\theoremstyle{theorem}
\newtheorem{theorem}{Theorem}[subsection]
\newtheorem{proposition}[theorem]{Proposition}
\newtheorem{lemma}[theorem]{Lemma}
\theoremstyle{definition}
\newtheorem{definition}[theorem]{Definition}
\newtheorem{remark}[theorem]{Remark}
\newtheorem{example}[theorem]{Example}
\newtheorem*{example*}{Example}
\newtheorem*{definition*}{Definition}
\numberwithin{equation}{subsection}
\def\maketag@@@#1{\hbox{\m@th\normalfont\normalsize#1}}
\newcommand{\dcorner}{\rotatebox[origin=c]{-45}{$\lrcorner$}}
\newcommand{\ucorner}{\rotatebox[origin=c]{135}{$\lrcorner$}}
\newcommand{\ldcorner}{\rotatebox[origin=c]{-90}{$\lrcorner$}}
\newcommand{\dpbk}{\arrow[dd,phantom,"\dcorner", very near start]}
\newcommand{\upbks}{\arrow[u,phantom,"\ucorner", very near start]}
\newcommand{\dpbks}{\arrow[d,phantom,"\dcorner ", very near start]}
\newcommand{\rdpbk}{\arrow[rd,phantom,"\lrcorner ", very near start]}
\newcommand{\ldpbk}{\arrow[ld,phantom,"\ldcorner ", very near start]}
 \tikzset{
  /tikz/commutative diagrams/on top/.style={inner sep=0pt, description}
}
	\renewcommand{\l}{\lambda}
	\renewcommand{\tilde}{\widetilde}
	\renewcommand{\bar}{\overline}
	\newcommand{\N}{\mathbb N}
	\newcommand{\Q}{\mathbb Q}
	\newcommand{\pow}[1]{\llbracket #1\rrbracket}
	\newcommand{\ncpow}[1]{\langle\!\langle #1\rangle\!\rangle}
	\newcommand{\X}{\mathbf X}
	\newcommand{\y}{\mathbf y}
	\newcommand{\x}{\mathbf x}
	\newcommand{\termon}{Y_1}
	\newcommand{\monoid}{Y}
	\newcommand{\ambcat}{\mathcal{E}}
	\newcommand{\C}{\mathcal C}
	\newcommand{\symgroup}{\mathfrak{S}}
	\newcommand{\set}{\mathbf{Set}}
	\newcommand{\infgrpd}{\mathbf{Grpd}}
	\newcommand{\sur}{\mathbf{S}}
	\newcommand{\Pmulticat}{\gm \textbf{-Operad}}
	\DeclareMathAlphabet{\mathbbe}{U}{bbold}{m}{n}%
	\newcommand{\simplexcategory}{\mathbbe{\Delta}} %
	\newcommand{\pop}{Q}
	\newcommand{\comm}{\mathsf{Sym}}
	\newcommand{\ass}{\mathsf{Ass}}
	\newcommand{\tcons}{\mathcal{T}}
	\newcommand{\tsbar}{\mathcal{B}}
	\newcommand{\Tpc}[3]{\mathcal{T}^{#1}_{#2}{#3}}
	\newcommand{\id}{\mathrm{id}}
	\newcommand{\comp}{m}
	\newcommand{\name}[1]{\ulcorner #1\urcorner}
	\newcommand{\gm}{\mathsf{P}}
	\newcommand{\fm}{\mathsf{M}}
	\newcommand{\mC}{\mathsf{Q}}
	\newcommand{\idm}{\mathsf{Id}}
	\newcommand{\fsg}{\mathsf{M}^{^\text{r}}}
	\newcommand{\mm}{\mathsf{Y}}
	\newcommand{\fsmc}{\mathsf{S}}
	\newcommand{\pfsmc}{\mathsf{S}^{^\text{r}}}
	\newcommand{\Tx}{\mathsf{L}}
	\newcommand{\FdB}{\mathcal{F}}
	\newcommand{\OFdB}{\mathcal{F}_{\operatorname{ord}}}
	\newcommand{\XFdB}{\mathcal{F}^{\operatorname{nc}}}
	\newcommand{\IIFdB}{\mathcal{F}^2}
	\newcommand{\IIXFdB}{\mathcal{F}^{\langle2\rangle}}
	\newcommand{\XIIXFdB}{\mathcal{F}^{\langle2\rangle,\operatorname{nc}}}
	\newcommand{\plethB}{\mathcal{P}}
	\newcommand{\EplethB}{\mathcal{P}_{\!\operatorname{exp}}}
	\newcommand{\MplethB}{\mathcal{P}^{\monoid}}
	\newcommand{\plethBX}{\mathcal{P}^{\diamondsuit}}
	\newcommand{\LplethBX}{\mathcal{P}_{\!\operatorname{lin}}^{\diamondsuit}}
	\newcommand{\XplethBX}{\mathcal{P}^{{\diamondsuit},\operatorname{nc}}}
	\newcommand{\LXplethBX}{\mathcal{P}^{{\diamondsuit},\operatorname{nc}}_{\!\operatorname{lin}}}
	\newcommand{\IIplethB}{\mathcal{P}^2}
	\newcommand{\EIIplethB}{\mathcal{P}^2_{\!\operatorname{exp}}}
	\newcommand{\lcx}[1]{\!{\underset{^{#1}}{\times}}\!}
	\newcommand{\cx}{{\times}}
	\newcommand{\grpdSprod}{\odot}
	\DeclareMathOperator{\autiv}{aut}
	\DeclareMathOperator{\aut}{Aut}
	\DeclareMathOperator{\rep}{R}
	\DeclareMathOperator{\iso}{Iso}
	\DeclareMathOperator{\fun}{Fun}
	\newcommand{\arxiv}[1]{\href{http://arxiv.org/pdf/#1}{arXiv:#1}} 
\begin{document}
 
 
 \title{Plethysms and operads}
 \date{}
 \author{Alex Cebrian}
 \maketitle
 
 {\let\thefootnote\relax\footnote{This work has received support from grant number MTM2016-80439-P of Spain.}}
 
\begin{abstract} We introduce the $\tcons$-construction, an endofunctor on the category of generalized operads, as a general mechanism by which
various notions of plethystic
substitution arise from more
ordinary notions of
substitution. In the special case of one-object unary
operads, i.e.~monoids, we recover the
$T$-construction of Giraudo.  We realize several kinds of plethysm as convolution products arising from the homotopy cardinality of the incidence bialgebra of the bar construction of various operads obtained from the $\tcons$-construction. The bar constructions are simplicial
groupoids, and in the special case of the
terminal reduced operad $\comm$, we recover
the simplicial groupoid of \cite{Cebrian}, a
combinatorial model for ordinary plethysm
in the sense of P\'olya, given in the spirit
of Waldhausen $S$ and Quillen $Q$
constructions. In some of the cases of the
$\tcons$-construction, an analogous
interpretation is possible.
\end{abstract}
  

  \tableofcontents
  

\section*{Introduction}\label{seccion:introduction}
  \addcontentsline{toc}{section}{Introduction} 
  
  Plethysm is a substitution law in the ring of power series in infinitely many variables. It was introduced
by P\'olya  \cite{polya1937} in unlabelled enumeration theory in combinatorics, motivated as a series analogue of the wreath product of permutation groups. 
Another notion of  plethysm was defined by Littlewood  \cite{Littlewood305}  in the context of symmetric functions and representation theory of the general linear groups \cite{Macdonald}. It appears also in algebraic topology, in connection with $\l$-rings \cite{Borger-Wieland} and power operations in cohomology \cite{bauer}. The two notions of plethysm are closely related, as described in \cite{StanleyII} and \cite{Bergeron:2}. 

The plethysms we deal with emerge from P\'olya's notion, which we proceed to recall. 
Let $\Q\pow{\x}$ be the ring of power series in the infinite set of variables $\x=(x_1,x_2,\dots)$ without constant term. Given $F,G\in \Q\pow{\x}$, their \emph{plethystic substitution} is defined as
$$(G\oast F)(x_1,x_2,\dots)=G(F_1,F_2,\dots), \;\;\text{where}\;\; F_k(x_1,x_2,\dots)=F(x_k,x_{2k},\dots).$$
The formal power series may be expressed as 
$$F(\x)=\sum_{\l}F_{\l}\frac{\x^{\l}}{\autiv (\l)},$$
where $\autiv (\l)$ are certain symmetry factors (see Section \ref{section:plethysmsandoperads} below).

 It is well appreciated in combinatorics that bijective proofs give a deeper understanding than algebraic manipulation, as well as a better ground for further development. The so-called objective method, pioneered by Lawvere \cite{LawvereMenni}, Joyal \cite{Joyal:1981}, and Baez and Dolan \cite{Baez-Dolan}, provides a systematic approach to bijective proofs. One of the starting points of objective combinatorics is the theory of species, developed by Joyal~\cite{Joyal:1981} as a combinatorial theory of formal series. Within this context, P\'olya enumeration theory of unlabelled structures, including cycle index series and their plethysm, was entirely renewed.
 
However, a full combinatorial model of plethysm was only given a few years later by Nava and Rota \cite{Nava-Rota}.
They developed the notion of partitional, a functor from the groupoid of partitions to the category of finite sets, and showed that a suitable notion of composition of partitionals yields plethystic substitution of their generating functions, in analogy with  composition of species and composition of their exponential generating functions. 
A variation of this combinatorial interpretation was given shortly after by Bergeron  \cite{Bergeron}, who instead of partitionals considered  permutationals, functors from the groupoid of permutations to the category of finite sets. This approach is nicely related to the theory of species and their cycle index series through an adjunction. Later on, Nava \cite{Nava} studied both partitionals and permutationals from the point of view of incidence coalgebras, and added a third class of functors called linear partitionals. 

The bialgebras arising from the various plethystic substitutions are called plethystic bialgebras in the present article. Here is a general definition, as given in \cite{Cebrian}: the \emph{plethystic bialgebra} is the free polynomial algebra on the linear functionals $A_{\l}(F)=F_{\l}$ with comultiplication dual to plethystic substitution,
$$\Delta(A_{\l})(F, G)=A_{\l}(G\oast F).$$
In this definition, the difference between the three bialgebras of Nava depends on the definition of the linear functionals $A_{\l}$, which in turn depend on the definition of the symmetry factors $\autiv(\l)$.

The present work introduces a construction on operads, called the $\tcons$-construction, which
establishes a relationship between ordinary substitutions and plethystic substitutions.
In particular, this construction produces combinatorial models for the partitional and the linear partitional (also called exponential) cases, but also for other kinds of plethysm: plethysm of power series with variables indexed by a (locally finite) monoid, introduced by M\'endez and Nava \cite{Mendez-Nava} in the course of generalizing Joyal's theory of colored species to an arbitrary set of colors; plethysm in two variables $\x,\y$; plethysm of series with coefficients in a noncommutative ring, in the style of \cite{BFK:0406117}, and plethysm of series with noncommuting variables. All these plethysms and their bialgebras are explained in Section \ref{section:plethysmsandoperads}. The $\tcons$-construction relies on operads and the theory of decomposition spaces and their  incidence bialgebras.

The theory of operads has long been a standard tool in topology and
algebra~\cite{Markl-Schnider-Stasheff,Loday-Vallette}, and in
category theory~\cite{Leinster}, and it is getting increasingly
important also in combinatorics~\cite{Giraudo:book,Mendez}. In the present work, for maximal flexibility, we work with operads in the form of
generalized multicategories \cite{Leinster}. This allows us to cover simultaneously notions such
as monoids, categories, nonsymmetric operads and symmetric operads.
  
On the other hand, decomposition spaces (certain simplicial spaces) provide a general machinery to objectify the notion of incidence algebra in algebraic combinatorics. They were introduced by  G\'alvez, Kock and Tonks \cite{GKT:HLA,GKT:DSIAMI-1,GKT:DSIAMI-2} in this framework, and they are the same as $2$-Segal spaces introduced by Dyckerhoff and Kapranov \cite{DK} in the context of homological algebra and representation theory.  To recover the algebraic incidence coalgebra from the categorified incidence coalgebra one takes homotopy cardinality, a cardinality functor defined from groupoids to the rationals.
            
    It was shown in \cite{Kock-Weber} that the two-sided bar construction \cite{May,Weber:iac} of an operad is a Segal groupoid, a particular type of decomposition space, and classical constructions of bialgebras arising from operads factor through this construction (see \cite{Chapoton-Livernet:0707.3725,vanderLaan:math-ph/0311013,vanderLaan-Moerdijk:hep-th/0210226} for related constructions). Next we give two relevant  examples of bialgebras that arise as incidence bialgebras of operads. 
 
\begin{example*}Let $\Q\pow{x}$ be the ring of formal power series in $x$ without constant term, and let $F,G\in \Q\pow{x}$. The \emph{Fa\`a di Bruno bialgebra} $\mathcal{F}$ is the free algebra $\Q[A_1,A_2,\dots]$, where $A_n\in \Q\pow{x}^{\ast}$ is the linear map  defined by 
$$A_n(F)=\frac{d^nF}{dx^n}.$$
Its comultiplication is defined to be dual to substitution of power series. That is
$$\Delta(A_n)(F, G)=A_n(G\circ F).$$
\end{example*}
It is a result of Joyal  \cite[\S 7.4]{Joyal:1981} that this bialgebra can be objectified by using the category of finite sets and surjections $\sur$. In the context of Segal spaces and incidence bialgebras the result reads as follows: \emph{the Fa\`a di Bruno bialgebra $\FdB$ is isomorphic to the homotopy cardinality of the incidence bialgebra of the fat nerve $N\sur$ of the category $\sur$}. The comultiplication here is given by summing over factorizations
of surjections.

\begin{example*} In earlier work~\cite{Cebrian} the author found a
  simplicial groupoid $T\sur$ (like $N\sur$ arising from
  the category of surjections cf.~\ref{seccion:oldTrelation}
  below), which plays the same role for
  plethystic substitution: \emph{the homotopy cardinality of the incidence bialgebra of $T\sur$ is isomorphic to the (partitional) plethystic bialgebra.} The
comultiplication extracted from this simplicial groupoid can be
interpreted as summing over certain transversals of partitions, as in the work
of Nava and Rota~\cite{Nava-Rota}.
\end{example*}

 Now, it is well-known that $N\sur$ is equivalent to the two-sided bar construction of $\comm$, the terminal reduced symmetric operad. This equivalence takes the surjection $n\twoheadrightarrow 1$ to the unique $n$-ary operation, and the comultiplication of an operation runs through all possible $2$-step factorizations. For example
$$
\Delta\Big(\hspace{-2pt}
\raisebox{-13pt}{ \begin{tikzpicture}[grow=up,level distance=10pt]
\tikzstyle{level 1}=[sibling distance=5pt]
\node at (0,0){}
child {
child 
child
child
};
\end{tikzpicture}}\hspace{1pt}\Big)
=
\raisebox{-13pt}{ \begin{tikzpicture}[grow=up,level distance=10pt]
\tikzstyle{level 1}=[sibling distance=5pt]
\node at (0,0){}
child {
child 
child
child
};
\end{tikzpicture}}
\otimes 
\raisebox{-13pt}{ \begin{tikzpicture}[grow=up,level distance=10pt]
\tikzstyle{level 1}=[sibling distance=5pt]
\node at (0,0){}
child {
child 
};
\end{tikzpicture}}
+
3 \raisebox{-13pt}{\begin{tikzpicture}[grow=up,level distance=10pt]
\tikzstyle{level 1}=[sibling distance=5pt]
\node at (0,0){}
child {
child 
child
};
\end{tikzpicture}}
\raisebox{-13pt}{ \begin{tikzpicture}[grow=up,level distance=10pt]
\tikzstyle{level 1}=[sibling distance=5pt]
\node at (0,0){}
child {
child 
};
\end{tikzpicture}}
\otimes
\raisebox{-13pt}{\begin{tikzpicture}[grow=up,level distance=10pt]
\tikzstyle{level 1}=[sibling distance=5pt]
\node at (0,0){}
child {
child 
child
};
\end{tikzpicture}}
+
\raisebox{-13pt}{ \begin{tikzpicture}[grow=up,level distance=10pt]
\tikzstyle{level 1}=[sibling distance=5pt]
\node at (0,0){}
child {
child 
};
\end{tikzpicture}}
\raisebox{-13pt}{ \begin{tikzpicture}[grow=up,level distance=10pt]
\tikzstyle{level 1}=[sibling distance=5pt]
\node at (0,0){}
child {
child 
};
\end{tikzpicture}}
\raisebox{-13pt}{ \begin{tikzpicture}[grow=up,level distance=10pt]
\tikzstyle{level 1}=[sibling distance=5pt]
\node at (0,0){}
child {
child 
};
\end{tikzpicture}}
\otimes 
\raisebox{-13pt}{ \begin{tikzpicture}[grow=up,level distance=10pt]
\tikzstyle{level 1}=[sibling distance=5pt]
\node at (0,0){}
child {
child 
child
child
};
\end{tikzpicture}}.
$$

The starting point of the present work is the observation that also $T\sur$ is equivalent to the two-sided bar construction of an operad. As we shall see, this operad can be obtained from $\comm$ by  the aforementioned $\tcons$-construction, which makes sense for any (nice enough) operad. As stated above, this construction 
 leads to many other flavors of plethysm, some of which had already
been studied in various contexts. For instance, from $\ass$, the reduced associative  operad, we obtain the exponential plethystic bialgebra, and from $n$-colored $\comm$ or $\ass$,   we obtain the $n$-variables plethystic bialgebra. The results relating the bialgebras to these operads are explained in Section \ref{section:plethysmsandoperads}. 

Let us give a brief introduction to the $\tcons$-construction. The word $\tcons$-construction comes from the simplicial $T$-construction \cite{Cebrian}, where $T$
stands for transversal (in the sense of Nava--Rota \cite{Nava-Rota}), and which is
analogous to Waldhausen S and Quillen Q constructions. By coincidence Giraudo \cite{Giraudo} had used the
same letter $T$ for a functor from monoids to nonsymmetric operads. The $\tcons$-construction of
the present work encompasses both these constructions, and the letter $T$
has been maintained, but now in a fancier font.

Let us first describe Giraudo's $T$-construction \cite{Giraudo}.  Let $(\monoid,\cdot,1)$ be a monoid. Then 
$$T \monoid:=\bigsqcup_{n\ge0} \fm \monoid(n),$$
where for all $n\ge 1$,
$$T \monoid(n):=\{(x_1,\dots,x_n)\, |\, x_i\in \monoid\;\; \text{for all } i=1,\dots,n\},$$
so that the $n$-ary operations are $n$-tuples of elements of $\monoid$. The substitution law in $T \monoid$,
$$\circ_i:T \monoid(n)\times T \monoid(m)\longrightarrow T \monoid(n+m-1),$$
is defined as follows: for all $x\in T \monoid(n)$, $y\in T \monoid(m)$, and $i=1,\dots, n$,
$$x\circ_i y:=(x_1,\dots,x_{i-1},x_i\cdot y_1,\dots,x_i\cdot y_m,x_{i+1},\dots,x_n).$$

Our $\tcons$-construction is developed in the context of $\gm$-operads (generalized multicategories in Leinster \cite{Leinster} terminology), for $\gm$ a cartesian monad on a cartesian category $\ambcat$.  This level of generality allows us to work with symmetric, nonsymmetric, colored and noncolored operads on the same footing. This includes also monoids and categories. A $\gm$-operad is represented by a span and two arrows
\begin{center}
\begin{tikzcd}[column sep=small]
       & \pop_1\arrow[dl,"s"']\arrow[dr,"t"] &       \\
 \gm \pop_0 &                                            & \pop_0
 \end{tikzcd}
 \begin{tikzcd}
\gm \pop_1\cx_{\gm \pop_0} \pop_1 \arrow[r,"\comp"]& \pop_1\\
 \phantom{aaaaa}\pop_0 \arrow[r,"e"] &\pop_1,
 \end{tikzcd}
 \end{center}
where $\pop_0$ is thought of as the object of colors, $\pop_1$ is thought of as the object of operations, $s$ returns the $\gm$-configuration of input colors, $t$ returns the output color, $e$ is the unit and $\comp$ is composition. All these arrows have to satisfy associativity and unit axioms. 

For instance, if $\idm$ is the identity monad, then an $\idm$-operad is a category internal to $\ambcat$. The $\tcons$-construction is in fact a composition of two constructions, one from $\gm$-operads to (internal) categories and one from categories to $\gm$-operads. The latter contains the Giraudo $T$-construction for the case $\ambcat=\set$ if we consider monoids as categories with one object. 

However, we will mainly be interested in $\ambcat=\infgrpd$. In particular, nonsymmetric operads will be considered as $\fsg$-operads, where $\fsg$ is the free semimonoidal category monad in $\infgrpd$, and symmetric operads as $\pfsmc$-operads, where $\pfsmc$ is the free symmetric semimonoidal category monad in $\infgrpd$. There are two main reasons for working over $\infgrpd$: on the one hand, note that unlike nonsymmetric operads, symmetric operads cannot be portrayed as $\gm$-operads in $\set$, because the free commutative monoid monad is not cartesian; on the other hand, working in $\infgrpd$ adapts better with the theory of decomposition spaces and incidence coalgebras. This theory uses weak notions of simplicial groupoids, slice categories, and pullbacks, but by keeping track of fibrancy we can stay within strict notions and strict monads in the style of \cite{Weber:opm}.

In order for the $\tcons$-construction to work, it is necessary to assume that the monads
come equipped with a strength. This notion goes back to work of A. Kock \cite{AndersKock} in enriched
category theory, but it has turned out to be fundamental for the role monads play in
functional programming \cite{Moggi,Wadler}.
 In Section \ref{seccion:MonadsMulticategories} we recall the theory of generalized operads, the notion of strong monad, and the two-sided bar construction in this context. 
 
 In Section \ref{seccion:HoCard} we  briefly explain Segal groupoids, incidence coalgebras and homotopy cardinality. Section \ref{seccion:Tcons} is devoted to the $\tcons$-construction, and Section \ref{seccion:examples} to some examples. Next, in Section \ref{section:plethysmsandoperads} we  introduce the bialgebras and state and prove the main results: the equivalence between the homotopy cardinality of the incidence bialgebras  of the two-sided bar constructions of operads obtained from the $\tcons$-construction and the plethystic bialgebras, as well as the Fa\`a di Bruno bialgebra and some of its variations. Finally, in Section \ref{seccion:oldTrelation} we prove the equivalence between $T\sur$ and $\tsbar \tilde{\comm}$, and we characterize some of the two-sided bar constructions as simplicial groupoids similar to $T\sur$.

 \subsection*{List of notations}

 \begin{tabular}{r l}
  $\infgrpd$& category of groupoids and groupoid morphisms\\
 $\set$ & category of sets and set maps\\
 $(\gm,\mu,\eta)$&generic strong cartesian monad (\ref{def:cartesian} and \ref{def:strength})\\
 $\idm$& identity monad (\ref{ex:identitymonad})\\
 $\fm$& free monoid monad (page \pageref{fm})\\
 $\fsg$& free semigroup monad (\ref{ex:fsg})\\
 $\fsmc$& free  symmetric monoidal category monad (\ref{ex:fsmc})\\
 $\pfsmc$&free symmetric semimonoidal category monad (\ref{ex:pfsmc})\\
 $\Tx$& monad $A\mapsto \gm 1\cx A$ (page \pageref{eq:TxD})\\
 $\monoid$ & generic (locally finite) monoid (\ref{ex:mm})\\
 $\mm$ & monad given by $A\mapsto \monoid \cx A$, for $\monoid$ a monoid  (\ref{ex:mm})\\
 $T\monoid$ & Giraudo $T$-construction of $\monoid$ (\ref{ex:giraudo})\\
 $D_{A,B}$& strength natural transformation (\ref{def:strength})\\
 $D_B$ & strength for $A=1$ (page \pageref{eq:TxD})\\
 $R_A$ & projection $\gm 1\cx A\mapsto A$ (page \pageref{eq:TxD})\\
 $\tsbar$ &two-sided bar construction (page \pageref{tsbar})\\
 $\tsbar^{\gm}$ & two-sided bar construction relative to a monad $\gm$ (page \pageref{tsbargm})\\
 $\tsbar_n$ & $n$-simplices of the two-sided bar construction $\tsbar$ (page \pageref{tsbarnsimplices}) \\
 $\tsbar^{^{\circ}}_n$& subgroupoid of connected objects of $\tsbar_n$ (page \pageref{tsbarconnectednsimplices})\\
 $\comm$ & the reduced symmetric  operad (\ref{ex:pfsmc})\\
 $\ass$ & the reduced associative  operad (\ref{ex:fsg})\\
 $\ambcat$ & generic ambient cartesian category, mainly $\set$ or $\infgrpd$) (\ref{ambcat})\\
 $C$ & category internal to $\ambcat$ (page \pageref{C})\\
 $\pop$ &$\gm$-operad internal to $\ambcat$ (page \pageref{dgrm:MCat})\\
 $\pop_0,\pop_1$& objects and operations of $\pop$ (page \pageref{dgrm:MCat})\\
 $(\mC,\mu^{\pop},\eta^{\pop})$ & monad on $\ambcat/\pop_0$ defined by the $\gm$-operad $\pop$ (page \pageref{mC})\\
 $\Tpc{}{\gm}{C}$ & $\tcons$-construction from $C$ to a $\gm$-operad (page \pageref{TpcCQ})\\
 $\Tpc{\gm}{}{\pop}$ & $\tcons$-construction from a $\gm$-operad to a category $C$ (page \pageref{TpcQC})\\
 $\Tpc{}{\gm}{\pop}$ & $\tcons$-construction from a $\gm'$-operad to a $\gm$-operad (page \pageref{TpcQQ})\\
  $\Lambda$& set of infinite vectors $\l=(\l_1,\l_2,\dots)$ of natural\\[-5pt]
                 & numbers with $\l_i=0$ for all $i$ large enough (page \pageref{plethysticnotation})\\
 $\simplexcategory$ & simplex category (page \pageref{simplexcategory})\\
 $T\sur$ & simplicial groupoid of \cite{Cebrian} (page \pageref{TS})\\
 \end{tabular}

\section{Monads and operads}\label{seccion:MonadsMulticategories}
 
 As mentioned in the introduction, the $\tcons$-construction fits neatly within the context of generalized operads and strong monads. The following discussion of generalized operads is taken from \cite{Leinster}. Let us start by expressing the notions of category and of plain operad in this setting. 

 A small category $C$\label{C} can be described by sets and functions
\begin{center} 
 \begin{tikzcd}[column sep=small]
       & C_1\arrow[dl,"s"']\arrow[dr,"t"] &       \\
 C_0 &                                            & C_0
 \end{tikzcd}
 \begin{tikzcd}
 C_1\cx_{C_0} C_1 \arrow[r,"\comp"]& C_1\\
 \phantom{aaaaa}C_0 \arrow[r,"e"] &C_1
 \end{tikzcd}
 \end{center}
 where the pullback is taken along $C_1\xrightarrow{s}C_0\xleftarrow{t}C_1$,
 satisfying associativity and identity axioms, which can be expressed with commutative diagrams in $\set$ (see Appendix \ref{Cat_Axioms}). The set $C_0$ is the set of objects and $C_1$ is the set of arrows of $C$. The map $s$ returns the source of an arrow and $t$ returns its target. The maps $\comp$ and $e$ represent composition and identities. 
 
 A nonsymmetric operad can be defined in a similar way. Let $\fm{\colon}\set\rightarrow \set$ \label{fm} be the free monoid monad: it sends a set $A$ to $\bigsqcup_{n\in \N}A^n$ (see Example~\ref{example:plainoperadoperations} below). Then an operad can be described as consisting of sets and functions
 \begin{equation} \label{dgrm:MCat}
 \begin{tikzcd}[column sep=small]
       & \pop_1\arrow[dl,"s"']\arrow[dr,"t"] &       \\
 \fm \pop_0 &                                            & \pop_0
 \end{tikzcd}
 \begin{tikzcd}
\fm \pop_1\cx_{\fm \pop_0} \pop_1 \arrow[r,"\comp"]& \pop_1\\
 \phantom{aaaaa}\pop_0 \arrow[r,"e"] &\pop_1
 \end{tikzcd}
  \end{equation}
  satisfying associativity and identity axioms, which can be expressed with commutative diagrams in $\set$ (see Appendix \ref{PCat_Axioms}) and involve the monad structure on $\fm$. The set $\pop_0$ is the set of objects and $\pop_1$ is the set of operations of $\pop$. The map $s$ assigns to an operation the sequence of objects constituting its source,  and $t$ returns its target. The maps $\comp$ and $e$ represent composition and identities.
  
Operations of classical operads, such as nonsymmetric operads or symmetric operads are pictured as
 \begin{center}
 \begin{tikzpicture}[grow=up,level distance=30pt,thick]
 \tikzstyle{level 1}=[sibling distance=20pt]
\node at (0,0){}
child[red] {node[black,circle,draw]{$x$}}{
child[green]
child[blue]
child[yellow]
};
\end{tikzpicture},
 \end{center}
  where the colors are objects of $\pop_0$.

 \subsection{$\gm$-operads}
The above characterization of nonsymmetric $\set$ operads can be generalized to any ambient category  and any monad $\gm$ as long as they are cartesian. The classical case is $\set$; we shall be concerned also with $\infgrpd$.
\begin{definition} \label{def:cartesian} A category is \emph{cartesian} if it has all pullbacks. A functor is \emph{cartesian} if it preserves pullbacks. A natural transformation is \emph{cartesian} if all its naturality squares are pullbacks. A monad $(\gm,\mu,\eta)$ is \emph{cartesian} if $\gm$ is cartesian as a functor and $\mu$ and $\eta$ are cartesian natural transformations.
\end{definition}
Given a cartesian category $\ambcat$ \label{ambcat} and a cartesian monad $(\gm,\mu,\eta)$, we define  $\ambcat_{(\gm)}$ as the bicategory whose $0$-cells are the objects $E$ of $\ambcat$, whose $1$-cells $E\rightarrow E'$ are spans $\gm E\leftarrow M \rightarrow E'$, and $2$-cells are the usual morphisms $M\rightarrow N$ between spans:
\begin{center} 
\begin{tikzcd}[column sep=small]
       &M\arrow[dl,"d"']\arrow[dr,"c"] \arrow[dd]&       \\
\gm E &                                            & E'\\
& N.\arrow[ul,"q"]\arrow[ur,"p"']&
 \end{tikzcd}
 \end{center}
Given two $1$-cells
\begin{center}  
 \begin{tikzcd}[column sep=small]
       &M\arrow[dl,"d"']\arrow[dr,"c"] &       \\
\gm E &                                            & E'\\
 \end{tikzcd}
  \begin{tikzcd}[column sep=small]
       &M'\arrow[dl,"d'"']\arrow[dr,"c'"] &       \\
\gm E' &                                            & E''\\
 \end{tikzcd}
\end{center}
the composite is given by taking a pullback and using the multiplication $\mu$ of $\gm$, and the $1$-cell identity is given by $\eta$ and $\id$. They are shown in the following diagram:
\begin{center}
\begin{tikzcd}[sep={4em,between origins}]
 &&& N \ar[dl] \ar[dr] \dpbk &&\\
 && \gm M\ar[dl,"\gm d"']\ar[dr,"\gm c"] & & M' \ar[dl,"d'"'] \ar[dr,"c'"]& \\
& \gm^2E \arrow[dl,"\mu_E"']&& \gm E' && E''\\
\gm E&&&&&
 \end{tikzcd}
 \begin{tikzcd}[sep={4em,between origins}]
 &&&\\
& &&\\
& & E \ar[dl,"\eta_E"'] \ar[dr,"\id"]&\\
&\gm E& & E\\
 \end{tikzcd}
\end{center}
Composition and identity of $2$-cells are obvious. Since composition assumes a global choice of pullbacks, and since the
pasting of two chosen pullbacks is not generally a chosen pullback, composition is associative up to coherent isomorphism. The coherence $2$-cells are defined using the universal property of the pullback. 
\begin{definition}[Burroni \cite{burroni}] Let $\gm$ be a cartesian monad in a cartesian category $\ambcat$. A $\gm$-\emph{operad} is a monad in the bicategory $\ambcat_{(\gm)}$.
\end{definition}
This means that a $\gm$-operad $\pop$ consists precisely of objects $\pop_0$ and $\pop_1$ of $\ambcat$ together with maps $s$, $t$, composition $\comp$ and identities $e$ as in Diagram (\ref{dgrm:MCat}) satisfying associativity and identity axioms (Appendix \ref{PCat_Axioms}). A \emph{morphism} $\pop\rightarrow \pop'$ of $\gm$-operads is defined as a pair of arrows $\pop_0\xrightarrow{f_0}\pop'_0$, $\pop_1\xrightarrow{f_1}\pop'_1$, satisfying the following diagrams,
\begin{equation} \label{eq:Poperadsmorphism}
\begin{tikzcd}[column sep=12pt, row sep=7pt]
       & \pop_1\arrow[dl]\arrow[dr] \arrow[dd,"f_1"]&       \\
 \gm \pop_0\arrow[dd,"\gm f_0"'] &                                            & \pop_0\arrow[dd,"f_0"]\\
        & \pop'_1\arrow[dl]\arrow[dr] &       \\
 \gm \pop'_0 &                                            & \pop'_0,
 \end{tikzcd}
 \hspace{20pt}
 \begin{tikzcd}[row sep=30pt]
 \pop_0\arrow[r,"e"]\arrow[d,"f_0"']& \pop_1\arrow[d,"f_1"]\\
 \pop'_0\arrow[r,"e"']&\pop'_1,
 \end{tikzcd}
  \hspace{20pt}
  \begin{tikzcd}[row sep=30pt]
 \gm \pop_1\cx_{\gm \pop_0}\pop_1\arrow[r,"\comp"]\arrow[d,"\gm f_1\cx_{\gm f_0}f_1"']& \pop_1\arrow[d,"f_1"]\\
 \gm \pop'_1\cx_{\gm \pop'_0}\pop'_1\arrow[r,"\comp"']&\pop'_1,
 \end{tikzcd}
\end{equation}
 regarding compatibility with the spans, identities and composition maps. Notice that this is not an arrow in $\ambcat_{(\gm)}$. The category of $\gm$-operads is denoted $\Pmulticat$.
 
\subsection{Morphisms of spans}
In Section \ref{seccion:Tcons} we will deal with morphisms between long horizontal composites of spans. It is thus worth to set up a framework for such morphisms: consider the following diagrams, named blocks, made of maps in $\ambcat$,
\begin{equation} \label{dgm:calculusid}
\begin{tikzcd}
\cdot \arrow[d]&\cdot\arrow[l]\arrow[r]\arrow[d]&\cdot\arrow[d]\\
\cdot&\cdot\arrow[l]\arrow[r]&\cdot
\end{tikzcd}
 \end{equation}
 \begin{equation}\label{dgm:calculusd1}
 \begin{tikzcd}
\cdot \arrow[d]&\cdot\arrow[l]\arrow[r]&\cdot&\cdot\arrow[l]\arrow[r]&\cdot\arrow[d]\\
\cdot&&\cdot\arrow[ul]\arrow[ur]\arrow[ll]\arrow[rr]\upbks&&\cdot 
\end{tikzcd}
\phantom{aaaa}
 \begin{tikzcd}
 \cdot\arrow[d]&&\cdot\arrow[dl]\arrow[dr]\arrow[ll]\arrow[rr]\dpbks&&\cdot \arrow[d]\\
\cdot &\cdot\arrow[l]\arrow[r]&\cdot&\cdot\arrow[l]\arrow[r]&\cdot
\end{tikzcd}
\end{equation}
 \begin{equation}\label{dgm:calculusprojections}
 \begin{tikzcd}
\cdot&\cdot\arrow[l]\arrow[r]&\cdot\arrow[d]&\cdot\arrow[l]\arrow[r]\arrow[d]&\cdot\arrow[d]\\
&&\cdot&\cdot\arrow[l]\arrow[r]&\cdot
\end{tikzcd}
\phantom{aaaa}
 \begin{tikzcd}
\cdot\arrow[d]&\cdot\arrow[l]\arrow[r]\arrow[d]&\cdot\arrow[d]&\cdot\arrow[l]\arrow[r]&\cdot\\
\cdot&\cdot\arrow[l]\arrow[r]&\cdot&&
\end{tikzcd}
\end{equation}
Notice that (\ref{dgm:calculusd1}) induce isomorphisms of spans if the vertical maps are isomorphisms, since in this case they represent horizontal composition of spans. Diagram (\ref{dgm:calculusid}) is an isomorphism when all the vertical arrows are isomorphisms, and (\ref{dgm:calculusprojections}) are isomorphisms when all the vertical arrows and the span projected away are isomorphisms.
Besides, the blocks can be horizontally and vertically attached in the obvious way to get morphisms of longer spans, with the only restriction that the diagrams (\ref{dgm:calculusprojections}) can be attached to the right and to the left respectively.
 \begin{lemma} Any pasting of blocks defines a morphism between the limit of the top row and the limit of the bottom row. Moreover, such a morphism is an isomorphism if it can be constructed from blocks that are isomorphisms.
\end{lemma}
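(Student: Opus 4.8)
The plan is to argue by induction on the number of blocks in the pasting, using throughout the universal property of limits in $\ambcat$. These limits exist because $\ambcat$ is cartesian: each row is a finite zigzag, whose limit is the iterated fibre product of its peaks (the sources) over its valleys (the targets), and this is built from ordinary pullbacks.

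For the base case of a single block I would produce the induced map $\lim T \to \lim B$, from the limit of the top row to that of the bottom row, directly from the universal property, treating the three types in turn. For a block of type (\ref{dgm:calculusid}) the vertical arrows constitute a natural transformation between the two spans, so composing the canonical cone on $\lim T$ with these arrows yields a cone over $B$, whence a unique map $\lim T \to \lim B$, which is an isomorphism when the vertical arrows are. For a block of type (\ref{dgm:calculusd1}) the pullback corner exhibits the apex of the short row as the limit (pullback) of the cospan appearing in the long row, so the comparison map between computing the overall limit all at once and computing it in two stages is a canonical isomorphism precisely when the remaining vertical arrows are isomorphisms; this handles both displayed orientations at once. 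For a block of type (\ref{dgm:calculusprojections}) the induced map is restriction of cones along the inclusion of the displayed subdiagram, and it is an isomorphism exactly when the projected-away span imposes no constraint, i.e.\ when its legs are isomorphisms, which is the content of the parenthetical remark preceding the lemma.

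For the inductive step I would distinguish vertical and horizontal attachment of one further block. Vertical attachment is immediate: the bottom row of the existing pasting is the top row of the attached block, so the two induced maps are composable and their composite is the induced map of the enlarged pasting, by functoriality of $\lim$. Horizontal attachment is the substantive point. When two zigzags are concatenated along a shared boundary valley $V$, the limit of the concatenation is the fibre product $\lim T_1 \cx_V \lim T_2$ of the limits of the pieces over $V$, and likewise on the bottom. The two induced maps $\lim T_i \to \lim B_i$ are compatible over $V$ because the attaching identifies the boundary data, so the universal property of the fibre product furnishes a unique map $\lim T_1 \cx_V \lim T_2 \to \lim B_1 \cx_V \lim B_2$ restricting to each, and by construction this is the induced map of the horizontally pasted diagram.

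Finally, the isomorphism statement follows formally: a composite of isomorphisms is an isomorphism, and a fibre product of isomorphisms over an isomorphism is an isomorphism, so if every constituent block induces an isomorphism then so does the whole pasting. I expect the main obstacle to be the horizontal step, specifically verifying cleanly that the limit of a concatenated row really is the fibre product of the pieces over the shared valley and that the induced maps glue, since this is where the one-dimensional calculus-of-spans intuition must be made precise as an honest statement about cones; the remaining arguments are bookkeeping with the universal property.
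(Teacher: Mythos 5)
Your proof is correct, but it cannot be compared against the paper's argument for the simple reason that the paper gives none: the lemma is stated without proof, treated as routine bookkeeping with universal properties, and your write-up supplies exactly the verification the paper leaves implicit. Your decomposition is the natural one -- existence of the row limits as iterated pullbacks (available since $\ambcat$ is cartesian), a case analysis producing the induced map for each of the three block types (\ref{dgm:calculusid}), (\ref{dgm:calculusd1}), (\ref{dgm:calculusprojections}), vertical attachment as composition, and horizontal attachment via the identity $\lim(T_1 \ast T_2)\cong \lim T_1\times_{V}\lim T_2$ for zigzags glued along a shared valley, which is the standard decomposition of a limit over an index shape obtained by gluing along a single object and is indeed the only point requiring care. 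Two small refinements: first, the shared boundary is really a \emph{column}, a top object $V_T$ and a bottom object $V_B$ joined by a vertical arrow $v\colon V_T\to V_B$, so the glued map is a map of fibre products over $v$ rather than over a single object; your closing observation that a fibre product of isomorphisms over an isomorphism is an isomorphism covers precisely this configuration, and in an all-isomorphism pasting the boundary verticals are isomorphisms by the block conditions, so the conclusion stands. Second, in the projection case your ``exactly when'' overclaims a biconditional: the lemma (and the paper's parenthetical remark, which also requires the retained vertical arrows to be isomorphisms) only needs and only asserts the ``if'' direction, since an induced map could be invertible for accidental reasons. Neither point affects the validity of the argument.
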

The morphisms between long spans are pictured with diagrams
\begin{center}
\begin{tikzcd}
\pmb{\cdot\arrow[d]}&\cdot \arrow[d,white, "\vdots" black]&\cdot\arrow[l] &\dots &\cdot\arrow[r]&\cdot\arrow[d,white, "\vdots" black]\\
\pmb{\cdot}&\cdot &\cdot\arrow[l] &\dots &\cdot\arrow[r]&\cdot
\end{tikzcd}
\end{center}
where the left bold part is the limit of the diagram: the upper dot is the limit of the upper row, and same for the bottom row.
Observe that the decomposition of a morphism into blocks is not unique, and there may be decompositions of isomorphisms whose blocks are not necessarily isomorphisms. Here is an example that will be used later on.
\begin{example}The following diagram represents an isomorphism of composites of spans:
 \begin{equation}\label{dgm:calculusisopbk}
 \begin{tikzcd}
\cdot\arrow[d,equal]&\cdot\arrow[l,"a"']\arrow[r,"b"]\arrow[d,"f"']\rdpbk&\cdot\arrow[d,"g"]&\cdot\arrow[l,"c"']\arrow[r]\arrow[d,equal]&\cdot\arrow[d,equal]\\
\cdot&\cdot\arrow[l,"a'"]\arrow[r,"b'"']&\cdot&\cdot\arrow[l,"c'"]\arrow[r]&\cdot
\end{tikzcd}
\end{equation}
Indeed, it can be expressed by pasting isomorphism blocks:
 \begin{equation}\label{dgm:calculusisopbkproof}
 \begin{tikzcd}
\cdot\arrow[d,equal]&&\cdot\arrow[ll,"a"']\arrow[rr,"b"]\arrow[dl,"f"']\arrow[dr,"b"]\dpbks&&\cdot\arrow[d,equal]&\cdot\arrow[l,"c"']\arrow[r]\arrow[d,equal]&\cdot\arrow[d,equal]\\
\cdot\arrow[d,equal]&\cdot\arrow[l,"a'"']\arrow[r,"b'"]\arrow[d,equal]&\cdot\arrow[d,equal]&\cdot\arrow[l,"g"']\arrow[r,equal]&\cdot&\cdot\arrow[l,"c"']\arrow[r]&\cdot\arrow[d,equal]\\
\cdot&\cdot\arrow[l,"a'"]\arrow[r,"b'"']&\cdot&&\cdot\arrow[ll,"c'"]\arrow[rr]\arrow[ul,"c"]\arrow[ur,equal]\upbks&&\cdot
\end{tikzcd}
\end{equation}
\end{example}

\subsection{Strong monads}
 We now recall the notion of strong monad \cite{AndersKock}, which is central in the $\tcons$-construction. From now on the ambient category $\ambcat$ is required to have a terminal object, hence all finite limits.
  
\begin{definition}\label{def:strength}Let  $(\gm,\mu,\eta)$ be a monad on $\ambcat$. A \emph{strength} for $\gm$ is a natural transformationwith components $D_{A,B}\colon A\cx \gm B\rightarrow \gm(A\cx B)$, satisfying the following two axioms concerning tensoring with $1$ and consecutive applications of $D$,
 \begin{subequations}
 \begin{equation}\label{dgm:StrengthWith1_Axiom}
 \begin{tikzcd}[column sep=4em, row sep=2em]
 1{\times}\gm A\arrow[r,"D_{1,A}"]\arrow[dr,"p_2"']& \gm (1{\times}A)\arrow[d,"\gm p_2"]\\
                                                                  &    \gm A
 \end{tikzcd}
\end{equation}
\begin{equation}\label{dgm:StrengthConsecutiveApplications_Axiom}
\begin{tikzcd}[column sep=5em, row sep=2em]
 (A{\times}B){\times}\gm C\arrow[r]\arrow[d,"D_{A{\times}B,C}"'] &   A{\times}(B{\times }\gm C)\arrow[r,"A{\times}D_{B,C}"]  &A{\times}\gm (B{\times}C)     \arrow[d,"D_{A,B{\times}C}"]    \\
      \gm ((A{\times}B){\times} C)\arrow[rr]    &                                              & \gm (A{\times}(B{\times} C))
\end{tikzcd}
\end{equation}
\end{subequations}
and two axioms concerning compatibility with monad unit and multiplication,
 \begin{subequations}
 \begin{equation}\label{dgm:StrongMonad_AxiomEta}
\begin{tikzcd}[column sep=6em, row sep=3em]
A{\times }B \arrow[rd, "\eta_{A{\times}B}"'] \arrow[r, "A{\times }\eta_B"] & A{\times }\gm B \arrow[d,"{D_{A,B}}"]    \\
                                                                    &            \gm(A{\times }B)  
\end{tikzcd}
\end{equation}
 \begin{equation}\label{dgm:StrongMonad_AxiomMu}
\begin{tikzcd}[column sep=6em, row sep=3em]
 A{\times }\gm^2B \arrow[d, "A{\times }\mu_B"'] \arrow[r,"D_{A,\gm B}"] &  \gm(A{\times }\gm B) \arrow[r, "{\gm D_{A,B}}"] & \gm^2(A{\times }B) \arrow[d,"\mu_{A{\times }B}"] \\
A{\times }\gm B \arrow[rr,"{D_{A,B}}"']&& \gm(A{\times }B)
\end{tikzcd}
\end{equation}
\end{subequations}
\end{definition}

 Before seeing some examples of $\gm$-operads and strong monads, we prove the following lemma, which will be useful in Section \ref{seccion:Tcons}.
 \begin{lemma} \label{lemma:upbk} Let $u$ be the unique morphism $u\colon \gm 1\rightarrow 1$. Then the square
  \begin{equation}\label{dgm:StrongMonadTupullback_Lemma}
 \begin{tikzcd}[column sep=4em, row sep=3em]
 A{\times}\gm ^21\arrow[r,"A{\times}\gm u"]\arrow[d,"D_{A,\gm 1}"']\rdpbk  &   A{\times}\gm 1 \arrow[d,"D_{A,1}"] \\
 \gm (A{\times}\gm 1)\arrow[r,"\gm (A{\times}u)"']                                     &   \gm (A{\times}1)
 \end{tikzcd}
 \end{equation}
 is a pullback.
 \begin{proof} Observe that if we project the bottom rows of this square to the first component,
  \begin{center}
 \begin{tikzcd}[column sep=4em, row sep=3em]
 A{\times}\gm ^21\arrow[r,"A{\times}\gm u"]\arrow[d,"D_{A,\gm 1}"']\rdpbk  &   A{\times}\gm 1 \arrow[d,"D_{A,1}"] \\
 \gm (A{\times}\gm 1)\arrow[r,"\gm (A{\times}u)"]        \arrow[d,"\gm p_2"']      &   \gm (A{\times}1) \arrow[d,"\gm p_2"]   \\
 \gm ^21   \arrow[r,"\gm u"']                                                              &   \gm 1
 \end{tikzcd}
 \end{center}
 then the lower square is a pullback because $\gm$ is cartesian, and the outer square is a  pullback because it is a projection, by (\ref{dgm:StrengthWith1_Axiom}). Therefore the upper square is a pullback too.
 \end{proof}
 \end{lemma}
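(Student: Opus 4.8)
The plan is to reduce the claim to the pasting lemma for pullbacks, after extending the given square downwards by one further row obtained by projecting onto the second factor. Concretely, I would stack underneath the square the image under $\gm$ of the square in $\ambcat$ whose top edge is $A\times u\colon A\times\gm 1\to A\times 1$, whose bottom edge is $u\colon\gm 1\to 1$, and whose two vertical legs are the second projections $p_2$. This gives a three-row diagram with two stacked squares, exactly the auxiliary configuration one wants to analyse.

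The second step is to check that this lower square is a pullback. Before applying $\gm$, the square with top edge $A\times u$ and legs $p_2$ is nothing but the canonical presentation of $A\times\gm 1$ as the pullback of $A\times 1\to 1\leftarrow\gm 1$ (a product cone over the terminal object), so it is a pullback in $\ambcat$. Since $\gm$ is cartesian it preserves pullbacks, hence the lower square, whose corners are $\gm(A\times\gm 1)$, $\gm(A\times 1)$, $\gm^2 1$, $\gm 1$ and whose legs are $\gm p_2$, is again a pullback.

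The third step is to identify the outer rectangle. Its left and right legs are the composites $\gm p_2\circ D_{A,\gm 1}$ and $\gm p_2\circ D_{A,1}$, and the key point is that for every object $B$ one has $\gm p_2\circ D_{A,B}=p_2\colon A\times\gm B\to\gm B$. To see this I would factor the projection $p_2\colon A\times B\to B$ through the terminal map $A\to 1$ and use naturality of the strength $D$ in its first variable to replace $D_{A,B}$ by $D_{1,B}$, then invoke the tensoring-with-$1$ axiom (\ref{dgm:StrengthWith1_Axiom}), which says precisely $\gm p_2\circ D_{1,B}=p_2$. With both legs thereby collapsed to projections, the outer rectangle becomes the product-projection square with top $A\times\gm u$, bottom $\gm u$ and legs $p_2$; this presents $A\times\gm^2 1$ as the pullback of $A\times\gm 1\to\gm 1\leftarrow\gm^2 1$ and is therefore a pullback.

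Finally, since the lower square and the outer rectangle are both pullbacks, the pasting lemma in its right-cancellation form (if the bottom square and the total rectangle are pullbacks, then so is the top square) shows that the original upper square is a pullback. The step I expect to be the genuine obstacle is the third one, the identification $\gm p_2\circ D_{A,B}=p_2$: the strength axiom as stated only concerns $D_{1,B}$, so one must first apply naturality of $D$ to reduce to that case, and it is important to perform this reduction before appealing to (\ref{dgm:StrengthWith1_Axiom}); the remaining steps are formal consequences of cartesianness of $\gm$ and the pasting lemma.
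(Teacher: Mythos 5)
Your proposal is correct and follows essentially the same route as the paper: extend the square downward by $\gm$ applied to the product-projection square, note the lower square is a pullback since $\gm$ is cartesian, identify the outer rectangle as a projection square using axiom (\ref{dgm:StrengthWith1_Axiom}), and conclude by the pasting lemma. Your third step merely makes explicit the naturality reduction from $D_{A,B}$ to $D_{1,B}$ that the paper's appeal to (\ref{dgm:StrengthWith1_Axiom}) leaves implicit, which is a welcome clarification rather than a divergence.
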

Let us see some examples of strong monads.

\begin{example}  \label{ex:identitymonad} Obviously the identity monad is strong. If we take the identity monad $\idm$ on any cartesian cartesian category $\ambcat$ then a $\idm$-operad is the same as a category internal to $\ambcat$, and a non colored $\idm$-operad is a monoid in $\ambcat$. In particular if $\ambcat=\set$ they are small categories and monoids, respectively. 
\end{example}

\begin{example} \label{example:plainoperadoperations}Let $(\fm,\mu,\eta)$ be the free monoid monad on the category $\ambcat{=}\set$. As mentioned above, a $\fm$-operad is the same thing as a nonsymmetric operad. Here is the full explicit description of  $\fm$. Let $A$ be a set and $a_0, \dots, a_n{\in }A$, then
\begin{align}\label{eqdef:fm}
\nonumber \fm A&=\bigsqcup_{n\in N}A^n,&\phantom{aaaaaaaaaaaaaaaaaaa}\\\nonumber
\eta_A (a_0)&=(a_0),&\\
\mu_A\big((a_1,\dots,a_i),\dots,(a_j,\dots,a_n)\big)&=(a_1,\dots,a_n).&
\end{align}
The free monoid monad is strong with the following strength:
\begin{center}
\begin{tikzcd}[row sep=3pt, column sep=40pt]
D_{A,B}\colon A\cx \fm B \arrow[r] & \fm(A\cx B)\phantom{aaaaaaaaa}\\
\phantom{aaaaa}\big(a,(b_1,\dots,b_n)\big)\arrow[r] & \big((a,b_1),\dots,(a,b_n)\big).
\end{tikzcd}
\end{center}
It is straightforward to check that the diagrams (\ref{dgm:StrongMonad_AxiomMu}) and (\ref{dgm:StrongMonad_AxiomEta}) are satisfied and clear that $D_{A,B}$ is injective. This last feature is relevant because to define the $\tcons$-construction, in Section \ref{seccion:Tcons}, it will be necessary that $D_{1,C_0}$ is a monomorphism.
\end{example}

\begin{example}\label{ex:fsg} The free semigroup monad $\fsg$  on $\set$ is defined in the same way as the free monoid monad, except that in this case $\fsg A=\bigsqcup_{n\ge 1} A^n$. This means that a $\fsg$-operad is a nonsymmetric operad without nullary operations. The terminal $\fsg$-operad is denoted $\ass$, which is of course the reduced associative  operad. Notice that $\fsg$ is also a strong cartesian monad on $\infgrpd$. In this sense the operad $\ass$ can also be considered as an $\fsg$-operad in $\infgrpd$, with discrete groupoid of objects and discrete groupoid of operations. The context will suffice to distinguish between $\set$ and $\infgrpd$, but in the main applications (Section \ref{section:plethysmsandoperads}) we work over $\infgrpd$.
\end{example}

\begin{example} \label{ex:mm}Let $\monoid$ be a monoid. Denote by $\mm$ the monad on $\set$ given by $\mm A=\monoid\cx A$ with unit and multiplication given by those of $\monoid$. Then $\mm$ is strong with strength given by the associator of the cartesian product. Therefore in this case the strength is an isomorphism. The same holds if $\monoid$ is a monoid in $\infgrpd$ and $\mm$ is then a monad on $\infgrpd$.
\end{example}

\begin{example} \label{ex:fsmc}Let $(\fsmc,\mu,\eta)$ be the free symmetric monoidal category monad on $\infgrpd$. An $\fsmc$-operad is an operad internal to groupoids, so that it has a groupoid of colors and a groupoid of operations. Let $A$ be a groupoid and $\symgroup_n$ the symmetric group on $n$ elements. The  monad $\fsmc$ acts on $A$ by
$$ \fsmc A=\bigsqcup_{n\in N}A^n//\symgroup_n,$$
where $//$ means homotopy quotient \cite{Baez-Dolan,GKT:FdB}. Hence it is analogous to $\fm$, but we add an arrow 
$$(a_1,\dots,a_n)\xrightarrow{\;\;\;\sigma\;\;\;}(a_{\sigma 1},\dots, a_{\sigma n})$$
for every element $\sigma \in \symgroup_n$. The multiplication and unit natural transformations are defined as in (\ref{eqdef:fm}) for both objects and operations. Notice that any symmetric operad $\pop$ is in particular an $\fsmc$-operad, where the groupoid of objects $\pop_0$ is discrete and the groupoid $\pop_1$ has only the arrows coming from the permutations of its source sequence. In other words, a symmetric operad is an $\fsmc$-operad
$$\fsmc \pop_0\xleftarrow{\;\;\; s \;\;\;} \pop_1 \xrightarrow{\;\;\; t \;\;\;} \pop_0$$
such that $\pop_0$ is discrete and $s$ is a discrete fibration. 
The strength for $\fsmc$ is defined the same way as for $\fm$,
\begin{center}
\begin{tikzcd}[row sep=3pt, column sep=40pt]
D_{A,B}\colon A\cx \fsmc B \arrow[r] & \fsmc(A\cx B)\phantom{aaaaaaaaa}\\
\phantom{aaaaa}\big(a,(b_1,\dots,b_n)\big)\arrow[r] & \big((a,b_1),\dots,(a,b_n)\big),
\end{tikzcd}
\end{center}
and it is again a monomorphism, since it is injective both on objects and morphisms. 

Observe that symmetric operads cannot be expressed as $\gm$-operads in $\set$, since the actions of the symmetric groups have to be encoded necessarily as morphisms in $\pop_1$. Also, the only monad $\gm$ one could attempt to use to define them is the free commutative monoid monad, but it is not cartesian.


\end{example}

\begin{example} \label{ex:pfsmc}As for $\fm$ and $\fsg$, we can remove the empty sequence from $\fsmc$ to get a monad $\pfsmc$ on $\infgrpd$ whose operads do not have nullary operations. We denote by $\comm$ the terminal $\pfsmc$-operad, which is the reduced commutative  operad.
\end{example}

\subsection{The two-sided bar construction for $\gm$-operads}

The two-sided bar construction for operads is standard \cite{May}. In this section we introduce the construction in the more general setting of $\gm$-operads by using induced monads. Any $\gm$-operad $\pop$ defines a monad $(\mC,\mu^{\mC},\eta^{\mC})$\label{mC} on the slice category of $\ambcat$ over $\pop_0$
$$\mC:\ambcat/\pop_0\longrightarrow \ambcat{/\pop_0},$$
given by  pullback and composition, as shown in the following diagram for an element $X\xrightarrow{f} \pop_0$ of $\ambcat{/\pop_0}$
\begin{equation}\label{eq:mC}
\begin{tikzcd}[sep={3em,between origins}]
&\mC X \arrow[ld]\arrow[rd,red]\dpbk&&\\
\gm X\arrow[rd,"\gm f"']&&\pop_1\arrow[ld,"s"]\arrow[rd,red,"t"]&\\
&\gm \pop_0& & \pop_0.
\end{tikzcd}
\end{equation}
The image of $f$ is thus the red composite. The multiplication $\mu^{\mC}$ and the unit $\eta^{\mC}$ are defined by the following morphisms
\begin{equation}
 \begin{tikzcd}
\pmb{\mC^2X \arrow[dd,"\mu^{\mC}_X"']}&\gm^2X\arrow[r,"\gm^2f"]\arrow[d,equal]&\gm^2\pop_0 \arrow[d,equal]&\gm \pop_1\arrow[l,"\gm s"']\arrow[r,"\gm t"]&\gm \pop_0&\pop_1\arrow[l,"s"']\arrow[r,"t"]&\pop_0\arrow[d,equal]\\
&\gm^2X\arrow[r,"\gm^2f"]\arrow[d,"\mu_X"]&\gm^2\pop_0 \arrow[d,"\mu_{\pop_0}"]&&\gm \pop_1\lcx{\gm \pop_0}\pop_1\arrow[ul,"s"]\arrow[ur,"t"']\arrow[ll]\arrow[rr]\arrow[d,"m"]\upbks&&\pop_0\arrow[d,equal]\\
\pmb{\mC X}& \gm X\arrow[r,"\gm f"]&\gm \pop_0 &&\pop_1\arrow[ll,"s"]\arrow[rr,"t"']&&\pop_0,
\end{tikzcd}
\end{equation}
\begin{equation}
\begin{tikzcd}
\pmb{X\arrow[d,"\eta^{\mC}_X"']}&X\arrow[r,"f"]\arrow[d,"\eta_{X}"]&\pop_0 \arrow[d,"\eta_{\pop_0}"]&\pop_0\arrow[l,equal]\arrow[r,equal]\arrow[d,"e"]&\pop_0\arrow[d,equal]&&&&&\\
\pmb{\gm_{\pop}X}&X\arrow[r,"\gm f"']&\gm \pop_0&\pop_1\arrow[l,"s"]\arrow[r,"t"']&\pop_0.&&&&&
\end{tikzcd}
\end{equation}
\begin{definition} An \emph{algebra} over the $\gm$-operad $\pop$ is an algebra over the monad $\mC$.
\end{definition}
Notice that the category $\ambcat{/\pop_0}$ has a terminal object, $\pop_0\xrightarrow{1} \pop_0$, so that there is an algebra over $\mC$ given by the unique arrow $q:\mC 1\rightarrow 1$. Moreover, since $\ambcat$ has a terminal object, the $\gm$-operad $\gm:\ambcat{/1}\rightarrow \ambcat{/1}$ itself can be represented by the span
$$\gm 1\longleftarrow \gm 1\longrightarrow 1,$$
and is the terminal $\gm$-operad. Now, the terminal arrow $u:\pop_0\rightarrow 1$  induces, by postcomposition,  a functor $u_{!}:\ambcat{/{\pop_0}}\rightarrow \ambcat{/1}$. The  diagram
\begin{equation}\label{eq:phi}
\begin{tikzcd}
\gm \pop_0\arrow[d,"\gm u"']&\pop_1\arrow[l]\arrow[r]\arrow[d]&\pop_0\arrow[d,"u"]\\
\gm 1&\gm 1\arrow[l]\arrow[r]&1.
\end{tikzcd}
\end{equation}
represents a natural transformation $u_!\mC\xRightarrow{\phi\;} \gm u_!$ which is compatible with the comultiplications and units of $\mC$ and $\gm$, meaning that 
\begin{align*}
 \nonumber   u_!\mC^2\xRightarrow{\phi^2\;} \gm^2 u_! \xRightarrow{\mu u_!}\gm u_!&= u_!\mC^2\xRightarrow{u_!\mu^{\mC}} u_!\mC\xRightarrow{\phi\;} \gm u_! \;\;\;\;\;\;\;\; \text{and} \\
u_!\xRightarrow{u_!\eta^{\mC}}u_!\mC\xRightarrow{\phi}\gm u_!&=u_!\xRightarrow{\eta u_!}\gm u_!
\end{align*}
or, equivalently,

\begin{equation}\label{dgm:CoverPmu}
\tiny
\begin{tikzcd}
\gm \pop_0\arrow[d,"\gm u"']&\gm^2\pop_0\arrow[l,"\mu_{\pop_0}"']\arrow[d,"\gm^2u"]&\gm \pop_1\arrow[l]\arrow[r]\arrow[d]&\gm \pop_0\arrow[d,"\gm u"']&\pop_1\arrow[l]\arrow[r]\arrow[d]&\pop_0\arrow[d,"u"]\\
\gm 1\arrow[dd,equal]&\gm^2 1\arrow[l,"\mu_1"]&\gm^2 1\arrow[l]\arrow[r,"\gm u"]&\gm 1&\gm 1\arrow[l]\arrow[r]&1\arrow[dd,equal]\\
&&&\gm^2 1\arrow[ur,"\gm u"']\arrow[ul]\arrow[d,"\mu_1"]\upbks&&\\
\gm 1&&&\gm 1\arrow[lll]\arrow[rr]&&1
\end{tikzcd}
=
\begin{tikzcd}
\gm \pop_0\arrow[dd,equal]&\gm^2\pop_0\arrow[l,"\mu_{\pop_0}"']&\gm \pop_1\arrow[l]\arrow[r]&\gm \pop_0&\pop_1\arrow[l]\arrow[r]&\pop_0\arrow[dd,equal]\\
&&&\pop_2\arrow[ur]\arrow[ul]\arrow[d]\upbks&&\\
\gm \pop_0\arrow[d,"\gm u"']&&&\pop_1\arrow[lll]\arrow[rr]\arrow[d]&&\pop_0\arrow[d,"u"]\\
\gm 1&&&\gm 1\arrow[lll]\arrow[rr]&&1,
\end{tikzcd}
\end{equation}

\begin{equation}\label{dgm:CoverPeta}
\begin{tikzcd}
\pop_0\arrow[d,"\eta_{\pop_0}"']&\pop_0\arrow[l,equal]\arrow[r,equal]\arrow[d,"e"]&\pop_0\arrow[d,equal]\\
\gm \pop_0\arrow[d,"\gm u"']&\pop_1\arrow[l]\arrow[r]\arrow[d]&\pop_0\arrow[d,"u"]\\
\gm 1&\gm 1\arrow[l]\arrow[r]&1.
\end{tikzcd}
=
\begin{tikzcd}
\pop_0\arrow[d,"u"']&\pop_0\arrow[l,equal]\arrow[r,equal]\arrow[d,"u"]&\pop_0\arrow[d,"u"]\\
1\arrow[d,"\eta_1"']&1\arrow[l]\arrow[r]\arrow[d,"\eta_1"]&1\arrow[d,equal]\\
\gm 1&\gm 1\arrow[l]\arrow[r]&1.
\end{tikzcd}
\end{equation}

\begin{lemma} \label{lemma:phicartesian}The natural transformation $\phi$ is cartesian.
\begin{proof} Let us describe the naturality squares of $\phi$. Let $H$ be a map in $\ambcat/\pop_0$, that is, a commutative triangle
\begin{equation*}
\begin{tikzcd}[column sep=1em]
X\arrow[rr,"h"]\arrow[dr,"f"']&&Y\arrow[dl,"g"]\\
&\pop_0.&
\end{tikzcd}
\end{equation*}
Consider the diagram
\begin{equation*}
\begin{tikzcd}
\gm X\lcx{\gm \pop_0}\pop_1\arrow[r,"u_!\mC H"]\arrow[d,"\phi_X"']&\gm Y\lcx{\gm \pop_0}\pop_1\arrow[d,"\phi_Y"']\arrow[r]\rdpbk&\pop_1\arrow[d,"s"]\\
\gm X\arrow[r,"\gm u_! H"']&\gm Y\arrow[r,"\gm g"']&\gm\pop_0.
\end{tikzcd}
\end{equation*}
From (\ref{eq:mC}) it is clear that the pullback square on the right is precisely the definition of $u!\mC g$. From (\ref{eq:mC}) and (\ref{eq:phi}) we have that the square on the left is the naturality square for $\phi$ at $H$, and moreover that $\phi_X$ and $\phi_Y$ are projections. But $\gm u_!H=\gm h$ and $\gm g\circ \gm h=\gm f$, so that the composite square is precisely the definition of $u!\mC f$, which is a pullback. As a consequence, the naturality square is a pullback too.
\end{proof}
\end{lemma}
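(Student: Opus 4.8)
The plan is to show that every naturality square of $\phi$ is a pullback by exhibiting it as the left-hand half of a horizontal composite of two squares whose total rectangle and right-hand square are both pullbacks, and then to invoke the pullback pasting (cancellation) lemma.

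First I would unwind what $\phi$ does on objects. By the definition (\ref{eq:mC}) of $\mC$, for an object $f\colon X\to\pop_0$ of $\ambcat/\pop_0$ the object $u_!\mC X$ is the pullback $\gm X\lcx{\gm\pop_0}\pop_1$ of $\gm X\xrightarrow{\gm f}\gm\pop_0\xleftarrow{s}\pop_1$, while $\gm u_! X=\gm X$. Reading off (\ref{eq:phi}), the component $\phi_X$ is precisely the projection $\gm X\lcx{\gm\pop_0}\pop_1\to\gm X$ onto the first factor. I would record this explicitly, since it is exactly the identification that makes the pasting argument go through.

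Next, given a morphism $H$ of $\ambcat/\pop_0$, that is a commuting triangle with $g\circ h=f$ over $\pop_0$, I would form the horizontal composite of two squares: the naturality square of $\phi$ at $H$ on the left, and on the right the square whose top edge is the second projection $\gm Y\lcx{\gm\pop_0}\pop_1\to\pop_1$, whose left edge is $\phi_Y$, whose bottom edge is $\gm g$ and whose right edge is $s$. The right-hand square is a pullback, being exactly the defining pullback of $u_!\mC Y$ coming from (\ref{eq:mC}). The outer rectangle has bottom edge $\gm g\circ\gm h=\gm f$ (using functoriality of $\gm$ together with $g\circ h=f$), so it is the defining pullback of $u_!\mC X$, again a pullback by (\ref{eq:mC}).

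Finally I would apply the pullback cancellation lemma: in a horizontally composed pair of squares, if the total rectangle and the right-hand square are pullbacks, then so is the left-hand square. Since the left-hand square is the naturality square of $\phi$ at $H$, this proves that $\phi$ is cartesian. The only point demanding care is the bookkeeping that identifies the two outer squares with the defining pullbacks of $\mC$ in (\ref{eq:mC}) and confirms that $\phi_X$ and $\phi_Y$ are the first projections; once that is in place the argument is a one-line application of pasting, so I do not anticipate a genuine obstacle here.
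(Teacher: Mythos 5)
Your proposal is correct and follows essentially the same route as the paper: identify $\phi_X$ and $\phi_Y$ as the first projections from the defining pullbacks of $\mC$ in (\ref{eq:mC}), paste the naturality square with the defining pullback square of $u_!\mC Y$, observe via $\gm g\circ\gm h=\gm f$ that the outer rectangle is the defining pullback of $u_!\mC X$, and conclude by pullback cancellation. No gaps to report.
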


Given a $\gm$-operad $\pop$, we define its \emph{two-sided bar construction}\label{tsbar}  \cite{May,Weber:iac,Kock-Weber}
\label{simplexcategory}$$\tsbar \pop:\simplexcategory^{\text{op}}\longrightarrow \ambcat$$ 
\noindent as the two-sided bar construction of $\mC$, $\phi$ and the terminal algebra $1$. This means that the space of $n$-simplicies \label{tsbarnsimplices}$\tsbar_n\pop$ is given by 
$$\gm u_!\mC^n1,$$
the inner face maps are given by the monad multiplication $\mu^{\mC}$, the bottom face map is given by $c:\mC 1\rightarrow 1$ and the top face maps are given by $\phi$ and $\mu$. Similarly, the degeneracy maps are given by $\eta^{\mC}$. Diagrams (\ref{dgm:CoverPmu}) and (\ref{dgm:CoverPeta}) and the monad axioms for $\gm$ and $\mC$ guarantee that the simplicial identities are satisfied. 

In practice, the bar construction of $\pop$ is simply
\begin{equation}
\begin{tikzcd}
\gm \pop_0 &\gm \pop_1\arrow[l,shift left]\arrow[l, shift right]& \gm \pop_2 \arrow[l,shift left=2]\arrow[l, shift right=2]\arrow[l]&\gm \pop_3\arrow[l,shift left=3]\arrow[l, shift right=3]\arrow[l,shift left=1]\arrow[l, shift right=1]&\arrow[l]\arrow[l,shift left=2]\arrow[l, shift right=2]\arrow[l,shift left=4]\arrow[l, shift right=4]\cdots,
\end{tikzcd}
\end{equation}
where 
\begin{enumerate}[(i)]
\item $\pop_2:=\gm \pop_1\lcx{\gm \pop_0}\pop_1$ and $\pop_3:=\gm^2 \pop_1\lcx{\gm^2 \pop_0}\gm \pop_1\lcx{\gm \pop_0}\pop_1$, etc.;
\item the bottom face maps $d_0$ are induced by $t$;
\item the top face maps $d_n$ are induced by $s$ and $\mu$;
\item the inner face maps are induced by $m$ and $\mu$, and
\item the degeneracy maps are induced by $e$ and $\eta$.
\end{enumerate}
Henceforth we may indiscriminately use this simplicial notation. Let us see some examples.

\begin{example} Let $C$ be a small category. Hence $C$  is a $\idm$-operad in $\set$. Then $\tsbar C$ is the nerve of $C$. Moreover, we can consider $C$ as a category internal to $\infgrpd$ whose groupoid of objects has as morphisms the isomorphisms of $C$, and whose groupoid of arrows has as morphisms the isomorphisms of the arrow category of $C$. In this case  $\tsbar C$ is the fat nerve of $C$, whose groupoid of $n$-simplices is the groupoid $\text{Map}(\Delta[n],C)$. In the theory of
incidence coalgebras, this is often more intresting than the ordinary nerve, cf.\cite{GKT:DSIAMI-1,GKT:DSIAMI-2,Cebrian}
\end{example}

\begin{example} If $\pop$ is a symmetric operad, as in Example \ref{ex:fsmc}, then $\tsbar \pop$ is the usual operadic two-sided bar construction. Its $n$-simplices have as objects forests of $n$-level $\pop$-trees, and as morphisms permutations at each level. For example, the following picture
\begin{center}
\begin{tikzpicture}[grow=up,level distance=40pt,thick]
\tikzstyle{level 1}=[sibling distance=40pt]
\tikzstyle{level 2}=[sibling distance=15pt]
\node at (0,0){}
child[red] {node[black,circle,draw]{$y_1$}}{
	child[blue] {node[black,circle,draw]{$x_1$} child[black] child[black] child[black] }
	child[blue] {node[black,circle,draw]{$x_1$} child[black] child[black] child[black]}
};
\node at (3,0){}
child[red] {node[black,circle,draw]{$y_1$}}{
	child[blue] {node[black,circle,draw]{$x_1$} child[black] child[black] child[black] }
	child[blue] {node[black,circle,draw]{$x_1$} child[black] child[black] child[black]}
};
\node at (5,0){}
child[yellow] {node[black,circle,draw,minimum size=20pt]{$y_2$}}{
child[orange] {node[black,circle,draw]{$x_2$} child[green] child[green]  }
};
\node at (7,0){}
child[red] {node[black,circle,draw]{$y_1$}}{
	child[blue] {node[black,circle,draw]{$x_3$} child[green] child[green] child[purple] }
	child[blue] {node[black,circle,draw]{$x_3$} child[green] child[green] child[purple]}
};


\end{tikzpicture}
\end{center}
is an object of $\gm \pop_2$ with $(2!\cdot 2!^2\cdot 3!^4)\cdot (2!)\cdot (2!\cdot 2!^2)$ automorphisms.
\end{example}

The following result is a reformulation of \cite[Proposition 4.4.1]{Weber:iac} and \cite[Proposition 3.3]{Kock-Weber} in the context of $\gm$-operads.
\begin{proposition} \label{prop:categoryobject}The simplicial object $\tsbar \pop$ is a strict category object.
\begin{proof} We have to check that the squares
\begin{equation}\label{eq:categorysquares}
\begin{tikzcd}
\tsbar_{n+2}\pop\arrow[r,"d_0"]\arrow[d,"d_{n+2}"']&\tsbar_{n+1}\pop\arrow[d,"d_{n+1}"]\\
\tsbar_{n+1}\pop\arrow[r,"d_0"']&\tsbar_n\pop.
\end{tikzcd}
\end{equation}
are pullbacks for $n\ge0$. We show the case $n=0$, the rest are similar. The square is given by
\begin{equation}
\begin{tikzcd}
\gm u_!\mC\mC1\arrow[r,"\gm u_!\mC c"]\arrow[d,"\gm (\phi_{\mC1})"']&\gm u_!\mC1\arrow[d," \gm(\phi_1)"]\\
\gm\gm u_!\mC1\arrow[r,"\gm\gm u_! c"]\arrow[d,"\mu^{\gm}_{u!\mC1}"']&\gm\gm u_!1\arrow[d,"\mu^{\gm}_{u!1}"]\\
\gm u_!\mC1\arrow[r,"\gm u_! c"']&\gm u_!1.
\end{tikzcd}
\end{equation}
The bottom square is cartesian because it is a naturality square for $\mu^{\gm}$, and $\gm$ is a cartesian monad. The top square is $\gm$ applied to a naturality square of $\phi$, which is cartesian, by Lemma \ref{lemma:phicartesian}. Since $\gm$ preserves pullbacks, the square is cartesian.
\end{proof}
\end{proposition}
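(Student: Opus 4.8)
The plan is to reduce the assertion to a pullback condition and then verify it directly from the construction of $\tsbar\pop$. To say that $\tsbar\pop$ is a \emph{strict} category object is to say that all its Segal maps are isomorphisms on the nose; by a standard inductive argument this is equivalent to requiring that the squares (\ref{eq:categorysquares}), which pair the outer face map with the bottom face map, be pullbacks for every $n\ge 0$. I would take this reformulation as the point of departure. Since $\tsbar_n\pop = \gm u_!\mC^n 1$ and the two face maps in question only touch the outer $\gm u_!$-layer and the innermost $\mC$-factor, the square for general $n$ has exactly the same shape as the square for $n=0$ with $\mC^{n+1}$ substituted for $\mC$; hence it suffices to treat $n=0$ and note that the remaining cases go through verbatim.

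Next I would unwind the two face maps from the definition of the two-sided bar construction. The bottom face $d_0$ is induced by the terminal-algebra structure map $c\colon\mC 1\to 1$, while the outer face is induced by $\phi$ and $\mu$, so that it factors as $\mu^{\gm}\circ\gm\phi$. Plugging in these factorizations breaks the square (\ref{eq:categorysquares}) for $n=0$ into two squares stacked vertically: an upper square that is $\gm$ applied to a naturality square of $\phi$, and a lower square that is a naturality square of the monad multiplication $\mu^{\gm}$.

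It then remains to argue that both squares are pullbacks and to paste them. The upper square is a pullback because $\phi$ is cartesian by Lemma \ref{lemma:phicartesian}, and $\gm$, being a cartesian functor, carries pullbacks to pullbacks. The lower square is a pullback because $\mu$ is a cartesian natural transformation, $\gm$ being a cartesian monad. A vertical pasting of pullback squares is again a pullback, so the original square is cartesian, which is what we want.

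The conceptual ingredients -- cartesianness of $\phi$ and of the monad $\gm$ -- are already in hand, so I do not expect genuine difficulty there. The step I expect to require the most care is the bookkeeping: reading off the factorization of the outer face map as $\mu^{\gm}\circ\gm\phi$ from the bar construction, and then recognizing the two resulting sub-squares as exactly a naturality square of $\phi$ (to which $\gm$ is applied) and a naturality square of $\mu^{\gm}$, so that the cartesianness hypotheses genuinely apply. The other point worth double-checking is the initial reduction, namely that pullback-ness of (\ref{eq:categorysquares}) for all $n$ really is equivalent to the Segal condition with the face maps oriented as above.
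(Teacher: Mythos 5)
Your proposal is correct and takes essentially the same route as the paper: you factor the top face map as $\mu^{\gm}\circ\gm\phi$, split the square (\ref{eq:categorysquares}) for $n=0$ into $\gm$ applied to a naturality square of $\phi$ (cartesian by Lemma \ref{lemma:phicartesian}) stacked on a naturality square of $\mu^{\gm}$ (cartesian since $\gm$ is a cartesian monad), and paste. This is exactly the paper's argument, including the reduction of the general $n$ to the $n=0$ shape with $\mC^{n+1}$ in place of $\mC$.
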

\noindent This allows to obtain the following result, in the special case where $\ambcat=\infgrpd$.
\begin{proposition} \label{prop:segalsquares}Let $\gm:\infgrpd\rightarrow\infgrpd$ be a cartesian monad that preserves fibrations. Let $\pop$ be a $\gm$-operad such that $\pop_0$ is a discrete groupoid. Then the simplicial groupoid $\tsbar \pop$ is a Segal groupoid.  
\begin{proof} It is enough to see that the strict pullbacks \ref{eq:categorysquares} are also homotopy pullbacks. For $n=0$, notice that $\gm u_!\mC1\xrightarrow{\gm u_! c}\gm u_!1$ is precisely the map $\gm Q_1\xrightarrow{\gm\comp}\gm Q_0$. But since $Q_0$ is discrete $\comp$ is a fibration, which means that $\gm \comp$ is a fibration, because $\gm$ preserves fibrations. This implies that the square is also a homotopy pullback. Moreover, since pullbacks preserve fibrations the map $\gm u_!\mC\mC1\xrightarrow{\gm u_!\mC c}\gm u_!\mC1$ is again a fibration. The same argument then implies that the square for $n=1$ is also a homotopy pullback, and so on.
\end{proof}
\end{proposition}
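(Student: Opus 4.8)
The plan is to build directly on Proposition \ref{prop:categoryobject}, which already shows that the squares \eqref{eq:categorysquares} are \emph{strict} pullbacks. By definition, a simplicial groupoid is Segal exactly when these squares are \emph{homotopy} pullbacks, so the entire problem reduces to upgrading the strict pullbacks to homotopy pullbacks. The standard device in $\infgrpd$ is that a strict pullback one of whose two legs into the bottom-right corner is a fibration (isofibration) is automatically a homotopy pullback; it therefore suffices to exhibit, in each square, such a fibration.

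First I would settle the base case $n=0$. There the leg $d_0\colon\tsbar_1\pop\to\tsbar_0\pop$ is $\gm$ applied to the structure map $\pop_1\to\pop_0$ (the target $t$, equivalently $u_!c$). The key observation is that, because $\pop_0$ is discrete, this map is a fibration for trivial reasons: a functor into a discrete groupoid lifts every isomorphism, since the only isomorphisms in the codomain are identities. As $\gm$ preserves fibrations by hypothesis, $\gm t$ is a fibration, and so the $n=0$ square is a homotopy pullback.

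Next I would propagate this to all $n$ by induction along the squares themselves. The map $d_0\colon\tsbar_{n+1}\pop\to\tsbar_n\pop$ is, as the top edge of the strict pullback square of Proposition \ref{prop:categoryobject}, a base change of $d_0\colon\tsbar_n\pop\to\tsbar_{n-1}\pop$, which is already known to be a fibration; since fibrations are stable under pullback, it is again a fibration. The same criterion then makes each square a homotopy pullback, and the induction closes.

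The only point requiring care---and the one I expect to be the real bookkeeping task rather than a homotopical difficulty---is the clean identification of the abstract bar-construction face maps $\gm u_!\mC^{n}1\to\gm u_!\mC^{n-1}1$ with the concrete operad data, so as to be certain the relevant leg really is $\gm$ of a map landing in the discrete object $\pop_0$. Once discreteness is correctly pinned to that map, the fibration property is immediate, and everything else is the formal stability of fibrations and homotopy pullbacks under base change.
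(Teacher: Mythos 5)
Your proposal is correct and is essentially the paper's own proof: you use the same criterion (a strict pullback with a fibration into the bottom-right corner is a homotopy pullback), the same base case (since $\pop_0$ is discrete, $t\colon\pop_1\to\pop_0$ is a fibration, hence so is $d_0=\gm t$ because $\gm$ preserves fibrations), and the same inductive step (each higher $d_0$ is a strict base change of the previous one, and fibrations are stable under pullback). The identification $\gm u_!\mC^{n}1\to\gm u_!\mC^{n-1}1$ with $\gm$ of a map over $\pop_0$ that you flag as the remaining bookkeeping is exactly what the paper records for $n=0$, so there is no gap.
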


Suppose now that $\mathsf{R}:\ambcat\rightarrow\ambcat$ is another cartesian monad and that there is a cartesian monad map $\gm\xRightarrow{\psi}\mathsf{R}$. Then we can take the bar construction over $\mathsf{R}$
$$\tsbar^{\mathsf{R}} \pop:\simplexcategory^{\text{op}}\longrightarrow \ambcat$$ \label{tsbargm}
whose $n$-simplices are given by
$$\mathsf{R} u_!\mC^n1 \;\;(\text{or }\;\mathsf{R} Q_n).$$
In this case all the face maps coincide with the previous ones except the top face map, which is given by $$\mathsf{R} u_!\mC^{n+1}1\xrightarrow{\mathsf{R}(\phi_{Q1})}\mathsf{R} \gm u_!\mC^n1\xrightarrow{\mathsf{R}(\psi_{u_!\mC^n1})}\mathsf{RR}  u_!\mC^n1 \xrightarrow{\mu^{\mathsf{R}}_{u_!\mC^n1}} \mathsf{R}u_!\mC^n1.$$
Since $\psi$ is cartesian the simplicial object $\tsbar^{\mathsf{R}}$ is also a strict category object. Moreover, if $\mathsf{R}$ preserves fibrations, it is a Segal groupoid, for the same reason as $\tsbar \pop$ in Proposition \ref{prop:segalsquares}.
The main examples of this bar construction that we use come from the natural transformations $\fsg\Rightarrow \fsmc$, $\pfsmc\Rightarrow \fsmc$ and $\fsg\Rightarrow \fm$, as in~\cite{Kock-Weber}.


\section{Segal groupoids and incidence coalgebras}\label{preliminaries}\label{seccion:HoCard}

Throughout  this section, pullbacks and fibers of groupoids refer to homotopy pullbacks and homotopy fibers. A brief introduction to the homotopy approach to groupoids in combinatorics can be found in \cite[\S 3]{GKT:FdB}.
\subsection{Segal groupoids}
 A simplicial groupoid  $X\colon \simplexcategory^{\text{op}}\longrightarrow \infgrpd$  is a \emph{Segal space} \cite[\S 2.9, Lemma 2.10]{GKT:DSIAMI-1} if the following square is a pullback for all $n>0$:
\begin{equation}\label{segal}
\begin{tikzcd}[row sep=2em, column sep=2em]
X_{n+1} \ar[r,"d_0"] \ar[d,"d_{n+1}"'] \rdpbk& X_n \ar[d,"d_n"]\\
	      X_n \ar[r,"d_0"'] & X_{n-1}.
\end{tikzcd}
\end{equation}

Segal spaces arise prominently through the fat nerve construction: the fat nerve of a category $\mathcal{C}$ is the simplicial groupoid $X=N\mathcal{C}$ with $X_n=\fun ([n],\C)^{\simeq}$, the groupoid of functors $[n] \to \C$. In this case the pullbacks above are strict, so that all the simplices are strictly determined by $X_0$ and $X_1$, respectively the objects and arrows of $\C$, and the inner face maps are given by composition of arrows in $\C$. In the general case, $X_n$ is determined from $X_0$ and $X_1$ only up to equivalence, but one may still think of it as a ``category'' object whose composition is defined only up to equivalence.

\begin{remark} \label{rk:homotopystrictpullback}
Despite the Segal conditions (\ref{segal}) require the squares to be homotopy pullbacks, if the top or bottom face maps are
fibrations, the ordinary pullbacks are
also homotopy pullbacks. In the present
work, homotopy pullbacks  mostly arise n this
way.\end{remark}

\subsection{Incidence coalgebras}
Let $X$ be a simplicial groupoid.
The spans
$$X_1\xleftarrow{\;\;d_1\;  \;} X_2 \xrightarrow{(d_2,d_0)\;}X_1\times X_1,    \;\;\;\;\;\;\;\;\;\;\;\;\;\; X_1\xleftarrow{\;\;s_0\; \;} X_0 \xrightarrow{\;\;t\;\;}1, $$

\noindent define two functors 

\begin{center}
\begin{tabular}{rllccrll}
$\Delta \colon \infgrpd_{/X_1}$& $\longrightarrow$ &$\mathbf{Grpd}_{/X_1\times X_1}$ && & $\epsilon \colon \infgrpd_{/X_1}$& $\longrightarrow$ &$\infgrpd$\\
$S\xrightarrow{s} X_1$& $\longmapsto$ & $(d_2,d_0)_!\circ d_1^{\ast}(s)$, && &$S\xrightarrow{s} X_1$& $\longmapsto$ & $t_!\circ s_0^{\ast}(s)$ .
\end{tabular}
\end{center} 
Recall that upperstar is homotopy pullback and lowershriek is postcomposition. This is the general way in which spans interpret homotopy linear algebra \cite{GKT:HLA}.

Segal spaces are a particular case of decomposition spaces \cite[Proposition 3.7]{GKT:DSIAMI-1}, simplicial groupoids with the property that the functor $\Delta$ is coassociative with the functor $\epsilon$ as counit (up to homotopy). In this case $\Delta$ and $\epsilon$ endow $\infgrpd_{/X_1}$ with a coalgebra structure \cite[\S 5]{GKT:DSIAMI-1}   called the \emph{incidence coalgebra} of $X$. Note that in the special case where $X$ is the nerve of a poset, this
construction becomes the classical incidence coalgebra \cite{Rota,Schmitt} construction after
taking cardinality, as we shall do shortly.

The morphisms of decomposition spaces that induce coalgebra homomorphisms are the so-called \emph{CULF} functors \cite[\S 4]{GKT:DSIAMI-1}, standing for conservative and unique-lifting-of-factorisations. A Segal space $X$ is \emph{CULF monoidal} if it is a monoid object in the monoidal category $(\mathbf{Dcmp}^{\text{CULF}},\times,1)$ of decomposition spaces and CULF functors \cite[\S 9]{GKT:DSIAMI-1}. More concretely, it is CULF monoidal
 if there is a product $X_n\times X_n\rightarrow X_n$ for each $n$, compatible with the degeneracy and face maps, and such that for all $n$ the squares
\begin{equation}
\label{monoidalpullback}
\begin{tikzcd}
X_n\cx X_n \ar[r,"g\cx g"] \ar[d] \rdpbk& X_1\cx X_1 \ar[d]\\
	      X_n \ar[r,"g"'] & X_1
\end{tikzcd}
\end{equation}
are pullbacks \cite[\S  4]{GKT:DSIAMI-1}. Here $g$ is induced by the unique endpoint-preserving map $[1]\rightarrow [n]$. For example the fat nerve of a monoidal extensive category is a CULF monoidal Segal space. Recall that a category $\C$ is monoidal extensive if it is monoidal $(\C,+,0)$ and the natural functors $\C_{/A}\times \C_{/B}\rightarrow \C_{/A+B}$  and $\C_{/0}\rightarrow 1$ are equivalences.
 
If $X$ is CULF monoidal then the resulting coalgebra is in fact a bialgebra \cite[\S 9]{GKT:DSIAMI-1}, with product given by
\begin{center}
\begin{tabular}{rllccrll}
$\grpdSprod\colon \infgrpd_{/X_1}\otimes \infgrpd_{/X_1}$&$\xrightarrow{\; \sim \;}$& $\infgrpd_{/X_1\times X_1}$  &$\xrightarrow{\;\; +_! \;\;}$ &$\infgrpd_{/X_1}$ \\
$(G\rightarrow X_1)\otimes (H\rightarrow X_1)$& $\longmapsto$ & $G\times H\rightarrow X_1\times X_1$&$ \longmapsto $& $ G\times H\rightarrow X_1$.
\end{tabular}
\end{center} 
Briefly, a product in $X_n$ compatible with the simplicial structure endows $X$ with a product, but in order to be compatible with the coproduct it has to satisfy the diagram \eqref{monoidalpullback} (i.e. it has to be a CULF functor).

\subsection{Homotopy cardinality}
A groupoid $X$ is \emph{finite} if $\pi_0(X)$ is a finite set and $\pi_1(x)=\aut(x)$ is a finite group for every point $x$. If only the latter is satisfied then it is called \emph{locally finite}. A morphism of groupoids is called finite when all its fibers are finite. The \emph{homotopy cardinality} \cite{Baez-Dolan}, \cite[\S 3]{GKT:HLA} of a finite goupoid $X$ is defined as 
$$|X|:=\sum_{x\in \pi_0X}\frac{1}{|\aut( x)|}\in \Q,$$
and the homotopy cardinality of a finite map of groupoids $A\xrightarrow{p}B$ is 
$$|p|:=\sum_{b\in \pi_0B}\frac{|A_b|}{|\aut(b)|}\delta_b,$$
in  $\Q_{\pi_0B}$, the vector space spanned by $\pi_0B$. In this sum, $A_b$ is the homotopy fiber and $\delta_b$ is a formal symbol representing the isomorphism class of $b$. A simple computation shows that $|1\xrightarrow{\name{b}}B|=\delta_b$.

 A Segal space $X$ is \emph{locally finite} \cite[\S 7]{GKT:DSIAMI-2} if $X_1$ is a locally finite groupoid and both $s_0\colon X_0\rightarrow X_1$ and $d_1\colon X_2\rightarrow X_1$ are finite maps. In this case one can take homotopy cardinality to get a comultiplication
\begin{center}
\begin{tabular}{rll}
$\Delta \colon \Q_{\pi_0X_1}$& $\longrightarrow$ &$ \Q_{\pi_0X_1}\otimes  \Q_{\pi_0X_1}$\\
$|S\xrightarrow{s} X_1|$& $\longmapsto$ & $|(d_2,d_0)_!\circ d_1^{\ast}(s)|$
\end{tabular}
\end{center} 
and similarly for $\epsilon$ (cf. \cite[\S 7]{GKT:DSIAMI-2}). Moreover, if $X$ is CULF monoidal then $\Q_{\pi_0X_1}$ acquires a bialgebra structure with the product $\cdot=|\grpdSprod|$. In particular, if we denote by $+$ the monoidal product in $X$, then
$\delta_{a}\cdot \delta_{b}=\delta_{a+b}$ for any $|1\xrightarrow{\name{a}}X_1|$ and $|1\xrightarrow{\name{b}}X_1|$. The following result gives a closed formula for the computation of the comultiplication when $X$ is a Segal space.

\begin{lemma}[\cite{GKT:DSIAMI-2:corrigendum,Cebrian}]
\label{SegalCom} 
Let $X$ be a Segal space. Then for $f$ in $X_1$ we have
$$\Delta(\delta_f)=\sum_{b\in \pi_0X_1}\sum_{a\in \pi_0X_1} \frac{|\iso(d_0a,d_1b)_{f}|}{|\aut(b)||\aut(a)|}\delta_a\otimes \delta_b,$$
where $\iso(d_0a,d_1b)$ is the set of morphisms from $d_0a$ to $d_1b$ and $\iso(d_0a,d_1b)_{f}$ is its homotopy fiber along $d_1$.
\end{lemma}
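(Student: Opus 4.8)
The plan is to express the comultiplication $\Delta(\delta_f)$ in terms of homotopy cardinality applied to the span defining the incidence coalgebra, and then to identify the relevant fibers explicitly. Recall from Section \ref{seccion:HoCard} that $\Delta(\delta_f)=|(d_2,d_0)_!\circ d_1^{\ast}(\name{f})|$, where $\name{f}\colon 1\to X_1$ picks out the point $f$. First I would compute $d_1^{\ast}(\name{f})$, which is simply the homotopy fiber $(X_2)_f$ of $d_1\colon X_2\to X_1$ over $f$; this is the groupoid of ``$2$-simplices with long edge $f$'', i.e.\ of factorizations of $f$ up to coherent isomorphism. Then the coproduct is obtained by pushing forward along $(d_2,d_0)\colon X_2\to X_1\times X_1$ and taking cardinality in $\Q_{\pi_0 X_1}\otimes\Q_{\pi_0 X_1}$.

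The key computational step is to reorganize the sum over $\pi_0$ of this fiber. By the cardinality formula, $|(d_2,d_0)_!\circ d_1^{\ast}(\name{f})| = \sum_{(a,b)}\frac{|((X_2)_f)_{(a,b)}|}{|\aut(a)||\aut(b)|}\,\delta_a\otimes\delta_b$, where the sum runs over $\pi_0X_1\times\pi_0 X_1$ and $((X_2)_f)_{(a,b)}$ is the fiber over $(a,b)$ of the map $(d_2,d_0)$ restricted to $(X_2)_f$. So the heart of the matter is to identify this iterated fiber: it is the groupoid of $2$-simplices $\sigma$ with $d_1\sigma\simeq f$, $d_2\sigma\simeq a$, $d_0\sigma\simeq b$. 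I would then invoke the Segal condition: the square \eqref{segal} for $n=1$ exhibits $X_2$ as the homotopy pullback $X_1\times_{X_0}X_1$ along $d_0$ and $d_1$. Consequently, fixing the outer edges $a$ and $b$, a $2$-simplex amounts to the data of $a$, $b$, and a path (isomorphism in $X_0$) identifying $d_0 a$ with $d_1 b$ — this is exactly where the set $\iso(d_0 a,d_1 b)$ enters. Thus the fiber $((X_2)_f)_{(a,b)}$ is equivalent to the homotopy fiber of $\iso(d_0 a,d_1 b)$ along the composite to $X_1$ induced by $d_1$, which is precisely $\iso(d_0 a,d_1 b)_f$ in the statement's notation.

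From here the count is bookkeeping: the homotopy cardinality of the fiber over $(a,b)$ is $|\iso(d_0a,d_1b)_f|$, and dividing by the automorphisms $|\aut(a)||\aut(b)|$ of the chosen representatives gives the stated coefficient. Assembling over all $(a,b)\in\pi_0X_1\times\pi_0X_1$ yields the closed formula.

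\textbf{Main obstacle.} The delicate point is the homotopy-theoretic identification of the iterated fiber $((X_2)_f)_{(a,b)}$ with $\iso(d_0a,d_1b)_f$. Because all pullbacks and fibers here are \emph{homotopy} pullbacks, I must be careful that the Segal equivalence $X_2\simeq X_1\times_{X_0}X_1$ correctly decomposes the three constraints ($d_1\sigma\simeq f$ and $(d_2\sigma,d_0\sigma)\simeq(a,b)$) without double-counting automorphisms — in particular the automorphisms of $a$ and $b$ act on the connecting isomorphism in $\iso(d_0a,d_1b)$, and one must verify that taking the homotopy fiber over $f$ along $d_1$ reproduces exactly the groupoid whose cardinality appears, with no residual stabilizer contributions beyond those already recorded in the denominators. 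Remark \ref{rk:homotopystrictpullback} helps here, since the relevant maps can often be arranged to be fibrations so that the homotopy fibers are computed strictly.
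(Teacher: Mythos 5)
Your proposal is correct and follows essentially the same route as the proof this paper relies on: the lemma is stated here by citation (to \cite{GKT:DSIAMI-2:corrigendum} and \cite{Cebrian}) rather than proved, and the cited argument is exactly yours --- realize $\Delta(\delta_f)$ as $\left|(d_2,d_0)_!\,d_1^{\ast}(\name{f})\right|$, use the Segal square \eqref{segal} for $n=1$ to identify $X_2\simeq X_1\times_{X_0}X_1$ so that the homotopy fiber of $(d_2,d_0)$ over $(a,b)$ is the discrete set $\iso(d_0a,d_1b)$, and then interchange the two homotopy fibers (fibering $X_2\to X_1\times(X_1\times X_1)$ over $(f,(a,b))$ in either order) to recognize the iterated fiber as $\iso(d_0a,d_1b)_f$. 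The ``main obstacle'' you flag is dissolved precisely by this Fubini-type interchange --- since $\iso(d_0a,d_1b)$ is discrete, its homotopy fiber along $d_1$ carries no stabilizers beyond the $|\aut(a)||\aut(b)|$ already recorded in the denominators by the cardinality formula for $(d_2,d_0)_!$ --- and the only implicit hypothesis worth making explicit is local finiteness of $X$, needed for the sums to make sense when cardinality is taken.
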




\section{The $\tcons$-construction}\label{seccion:Tcons}

 Throughout this section $(\gm,\mu,\eta)$ is a cartesian strong monad on a cartesian category $\ambcat$, and category means a category internal to $\ambcat$. As mentioned in the introduction, the $\tcons$-construction consists of two construction, one from internal categories to $\gm$-operads and another from $\gm$-operads to categories. With the purpose of reducing the diagrams and fiber products, we use the following notation for the endofunctors and natural transformations featuring in this section,
\begin{equation*}\label{eq:TxD}
\begin{tikzcd}[row sep=5pt]
\Tx: \ambcat \arrow[r] &\ambcat &&F:\idm\arrow[r,Rightarrow]&\Tx\\
\phantom{P:}A\arrow[r, maps to]& A\cx\gm 1, \!\!\!&&F_A: A\cx 1\arrow[r,"\id\cx\eta_1"]&\Tx A,\\
&&&&\\
D:\Tx \arrow[r,Rightarrow]&\gm &&R:\Tx\arrow[r,Rightarrow] &\idm\\
D_A:A\cx\gm1\arrow[r]&\gm A,&&R_A:A\cx\gm1\arrow[r,"p_1"]&A.
\end{tikzcd}
\end{equation*}
Observe that $\Tx$ is cartesian as a functor. Also, notice that $R$ and $F$ are cartesian natural transformations. Finally, by monomorphism we refer to the $1$-categorical notion. In the case of most interest where $\ambcat$ is  $\set$ or
$\infgrpd$, this means injective on objects and injective on arrows.

The material of this section is highly technical. The casual or application-oriented reader might wish to regard it as a black box and take on faith the well-definedness of the constructions, and still be able to appreciate the examples worked out in Chapters~\ref{seccion:examples} and~\ref{section:plethysmsandoperads}.

\subsection{From categories to $\gm$-operads}
 Let $C$ be a category such that $D_{C_0}:\gm 1\cx C_0\rightarrow \gm C_0$ is a monomorphism. It is convenient in this section to adopt a simplicial nomenclature. Hence $C$ is represented by the span
\begin{center} 
 \begin{tikzcd}[column sep=small]
       & C_1\arrow[dl,"d_1"']\arrow[dr,"d_0"] &       \\
 C_0 &                                            & C_0
 \end{tikzcd}
 \begin{tikzcd}
 C_1\cx_{C_0}C_1=:C_2 \arrow[r,"d_1"]& C_1\\
 \phantom{aaaaa}C_0 \arrow[r,"e"] &C_1,
 \end{tikzcd}
\end{center}
\noindent  with the only inconvenience that some of the face maps share their names. Notice that we still denote by $e$ the degeneracy map $s_0$. We now construct a $\gm$-operad $\Tpc{}{\gm}{C}$\label{TpcCQ} from the category $C$. To keep notation short, the simplicial nomenclature for $\Tpc{}{\gm}{C}$ is $\tilde{C}_i$ for the simplices and $\tilde{d}_i$ for the face maps. The span defining the objects and operations of $\Tpc{}{\gm}{C}$ is given by the pullback
 \begin{equation}\label{dgm:Cat_TMulticat_Def}
\begin{tikzcd}[sep={4em,between origins}]
 && \tilde{C}_1 \ar[dl,tail,"i_1"'] \ar[dr] \arrow[ddll,bend right,"\tilde{d}_1"']\arrow[ddrr, bend left, "\tilde{d}_0"]\dpbk &&\\
 & \gm C_1\ar[dl,"\gm d_1"]\ar[dr,"\gm d_0"] & & \Tx C_0 \ar[dl,tail,"D_{C_0}"'] \ar[dr,"R_{C_0}"']& \\
 \gm C_0 && \gm C_0 && C_0.
 \end{tikzcd}
 \end{equation}
 Observe that $\tilde{C}_0=C_0$, so that $\Tpc{}{\gm}{C}$ has the same objects as $C$. Besides, the morphism $i_1$ is a monomorphism, since monomorphisms are preserved by pullbacks and $D_{C_0}$ is a monomorphism.

 To define composition we need to specify a map $\tilde{C}_2 \xrightarrow{\tilde{d}_1} \tilde{C}_1$, where $\tilde{C}_2:=\gm\tilde{C}_1\lcx{\gm C_0}\tilde{C}_1$, satisfying the axioms of Appendix \ref{Cat_Axioms}. 
 However, to describe it we have to express $\tilde{C}_2$ in a way we can naturally use composition in the original category $C_2\xrightarrow{d_1} C_1$. The following diagram represents an isomorphism 
 $$\tilde{C}_2\cong \gm^2C_1\lcx{\gm^2C_0}\gm \Tx C_1\lcx{\gm\Tx C_0}(C_0\cx\gm^21)=:\tilde{C}_2',$$
 \begin{equation}\label{dgm:tildeC2tildeC2prima}
 \tiny
 \begin{tikzcd}[row sep=30pt,column sep=30pt]
\gm^2 C_0\arrow[d,equal]&\gm^2 C_1 \arrow[l,"\gm^2 d_1"']\arrow[r,"\gm^2 d_0"]\arrow[d,equal]&\gm^2 C_0 \arrow[d,equal]&&\gm \Tx C_1\arrow[ll]\arrow[rr,"\gm\Tx d_0" near end]\arrow[dl,"\gm \Tx d_1"',"\phantom{aaaa}(A)"]\arrow[rd,"\gm R_{C_1}"]\dpbks&&\gm\Tx C_0\arrow[d,"\gm R_{C_0}"']&C_0\cx\gm^21 \arrow[l,"D_{\gm 1,C_0}"']\arrow[r,"p_1"]\arrow[d,"(B)\phantom{aaaaa}"',"\id\cx\gm u"]\ldpbk&C_0.\arrow[d,equal]\\
\gm^2 C_0&\gm^2C_1\arrow[l,"\gm^2 d_1"]\arrow[r,"\gm^2d_0"']&\gm^2C_0& \gm \Tx C_0\arrow[l,"\gm D_{C_0}"]\arrow[r,"\gm R_{C_0}"']&\gm C_0&\gm C_1 \arrow[l,"\gm d_1"]\arrow[r,"\gm d_0"']&\gm C_0& \Tx C_0\arrow[l,"D_{C_0}"]\arrow[r,"R_{C_0}"']&C_0
 \end{tikzcd}
 \end{equation}
 It is clear that all the squares in (\ref{dgm:tildeC2tildeC2prima}) commute. Moreover, the square $(A)$ is cartesian because $R$ and $\gm$ are cartesian, and the square $(B)$ is the same as (\ref{dgm:StrongMonadTupullback_Lemma}) of Lemma \ref{lemma:upbk}.
 \begin{definition}\label{def:tildecomp} The composition of $\Tpc{}{\gm}{C}$ is given by the following arrow $\tilde{C}_2'\xrightarrow{\tilde{d}_1'} \tilde{C}_1$,
  \begin{equation}\label{dgm:tildeCcomposition}
 \tiny
 \begin{tikzcd}[row sep=20pt,column sep=30pt]
\pmb{\tilde{C}_2'\arrow[dddd,"\tilde{d}_1'"]}&\gm^2 C_0\arrow[d,equal]&\gm^2 C_1 \arrow[d,equal]\arrow[r,"\gm d_0"]\arrow[l,"\gm^2 d_1"']&\gm^2 C_0\arrow[d,equal]&\gm \Tx C_1\arrow[l]\arrow[r,"\gm\Tx d_0"]\arrow[d,"\gm D_{C_1}\phantom{aa}(A)"]&\gm\Tx C_0\arrow[d,"\gm D_{C_0}\phantom{aa}(B)"]&C_0\cx\gm^21\arrow[l,"D_{\gm1,C_0}"']\arrow[d,equal]\arrow[r,"p_1"]&C_0\arrow[d,equal]\\
&\gm^2 C_0\arrow[dd,equal,"\phantom{aaaaaa}(C)"]&\gm^2 C_1 \arrow[r,"\gm^2 d_0"]\arrow[l,"\gm^2 d_1"']&\gm^2 C_0&\gm^2 C_1\arrow[l,"\gm^2d_1"']\arrow[r,"\gm^2 d_0"']&\gm^2 C_0\arrow[dd,equal,"(D)\phantom{aaaaaaa}"']&C_0\cx\gm^21 \arrow[l,"D^2_{C_0}"]\arrow[r,"p_1"]\arrow[ddd,"\id\cx\mu_1","(G)\phantom{aaaaa}"']&C_0\arrow[ddd,equal]\\
&&& \gm^2 C_2\arrow[ul,"\gm^2d_2"]\arrow[ur,"\gm^2d_0"']\arrow[d,"\gm^2d_1"]\upbks&&&&\\
&\gm^2 C_0 \arrow[d,"\mu_{C_0}\phantom{aaaaa}(E)"]& &\gm^2 C_1\arrow[ll,"\gm^2d_1"']\arrow[rr,"\gm^2d_0"]\arrow[d,"\mu_{C_1}\phantom{aaaaa}(F)"]&&\gm^2C_0 \arrow[d,"\mu_{C_0}"]&&\\
\pmb{\tilde{C}_1}&\gm C_0 &&\gm C_1\arrow[ll,"\gm d_1"']\arrow[rr,"d_0"]&&\gm C_0& \Tx C_0\arrow[l,"D_{C_0}"']\arrow[r,"R_{C_0}"]&C_0.
 \end{tikzcd}
 \end{equation}
 It is clear that the diagram commutes: $(A)$ is $\gm$ applied to a naturality square of $D$; $(B)$ is the definition of $D^2$; $(C)$ and $(D)$ are $\gm^2$ applied to axioms (\ref{Cat_d1d1}) and (\ref{Cat_d0d1}) for composition in $C$; $(E)$ and $(F)$ are naturality squares of $\mu$, and $(G)$ is again axiom (\ref{dgm:StrongMonad_AxiomMu}) for strong monads. The remaining squares are trivial.
\end{definition}
Notice that from this definition it is clear that $\tilde{d}_1$ satisfies axioms (\ref{PCat_d1d1}) and (\ref{PCat_d0d1}). Furthermore, there is a map
\begin{center}\begin{tikzcd}\tilde{C}_2'\arrow[r,"i_2'"] &\gm^2 C_2,\end{tikzcd}\end{center}
given by the diagram
 \begin{equation*}
 \small
\begin{tikzcd}[row sep=20pt,column sep=35pt]
\gm^2C_0\arrow[d,equal]&\gm^2 C_1\arrow[l,"\gm^2d_1"'] \arrow[d,equal]\arrow[r,"\gm d_0"]&\gm^2 C_0\arrow[d,equal]&\gm \Tx C_1\arrow[l]\arrow[r,"\gm\Tx d_0"]\arrow[d,"\gm D_{C_1}"]&\gm\Tx C_0\arrow[d,"\gm D_{C_0}"]&C_0\cx\gm^21 \arrow[l,"D_{\gm1,C_0}"']\arrow[r]&C_0\\
\gm^2C_0&\gm^2 C_1 \arrow[l,"\gm^2d_1"']\arrow[r,"\gm^2 d_0"']&\gm^2 C_0&\gm^2 C_1\arrow[l,"\gm^2d_1"]\arrow[r,"\gm^2 d_0"']&\gm^2 C_0,&&
 \end{tikzcd}
 \end{equation*}
which clearly makes the square 
\begin{equation}\label{dgm:Cat_TMulticat_C2d1restriction}
\begin{tikzcd}
\tilde{C}_2' \arrow[dd, "\tilde{d}_1'"'] \arrow[r,"i'_2"] & \gm ^2C_2 \arrow[d, "\gm ^2d_1"]     \\
                                                       & \gm ^2 C_1 \arrow[d, "\mu_{C_1}"] \\
\tilde{C}_1 \arrow[r, tail,"i_1"']                            & \gm C_1,                       
\end{tikzcd}
\phantom{aaa}
\begin{tikzcd}[row sep=2pt]
\text{and therefore}\\
\text{also the square}
\end{tikzcd}
\phantom{aaa}
\begin{tikzcd}
\tilde{C}_2 \arrow[dd, "\tilde{d}_1"'] \arrow[r,"i_2"] & \gm ^2C_2 \arrow[d, "\gm ^2d_1"]     \\
                                                       & \gm ^2 C_1 \arrow[d, "\mu_{C_1}"] \\
\tilde{C}_1 \arrow[r, tail,"i_1"']                            & \gm C_1,                       
\end{tikzcd}
\end{equation}
commute, for the corresponding arrow $i_2$. This says, roughly speaking, that composition in $\Tpc{}{\gm}{C}$ is ``the same'' as composition in $\gm^2C$, as it is also clear in most of the examples.

We have to check that composition is associative (\ref{dgm:PCat_Associativity}). We state first the following lemma. We omit  its proof since it is long and unenlightening; it can be found in \cite{Cebrian2}.
\begin{lemma}  \label{lemma:tilderestriction} There is a map $\tilde{C}_3\xrightarrow{\;\;i_3\;\;}\gm^3C_3$ such that the following diagrams commute
\begin{equation}\label{dgm:tilderestriction}
\begin{tikzcd}
\tilde{C}_3 \arrow[dd, "\tilde{d}_1"'] \arrow[r,"i_3"] & \gm ^3C_3 \arrow[d, "\gm ^3d_1"]     \\
                                                       & \gm ^3 C_2 \arrow[d, "\gm \mu_{C_2}"] \\
\tilde{C}_2 \arrow[r,"i_2"']                            & \gm ^2C_2,                      
\end{tikzcd}
\phantom{aaaaaaaaaaa}
\begin{tikzcd}
\tilde{C}_3 \arrow[dd, "\tilde{d}_2"'] \arrow[r,"i_3"] & \gm ^3C_3 \arrow[d, "\gm ^3d_2"]     \\
                                                       & \gm ^3 C_2 \arrow[d, " \mu_{\gm C_2}"] \\
\tilde{C}_2 \arrow[r,"i_2"']                            & \gm ^2C_2.                      
\end{tikzcd}
\end{equation}
\end{lemma}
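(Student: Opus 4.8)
The plan is to build $i_3$ by the very same pattern used for $i_2$, exploiting the fact that the degree-$3$ piece of the $\tcons$-construction decomposes in terms of the degree-$2$ piece. Since $\gm$ preserves pullbacks one has
\[
\tilde{C}_3 \;=\; \gm^2\tilde{C}_1\lcx{\gm^2 C_0}\gm\tilde{C}_1\lcx{\gm C_0}\tilde{C}_1 \;\cong\; \gm\tilde{C}_2\lcx{\gm C_0}\tilde{C}_1,
\]
the latter pullback being formed along the source-type leg of $\gm\tilde{C}_2$ and the target-type leg $\tilde{d}_0$ of $\tilde{C}_1$. First I would unfold each factor $\tilde{C}_1$ through its defining pullback (\ref{dgm:Cat_TMulticat_Def}), applying $\gm$ and $\gm^2$ and invoking Lemma \ref{lemma:upbk} at each level exactly as in the isomorphism $\tilde{C}_2\cong\tilde{C}_2'$ of (\ref{dgm:tildeC2tildeC2prima}). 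This produces an isomorphism $\tilde{C}_3\cong\tilde{C}_3'$ presenting $\tilde{C}_3$ as an iterated fibre product of the operation objects $\gm^3 C_1$, $\gm^2 C_1$, $\gm C_1$ together with the colour and unit legs $\gm\Tx C_0$, $\gm^2\Tx C_0$ and $C_0\cx\gm^k1$.

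Then I would define $i_3$ as a morphism of these long span diagrams into
\[
\gm^3 C_3 \;=\; \gm^3 C_1\lcx{\gm^3 C_0}\gm^3 C_1\lcx{\gm^3 C_0}\gm^3 C_1,
\]
keeping the outermost slot $\gm^3 C_1$ fixed and lifting each of the remaining slots to the common level $\gm^3 C_1$ by the appropriate iterate of the strength $D$, fed by the unit legs $\gm 1,\gm^2 1$ and projecting the colours to $\gm^3 C_0$ — precisely as the single lift $\gm D_{C_1}$ was used in the construction of $i_2$ in Definition \ref{def:tildecomp}. The iterated strength maps are built from the consecutive-application axiom (\ref{dgm:StrengthConsecutiveApplications_Axiom}), and as for $i_2'$ every block of the resulting diagram commutes because it is a naturality square of $D$ or $\mu$, an instance of the definition of the iterated $D^2$, or an instance of the strength axiom (\ref{dgm:StrongMonad_AxiomMu}); transporting along $\tilde{C}_3\cong\tilde{C}_3'$ then yields $i_3$.

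To check the two squares of (\ref{dgm:tilderestriction}) I would use the presentation $\tilde{C}_3\cong\gm\tilde{C}_2\lcx{\gm C_0}\tilde{C}_1$ together with the square (\ref{dgm:Cat_TMulticat_C2d1restriction}) already established for $i_2$. The two inner face maps $\tilde{d}_1,\tilde{d}_2\colon\tilde{C}_3\to\tilde{C}_2$ compose adjacent operation-slots at the inner, respectively outer, level. The map $\tilde{d}_1$ acts entirely inside the $\gm\tilde{C}_2$ factor, so the first square is $\gm$ applied to (\ref{dgm:Cat_TMulticat_C2d1restriction}): the outer $\mu_{C_1}$ there becomes the layer-$2$/$3$ multiplication $\gm\mu_{C_2}$, while the inner composition of $C_3$ accounts for $\gm^3 d_1$. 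The map $\tilde{d}_2$ instead composes the trailing factor $\tilde{C}_1$ with $\gm\tilde{C}_2$, and repeating the diagram chase of Definition \ref{def:tildecomp} one level up, with the multiplication now acting on the outer two $\gm$-layers, produces $\mu_{\gm C_2}$ together with $\gm^3 d_2$; this is the second square. In both cases the remaining commutativities reduce to the associativity axioms (\ref{Cat_d1d1}), (\ref{Cat_d0d1}) of $C$, the strength axiom (\ref{dgm:StrongMonad_AxiomMu}), and naturality of $\mu$, exactly as in Definition \ref{def:tildecomp}.

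The hard part is purely organisational: the diagram presenting $\tilde{C}_3'$ and the map $i_3$ is one level larger than (\ref{dgm:tildeCcomposition}), so the bookkeeping of which $\gm$-power and which unit leg each slot carries, and the verification that each pasting of blocks is a legitimate (iso)morphism in the sense of the block calculus, is long. No idea beyond the strength axioms and the associativity of $C$ is needed, which is exactly why the detailed verification is relegated to \cite{Cebrian2}.
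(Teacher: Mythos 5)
Your plan is essentially the paper's own approach: the paper explicitly omits the proof of this lemma as ``long and unenlightening'' and defers it to \cite{Cebrian2}, and what you describe is exactly the one-level-up iteration of the template it establishes for $i_2$ --- a presentation $\tilde{C}_3\cong\tilde{C}_3'$ analogous to (\ref{dgm:tildeC2tildeC2prima}) via Lemma \ref{lemma:upbk}, a definition of $i_3$ by iterated strength lifts as in Definition \ref{def:tildecomp} using (\ref{dgm:StrengthConsecutiveApplications_Axiom}), and verification of the two squares by pasting blocks that are naturality squares of $D$ and $\mu$, instances of (\ref{dgm:StrongMonad_AxiomMu}), and $\gm^k$ of the axioms of $C$. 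Your identification of the faces is also the right one under the paper's conventions ($\tilde{d}_1$ acting within the $\gm\tilde{C}_2$ factor and producing $\gm\mu_{C_2}$, while $\tilde{d}_2$ merges the outer two $\gm$-layers and produces $\mu_{\gm C_2}$), so apart from trivia (e.g.\ (\ref{Cat_d1d1}) and (\ref{Cat_d0d1}) are source/target compatibility, not associativity, and the first square is only \emph{in its core} $\gm$ applied to (\ref{dgm:Cat_TMulticat_C2d1restriction}), since the untouched root slot still needs the iterated-strength bookkeeping you mention) the proposal is correct and matches the intended proof.
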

\begin{proposition} Composition is associative.
\begin{proof} In view of Lemma \ref{lemma:tilderestriction} there is a diagram
\begin{equation*}
\begin{tikzcd}
\tilde{C}_3\arrow[rrrr,"\tilde{d}_1"]\arrow[dddd,"\tilde{d}_2"']\arrow[rd,"i_3"]&&&&\tilde{C}_2\arrow[dddd,"\tilde{d}_1"]\arrow[ld,"i_2"]\\
&\gm^3C_3\arrow[r,"\gm^3d_1"]\arrow[d,"\gm^3d_2"',"\phantom{aaaa}(A)"]&\gm^3C_2\arrow[r,"\gm \mu_{C_2}"]\arrow[d,"\gm^3d_1\phantom{aa}(B)"]&\gm^2C_2\arrow[d,"\gm^2d_1"]&\\
&\gm^3C_2\arrow[r,"\gm^3d_1"]\arrow[d,"\mu_{\gm C_2}"',"\phantom{aaaa}(C)"]&\gm^3C_1\arrow[r,"\gm \mu_{C_1}"]\arrow[d,"\mu_{\gm C_1}\phantom{aa}(D)"]&\gm^2C_1\arrow[d,"\mu_{C_1}"]&\\
&\gm^2C_2\arrow[r,"\gm^2d_1"]&\gm^2C_1\arrow[r,"\mu_{C_1}"]&\gm C_1&\\
\tilde{C}_2\arrow[rrrr,"\tilde{d}_1"]\arrow[ru,"i_2"]&&&&\tilde{C}_1,\arrow[lu,tail,"i_1"]\\
\end{tikzcd}
\end{equation*}
where the four trapeziums are diagrams (\ref{dgm:Cat_TMulticat_C2d1restriction}) and (\ref{dgm:tilderestriction}) of Lemma  \ref{lemma:tilderestriction}. The inner squares are the following: $(A)$ is $\gm^3$ applied to associativity of $C$; $(B)$ is $\gm$ applied to naturality of $\mu$ at $d_1$; $(C)$ is naturality of $\mu$ at $\gm d_1$ and $(D)$ is the associativity law of $\mu$. Since $i_1$ is a monomorphism (\ref{dgm:Cat_TMulticat_Def}) and all the inner diagrams commute, so does the outer square, as we wanted to see.
\end{proof}
\end{proposition}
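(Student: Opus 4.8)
The associativity axiom (\ref{dgm:PCat_Associativity}) asks that the square
\[
\begin{tikzcd}
\tilde{C}_3 \arrow[r,"\tilde{d}_1"] \arrow[d,"\tilde{d}_2"'] & \tilde{C}_2 \arrow[d,"\tilde{d}_1"] \\
\tilde{C}_2 \arrow[r,"\tilde{d}_1"'] & \tilde{C}_1
\end{tikzcd}
\]
commute, that is, that $\tilde{d}_1\circ\tilde{d}_1 = \tilde{d}_1\circ\tilde{d}_2$ as maps $\tilde{C}_3\to\tilde{C}_1$. Since composition in the original category $C$ is already associative, the plan is to \emph{reduce} this identity in $\Tpc{}{\gm}{C}$ to associativity in $C$ by transporting everything along the embedding maps $i_k\colon\tilde{C}_k\to\gm^kC_k$. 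The decisive structural feature is that $i_1$ is a monomorphism (established just after (\ref{dgm:Cat_TMulticat_Def})): to prove that two maps into $\tilde{C}_1$ agree it suffices to prove they agree after postcomposing with $i_1$ into $\gm C_1$.

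First I would secure, at the level of $3$-simplices, a map $i_3\colon\tilde{C}_3\to\gm^3C_3$ together with restriction diagrams expressing $i_2\circ\tilde{d}_1$ and $i_2\circ\tilde{d}_2$ in terms of the genuine face maps $\gm^3d_1$ and $\gm^3d_2$ of $C$, followed by the appropriate multiplications $\gm\mu_{C_2}$ and $\mu_{\gm C_2}$; this is exactly the content of Lemma \ref{lemma:tilderestriction}, which I take as given. Combined with the analogous square (\ref{dgm:Cat_TMulticat_C2d1restriction}), which relates $i_1\circ\tilde{d}_1$ to $\mu_{C_1}\circ\gm^2d_1\circ i_2$, these diagrams furnish four ``trapeziums'' bridging the tilde-level maps and their $C$-level counterparts.

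With these bridges in place, the next step is to assemble a single large diagram whose outer boundary is precisely the associativity square above and whose interior is a $2\times2$ grid of squares living at the $\gm$-power level of $C$. Reading off the pieces, the interior records associativity purely in terms of $C$ and the monad structure: one square is $\gm^3$ applied to the associativity of composition in $C$; two are naturality squares of $\mu$ (at $d_1$ and at $\gm d_1$, one of them under $\gm$); and the last is the associativity law of the monad multiplication $\mu$. Each commutes by facts already available, so the whole interior commutes.

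The concluding step is the monomorphism argument. Because the trapeziums glue the outer square to the commuting interior, the two composites $\tilde{C}_3\to\gm C_1$ obtained by traversing the boundary and then $i_1$ coincide; since $i_1$ is monic, the two composites $\tilde{C}_3\to\tilde{C}_1$ must themselves coincide, which is the desired associativity. I expect the real obstacle to lie not in this final assembly but in Lemma \ref{lemma:tilderestriction} itself: constructing $i_3$ and checking its compatibility diagrams is a long and delicate chase through the strength axiom (\ref{dgm:StrongMonad_AxiomMu}) and the pullback of Lemma \ref{lemma:upbk}, which is precisely why it is sensible to isolate it as a separate lemma (and, as the text remarks, to defer its proof to \cite{Cebrian2}).
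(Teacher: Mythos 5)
Your proposal is correct and takes essentially the same route as the paper: you assemble the same four trapeziums (the two squares of Lemma~\ref{lemma:tilderestriction} together with two instances of~(\ref{dgm:Cat_TMulticat_C2d1restriction})) around the same interior grid ($\gm^3$ applied to associativity in $C$, the two naturality squares of $\mu$ at $d_1$ and $\gm d_1$, and the associativity law of $\mu$), and conclude by the monicity of $i_1$. Your closing remark is also accurate: the paper likewise defers the genuinely technical work, the construction of $i_3$ and its compatibility diagrams, to \cite{Cebrian2}.
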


The unit morphism of $\Tpc{}{\gm}{C}$ is easier to obtain than composition. Recall that the unit is a morphism $\widetilde{e}\colon C_0\rightarrow \widetilde{C}_1$ such that the following diagram (\ref{PCat_topbottom}) commutes,
 \begin{center}
 \begin{tikzcd}[column sep=small]
 & C_0\ar[d,"\widetilde{e}"]\ar[dl,"\eta_{C_0}"']\ar[dr,"\id"] & \\
 \gm C_0 &\widetilde{C}_1\ar[l,"d_1"]\ar[r,"d_0"']& C_0.
 \end{tikzcd}
 \end{center}
 \begin{definition}\label{def:tildeunit} The unit of $\Tpc{}{\gm}{C}$ is given by the following arrow:
 \begin{equation}\label{dgm:Cat_TMulticat_unit_def}
  \begin{tikzcd}[row sep=15pt]
 \pmb{C_0\arrow[dd,"\tilde{e}"]}& \gm C_0\arrow[dd,equal,"\phantom{aaaa}(A)"]& C_0\arrow[l,"\eta_{C_0}"]\arrow[r,equal]\arrow[d,"\eta_{C_0}"]& C_0\arrow[r,equal]\arrow[dd,"(B)\phantom{aaaa}"',"\eta_{C_0}\phantom{aaa}(C)"]& C_0\arrow[r,equal]\arrow[dd,"F_{C_0}"]& C_0\arrow[dd,equal,"(D)\phantom{aaa}"']&\\
& &\gm C_0\arrow[d,"\gm e"']&&&&\\
 \pmb{\tilde{C}_1}&\gm C_0 &\gm C_1\arrow[l,"\gm d_1"]\arrow[r,"\gm d_0"']&\gm C_0&\Tx C_0\arrow[l,"D_{C_0}"]\arrow[r,"R_{C_0}"']&C_0.&
 \end{tikzcd}
 \end{equation}
 It is clear that all the diagrams commute: $(A)$ and $(B)$ come from $\gm$ applied to (\ref{Cat_d1e}) and (\ref{Cat_d0e}), this is $d_1\circ e=\id=d_0\circ e$; $(C)$ is the  compatibility between $D$ and $\eta$ (\ref{dgm:StrongMonad_AxiomEta}), and $(D)$ is obvious from the definitions of $R$ and $F$.
 \end{definition}

 We have to verify now that composition with the unit morphism is the identity (\ref{dgm:PCat_Unit}).
 To prove it we follow the same strategy as for associativity. That is, we project the diagrams into diagrams in the original category $C$ containing the corresponding unit axioms. Again, the proof of the following lemma can be found in \cite{Cebrian2}. Recall first that 
 $$C_2:=C_1\cx_{C_0}C_1\;\;\;\text{and}\;\;\;\tilde{C}_2:=\gm \tilde{C}_1\lcx{\gm C_0}\tilde{C}_1.$$
 \begin{lemma} \label{lemma:tildeerestriction}We have commutative squares
 \vspace{-10pt}
  \begin{figure}[H]
\centering
 \begin{subequations}
\begin{minipage}[b]{0.45\linewidth}
 \begin{equation}\label{dgm:tildeerestrictiona}
 \begin{tikzcd}[column sep=2em]
\gm C_0\lcx{\gm C_0} \tilde{C}_1\arrow[dd,"\gm\tilde{e}\cx_{\id}\id"]\arrow[r,"i_1^l"]&\gm C_0 \lcx{\gm C_0}\gm C_1\arrow[d,"\gm e\cx_{\id}\id"']\\
 & \gm C_2\arrow[d,"\gm\eta_{C_2}"']\\
 \tilde{C}_2\arrow[r,"i_2"']&\gm^2C_2,
 \end{tikzcd}
 \end{equation}
\end{minipage}
\quad
\begin{minipage}[b]{0.45\linewidth}
\begin{equation}\label{dgm:tildeerestrictionb}
 \begin{tikzcd}
  \tilde{C}_1\lcx{\,C_0\,}C_0\arrow[dd,"\eta_{\tilde{C}_1}\cx_{\eta_{C_0}}\tilde{e}"]\arrow[r,"i_1^r"]&\gm C_1 \lcx{\gm C_0}\gm C_0\arrow[d,"\id\cx_{\id}\gm e"']\\
 & \gm C_2\arrow[d,"\eta_{\gm C_2}"']\\
 \tilde{C}_2\arrow[r,"i_2"']&\gm^2C_2,
 \end{tikzcd}
 \end{equation}
 \end{minipage}
 \end{subequations}
\end{figure}
\noindent where $i_1^l$ and $i_1^r$ are the morphisms corresponding to $i_1$.
 \end{lemma}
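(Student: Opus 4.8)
The plan is to treat these two squares exactly as Lemma~\ref{lemma:tilderestriction} was treated for associativity: they record the behaviour of the comparison map $i_2$ on the two ``unit--inserted'' subobjects of $\tilde{C}_2=\gm\tilde{C}_1\lcx{\gm C_0}\tilde{C}_1$. Square (\ref{dgm:tildeerestrictiona}) concerns plugging unit operations into all the inputs of an operation (the factor $\gm C_0$ entering the input slot $\gm\tilde{C}_1$ by $\gm\tilde{e}$), while square (\ref{dgm:tildeerestrictionb}) concerns using $\tilde{e}$ as the outer operation with a single operation as input. First I would pin down $i_1^l$ and $i_1^r$, the ``morphisms corresponding to $i_1$'': each restricts the monomorphism $i_1\colon\tilde{C}_1\hookrightarrow\gm C_1$ of (\ref{dgm:Cat_TMulticat_Def}) to the operation factor, together with the evident unit map on the colour factor. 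These are well defined because $i_1$ is compatible with the source and target projections by construction of the pullback (\ref{dgm:Cat_TMulticat_Def}) (e.g.\ $\tilde{d}_1=\gm d_1\circ i_1$), so the fibre products over $\gm C_0$, respectively $C_0$, are respected.

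Second, I would verify commutativity by computing $i_2$ through the presentation $\tilde{C}_2\cong\tilde{C}_2'$ of (\ref{dgm:tildeC2tildeC2prima}) together with the definition of $i_2'$ in Definition~\ref{def:tildecomp}, and then restricting along the two unit insertions. Unfolding $\tilde{e}$ by its defining diagram (\ref{dgm:Cat_TMulticat_unit_def}) -- so that $\tilde{e}$ is assembled from $\eta_{C_0}$, $\gm e$ and $F_{C_0}$ -- the whole verification decomposes, in the block calculus of (\ref{dgm:calculusid})--(\ref{dgm:calculusprojections}), into a pasting of elementary tiles, each commuting for one of a short list of reasons: naturality of $\eta$ and of $\mu$, the category unit laws $d_0 e=d_1 e=\id$ of (\ref{Cat_d0e}) and (\ref{Cat_d1e}), and -- crucially -- the strength--unit axiom (\ref{dgm:StrongMonad_AxiomEta}), $D_{A,B}\circ(A\cx\eta_B)=\eta_{A\cx B}$. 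This last identity is what makes the strength maps occurring inside $i_2$ (through $\gm D_{C_1}$ and $D_{\gm1,C_0}$) collapse to a plain unit $\eta$ precisely on the unit--inserted part, which is exactly why the right-hand verticals of the two squares are $\gm\eta_{C_2}$ and $\eta_{\gm C_2}$ rather than genuine strength maps. Once every tile is checked, the block--pasting lemma delivers commutativity of the outer squares.

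The asymmetry between the two squares -- the inner degeneracy $\gm\eta_{C_2}$ in (\ref{dgm:tildeerestrictiona}) versus the outer degeneracy $\eta_{\gm C_2}$ in (\ref{dgm:tildeerestrictionb}) -- I would track through which copy of $\gm$ in $\gm^2C_2$ receives the inserted unit: feeding identities into the inputs is detected inside the inner grouping data of $\tilde{C}_2'$, whereas using $\tilde{e}$ outermost is detected by the outer $\gm$. Matching these two positions cleanly requires the strong--monad multiplication axiom (\ref{dgm:StrongMonad_AxiomMu}) together with the collapse of the $\gm^21$ bookkeeping factor of $\tilde{C}_2'$ under $\mu_1$ -- equivalently the pullback square of Lemma~\ref{lemma:upbk} -- exactly the ingredients already used to define composition in Definition~\ref{def:tildecomp}.

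The main obstacle, and the reason the author calls such computations long and unenlightening, is bookkeeping rather than depth: one must carry the $\gm1$ and $\gm^21$ ``grouping'' factors correctly through the isomorphism (\ref{dgm:tildeC2tildeC2prima}) and through $i_2'$, and confirm that under a unit insertion the strength maps $D_{\gm1,C_0}$ and $\gm D_{C_1}$ genuinely reduce, via (\ref{dgm:StrongMonad_AxiomEta}) and Lemma~\ref{lemma:upbk}, to the single $\eta$ demanded by the statement. No individual tile is hard; the entire risk lies in keeping the several nested fibre products and the two applications of $\gm$ aligned, so I would organise the argument as an explicit pasting of isomorphism and naturality blocks and appeal to the block--pasting lemma only at the very end.
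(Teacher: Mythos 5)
Your plan coincides with the strategy the paper itself declares for this lemma (the tile-by-tile verification being deferred to \cite{Cebrian2}): express $\tilde{C}_2$ through $\tilde{C}_2'$ and $i_2'$, unfold $\tilde{e}$ via \eqref{dgm:Cat_TMulticat_unit_def}, and paste blocks that commute by naturality of $\eta$ and $D$, the unit axioms \eqref{Cat_d1e}--\eqref{Cat_d0e} of $C$, and the strength--unit axiom \eqref{dgm:StrongMonad_AxiomEta}, with Lemma \ref{lemma:upbk} absorbing the $\gm^2 1$ bookkeeping --- so this is essentially the paper's own proof. Two details to repair in a full write-up: neither composite in the two squares contains a $\mu$, so the strength--multiplication axiom \eqref{dgm:StrongMonad_AxiomMu} is not actually needed (the $\gm\eta_{C_2}$ versus $\eta_{\gm C_2}$ asymmetry is settled by naturality and the position of $\eta$ alone, the monad unit laws entering only in the subsequent proposition via $\mu_{C_2}$), and the colour components of $i_1^l$ and $i_1^r$ are not ``unit maps'' but the maps forced by the fibre products, namely $\id_{\gm C_0}$ for $i_1^l$ and $\gm d_0\circ i_1$ (a constant tuple through $D_{C_0}$, not $\eta_{C_0}$, since the arity need not be $1$) for $i_1^r$.
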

 
 \begin{proposition} The unit morphism $\tilde{e}$ of $\Tpc{}{\gm}{C}$ satisfies the left and right composition axioms (\ref{dgm:PCat_Unit}).
 \begin{proof}
 For the left composition (\ref{dgm:PCat_Unita}), the required commutative triangle is the outline of the diagram
\begin{equation}\label{dgm:Cat_TMulticat_unit_axiom_leftproof}
\begin{tikzcd}[column sep=60pt]
\gm C_0\lcx{\gm C_0}\tilde{C}_1 \arrow[rrrr, "\gm \tilde{e}\cx_{\id}\id"] \arrow[rd, "i_1^r"] \arrow[rrdddd, bend right] \arrow[rrrd,white, "(B)" black]&&&&\tilde{C}_2 \arrow[ld,"i_2"'] \arrow[lldddd, "\tilde{d}_1", bend left=35] \\                                                                                                                     & \gm C_0\lcx{\gm C_0}\gm C_1 \arrow[r, "\gm e\cx_{\id}\id"] \arrow[rdd, "(A)\phantom{aaaaaa}p_2"'] & \gm C_2 \arrow[d, "(C)\phantom{aaa}\id"'] \arrow[r,"\gm \eta_{ C_2}"] \arrow[d,white,bend left ,"(E)" black]& \gm ^2C_2 \arrow[dl, "\mu_{C_2}"] \arrow[d, "\gm ^2d_1"] &\\
&&\gm C_2\arrow[d,"\gm d_1"]&\gm ^2C_1 \arrow[dl, "\mu_{C_1}\phantom{aa}(F)"]\arrow[l,white,"(D)" black]&\\
&& \gm C_1& &\\
&&\tilde{C}_1 \arrow[u, tail,"i_1"]&&                                                                                                                      
\end{tikzcd}
\end{equation}
 We have that Diagram $(A)$ commutes by definition of $i_1^l$;  $(B)$ is precisely (\ref{dgm:tildeerestrictiona}) of Lemma \ref{lemma:tildeerestriction}; $(C)$ is $\gm$ applied to the left composition with the unit axiom in the category $C$ (\ref{dgm:Cat_Unita}); $(D)$ is naturality of $\mu$ at $d_1$; $(E)$ is $\gm$ of the unit axiom of $\gm$ applied to $C_2$, and $(F)$ is the same as (\ref{dgm:Cat_TMulticat_C2d1restriction}). Since $i_1$ is a monomorphism and all the inner diagrams commute so does the outer triangle, as we wanted to see.
 
 For the right composition (\ref{dgm:PCat_Unitb}), the required commutative triangle is the outline of the diagram
  \begin{equation}\label{dgm:Cat_TMulticat_unit_axiom_rightproof}
 \begin{tikzcd}[column sep=60pt]
 \tilde{C}_1\lcx{\,C_0\,}C_0 \arrow[rrrr, "\eta_{\tilde{C}_1}\cx_{\eta_{C_0}}\tilde{e}"] \arrow[rd,"i_1^l"] \arrow[rrdddd, bend right]\arrow[rrrd,white, "(B)" black] \arrow[rrdddd, white, "(A)"' black]&   &  &  &\tilde{C}_2\arrow[ld,"i_2"'] \arrow[lldddd, "\tilde{d}_1", bend left=40] \\
 & \gm C_1\lcx{\gm C_0}\gm C_0 \arrow[r, "\id\cx_{\id}\gm e"] \arrow[rdd] & \arrow[d,white,bend left ,"(E)" black] \gm C_2 \arrow[d,"(C)\phantom{aaa}\id"'] \arrow[r,"\eta_{\gm C_2}"]& \gm ^2C_2 \arrow[dl, "\mu_{C_2}"] \arrow[d, "\gm ^2d_1"] & \\
  &&\gm C_2\arrow[d,"\gm d_1"]&\gm ^2C_1 \arrow[dl, "\mu_{C_1}\phantom{aaa}(F)"] \arrow[l,white,"(D)" black]&\\
 & & \gm C_1  &  &   \\
  &  & \tilde{C}_1 \arrow[u, tail,"i_1"]    &     &                                                                                                                        
\end{tikzcd}
 \end{equation}
  We have that Diagram $(A)$ commutes by definition of $i_1^r$,  $(B)$ is precisely (\ref{dgm:tildeerestrictiona}) of Lemma \ref{lemma:tildeerestriction}, $(C)$ is $\gm$ applied to the right composition with the unit axiom in the category $C$ (\ref{dgm:Cat_Unitb}); $(D)$ is again naturality of $\mu$ at $d_1$, $(E)$ is the unit axiom of $\gm$ applied to $\gm C_2$ and $(F)$ is the same as (\ref{dgm:Cat_TMulticat_C2d1restriction}), as before. Since $i_1$ is a monomorphism and all the inner diagrams commute so does the outer triangle, as we wanted to see.
  \end{proof}
 \end{proposition}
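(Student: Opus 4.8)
\section*{Proof proposal}

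The plan is to verify the two unit axioms (\ref{dgm:PCat_Unit}) by exactly the projection strategy already used for associativity. Since the inclusion $i_1$ is a monomorphism, as noted right after (\ref{dgm:Cat_TMulticat_Def}), it suffices to check that each of the two required triangles commutes after postcomposing with $i_1$, i.e.\ to verify the corresponding equality of maps landing in $\gm C_1$. This reduces both axioms to statements that can be assembled entirely from the unit axioms (\ref{dgm:Cat_Unita}) and (\ref{dgm:Cat_Unitb}) of the underlying category $C$, together with the monad unit law for $\gm$ and the naturality of $\mu$.

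First I would record the key bookkeeping device. By the restriction square (\ref{dgm:Cat_TMulticat_C2d1restriction}) the composite $i_1\circ\tilde{d}_1$ equals $\mu_{C_1}\circ\gm^2 d_1\circ i_2$, so after composing with $i_1$ the operadic composition in $\Tpc{}{\gm}{C}$ is computed by $\gm^2 d_1$ followed by $\mu_{C_1}$. The task then becomes to understand how the two unit-insertion configurations sit inside $\gm^2 C_2$ through $i_2$, and this is precisely the content of Lemma \ref{lemma:tildeerestriction}: the squares (\ref{dgm:tildeerestrictiona}) and (\ref{dgm:tildeerestrictionb}) factor the left- and right-unit domains $\gm C_0\lcx{\gm C_0}\tilde{C}_1$ and $\tilde{C}_1\lcx{C_0}C_0$ through $i_2$ by means of the auxiliary inclusions $i_1^l$ and $i_1^r$, landing in $\gm^2 C_2$ after $\gm\eta_{C_2}$ (respectively $\eta_{\gm C_2}$). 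I would take this lemma as given, together with the isomorphism (\ref{dgm:tildeC2tildeC2prima}) that makes $\tilde{d}_1$ meaningful on $\tilde{C}_2$.

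With these ingredients the argument is diagrammatic. For the left axiom (\ref{dgm:PCat_Unita}) I would build the diagram (\ref{dgm:Cat_TMulticat_unit_axiom_leftproof}), whose outer boundary is the desired triangle and whose inner cells are, in turn, the definition of the auxiliary inclusion, the lemma square (\ref{dgm:tildeerestrictiona}), the image under $\gm$ of the left unit law (\ref{dgm:Cat_Unita}) of $C$, naturality of $\mu$ at $d_1$, the monad unit law for $\gm$ applied to $C_2$, and the restriction square (\ref{dgm:Cat_TMulticat_C2d1restriction}). The right axiom (\ref{dgm:PCat_Unitb}) is handled symmetrically via (\ref{dgm:Cat_TMulticat_unit_axiom_rightproof}), now invoking (\ref{dgm:Cat_Unitb}) and the unit law of $\gm$ applied to $\gm C_2$. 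Once every inner cell is seen to commute, the outer boundary commutes after postcomposition with $i_1$, and since $i_1$ is a monomorphism the original triangle commutes.

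The genuinely conceptual input is minimal, since all the geometric content is packaged into Lemma \ref{lemma:tildeerestriction} and the reduction square (\ref{dgm:Cat_TMulticat_C2d1restriction}). The main obstacle I anticipate is therefore organizational rather than conceptual: correctly identifying the auxiliary inclusions $i_1^l$ and $i_1^r$ and threading the various projections, fibre-product reindexings, and naturality squares through the isomorphism (\ref{dgm:tildeC2tildeC2prima}), so that the large diagrams (\ref{dgm:Cat_TMulticat_unit_axiom_leftproof}) and (\ref{dgm:Cat_TMulticat_unit_axiom_rightproof}) commute on the nose and not merely up to the coherence isomorphisms implicit in the chosen pullbacks.
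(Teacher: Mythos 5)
Your proposal is correct and follows essentially the same route as the paper: reduce both unit axioms to equalities after postcomposition with the monomorphism $i_1$, then assemble the two large diagrams (\ref{dgm:Cat_TMulticat_unit_axiom_leftproof}) and (\ref{dgm:Cat_TMulticat_unit_axiom_rightproof}) from the definitions of $i_1^l$ and $i_1^r$, the squares (\ref{dgm:tildeerestrictiona}) and (\ref{dgm:tildeerestrictionb}) of Lemma \ref{lemma:tildeerestriction}, $\gm$ applied to the unit laws (\ref{dgm:Cat_Unita}) and (\ref{dgm:Cat_Unitb}) of $C$, naturality of $\mu$ at $d_1$, the monad unit law, and the restriction square (\ref{dgm:Cat_TMulticat_C2d1restriction}). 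The ingredients, the monomorphism reduction, and the cell-by-cell decomposition all match the paper's proof.
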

 

The last thing to check is that the construction is functorial. First of all we have to specify how the construction acts on morphisms. Let $C$ and $C'$ be two categories and $C\xrightarrow{f} B$ a functor, that is a diagram
 \begin{equation*}
 \begin{tikzcd}
 C_0 \arrow[d,"f_0"']&C_1\arrow[l,"d_1"']\arrow[r,"d_0"]\arrow[d,"f_1"]&C_0\arrow[d,"f_0"]\\
 B_0&B_1\arrow[l,"d_1"']\arrow[r,"d_0"]&B_0
 \end{tikzcd}
 \end{equation*}
 satisfying the commutative squares of \ref{eq:Poperadsmorphism}. Then $\Tpc{}{\gm}{f}$ is the morphism given by
  \begin{equation*}
  \begin{tikzcd}[row sep=20pt]
 \pmb{\Tpc{}{\gm}{C}\arrow[d,"\Tpc{}{\gm}{f}"]}&\gm C_0 \arrow[d,"\gm f_0"]&\gm C_1\arrow[l,"\gm d_1"']\arrow[r,"\gm d_0"]\arrow[d,"\gm f_1"]&\gm C_0\arrow[d,"\gm f_0"]&\Tx C_0\arrow[l,"D_{C_0}"']\arrow[r,"R_{C_0}"]\arrow[d,"\Tx f_0"]&C_0\arrow[d,"f_0"]\\
 \pmb{\Tpc{}{\gm}{B}}&\gm B_0 &\gm B_1\arrow[l,"\gm d_1"']\arrow[r,"\gm d_0"]&\gm B_0&\Tx B_0\arrow[l,"D_{B_0}"']\arrow[r,"R_{B_0}"]&B_0.
 \end{tikzcd}
 \end{equation*}
 It is a bit tedious but not difficult to see that $\tilde{f}$ satisfies again the commutative squares of \ref{eq:Poperadsmorphism} \cite{Cebrian2}. Moreover, given another morphism $B\xrightarrow{g} A$ it is clear that $\Tpc{}{\gm}{(g\circ f)}=\Tpc{}{\gm}{g}\circ \Tpc{}{\gm}{f}$, just because of the functoriality of $\gm$ and $\Tx$.
 
 Since the construction is functorial, if the strength $D_{A}$ is a monomorphism for every object  $A\in\ambcat$ then $\Tpc{}{\gm}{}$ is in fact a functor from categories internal to $\ambcat$ to $\gm$-operads. 

 \subsection{From $\gm$-operads to categories}\label{PoperadsToCat}
This construction has a similar structure as the construction above, so we  follow the same steps. Let  $\pop$ be a $\gm$-operad, 
 \begin{center}
\begin{tikzcd}[column sep=small]
       & Q_1\arrow[dl,"d_1"']\arrow[dr,"d_0"] &       \\
 \gm Q_0 &                                            & Q_0
 \end{tikzcd}
 \begin{tikzcd}
\gm Q_1\lcx{\gm Q_0}Q_1:=Q_2 \arrow[r,"d_1"]& Q_1\\
 \phantom{aaaaa}Q_0 \arrow[r,"e"] &Q_1,
 \end{tikzcd}
  \end{center}
and assume that $D_{\pop_0}:\gm 1\cx \pop\rightarrow \gm\pop$ is a monomorphism. We construct a category $\Tpc{\gm}{}{\pop}$\label{TpcQC} from the $\gm$-operad $\pop$. In this case, the simplicial nomenclature for $\Tpc{\gm}{}{\pop}$ is $\bar{Q}_i$ for the simplices and $\bar{d}_i$ for the face maps. The following pullback defines the objects and arrows of $\Tpc{\gm}{}{\pop}$:
\begin{equation}\label{dgm:TMulticat_Cat_Def}
\begin{tikzcd}[sep={4em,between origins}]
 && \bar{C}_1 \ar[dl] \ar[dr,tail,"j_1"] \arrow[rrdd,bend left, "\bar{d}_0"]\arrow[lldd,bend right, "\bar{d}_1"']\dpbk &&\\
 & \Tx C_0\ar[dl,"R_{C_0}"]\ar[dr,tail,"D_{C_0}"] & & C_1 \ar[dl,"d_1"'] \ar[dr,"d_0"']& \\
 C_0 && \gm C_0 && C_0.
 \end{tikzcd}
 \end{equation}
 Observe that $\bar{\pop}_0=\pop_0$, so that again $\Tpc{\gm}{}{\pop}$ has the same objects as $\pop$. Besides, the morphism $j_1$ is a monomorphism, since monomorphisms are preserved by pullbacks and $D_{\pop_0}$ is a monomorphism.

To define composition we need to define a map $\bar{\pop}_2\xrightarrow{\bar{d}_1} \bar{\pop}_1$, where $\bar{\pop}_2:=\bar{\pop}_1\cx_{\pop_0}\bar{\pop}_1$, satisfying the axioms of Appendix \ref{PCat_Axioms}. 
\noindent However, to specify this map we need to express it in a way we can naturally use composition in the original $\gm$-operad $\pop_2\xrightarrow{d_1} \pop_1$. 
The following diagram represents an isomorphism 
 $$\bar{\pop}_2\cong  \Tx^2\pop_0\lcx{\gm\Tx \pop_0}\Tx \pop_1\lcx{\gm \pop_0}\pop_1=:\bar{\pop}_2',$$
 \begin{equation}\label{dgm:barC2barC2prima}
 \tiny
 \begin{tikzcd}[row sep=30pt,column sep=30pt]
 \pop_0\arrow[d,equal]&\Tx^2 \pop_0 \arrow[d,"R_{\Tx \pop_0}"']\arrow[r,"\Tx D_{\pop_0}"]\arrow[l]\rdpbk&\Tx \gm \pop_0 \arrow[d,"R_{\gm \pop_0}","(A)\phantom{aaaaa}"']&&\Tx \pop_1\arrow[ll,"\Tx d_1"']\arrow[rr]\arrow[dl,"R_{\pop_1}"',"\phantom{aaaa}(B)"]\arrow[rd,"\Tx d_0"]\dpbks&&\gm \pop_0\arrow[d,equal]&\pop_1 \arrow[l,"d_1"']\arrow[r,"d_0"]\arrow[d,equal]&\pop_0.\arrow[d,equal]\\
 \pop_0&\Tx \pop_0\arrow[r,"D_{\pop_0}"']\arrow[l,"R_{\pop_0}"]&\gm \pop_0& \pop_1\arrow[l,"d_1"]\arrow[r,"d_0"']& \pop_0&\Tx \pop_0\arrow[r,"D_{\pop_0}"']\arrow[l,"R_{\pop_0}"]&\gm \pop_0& \pop_1\arrow[l,"d_1"]\arrow[r,"d_0"']&\pop_0
 \end{tikzcd}
 \end{equation}
 It is clear that all the squares in (\ref{dgm:barC2barC2prima}) commute. Moreover, the squares $(A)$ and $(B)$ are cartesian because so is $R$.
 
 \begin{definition} The composition of $\bar{\pop}$ is given by the following arrow $\bar{\pop}_2'\xrightarrow{\bar{d}_1'} \bar{\pop}_1$,
  \begin{equation}\label{dgm:barCcomposition}
 \tiny
\begin{tikzcd}[row sep=30pt,column sep=30pt]
\pmb{\bar{\pop}_2'\arrow[ddd]}&\pop_0\arrow[ddd,equal]& \Tx^2 \pop_0 \arrow[d,"\id\cx D_{\gm1}"']\arrow[r,"\Tx D_{\pop_0}"]\arrow[l]&\Tx \gm \pop_0 \arrow[d,"(A)\phantom{aaa}D_{\gm \pop_0}"']&\Tx \pop_1\arrow[l,"\Tx d_1"']\arrow[r]\arrow[d,"D_{\pop_1}","(B)\phantom{aaaaa}"']&\gm \pop_0\arrow[d,equal]&\pop_1 \arrow[l,"d_1"']\arrow[d,equal]\arrow[r,"d_0"]&\pop_0\arrow[d,equal]\\
&&\pop_0\cx\gm^21\arrow[r,"D^2_{\pop_0}"']\arrow[dd,"\id\cx\mu_1"',"\phantom{aaaaa}(C)"]&\gm^2\pop_0\arrow[dd,"\mu_{\pop_0}"]&\gm \pop_1\arrow[l,"\gm d_1"]\arrow[r,"\gm d_0"]&\gm \pop_0&\pop_1 \arrow[l,"d_1"']\arrow[r,"d_0"']&\pop_0\arrow[dd,equal]\\
&&&&(D)&\pop_2\arrow[ul,"d_2"]\arrow[ur,"d_0"']\upbks\arrow[d,"d_1"]&(E)&\\
\pmb{\bar{\pop}_1}&\pop_0&\gm \pop_0\arrow[r,"D_{\pop_0}"']\arrow[l,"R_{\pop_0}"]&\gm \pop_0& &\pop_1\arrow[ll,"d_1"]\arrow[rr,"d_0"']&&\gm \pop_0.
\end{tikzcd}
\end{equation}
Let us see that all the diagrams commute: $(A)$ is a combination of natuarlity of $D$ applied to $D_{\pop_0}$ and axiom (\ref{dgm:StrengthConsecutiveApplications_Axiom}) concerning consecutive applications of the strength, $(B)$ is naturality of $D$ at $d_1$, $(C)$ is axiom (\ref{dgm:StrongMonad_AxiomMu}) for strong monads and $(D)$ and $(E)$ are respectively axioms (\ref{PCat_d1d1}) and (\ref{PCat_d0d1}) for composition in $\pop$. The remaining diagrams are clear.
\end{definition}
Notice that from this definition it is clear that $\bar{d}_1$ satisfies axioms (\ref{Cat_d1d1}) and (\ref{Cat_d0d1}). Furthermore, there is a morphism
\begin{center}\begin{tikzcd}\bar{\pop}_2'\arrow[r,"j_2'"] & \pop_2,\end{tikzcd}\end{center}
given by the diagram
 \begin{equation*}
\begin{tikzcd}[row sep=30pt,column sep=30pt]
\pop_0& \Tx^2 \pop_0 \arrow[r,"\Tx D_{\pop_0}"]\arrow[l]&\Tx \gm \pop_0 \arrow[d,"D_{\gm \pop_0}"']&\Tx \pop_1\arrow[l,"\Tx d_1"']\arrow[r]\arrow[d,"D_{\pop_1}"]&\gm \pop_0\arrow[d,equal]&\pop_1 \arrow[l,"d_1"']\arrow[d,equal]\arrow[r,"d_0"]&\pop_0\arrow[d,equal]\\
&&\gm^2\pop_0&\gm \pop_1\arrow[l,"\gm d_1"]\arrow[r,"\gm d_0"]&\gm \pop_0&\pop_1 \arrow[l,"d_1"']\arrow[r,"d_0"']&\pop_0,
\end{tikzcd}
\end{equation*}
which clearly makes the square
\begin{equation}\label{dgm:TMulticat_Cat_C2d1restriction}
\begin{tikzcd}
\bar{\pop}_2' \arrow[d, "\bar{d}_1'"'] \arrow[r,"j_2'"] & \pop_2 \arrow[d, "d_1"]     \\
\bar{\pop}_1 \arrow[r, tail,"j_1"']                            &  \pop_1,                       
\end{tikzcd}
\phantom{aaa}
\begin{tikzcd}[row sep=2pt]
\text{and therefore}\\
\text{also the square}
\end{tikzcd}
\phantom{aaa}
\begin{tikzcd}
\bar{\pop}_2 \arrow[d, "\bar{d}_1"'] \arrow[r,"j_2"] & \pop_2 \arrow[d, "d_1"]     \\
\bar{\pop}_1 \arrow[r, tail,"j_1"']                            &  \pop_1,                       
\end{tikzcd}
\end{equation}
commute, for the corresponding $j_2$. This says, roughly speaking, that composition in $\bar{\pop}$ is ``the same'' as composition in $\pop$, as is also clear in most of the examples.

We have to check that composition is associative (\ref{dgm:Cat_Associativity}). As in the previous subsection, the proof of the following lemma is omitted; it can be read in \cite{Cebrian2}.

\begin{lemma}  \label{lemma:barrestriction} There is a morphism $\bar{\pop}_3\xrightarrow{\;\;j_3\;\;} \pop_3$ such that the following diagrams commute
\begin{equation}\label{dgm:barrestriction}
\begin{tikzcd}
\bar{\pop}_3 \arrow[d, "\bar{d}_1"'] \arrow[r,"j_3"] & \pop_3 \arrow[d, "d_1"]     \\
\bar{\pop}_2 \arrow[r,"j_2"']                            &  \pop_2,                       
\end{tikzcd}
\phantom{aaaaaaaaaaa}
\begin{tikzcd}
\bar{\pop}_3 \arrow[d, "\bar{d}_2"'] \arrow[r,"j_3"] & \pop_3 \arrow[d, "d_2"]     \\
\bar{\pop}_2 \arrow[r,"j_2"']                            &  \pop_2.                      
\end{tikzcd}
\end{equation}
\end{lemma}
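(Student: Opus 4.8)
The plan is to construct $j_3$ by imitating, one dimension higher, the construction of $j_2$ carried out just above, and then to reduce each of the two squares (\ref{dgm:barrestriction}) to the already-established compatibility square (\ref{dgm:TMulticat_Cat_C2d1restriction}) for $j_2$ together with naturality and the strong-monad axioms; this parallels Lemma~\ref{lemma:tilderestriction} on the category-to-operad side. First I would record explicit iterated-pullback models for the two objects in play. Since $\bar{\pop}$ is a category internal to $\ambcat$ with $\bar{\pop}_0=\pop_0$, its object of $3$-simplices is the triple fibre product $\bar{\pop}_3=\bar{\pop}_1\cx_{\pop_0}\bar{\pop}_1\cx_{\pop_0}\bar{\pop}_1$, whereas on the operad side $\pop_3=\gm^2\pop_1\lcx{\gm^2\pop_0}\gm\pop_1\lcx{\gm\pop_0}\pop_1$. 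Exactly as (\ref{dgm:barC2barC2prima}) rewrites $\bar{\pop}_2$ as a model $\bar{\pop}_2'$ in which each arrow-factor carries a visible copy of $\pop_1$ together with the strength maps, I would produce an isomorphism $\bar{\pop}_3\cong\bar{\pop}_3'$ whose three composable arrows appear roughly as $\Tx^2\pop_1$, $\Tx\pop_1$ and $\pop_1$ over the appropriate $\Tx$-powers of $\pop_0$; the isomorphism is assembled from the block calculus for morphisms of composite spans, with the cartesian squares supplied, as in (\ref{dgm:barC2barC2prima}), by the cartesianness of $R$.

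Next I would define $j_3'$, and hence $j_3$, by the evident one-level-up analogue of the diagram defining $j_2'$ after (\ref{dgm:TMulticat_Cat_C2d1restriction}): apply the strength $D_{\pop_1}$ (and its iterate $\Tx D_{\pop_1}$, composed with $D_{\Tx\pop_1}$) to each arrow-factor, collapse the $\Tx$-tails using $R$ and the projection $p_1$, and use the multiplication axiom (\ref{dgm:StrongMonad_AxiomMu}) together with naturality of $D$ to see that the components assemble coherently into the fibre product defining $\pop_3$. Well-definedness of $j_3$ amounts to the assertion that these component maps agree on the shared $\gm$-powers of $\pop_0$, which is precisely axiom (\ref{dgm:StrengthConsecutiveApplications_Axiom}) on consecutive applications of the strength, applied one factor at a time.

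Finally, to check the two squares, I would paste. The face map $\bar{d}_1$ composes the first two of the three arrows using the composition law (\ref{dgm:barCcomposition}) of $\bar{\pop}$ on the relevant pair of factors and leaves the third untouched; under $j_3$ this must match $d_1$, which applies $\comp$ and $\mu$ to the corresponding pair of operations in $\pop_3$. I would decompose the resulting diagram into the $j_2$-square (\ref{dgm:TMulticat_Cat_C2d1restriction}) acting on the first two factors, naturality squares of $\mu$ and of $D$ transporting past the untouched factor, and instances of the strong-monad axioms (\ref{dgm:StrengthConsecutiveApplications_Axiom}) and (\ref{dgm:StrongMonad_AxiomMu}). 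Since $j_1$ is a monomorphism (\ref{dgm:TMulticat_Cat_Def}), it suffices to verify commutativity after postcomposition with $j_1$, which is exactly what this decomposition achieves; the square for $\bar{d}_2$ is symmetric, composing the last two factors instead.

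The hard part will be purely one of bookkeeping rather than of idea: the composition (\ref{dgm:barCcomposition}) interleaves the strength $D$, its two structural axioms, and the operad structure maps $\comp,\mu$, so the diagrams to be filled are large and every inner region must be identified with exactly one naturality square or strong-monad axiom. This is the ``long and unenlightening'' verification deferred to \cite{Cebrian2}. It is also the reason one prefers, where possible, to organize the argument through the category-object structure: since both $\bar{\pop}$, as a nerve, and $\tsbar\pop$, by Proposition~\ref{prop:categoryobject}, are strict category objects, $\bar{\pop}_3$ and $\pop_3$ are the respective double fibre products of $2$-simplices over $1$-simplices, and one may instead induce $j_3$ from $j_1$ and $j_2$ as a map of fibre products, deducing (\ref{dgm:barrestriction}) from the $j_2$-square and the simplicial identities.
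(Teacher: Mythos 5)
Your main route is indeed the intended one: the paper itself omits this verification entirely, deferring it to \cite{Cebrian2}, and your plan --- an explicit iterated-pullback model $\bar{\pop}_3'$ with arrow-factors $\Tx^2\pop_1$, $\Tx\pop_1$, $\pop_1$, a map $j_3'$ built from the strength, and a pasting of naturality squares, strong-monad axioms and the $j_2$-square --- is the evident one-level-up analogue of the construction of $j_2'$ around (\ref{dgm:TMulticat_Cat_C2d1restriction}), in parallel with Lemma \ref{lemma:tilderestriction}. (Small slip: the iterated strength ``$\Tx D_{\pop_1}$ composed with $D_{\Tx\pop_1}$'' does not typecheck, since both maps have source $\Tx^2\pop_1$; the correct composites are $D_{\gm\pop_1}\circ\Tx D_{\pop_1}$ or $\gm D_{\pop_1}\circ D_{\Tx\pop_1}$, which agree by (\ref{dgm:StrengthConsecutiveApplications_Axiom}).) However, one step of your justification fails as stated: you cannot verify the two squares of (\ref{dgm:barrestriction}) ``after postcomposition with $j_1$''. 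Their common target is $\pop_2$, not $\pop_1$, so $j_1$ cannot be postcomposed with them, and no monomorphism out of $\pop_2$ is available ($m$ and the projections are not monic). In the paper the monomorphism $j_1$ is used one step later, to deduce associativity --- whose outer square lands in $\bar{\pop}_1$ --- \emph{from} the present lemma; the lemma's squares themselves must be checked directly, componentwise against the limit defining $\pop_2$, which is exactly what the block calculus for morphisms of spans accomplishes. Your pasting decomposition can do this, but the mono remark should be deleted, not invoked.

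The closing shortcut also does not work as advertised. The strict-category-object structure of Proposition \ref{prop:categoryobject} lives on $\tsbar\pop$, i.e.\ on the objects $\gm\pop_n$, yielding $\gm\pop_3\cong\gm\pop_2\lcx{\gm\pop_1}\gm\pop_2$; it does \emph{not} give $\pop_3\cong\pop_2\times_{\pop_1}\pop_2$, and indeed $\pop_3=\gm^2\pop_1\lcx{\gm^2\pop_0}\gm\pop_1\lcx{\gm\pop_0}\pop_1$ decomposes as $\gm\pop_2\lcx{\gm\pop_1}\pop_2$, with $\gm\pop_2$ rather than $\pop_2$ as a factor. Inducing $j_3$ as a map of fibre products from $\bar{\pop}_3\cong\bar{\pop}_2\times_{\bar{\pop}_1}\bar{\pop}_2$ would therefore require a strength-twisted map $\bar{\pop}_2\to\gm\pop_2$ and compatibility of $j_2$ with the \emph{outer} faces, neither of which is supplied by (\ref{dgm:TMulticat_Cat_C2d1restriction}), which is an inner-face square; note also that the $j_n$ do not assemble into a simplicial map (already $\bar{d}_1$ on $\bar{\pop}_1$ lands in $\pop_0$ while $d_1$ on $\pop_1$ lands in $\gm\pop_0$, so outer-face compatibility only holds up to the strength). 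One is thus thrown back on precisely the bookkeeping of your main route, which, with these two repairs and the large diagrams actually drawn as in \cite{Cebrian2}, does prove the lemma.
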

 \begin{proposition} Composition is associative. 
 \begin{proof}
 In view of Lemma \ref{lemma:barrestriction} there is a diagram
 \begin{equation*}
  \begin{tikzcd}
\bar{C}_3\ar[rrr,"\bar{d}_2"]\ar[ddd,"\bar{d}_1"']\arrow[dr,"j_3"]&&&\bar{C}_2 \ar[ddd,"\bar{d}_1"]\arrow[dl,"j_2"]\\
 &C_3\arrow[r,"d_2"]\arrow[d,"d_1"']&C_2\arrow[d,"d_1"]\\
&C_2\arrow[r,"d_1"']&C_1\\
\bar{C}_2\ar[rrr,"\bar{d}_1"']\arrow[ur,"j_2"]& &&\bar{C}_1 \arrow[ul,tail,"j_1"].
 \end{tikzcd}
 \end{equation*}
 The four trapeziums are the commutative diagrams (\ref{dgm:TMulticat_Cat_C2d1restriction}) and (\ref{dgm:barrestriction}) of Lemma  \ref{lemma:barrestriction} respectively, and the inner square is associativity of composition in $C$ (\ref{dgm:PCat_Associativity}). Since $j_1$ is a monomorphism and all the inner diagrams commute, so does the outer square, as we wanted to see.
 \end{proof}
 \end{proposition}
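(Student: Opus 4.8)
The plan is to reduce associativity of composition in $\bar{\pop}=\Tpc{\gm}{}{\pop}$ to associativity of composition in the original $\gm$-operad $\pop$, exactly as was done for the dual construction $\Tpc{}{\gm}{C}$. The category associativity axiom (\ref{dgm:Cat_Associativity}) asks that the two composites $\bar{\pop}_3\rightrightarrows\bar{\pop}_1$ agree: compose the first two arrows and then with the third ($\bar{d}_1\circ\bar{d}_1$), versus compose the last two and then with the first ($\bar{d}_1\circ\bar{d}_2$). I would assemble these two composites as the top and left edges of a square whose remaining two edges are the single map $\bar{d}_1\colon\bar{\pop}_2\to\bar{\pop}_1$, and then exhibit this outer square as lying over the associativity square of $\pop$ through the monic embeddings $j_i\colon\bar{\pop}_i\hookrightarrow\pop_i$.

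Concretely, the key step is to fill in a diagram whose outer boundary is the square I want to commute and whose inner square is the associativity square of $\pop$, namely $d_1\circ d_1=d_1\circ d_2\colon\pop_3\to\pop_1$ (axiom (\ref{dgm:PCat_Associativity})). The four regions connecting the outer and inner squares are then, on one side, the two squares (\ref{dgm:barrestriction}) of Lemma \ref{lemma:barrestriction}, which compare $\bar{d}_1$ and $\bar{d}_2$ on $\bar{\pop}_3$ with $d_1$ and $d_2$ on $\pop_3$ via $j_3$ and $j_2$; and on the other side, two copies of the square (\ref{dgm:TMulticat_Cat_C2d1restriction}), which compares $\bar{d}_1$ on $\bar{\pop}_2$ with $d_1$ on $\pop_2$ via $j_2$ and $j_1$. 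Each of these four trapezia commutes, and the inner square commutes by associativity in $\pop$, so the whole interior commutes.

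Finally, I would conclude by the monomorphism argument. The two composites $\bar{\pop}_3\to\bar{\pop}_1$ that I wish to identify become equal after postcomposition with $j_1$, since the commuting interior carries both of them to the single composite $\bar{\pop}_3\to\pop_1$. Because $j_1$ is a monomorphism — it is a pullback of $D_{\pop_0}$, which is assumed monic — the two composites themselves coincide, which is precisely associativity.

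I expect the only real obstacle to be bookkeeping rather than mathematics: one must ensure that the faces $\bar{d}_1$ and $\bar{d}_2$ on $\bar{\pop}_3$ are arranged so that the squares of Lemma \ref{lemma:barrestriction} glue to the two copies of (\ref{dgm:TMulticat_Cat_C2d1restriction}) along their shared $\bar{\pop}_2$ edges, so that the four regions close up into the single outer square. This compatibility is exactly what Lemma \ref{lemma:barrestriction} packages, and the construction of $j_3$ (omitted here, cf.\ \cite{Cebrian2}) is where the genuine work lies; granting it, the associativity proof is a formal monomorphism-cancellation argument identical in spirit to the one given above for $\Tpc{}{\gm}{C}$.
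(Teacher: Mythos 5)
Your proposal matches the paper's proof essentially verbatim: both pad the associativity square of $\pop$ (axiom (\ref{dgm:PCat_Associativity})) with the two squares (\ref{dgm:barrestriction}) of Lemma \ref{lemma:barrestriction}) and two copies of (\ref{dgm:TMulticat_Cat_C2d1restriction}), then cancel the monomorphism $j_1$ to conclude that the outer square commutes. Your justification that $j_1$ is monic as a pullback of $D_{\pop_0}$ is also exactly the paper's, so there is nothing to add.
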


 The unit morphism of the new category is easier to obtain than composition. Recall that the unit is a morphism $e\colon \pop_0\rightarrow \bar{\pop}_1$ such that this diagram (\ref{Cat_topbottom}) commutes
 \begin{center}
\begin{tikzcd}[column sep=small]
 & \pop_0\ar[d,"e"]\ar[dl,"\id"']\ar[dr,"\id"] & \\
 \pop_0 &\bar{\pop}_1\ar[l,"\bar{d}_1"]\ar[r,"\bar{d}_0"']& \pop_0.
 \end{tikzcd}
 \end{center}
 \begin{definition} The unit of $\bar{\pop}$ is given by the following arrow:
 \begin{equation}\label{dgm:TMulticat_Cat_unit_def}
 \begin{tikzcd}[row sep=20pt]
\pop_0\arrow[r,equal]\arrow[d,"\id"',"\phantom{aaaa}(A)"]& \pop_0\arrow[r,equal]\arrow[d,"F_{\pop_0}"]& \pop_0\arrow[r,equal]\arrow[d,"(B)\phantom{aaaa}"',"\eta_{\pop_0}\phantom{aaa}(C)"]& \pop_0\arrow[r,equal]\arrow[d,"e"]& \pop_0\arrow[d,"\id","(D)\phantom{aaa}"']\\
\pop_0&\Tx \pop_0\arrow[l,"R_{\pop_0}"]\arrow[r,"D_{\pop_0}"']&\gm \pop_0& \pop_1\arrow[l,"d_1"]\arrow[r,"d_0"']&\pop_0.
 \end{tikzcd}
 \end{equation}
 It is clear that all the diagrams commute: $(A)$ is obvious from the definitions of $R$ and $F$, $(B)$ is axiom (\ref{dgm:StrongMonad_AxiomEta}) for strong monads, and $(C)$ and $(D)$ are respectively the unit axioms (\ref{PCat_d0e}) and (\ref{PCat_d1e}) of $\pop$.
  \end{definition}
 

 We have to check  that composition with the unit morphism is the identity (\ref{dgm:Cat_Unit}). To prove it we follow the same strategy as for associativity. That is, we project the diagrams into diagrams in the original $\gm$-operad $\pop$ containing the corresponding unit axioms. The proof of the following lemma can be found in \cite{Cebrian2}. Recall first that
$$\pop_2:=\gm\pop_1\lcx{\gm\pop_0}\pop_1\;\;\;\text{and}\;\;\; \bar{\pop}_2:=\bar{\pop}_1\cx_{\pop_0}\bar{\pop}_1.$$
 \begin{lemma} \label{lemma:barerestriction}We have commutative squares
  \begin{figure}[H]
\centering
 \begin{subequations}
\begin{minipage}[b]{0.45\linewidth}
 \begin{equation}\label{dgm:barerestrictiona}
 \begin{tikzcd}[row sep=20pt]
 \pop_0\lcx{\, \pop_0\,} \bar{\pop}_1\arrow[d,"\bar{e}\cx_{\id}\id"']\arrow[r,"j_1^l"]&\gm \pop_0 \lcx{\gm \pop_0}\pop_1\arrow[d,"Pe\cx_{\id}\id"']\\
 \bar{\pop}_2\arrow[r,"j_2"']&\pop_2,
 \end{tikzcd}
 \end{equation}
\end{minipage}
\quad
\begin{minipage}[b]{0.45\linewidth}
\begin{equation}\label{dgm:barerestrictionb}
 \begin{tikzcd}
  \bar{\pop}_1\lcx{\,\pop_0\,}\pop_0\arrow[d,"\id\cx_{\id}\bar{e}"']\arrow[r,"j_1^r"]& \pop_1 \lcx{\, \pop_0\,} \pop_0\arrow[d,"\eta_{\pop_1}\cx_{\eta_{\pop_0}} e"']\\
 \bar{\pop}_2\arrow[r,"j_2"']&\pop_2,
 \end{tikzcd}
 \end{equation}
 \end{minipage}
 \end{subequations}
\end{figure}
\noindent where $j_1^l$ and $j_1^r$ are the morphisms corresponding to $j_1$.
 \end{lemma}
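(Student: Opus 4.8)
The plan is to exploit that the common target $\pop_2=\gm\pop_1\lcx{\gm\pop_0}\pop_1$ is a limit, so that two maps into it coincide as soon as their composites with the two projections $\pop_2\to\gm\pop_1$ and $\pop_2\to\pop_1$ agree. First I would make the ingredients explicit. Unwinding the defining pullback (\ref{dgm:TMulticat_Cat_Def}) of $\bar\pop_1$ and the unit (\ref{dgm:TMulticat_Cat_unit_def}), the map $\bar{e}$ decorates a color with the trivial multiplicity $F_{\pop_0}$ and with the operadic unit $e$; and $j_2$, read off through the isomorphism $\bar\pop_2\cong\bar\pop_2'$ of (\ref{dgm:barC2barC2prima}) together with $j_2'$, is the map that applies the strength $D$ to the $\Tx$-decorated inner slot $\Tx\pop_1$ (turning an operation-with-multiplicity into the corresponding $\gm$-configuration in $\gm\pop_1$) while leaving the outer slot $\pop_1$ untouched. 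The comparison maps $j_1^l$ and $j_1^r$ are the induced maps into the degenerate pieces $\gm\pop_0\lcx{\gm\pop_0}\pop_1\cong\pop_1$ and $\pop_1\lcx{\pop_0}\pop_0\cong\pop_1$.

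For the left square (\ref{dgm:barerestrictiona}) the inserted unit sits in the inner slot, while the outer operation is arbitrary, of some arity recorded in its source configuration. Projecting to the outer factor $\pop_1$, both composites return that outer operation unchanged, so the equality is immediate from the definition of $j_2$. Projecting to the inner factor $\gm\pop_1$ is the only point that uses structure: here $j_2$ feeds the decorated unit, namely $\Tx e$ applied to the (color, multiplicity) datum, through $D_{\pop_1}$, and naturality of the strength transformation $D\colon\Tx\Rightarrow\gm$ at the operad unit $e$ gives $D_{\pop_1}\circ\Tx e=\gm e\circ D_{\pop_0}$; the right-hand side is exactly $\gm e$ applied to the source configuration, which is what the map $\gm e\cx_{\id}\id$ produces. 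Hence both projections agree and the square commutes.

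For the right square (\ref{dgm:barerestrictionb}) the roles are reversed: now the unit is the outer operation and the inner operation is arbitrary. Because the outer unit is unary, the multiplicity attached to the inner slot is the trivial one, so the $\Tx\pop_1$-datum is $F_{\pop_1}$ of the inner operation. Projecting to the outer factor $\pop_1$ again gives equality on the nose, both sides returning the operadic unit $e$ supplied by $\bar{e}$ and by $\eta_{\pop_1}\cx_{\eta_{\pop_0}}e$ respectively. Projecting to the inner factor $\gm\pop_1$, $j_2$ sends $F_{\pop_1}$ of the inner operation through $D_{\pop_1}$, and the strength--unit axiom (\ref{dgm:StrongMonad_AxiomEta}) at $\pop_1$ identifies $D_{\pop_1}\circ F_{\pop_1}$ with $\eta_{\pop_1}$, matching the $\eta_{\pop_1}$ leg of the right-hand vertical. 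Thus this square commutes as well.

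I expect the genuine difficulty to be purely organizational rather than conceptual: one must track, through the isomorphism (\ref{dgm:barC2barC2prima}) and the definition of $j_2'$, precisely how the degenerate input distributes over the three components $\Tx^2\pop_0$, $\Tx\pop_1$ and $\pop_1$ of $\bar\pop_2'$ --- in particular that the multiplicity decorating the inner slot is the arity of the \emph{outer} operation. Once that identification is pinned down, each projection equality reduces to a single naturality square of $D$ (left square) or a single instance of the strength--unit compatibility (\ref{dgm:StrongMonad_AxiomEta}) (right square), together with the definition of $\bar{e}$. Note that the monomorphism hypothesis on $D_{\pop_0}$ is not needed for the lemma itself, since $\pop_2$ is a genuine limit and a componentwise check suffices; that hypothesis is used only afterwards, to deduce the unit laws of $\bar\pop$ from these squares. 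This is the pattern of the proof recorded in~\cite{Cebrian2}.
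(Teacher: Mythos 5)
Your proof is correct, and it is worth noting that the paper itself contains no proof of Lemma \ref{lemma:barerestriction} to compare against: it is deferred to the thesis \cite{Cebrian2}, and the analogous results in Section \ref{seccion:Tcons} are established there in the span-calculus style, by pasting block morphisms as in (\ref{dgm:calculusid})--(\ref{dgm:calculusprojections}). Your route is genuinely different and more elementary: since $\pop_2=\gm\pop_1\lcx{\gm\pop_0}\pop_1$ is a strict $1$-categorical pullback (the paper deliberately stays within strict notions here), its two projections are jointly monic, so commutativity of each square can be checked after projecting to $\gm\pop_1$ and to $\pop_1$ separately. Your bookkeeping through the isomorphism $\bar{\pop}_2\cong\bar{\pop}_2'$ of (\ref{dgm:barC2barC2prima}) is exactly right, and it is the crux: the $\Tx\pop_1$-component carries the inner operation decorated with the $\gm 1$-shape (arity) of the \emph{outer} operation, so in (\ref{dgm:barerestrictiona}) the decorated unit is $\Tx e$ of the $(\text{color},\text{multiplicity})$ datum, and the $\gm\pop_1$-projection reduces to $D_{\pop_1}\circ \Tx e=\gm e\circ D_{\pop_0}$, which is naturality of $D\colon \Tx\Rightarrow\gm$ at $e$ (legitimate, since $D_A$ is natural in $A$ as a consequence of naturality of $D_{A,B}$), while in (\ref{dgm:barerestrictionb}) the outer unit is unary, so the inner datum is $F_{\pop_1}$ of the inner operation and $D_{\pop_1}\circ F_{\pop_1}=\eta_{\pop_1}$ by (\ref{dgm:StrongMonad_AxiomEta}); the $\pop_1$-projections agree trivially in both cases. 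Your side remark that the monomorphism hypothesis on $D_{\pop_0}$ is not needed for this lemma, only afterwards when the mono $j_1$ is used to deduce the unit laws (\ref{dgm:Cat_Unit}), is also accurate. The one point you should pin down explicitly rather than leave implicit is the definition of $j_1^l$ and $j_1^r$, which the statement only describes as ``corresponding to $j_1$'': your argument requires reading them as $(c,g)\mapsto(d_1(j_1 g),\,j_1 g)$ and $(f,c)\mapsto(j_1 f,\,c)$, which is indeed the reading forced by the regions $(A)$ of diagrams (\ref{dgm:TMulticat_Cat_unit_axiom_leftproof}) and (\ref{dgm:TMulticat_Cat_unit_axiom_rightproof}), so this is a matter of explicitness, not a gap. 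What the block-calculus proof buys in exchange is uniformity (it works verbatim in any cartesian $\ambcat$ without ever invoking elements or projections one at a time); what yours buys is brevity and a clear isolation of the two axioms actually doing the work.
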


\begin{proposition} The unit morphism $\bar{e}$ of $\bar{\pop}$ satisfies the left and right composition axioms (\ref{dgm:Cat_Unit}).
 \begin{proof}
 For the left composition (\ref{dgm:Cat_Unita}), the required commutative triangle is the outline of the diagram
\begin{equation}\label{dgm:TMulticat_Cat_unit_axiom_leftproof}
\begin{tikzcd}[column sep=50pt]
\pop_0\lcx{\,\pop_0}\bar{\pop}_1 \arrow[rrrr, "\bar{e}\cx_{\id}\id"] \arrow[rd, "j_1^l\phantom{aaaaaaaaaaaaaaaaaaaa}(B)"] \arrow[rrddd,bend right,"\phantom{aaa}(A)" black] &   &\phantom{R}&&\bar{\pop}_2 \arrow[ld,"j_2"] \arrow[llddd, "\bar{d}_1", bend left] \\
& \gm \pop_0\lcx{\gm \pop_0}\pop_1 \arrow[rd,"\phantom{aaaaaa}(C)"] \arrow[rr, "\gm e\cx_{\id}\id"] && \gm \pop_1\lcx{\gm \pop_0}\pop_1 \arrow[ld, "d_1"] \arrow[ldd,white,"\phantom{aa}(D)" black]&\\
&& \pop_1&                                            &                                                                                                                   \\
                                                                                                                 &                                                              & \bar{\pop}_1,\arrow[u, tail,"j_1"] &                                            &                                                                                                                  
\end{tikzcd}
\end{equation}
We have that Diagram $(A)$ commutes by definition of $j_1^l$,  $(B)$ is precisely (\ref{dgm:barerestrictiona}) of Lemma \ref{lemma:barerestriction}, $(C)$ is the left composition with unit axiom in the $\gm$-operad $C$  (\ref{dgm:PCat_Unita})  and $(D)$ is the same as (\ref{dgm:TMulticat_Cat_C2d1restriction}). Since $j_1$ is a monomorphism and all the inner diagrams commute, so does the outer triangle, as we wanted to see.
 
 For the right composition (\ref{dgm:Cat_Unitb}), the required commutative triangle is the outline of the diagram 
\begin{equation}\label{dgm:TMulticat_Cat_unit_axiom_rightproof}
\begin{tikzcd}[column sep=50pt]
\bar{\pop}_1\lcx{\,\pop_0\,}\pop_0 \arrow[rrrr,"\id\cx_{\id}\bar{e}"] \arrow[rd,"j_1^r \phantom{aaaaaaaaaaaaaaaaaaaaaaa}(B)"] \arrow[rrddd, bend right,"\phantom{aaa}(A)" black] &&&& \bar{\pop}_2 \arrow[ld,"j_2"] \arrow[llddd, "\bar{d}_1", bend left] \\
& \pop_1\lcx{\;\pop_0\;}\pop_0 \arrow[rd,"\phantom{aaaaaa}(C)"] \arrow[rr, "\eta_{\pop_1}\cx_{\eta_{\pop_0}}e"] && \gm \pop_1\lcx{\gm \pop_0}\pop_1 \arrow[ld, "d_1"] \arrow[ldd,white,"\phantom{aa}(D)" black]&\\
&& \pop_1&&\\
&& \bar{\pop}_1, \arrow[u,tail,"j_1"] &&                                                                                                                  
\end{tikzcd}
 \end{equation}
We have that Diagram $(A)$ commutes by definition of $j_1^r$,  $(B)$ is precisely (\ref{dgm:barerestrictionb}) of Lemma \ref{lemma:barerestriction}, $(C)$ is the right composition with unit axiom in the $\gm$-operad $\pop$ (\ref{dgm:PCat_Unitb}), and $(D)$ is the same as (\ref{dgm:TMulticat_Cat_C2d1restriction}), as before. Since $j_1$ is a monomorphism and all the inner diagrams commute so does the outer triangle, as we wanted to see.
 \end{proof}
 \end{proposition}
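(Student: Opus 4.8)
The plan is to mirror, step for step, the proof of the corresponding unit-composition proposition for $\Tpc{}{\gm}{C}$, since the construction $\Tpc{\gm}{}{\pop}=\bar{\pop}$ is structurally dual to $\Tpc{}{\gm}{C}$. The guiding principle is identical throughout: rather than verify the unit axioms directly on the pullback $\bar{\pop}_1$, I would project everything into the original $\gm$-operad $\pop$, where the unit axioms (\ref{dgm:PCat_Unita}) and (\ref{dgm:PCat_Unitb}) are available, and then invoke the monomorphism $j_1\colon \bar{\pop}_1\rightarrowtail\pop_1$ established after (\ref{dgm:TMulticat_Cat_Def}) to transport commutativity back to $\bar{\pop}$.

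For the left composition axiom (\ref{dgm:Cat_Unita}), I would exhibit the required triangle as the outline of a larger diagram whose vertices, besides $\pop_0\lcx{\pop_0}\bar{\pop}_1$, $\bar{\pop}_2$ and $\bar{\pop}_1$, include the $\pop$-level objects $\gm\pop_0\lcx{\gm\pop_0}\pop_1$, $\gm\pop_1\lcx{\gm\pop_0}\pop_1=\pop_2$ and $\pop_1$. The interior then splits into a small number of cells, each commuting for an explicit reason: the restriction square (\ref{dgm:barerestrictiona}) of Lemma \ref{lemma:barerestriction} expresses $\bar{e}\cx_{\id}\id$ in terms of $\pop$-data; the left unit axiom (\ref{dgm:PCat_Unita}) of $\pop$ supplies the central commutativity; and the restriction square (\ref{dgm:TMulticat_Cat_C2d1restriction}) identifies $\bar{d}_1$ with $d_1$ after applying $j_1$ and $j_2$. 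Since $j_1$ is a monomorphism, the commutativity of every inner cell forces the outer triangle to commute. The right composition axiom (\ref{dgm:Cat_Unitb}) is then handled by the symmetric argument, replacing (\ref{dgm:barerestrictiona}) by (\ref{dgm:barerestrictionb}) and the left unit axiom by the right unit axiom (\ref{dgm:PCat_Unitb}) of $\pop$, and again closing the diagram by monicity of $j_1$.

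The genuine content is entirely carried by the two restriction squares of Lemma \ref{lemma:barerestriction}, which I would take as given, together with the monicity of $j_1$; once these are in hand the remaining work is purely diagrammatic bookkeeping. The main obstacle is therefore not conceptual but organizational: laying out the large commutative diagram so that each interior cell is recognizably one of the unit or restriction axioms, and checking that the composites along the two boundary paths really are $\bar{d}_1\circ(\bar{e}\cx_{\id}\id)$ and the identity on $\bar{\pop}_1$, respectively.
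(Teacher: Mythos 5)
Your proposal is correct and takes essentially the same approach as the paper: you decompose the required triangles into the same cells, namely the restriction squares (\ref{dgm:barerestrictiona}), (\ref{dgm:barerestrictionb}) and (\ref{dgm:TMulticat_Cat_C2d1restriction}), the unit axioms (\ref{dgm:PCat_Unita}) and (\ref{dgm:PCat_Unitb}) of $\pop$, and conclude by monicity of $j_1$, exactly as in diagrams (\ref{dgm:TMulticat_Cat_unit_axiom_leftproof}) and (\ref{dgm:TMulticat_Cat_unit_axiom_rightproof}).
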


The last thing to check is that the construction is functorial. First of all we have to specify how the construction acts on morphisms. Let $\pop$ and $\pop'$ be two $\gm$-operads and $\pop\xrightarrow{f} B$ a morphism, that is a diagram
 \begin{equation*}
 \begin{tikzcd}
 \gm \pop_0 \arrow[d,"\gm f_0"']&\pop_1\arrow[l,"d_1"']\arrow[r,"d_0"]\arrow[d,"f_1"]&\pop_0\arrow[d,"f_0"]\\
 \gm B_0&B_1\arrow[l,"d_1"']\arrow[r,"d_0"]&B_0
 \end{tikzcd}
 \end{equation*}
 satisfying the commutative squares of \ref{eq:Poperadsmorphism}. Then $\Tpc{\gm}{}{f}$ is the functor given by
\begin{equation*}
\begin{tikzcd}[row sep=20pt]
\pmb{\Tpc{\gm}{}{\pop}\arrow[d,"\Tpc{\gm}{}{f}"]}&\pop_0\arrow[d,"f_0"']&\Tx \pop_0\arrow[l,"R_{\pop_0}"]\arrow[r,"D_{\pop_0}"']\arrow[d,"\Tx f_0"]&\gm \pop_0\arrow[d,"\gm f_0"]& \pop_1\arrow[l,"d_1"]\arrow[r,"d_0"']\arrow[d,"\gm f_1"]&\pop_0\arrow[d,"f_0"]\\
\pmb{\Tpc{\gm}{}{B}}&B_0&\Tx B_0\arrow[l,"R_{B_0}"]\arrow[r,"D_{B_0}"']&\gm B_0& B_1\arrow[l,"d_1"]\arrow[r,"d_0"']&B_0.
 \end{tikzcd}
 \end{equation*}
 It is a bit tedious but not difficult to see that $\Tpc{\gm}{}{f}$ satisfies again the commutative squares of \ref{eq:Poperadsmorphism} \cite{Cebrian2}. Moreover, given another morphism $B\xrightarrow{g} A$ it is clear that $\Tpc{\gm}{}{(g\circ f)}=\Tpc{\gm}{}{g}\circ \Tpc{\gm}{}{f}$, just because of the functoriality of $\gm$ and $\Tx$.

Since the construction is functorial, if the strength $D_{A}$ is a monomorphism for every object  $A\in\ambcat$ then $\Tpc{\gm}{}{}$ is in fact a functor from $\gm$-operads to categories internal to $\ambcat$. 

\subsection{The composite construction}
Since we have defined a construction from $\gm$-operads to categories and a construction from categories to $\gm$-operads, we obtain a composite construction from $\gm'$-operads to $\gm$-operads, for $\gm'$ and $\gm$ not necessarily the same monad. In particular, since a category is the same as an $\idm$-operad, the composite construction for $\gm'=\idm$ is the same as the functor from categories to $\gm$-operads. From now on we call $\tcons$-construction any of the three constructions, the context will suffice to distinguish, but we are mainly interested in landing on a $\gm$-operad, rather than a category.  To keep notation short, we denote by 
\label{TpcQQ}$$\Tpc{}{\gm}{\pop}:=\Tpc{}{\gm}{\Tpc{\gm'}{}{\pop}}$$
the composite construction that produces a $\gm$-operad from the $\gm'$-operad $\pop$. The monad $\gm'$ will be always clear from the context. 


\subsection{Finiteness conditions}

In Section \ref{section:plethysmsandoperads} we will be interested in computing the incidence bialgebra of the bar construction of several $\gm$-operads in $\ambcat=\infgrpd$. Recall that to be able to take the homotopy cardinality, the bar construction has to be locally finite as a simplicial groupoid (in the sense of \cite{GKT:DSIAMI-2}). We now define the notion of locally finite operad (in the sense of (\cite{Kock-Weber}) in the setting of $\gm$-operads, which is the sufficient condition for its bar construction to be locally finite, and we give sufficient conditions on the $\tcons$-construction to preserve locally finiteness.

\begin{definition} A natural transformation is \emph{finite} if all its components are finite. A monad $(\gm,\mu,\eta)$ on $\infgrpd$ is \emph{locally finite} if $\mu$ and $\eta$ are finite natural transformations.
 A $\gm$-operad $\pop$ is locally finite if $\pop_1$ is locally finite, and the maps $d_1$ and $e$ are finite.
\end{definition}

In the special case of $\gm=\idm$, $\gm$-operads are just categories, and the notion of locally finite agrees with the standard notion. Notice that $\pop$ can be locally finite even if $\gm$ is not. The condition of $\gm$ being locally finite appears in the $\tcons$-construction.

\begin{example}\label{ex:asscommlocfinite} For a classical symmetric or nonsymmetric operad, the locally finiteness condition amounts to saying that every operation can be expressed as a composition of operations in a finite number of ways. For instance, the operads $\ass$ and $\comm$ are locally finite. For this it is important that nullary operations are excluded. The non-reduced versions, where there is a nullary operation, are {\em not} locally finite.
\end{example}

 The bar construction of $\pop$ is locally finite if  $\pop$ is locally finite and $\gm$ preserves locally finite groupoids and finite maps (see Section \ref{seccion:MonadsMulticategories}). Also, given another locally finite monad $\mathsf{R}$ on $\ambcat$ that preserves locally finite groupoids and finite maps, if there is a cartesian monad map $\gm\xRightarrow{\psi}\mathsf{R}$ with $\psi$ finite then the bar construction $\tsbar^{\mathsf{R}}$ is also locally finite. Let us see that the $\tcons$-construction interacts well with finiteness, as long as some simple conditions are satisfied.

\begin{lemma}\label{lemma:TCfinite} 
Let $\gm\colon\infgrpd\rightarrow\infgrpd$ be a locally finite strong monad that preserves locally finite groupoids, finite maps and fibrations. Assume moreover that  the strength $D$ is finite. Consider a locally finite category $C$ in $\infgrpd$ such that $C_0$ is discrete and $D_{C_0}$ is a monomorphism.
Then the $\gm$-operad $\Tpc{}{\gm}{C}$ is locally finite.
\begin{proof} Recall from Diagram~\eqref{dgm:Cat_TMulticat_Def} that $\tilde{C}_1$ is defined as the pullback 
\begin{center}
\begin{tikzcd}
\tilde{C}_1\arrow[r]\arrow[d]\rdpbk & \Tx C_0\arrow[d,tail,"D_{C_0}"]\\
\gm C_1\arrow[r,"\gm d_0"']& \gm C_0.
\end{tikzcd}
\end{center}
Notice that the pullback and the monomorphism refer to the $1$-categorical notions, while the finite map condition is a homotopy notion. 

Let us see first that $\tilde{C}_1$ is locally finite. Since $C_1$ is locally finite and $\gm$ preserves locally finite groupoids, $\gm C_1$ is locally finite. Now, an automorphism in  $\tilde{C}_1$ is a pair of automorphisms $(f,g)\in \gm C_1\cx \Tx C_0$ coinciding at $\gm C_0$, but there is only a finite number of $f$'s, since $\gm C_1$ is locally finite, and for each $f$ at most one $g$, since $D_{C_0}$ is a monomorphism.

We have to prove also that $\tilde{d}_1$ and $\tilde{e}$ are finite maps. This follows from their definitions,~\ref{def:tildecomp} and~\ref{def:tildeunit}: since $C_0$ is discrete, we have that $d_0$ is a fibration, and because $\gm$ (and also $\Tx$) preserves fibrations, all the right arrows  in diagrams~\eqref{dgm:tildeCcomposition} and~\eqref{dgm:Cat_TMulticat_unit_def} are fibrations. As a consequence their limit is equivalent to their homotopy limit. Finally, notice that all the vertical maps involved in these two diagrams are finite. This implies that their homotopy limit, and hence their limit, is also finite.
\end{proof}
\end{lemma}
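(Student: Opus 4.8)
The plan is to verify directly the three defining conditions of a locally finite $\gm$-operad for $\Tpc{}{\gm}{C}$: that the groupoid of operations $\tilde{C}_1$ is locally finite, that the composition map $\tilde{d}_1$ is finite, and that the unit map $\tilde{e}$ is finite. I would dispatch the local finiteness of $\tilde{C}_1$ first, since it is the most self-contained, and then handle the two structure maps together, because both are described as maps induced on limits by morphisms of long spans and should succumb to one and the same fibrancy argument.

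For the local finiteness of $\tilde{C}_1$, recall from \eqref{dgm:Cat_TMulticat_Def} that it is the strict pullback of $\gm C_1 \xrightarrow{\gm d_0} \gm C_0 \xleftarrow{D_{C_0}} \Tx C_0$, where $\Tx C_0 = C_0\times\gm 1$. Since $C$ is locally finite, $C_1$ is a locally finite groupoid, and since $\gm$ preserves locally finite groupoids, so is $\gm C_1$. An automorphism of an object of $\tilde{C}_1$ is a pair of automorphisms, one in $\gm C_1$ and one in $\Tx C_0$, sharing an image in $\gm C_0$. There are only finitely many of the first because $\gm C_1$ is locally finite, and because $D_{C_0}$ is a monomorphism, hence injective on arrows, each of them is matched by at most one automorphism of $\Tx C_0$. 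Hence every automorphism group of $\tilde{C}_1$ is finite.

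For the finiteness of $\tilde{d}_1$ and $\tilde{e}$, I would read off their definitions~\ref{def:tildecomp} and~\ref{def:tildeunit}: each is the map induced on limits by the vertical arrows of a diagram of spans, namely \eqref{dgm:tildeCcomposition} and \eqref{dgm:Cat_TMulticat_unit_def}. The argument has two ingredients. First, fibrancy: since $C_0$ is discrete, the target map $d_0\colon C_1\to C_0$ is a fibration, and as both $\gm$ and $\Tx$ preserve fibrations, all the horizontal legs pulled back in these diagrams are fibrations; by Remark~\ref{rk:homotopystrictpullback} the strict limits defining $\tilde{C}_2$ and $\tilde{C}_1$ therefore compute the corresponding homotopy limits. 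Second, finiteness of the components: every vertical arrow in \eqref{dgm:tildeCcomposition} and \eqref{dgm:Cat_TMulticat_unit_def} is assembled from $d_1$ and $e$ (finite because $C$ is locally finite), $\mu$ and $\eta$ (finite because $\gm$ is locally finite), the strength $D$ (finite by hypothesis), and identities, together with applications of $\gm$ and $\Tx$, which preserve finite maps; since finite maps are closed under composition and pullback, each vertical component is finite.

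The main obstacle, and the step that forces the hypotheses to be combined, is the passage from finiteness of the individual vertical components to finiteness of the induced map on limits. Once fibrancy guarantees that all the limits in sight are homotopy limits, the homotopy fiber of $\tilde{d}_1$ (resp.\ $\tilde{e}$) over a point of $\tilde{C}_1$ is itself a homotopy limit of the homotopy fibers of the vertical components over that point. Each such fiber is finite, and a homotopy limit of a finite diagram of finite groupoids along fibrations is again finite, so the total homotopy fiber is finite; this is precisely the assertion that $\tilde{d}_1$ and $\tilde{e}$ are finite maps. With the three conditions established, $\Tpc{}{\gm}{C}$ is locally finite.
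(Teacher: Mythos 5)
Your proof is correct and follows the paper's own argument essentially step for step: the same pullback-plus-monomorphism count of automorphisms for $\tilde{C}_1$, and the same fibrancy argument ($C_0$ discrete, so $d_0$ is a fibration, preserved by $\gm$ and $\Tx$) reducing the strict limits in \eqref{dgm:tildeCcomposition} and \eqref{dgm:Cat_TMulticat_unit_def} to homotopy limits, combined with finiteness of all the vertical components ($d_1$, $e$, $\mu$, $\eta$, $D$, and their images under $\gm$ and $\Tx$). The only divergence is that you spell out the final step the paper leaves implicit---that the homotopy fiber of the induced map on limits is the homotopy limit of the fibers of the components, and that such a finite homotopy limit of finite groupoids is finite---which is a useful clarification rather than a different route.
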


\begin{lemma}\label{lemma:TQfinite}Let $\gm\colon\infgrpd\rightarrow\infgrpd$ be a locally finite strong monad that preserves locally finite groupoids, finite maps and fibrations. Assume moreover that  the strength $D$ is finite. Consider a locally finite  $\gm$-operad $\pop$ such that $\pop_0$ is discrete and $D_{\pop_0}$ is a monomorphism.
Then the $\gm$-operad $\Tpc{}{\gm}{C}$ is locally finite.
\begin{proof} The proof is analogous to the proof of Lemma~\ref{lemma:TCfinite}.
\end{proof}
\end{lemma}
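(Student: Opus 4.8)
The plan is to follow the proof of Lemma~\ref{lemma:TCfinite} step for step, interchanging the roles of the two legs of the defining span. As for a category, two things must be checked: that the groupoid of morphisms $\bar{\pop}_1$ of $\Tpc{\gm}{}{\pop}$ is locally finite, and that the composition map $\bar{d}_1\colon\bar{\pop}_2\to\bar{\pop}_1$ and the unit map $\bar{e}\colon\pop_0\to\bar{\pop}_1$ are finite.

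First I would establish local finiteness of $\bar{\pop}_1$. Recall from~\eqref{dgm:TMulticat_Cat_Def} that $\bar{\pop}_1$ is the (strict) pullback of $\Tx\pop_0\xrightarrow{D_{\pop_0}}\gm\pop_0\xleftarrow{d_1}\pop_1$, so an automorphism of one of its objects is a pair $(\beta,\alpha)$ consisting of an automorphism $\beta$ in $\pop_1$ and an automorphism $\alpha$ in $\Tx\pop_0$ that agree in $\gm\pop_0$. Since $\pop_1$ is locally finite there are only finitely many choices of $\beta$, and since $D_{\pop_0}$ is a monomorphism each $\beta$ has at most one partner $\alpha$; hence every automorphism group of $\bar{\pop}_1$ is finite.

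Next I would show that $\bar{d}_1$ and $\bar{e}$ are finite, arguing as in the template from their definitions~\eqref{dgm:barCcomposition} and~\eqref{dgm:TMulticat_Cat_unit_def}. All the vertical maps in those two diagrams are finite: this uses that $\pop$ is locally finite (the composition $d_1\colon\pop_2\to\pop_1$ and the unit $e$ are finite), that $D$ and $F$ are finite, and that $\gm$ is locally finite (so $\mu$ and $\eta$ are finite) and preserves finite maps. To upgrade ``finite on each vertical map'' to ``finite on the induced map between the limits'' one needs the strict limits of the top and bottom rows to coincide with the corresponding homotopy limits. Here the discreteness of $\pop_0$ enters: the target map $d_0\colon\pop_1\to\pop_0$ is then a fibration, the projections $R$ are fibrations, and since $\gm$ and $\Tx$ preserve fibrations this provides a fibrant leg for the pullbacks occurring in the diagrams (cf.\ Remark~\ref{rk:homotopystrictpullback}), so strict and homotopy limits agree and the induced maps are finite.

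The step I expect to be the main obstacle is precisely this last homotopy-limit verification. Unlike the category case, where both face maps land in the discrete $C_0$ and are therefore simultaneously fibrations, the operad source map $d_1$ lands in the non-discrete $\gm\pop_0$, so it is not a priori a fibration; one must check that in the iterated pullbacks assembling $\bar{\pop}_2$ (through the isomorphism $\bar{\pop}_2\cong\bar{\pop}_2'$) and in the comparison squares, each pullback can be taken along one of the fibrant legs --- the target maps to $\pop_0$ or the $R$-projections --- rather than along $d_1$ or $D_{\pop_0}$. Alternatively, one may use the square~\eqref{dgm:TMulticat_Cat_C2d1restriction} to factor $\bar{d}_1$ through the finite composition $\pop_2\xrightarrow{d_1}\pop_1$. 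Either route reduces the claim to the finiteness hypotheses on $D$, $d_1$ and $e$ together with the discreteness of $\pop_0$, exactly as in Lemma~\ref{lemma:TCfinite}.
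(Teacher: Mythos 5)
Your proposal is correct and is essentially the paper's proof: the paper's entire argument for this lemma is the declaration that it is analogous to Lemma~\ref{lemma:TCfinite}, and your write-up is precisely that analogue (local finiteness of $\bar{\pop}_1$ from the monomorphism $D_{\pop_0}$ together with local finiteness of $\pop_1$, then finiteness of $\bar{d}_1$ and $\bar{e}$ from the finite vertical maps in diagrams~\eqref{dgm:barCcomposition} and~\eqref{dgm:TMulticat_Cat_unit_def} between rows whose strict limits are homotopy limits). The one obstacle you flag in fact dissolves: since $\pop_0$ is discrete, axiom~\eqref{dgm:StrengthWith1_Axiom} together with naturality of $D$ exhibits $D_{\pop_0}$ as a coproduct over the points $a$ of $\pop_0$ of maps isomorphic to $\gm(\iota_a)\colon \gm 1\to\gm\pop_0$, each a fibration because $\gm$ preserves fibrations, so the pullback over $\gm\pop_0$ is already taken along a fibrant leg and no rerouting through~\eqref{dgm:TMulticat_Cat_C2d1restriction} is needed.
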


In particular these results imply of course that if $\gm$ and $\gm'$ are monads satisfying the conditions of Lemmas~\ref{lemma:TCfinite} and~\ref{lemma:TQfinite} and 
$\pop$ is a locally finite $\gm'$-operad  then $\Tpc{}{\gm}{\pop}$ is locally finite.

\begin{remark}\label{rk:finiteness}
 In the sequel, we deal with the free semigroup monad $\fsg$ and the free symmetric semimonoidal category monad $\pfsmc$, which preserve locally finite groupoids, finite maps and fibrations, as required by Lemmas~\ref{lemma:TCfinite} and~\ref{lemma:TQfinite}. Moreover, their strength is finite, as can be easily seen from its definition (see Examples~\ref{example:plainoperadoperations} and~\ref{ex:fsmc}). Also, we use the reduced operads $\ass$ and $\comm$, as well as their colored versions. They are all locally finite and have discrete groupoid of colors.
\end{remark}


\section{$\tcons$-construction for $\fsg$ and $\pfsmc$-operads}\label{seccion:examples}
In this section we unravel the $\tcons$-construction with some of the main examples. We begin discussing the construction from categories to $\fsg$-operads and $\pfsmc$-operads. When the category is just a monoid we get the Giraudo $T$-construction, which we recall next.  Lastly we treat symmetric and nonsymmetric operads.

The choice of working with the reduced version of the operads (excluding nullary operations),
is irrelevant for the sake of the $\tcons$-construction itself, which is abstract enough to work with any operad.
The reason for preferring the reduced version is to stay within the realm of locally finite operads, as mentioned in Example~\ref{ex:asscommlocfinite} and Remark~\ref{rk:finiteness}. Moreover, it is also easy to see that the cartesian monad maps $\fsg\Rightarrow\fsmc$, $\pfsmc\Rightarrow\fsmc$ and $\fsg\Rightarrow\fm$ are finite.

\subsection{The $\tcons$-construction for categories} Let $C$ be a category internal to $\set$, represented by the span $C_0\leftarrow C_1\rightarrow C_0$, and take the free semigroup monad $\fsg$. The set of objects of $\Tpc{}{\fsg}{C}$ is again $C_0$, while $\tilde{C}_1$  is given by
\begin{equation}
\begin{tikzcd}[sep={4em,between origins}]
 && \tilde{C}_1 \ar[dl,tail] \ar[dr] \arrow[ddll,bend right,"\tilde{d}_1"']\arrow[ddrr, bend left, "\tilde{d}_0"]\dpbk &&\\
 & \fsg C_1\ar[dl,"\gm d_1"]\ar[dr,"\gm d_0"] & & \Tx C_0 \ar[dl,tail,"D_{C_0}"'] \ar[dr,"R_{C_0}"']& \\
 \fsg C_0 && \fsg C_0 && C_0.
 \end{tikzcd}
\end{equation}
Recall from Example \ref{ex:fsmc} that the strength is given by 
\begin{equation}\label{eq:strength1}
\begin{tikzcd}[row sep=3pt, column sep=40pt]
D_{C_0}\colon \Tx C_0 \arrow[r] & \fsg C_0\phantom{aaaaaaaaa}\\
\phantom{aaaaa}\big(c,(1,\dots,1)\big)\arrow[r] & \big((c,1),\dots,(c,1)\big).
\end{tikzcd}
\end{equation}
Therefore, the pullback condition means that the elements in $\tilde{C}_1$ that have input $c_1,\dots,c_n$ and output $c$ are the sequences of $n$ arrows in $C$ whose sources are $c_1,\dots,c_n$ and whose targets are all $c$. Hence 
$$\tilde{C}_1=\sum_{(c_1,\dots,c_n;c)}\prod_{i=1}^n\text{Hom}(c_i,c).$$
Substitution in $ \Tpc{}{\fsg}{C}$, 
$$\circ\colon \prod_{i=1}^k\text{Hom}(c_i,c)\times \prod_{i=1}^k\prod_{j=1}^{n_i}\text{Hom}(d^i_j,c_i)\longrightarrow \prod_{\substack{1\le i\le k\\1\le j \le n_i}} \text{Hom}(d^i_j,c),$$
goes as follows: for an operation $x\in \prod_{i=1}^k\text{Hom}(c_i,c)$ and a sequence of $k$ operations $y^i\in \prod_{j=1}^{n_i}\text{Hom}(d^i_j,c_i)$, with $i=1,\dots,k$,
$$x\circ (y^1,\dots,y^k)=(x_1\circ y^1_1,\dots, x_1\circ y^1_{n_1},\dots,x_k\circ y^k_1,\dots,x_k\circ y^k_{n_k})\in \prod_{\substack{1\le i\le k\\1\le j \le n_i}} \text{Hom}(d^i_j,c).$$
Note that now  the composition inside the parenthesis is composition of morphisms of $C$, while the composition on the left-hand side of the equation  is composition in $\fsg C$.  It is not difficult to see that the composition we get from \ref{def:tildecomp} agrees with the one defined above: both use the fact that $\tilde{C}_2$ is a subset of $(\fsg)^2 C$ together with $(\fsg)^2\circ$ and the monad multiplication. The identity elements of this operad are given by the identity morphisms of $C$. If the category $C$ has coproducts $(+)$ then 
$$\prod_{i=1}^n\text{Hom}(c_i,c)=\text{Hom}(c_1+\cdots+c_n,c),$$
so that the operations of $\Tpc{}{\fsg}{C}$ are in fact arrows of $C$.

Since $C$ can be considered as a category internal to $\infgrpd$, we can also compute $\Tpc{}{\pfsmc}{C}$ to get a symmetric operad. It is clear that $\Tpc{}{\fsg}{C}_1=\Tpc{}{\fsg}{C}_1//\mathfrak{S}$, where the action of the symmetric group $\mathfrak{S}_n$ is given by permutation of tuples, that is
\begin{center}
\begin{tabular}{cll}
  $\mathfrak{S}_n\times \prod_{i=1}^n\text{Hom}(c_i,c)$  & $\longrightarrow$ & $\prod_{i=1}^n\text{Hom}(c_{\sigma(i)};c)$  \\
$(\sigma,(x_1,\dots,x_n))$& $\longmapsto$ & $(x_{\sigma(1)},\dots,x_{\sigma(n)})$.
\end{tabular}
\end{center}

It is useful to picture elements $(c_1,\dots,c_n;c)$ as (picturing $n=3$)
\begin{center}
\begin{tikzpicture}[<-,>=stealth',grow=up,level distance=30pt]
\tikzstyle{level 1}=[sibling distance=20pt]
\node at (0,0){$c$}
child {node{$c_1$}}
child {node{$c_2$}}
child {node{$c_3$}};
\end{tikzpicture}
\end{center}
Under this representation, composition in $\Tpc{}{\pfsmc}{C}$ (or $\Tpc{}{\fsg}{C}$) looks like
\begin{center}
\begin{tikzpicture}[<-,>=stealth',grow=up,level distance=30pt]
\tikzstyle{level 1}=[sibling distance=40pt]
\tikzstyle{level 2}=[sibling distance=20pt]
\node at (0,0){$c$}
child {node{$c_1$} child{node{$c^1_1$}} child{node{$c^2_1$}} child{node{$c^3_1$}}}
child {node{$c_2$} child{node{$c^1_2$}}}
child {node{$c_3$} child{node{$c^1_3$}} child{node{$c^2_3$}}};
\node at (3,1){$=$};
\end{tikzpicture}
\begin{tikzpicture}[<-,>=stealth',grow=up,level distance=50pt]
\tikzstyle{level 1}=[sibling distance=20pt]
\node at (0,0){$c$}
child {node{$c^1_1$}}
child {node{$c^2_1$}}
child {node{$c^3_1$}}
child {node{$c^1_2$}}
child {node{$c^1_3$}}
child {node{$c^2_3$}};
\end{tikzpicture}
\end{center}

\begin{example}\label{ex:TassTsym}
 Take $C\!=\!\{\!\!\begin{tikzcd}0\!\arrow[r,bend left=20]&\!1\arrow[l,bend left=20]\end{tikzcd}\!\!\}$. For any pair of objects of $C$ there is exactly one morphism between them. Hence $\Tpc{}{\fsg}{C}$ has one operation for each given sequence of inputs and output, so that it is the  $2$-colored associative operad $\ass_2$. In the same way $\Tpc{}{\pfsmc}{C}$ is the $2$-colored symmetric operad $\comm_2$. In fact it is straightforward to see that the $\tcons$-constructions of the discrete connected groupoid of $n$ elements are $\ass_n$ and $\comm_n$.
\end{example} 

\begin{example}Consider the category $C=\{0\longrightarrow 1\}$.  Note that in this case there is either one or no morphism between two objects of $C$. Thus clearly
$$\Tpc{}{\pfsmc}{C}(c_1,\dots,c_n;c)=\begin{cases}
    (c\rightarrow c_1,\dots,c\rightarrow c_n) & \text{if } c=0\;\; \text{or } c=c_1=\dots=c_n=1 \\
    \emptyset & \text{otherwise. }
  \end{cases}$$
 Of course this operad is a suboperad of the previous one, since this category is a subcategory of the previous one. In particular composition is obvious.
 \end{example}
 

\begin{example}\label{ex:giraudo}
We now specialize to the
case of categories with only one object,
that is monoids, recovering the $T$-construction of Giraudo. This construction was introduced by Giraudo \cite{Giraudo} as a generic method to build combinatorial operads from monoids.

Since a monoid is just a category with one object, it is represented by the span $1\leftarrow \monoid\rightarrow 1$,  and because the morphism $\Tx1\xrightarrow{D_1}\fsg1$ is an isomorphism, we have that $\Tpc{}{\fsg}{\monoid}$ is given by
\begin{center}
\begin{tikzcd}[sep={4em,between origins}]
 &&\fsg \monoid \ar[dl] \ar[dr] \dpbk &&\\
 & \fsg \monoid\ar[dl,"\fsg s"']\ar[dr,"\fsg t"] & & \Tx 1 \ar[dl,"D_{1}"'] \ar[dr]& \\
 \fsg 1 && \fsg 1 && 1.
 \end{tikzcd}
\end{center}
It is easy to see that this gives the same operad $T \monoid$ defined in the introduction, since $T\monoid$ is precisely $\fsg\monoid$, and both compositions are defined by using composition in $(\fsg)^2\monoid$ and the monad multiplitcation.

%

\end{example}
\begin{example}\label{ex:TassTsym1}If $\termon$ is the singleton monoid, then $\Tpc{}{\fsg}{\termon}=\ass$, the associative operad, and  $\Tpc{}{\pfsmc}{\termon}=\comm$, the commutative operad.
\end{example}

\subsection{The $\tcons$-construction for operads}
We now unravel the full $\tcons$-construction from nonsymmetric operads to $\pfsmc$-operads. As we already know, the first ones are the same as $\fsg$-operads in $\set$, but we view them as $\fsg$-operads in $\infgrpd$ with discrete groupoids of objects and arrows. At the end we will comment on other variations similar to this case, such as from  symmetric operads to $\pfsmc$-operads.

Let $\pop$ be an $\fsg$-operad represented by the span $\fsg \pop_0\leftarrow \pop_1 \rightarrow \pop_0$. Recall that elements of $\pop_1$ are depicted as
 \begin{center}
 \begin{tikzpicture}[grow=up,level distance=30pt,thick]
\tikzstyle{level 1}=[sibling distance=20pt]
\node at (0,0){}
child[red] {node[black,circle,draw]{$x$}}{
child[green]
child[blue]
child[yellow]
};
\end{tikzpicture}
\end{center} 
We apply first the $\tcons$-construction to get a category $\Tpc{\fsg}{}{Q}$:
\begin{center}
 \begin{tikzcd}[sep={4em,between origins}]
 && \bar{\pop}_1 \ar[dl] \ar[dr] \dpbk &&\\
 & \Tx \pop_0\ar[dl,"R_{\pop_0}"']\ar[dr,"D_{\pop_0}"] & & \pop_1 \ar[dl,"s"'] \ar[dr,"t"]& \\
 \pop_0 && S\pop_0 && \pop_0.
 \end{tikzcd}
\end{center}

The strength morphism is the same as in (\ref{eq:strength1}). Therefore the elements of $\bar{\pop}_1$ are the elements of $\pop_1$ such that all the input objects coincide,
 
 \begin{center}
 \begin{tikzpicture}[grow=up,level distance=30pt,thick]
\tikzstyle{level 1}=[sibling distance=20pt]
\node at (0,0){}
child[red] {node[black,circle,draw]{$x$}}{
child[green]
child[green]
child[green]
};
\end{tikzpicture}
\end{center}
so that $x$ is an arrow ${\color{green}\bullet}\xrightarrow{x}{\color{red}\bullet}$ in $\Tpc{\fsg}{}{\pop}$. Notice that $\bar{\pop}_2$ is a subset of $\pop_2$. Therefore composition in $\Tpc{\fsg}{}{\pop}$ is the same as composition in $\pop$. For example

\begin{equation}\label{op:TMQcomposition}
\begin{tikzpicture}[grow=up,level distance=30pt,thick]
\tikzstyle{level 1}=[sibling distance=20pt]
\tikzstyle{level 2}=[sibling distance=15pt]
\node at (0,0){}
child[red] {node[black,circle,draw]{$x$}}{
child[green]
child[green]
child[green]
};
\node at (0.85,1){$\circ$};
\node at (1.7,0){}
child[blue] {node[black,circle,draw,inner sep=3pt]{$y$}}{
child[red]
child[red]
};
\node at (2.5,1){$=$};
\tikzstyle{level 1}=[sibling distance=40pt]
\tikzstyle{level 2}=[sibling distance=15pt]
\node at (4,-0.4){}
child[blue] {node[black,circle,draw,inner sep=3pt]{$y$}}{
	child[red] {node[black,circle,draw]{$x$} child[green] child[green] child[green] }
	child[red] {node[black,circle,draw]{$x$} child[green] child[green] child[green]}
};
\node at (5.5,1){$=$};
\end{tikzpicture}
\begin{tikzpicture}[grow=up,level distance=45pt,thick]
\tikzstyle{level 1}=[sibling distance=20pt]
\node at (0,0){}
child[blue] {node[black,ellipse,draw]{$y\circ (x,x)$}}{
child[green]
child[green]
child[green]
child[green]
child[green]
child[green]
};
\end{tikzpicture}
\end{equation}

where $y\circ (x,x)$ is composition in $\pop$. Hence the recipe is to repeat $x$ for each input of $y$ and use composition in $\pop$. Now we have to apply again the $\tcons$-construction to get a $\pfsmc$-operad from the category $\Tpc{\fsg}{}{\pop}$. This step was made above for any category: the objects of $\tilde{\pop}_1$ are sequences $(x_1,\dots,x_n)$ of elements $x_i\in \bar{\pop}_1$. For instance the pair
\begin{center}
\begin{tikzpicture}[grow=up,level distance=30pt,thick]
\tikzstyle{level 1}=[sibling distance=20pt]
\tikzstyle{level 2}=[sibling distance=15pt]
\node at (0,0){}
child[blue] {node[black,circle,draw]{$x$}}{
child[green]
child[green]
child[green]
};
\node at (1.7,0){}
child[blue] {node[black,circle,draw,inner sep=3pt]{$y$}}{
child[red]
child[red]
};
\node at (4,1){is an operation\phantom{aa}};
\end{tikzpicture}
\begin{tikzpicture}[grow=up,level distance=30pt,thick]
\tikzstyle{level 1}=[sibling distance=20pt]
\tikzstyle{level 2}=[sibling distance=15pt]
\node at (0,0){}
child[blue] {node[black,ellipse,draw,minimum size=20pt]{$(x,y)$}}{
child[red]
child[green]
};
\end{tikzpicture}
\end{center}
in $\Tpc{}{\pfsmc}{\pop}$. Clearly $\Tpc{}{\pfsmc}{\pop}$ is a symmetric operad, since the groupoid of objects is discrete and the morphisms in the groupoid $\tilde{\pop}_1$ are given by permutation of tuples. 

\begin{example}\label{ex:Tass} If the starting $\fsg$-operad is $\ass$, which is a noncolored operad, then it is easy to see that the monoid $\Tpc{\fsg}{}{\ass}$ is isomorphic to $(\N^+,\times)$. Therefore the operations of $\Tpc{}{\pfsmc}{\ass}$ are sequences
of natural numbers and composition is given by multiplication. For example
$$\big((2,3),(4,7)\big)\circ (5,9)=(5\cdot 2,5\cdot 3,9\cdot 4,9\cdot 7)=(10,15,36,63).$$
If the starting $\fsg$-operad is $\ass_2$ the $2$-colored associative operad, then the category $\Tpc{\fsg}{}{\ass_2}$ has two objects and a morphism $\xrightarrow{n}$ for every pair of objects and positive natural number $n$. Composition is given by multiplication. The operations of $\Tpc{}{\pfsmc}{\ass_2}$ are thus sequences of such arrows with the same output.
\end{example}

Suppose we start instead from a symmetric operad $\pop$. Recall from Example~\ref{ex:fsmc} that a symmetric operad is an $\fsmc$-operad in $\infgrpd$ such that $\pop_0$ is discrete and $ \pfsmc \pop_0\xleftarrow{s}\pop_1$ is discrete fibration. The $\tcons$-construction to get another $\pfsmc$-operad is completely analogous to the previous case, but in this case the groupoid $\tilde{\pop}_1$ inherits morphisms from $\pop$, so that for instance the element 
\begin{center}
\begin{tikzpicture}[grow=up,level distance=30pt,thick]
\tikzstyle{level 1}=[sibling distance=20pt]
\node at (0,0){}
child[red] {node[black,circle,draw]{$x$}}{
child[green]
child[green]
child[green]
};
\node at (2,0){}
child[red] {node[black,circle,draw]{$x$}}{
child[green]
child[green]
child[green]
};
\node at (4,0){}
child[red] {node[black,circle,draw,inner sep=3pt]{$y$}}{
child[blue]
child[blue]
};
\end{tikzpicture}
\end{center}
has $2!\cdot 3!^2 \cdot 2!$ automorphisms, corresponding to $2!$ invariant permutations on $(x,x,y)$ and permutations of the inputs. The latter contribution did not appear in the previous case, since $\pop$ was a planar operad. Notice that this means that $\Tpc{}{\pfsmc}{\pop}$ is not a symmetric operad, but just an $\pfsmc$-operad in $\infgrpd$.

\begin{example}\label{ex:Tsym} If the starting $\pfsmc$-operad is $\comm$, which is a noncolored symmetric operad, then it is easy to see that the monoid $\Tpc{\pfsmc}{}{\comm}$ is isomorphic to the monoid $(\N^+,\times)$ internal to groupoids where $\aut (n)\cong \mathfrak{S}_n$. The objects of $\Tpc{}{\pfsmc}{\comm}$ are the same as the objects in $\Tpc{}{\pfsmc}{\ass}$, and the morphisms are given by permutation of tuples (as in $\Tpc{}{\pfsmc}{\ass}$) plus the ones given by $\aut (n)$ for each $n$. The colored case is analogous.
\end{example}

\subsection{The opposite convention}\label{subsection:oppositeconvention} 
It is not difficult to see that the categories $\Tpc{\pfsmc}{}{\ass}$ and $\Tpc{\pfsmc}{}{\comm}$ (as well as  $\Tpc{\fsg}{}{\ass}$ and $\Tpc{\fsg}{}{\comm}$) are self dual.  In the case of $\ass$ this means that the monoid $(\N^+,\times)$ is commutative. An obvious consequence of this self duality is that we get equivalent operads by applying the $\tcons$-construction to their opposite categories. Nevertheless, when dealing with plethysm it is in fact more natural, from a combinatorial point of view, to apply the $\tcons$-construction to the opposite categories. This is particularly apparent when we interpret the simplicial groupoid $T\sur$~\cite{Cebrian} as an operad (see Examples~\ref{ex:TSTSym2simplex} and ~\ref{ex:TS2BTSym2}). 

We end this chapter by  developing this variant in a general context, since we believe it is interesting in its own right. 
From a formal perspective there is not much to say, since in the context of internal categories if $C$ is represented by 
\begin{center}
  \begin{tikzcd}[column sep=small]
       & C_1\arrow[dl,"s"']\arrow[dr,"t"] &       \\
 C_0 &                                            & C_0
 \end{tikzcd}
 \end{center}
 then $C^{\text{op}}$ is represented by
  \begin{center} 
   \begin{tikzcd}[column sep=small]
       & C_1\arrow[dl,"t"']\arrow[dr,"s"] &       \\
 C_0 &                                            & C_0
 \end{tikzcd}
 \end{center}
and thus the $\tcons$-construction can be applied the same way. Let us see what $\Tpc{}{\pfsmc}{C^{\text{op}}}$ looks like.  We have that  
 $$\tilde{C_1^{\text{op}}}=\sum_{(c_1,\dots,c_n;c)}\prod_{i=1}^n\text{Hom}_{C^{\text{op}}}(c_i,c)=\sum_{(c_1,\dots,c_n;c)}\prod_{i=1}^n\text{Hom}_C(c,c_i),$$
 for each tuple $(c_1,\dots,c_n;c)$ of elements of $C_0$. 
In this case elements $(c_1,\dots,c_n;c)$ can be pictured as (picturing $n=3$)
\begin{center}
\begin{tikzpicture}[->,>=stealth',grow=down,level distance=30pt]
\tikzstyle{level 1}=[sibling distance=20pt]
\node at (0,0){$c$}
child {node{$c_1$}}
child {node{$c_2$}}
child {node{$c_3$}};
\end{tikzpicture}
\end{center}
and under this representation, composition in $\Tpc{}{\pfsmc}{C^{\text{op}}}$ looks like
\begin{center}
\begin{tikzpicture}[->,>=stealth',grow=down,level distance=30pt]
\tikzstyle{level 1}=[sibling distance=40pt]
\tikzstyle{level 2}=[sibling distance=20pt]
\node at (0,0){$c$}
child {node{$c_1$} child{node{$c^1_1$}} child{node{$c^2_1$}} child{node{$c^3_1$}}}
child {node{$c_2$} child{node{$c^1_2$}}}
child {node{$c_3$} child{node{$c^1_3$}} child{node{$c^2_3$.}}};
\node at (3,-1){$=$};
\end{tikzpicture}
\begin{tikzpicture}[->,>=stealth',grow=down,level distance=50pt]
\tikzstyle{level 1}=[sibling distance=20pt]
\node at (0,0){$c$}
child {node{$c^1_1$}}
child {node{$c^2_1$}}
child {node{$c^3_1$}}
child {node{$c^1_2$}}
child {node{$c^1_3$}}
child {node{$c^2_3$}};
\end{tikzpicture}
\end{center}
Furthermore, if $C$ has products then
  $$\prod_{i=1}^n\text{Hom}_{C}(c,c_i)=\text{Hom}_{C}(c,c_1\cx\cdots\cx c_n).$$

Suppose now that we start from an $\fsg$-operad $\pop$. The first step is the same as before: we obtain a category $\Tpc{\fsg}{}{\pop}$ whose arrows are operations of $\pop$
 \begin{center}
 \begin{tikzpicture}[grow=up,level distance=30pt,thick]
\tikzstyle{level 1}=[sibling distance=20pt]
\node at (0,0){}
child[red] {node[black,circle,draw]{$x$}}{
child[green]
child[green]
child[green]
};
\end{tikzpicture}
\end{center}
all of whose inputs coincide. Next, we take the opposite category $\Tpc{\fsg}{}{\pop}^{\text{op}}$, and depict its arrows as
 \begin{center}
 \begin{tikzpicture}[grow=down,level distance=30pt,thick]
 \tikzstyle{level 1}=[sibling distance=20pt]
\tikzstyle{level 2}=[sibling distance=15pt]
\node at (0,0){}
child[red] {node[black,circle,draw]{$x$}}{
child[green]
child[green]
child[green]
};
\end{tikzpicture}
\end{center}
The $\tcons$-construction $\Tpc{}{\pfsmc}{\pop^{\text{op}}}$ has as operations sequences $(x_1,\dots,x_n)$ of arrows $x_i\in \bar{\pop_1^\text{op}}$ with the same output. For instance the pair 
\begin{center}
\begin{tikzpicture}[grow=down,level distance=30pt,thick]
\tikzstyle{level 1}=[sibling distance=20pt]
\tikzstyle{level 2}=[sibling distance=15pt]
\node at (0,0){}
child[green] {node[black,circle,draw]{$x$}}{
child[blue]
child[blue]
child[blue]
};
\node at (1.7,0){}
child[red] {node[black,circle,draw,inner sep=3pt]{$y$}}{
child[blue]
child[blue]
};
\node at (4,-1){is an operation\phantom{aa}};
\end{tikzpicture}
\begin{tikzpicture}[grow=up,level distance=30pt,thick]
\tikzstyle{level 1}=[sibling distance=20pt]
\tikzstyle{level 2}=[sibling distance=15pt]
\node at (0,0){}
child[blue] {node[black,ellipse,draw,minimum size=20pt]{$(x,y)$}}{
child[red]
child[green]
};
\end{tikzpicture}
\end{center}
in $\Tpc{}{\pfsmc}{\pop^{\text{op}}}$. We show once and for all an example of composition in $\Tpc{}{\pfsmc}{\pop^{\text{op}}}$:
\begin{equation}\label{eq:TQopcomposition}
\begin{tikzpicture}[grow=down,level distance=23pt,thick]
\tikzstyle{level 1}=[sibling distance=10pt]
\node at (0,0){}
child[green] {node[black,circle,draw]{$x$}}{
child[blue]
child[blue]
child[blue]
};
\node at (1,0){}
child[red] {node[black,circle,draw,inner sep=3pt]{$y$}}{
child[blue]
child[blue]
};
\draw [rounded corners] (-0.5,-1.8)--(-0.5,0)--(1.4,0)--(1.4,-1.8)--cycle;
\draw [green](0,0)--(-0.5,0.5); \draw [red](1,0)--(1.3,0.5); \draw [blue](0.45,-1.8)--(0.45,-2.3);
\draw [rounded corners] (0.4,0.5)--(0.4,2.2)--(-1.4,2.2)--(-1.4,0.5)--cycle;
\draw [rounded corners] (0.6,0.5)--(0.6,2.2)--(2,2.2)--(2,0.5)--cycle;
\node at (-0.9,2.2){}
child {node[black,circle,draw,inner sep=2pt]{$x_1$}}{
child[green]
child[green]
};
\node at (-0.1,2.2){}
child [orange]{node[black,circle,draw,inner sep=2pt]{$x_2$}}{
child[green]
};
\node at (1.3,2.2){}
child [yellow]{node[black,circle,draw,inner sep=2pt]{$y_1$}}{
child[red]
child[red]
child[red]
child[red]
};
\draw (-0.9,2.2)--(-0.9,2.7); \draw [orange](-0.1,2.2)--(-0.1,2.7); \draw [yellow](1.3,2.2)--(1.3,2.7);
\end{tikzpicture}
\begin{tikzpicture}
\node at (-0.2,-1.9){};
\node at (1.8,0){};
\draw [->,thick](0,0)--(1.5,0);
\end{tikzpicture}
\begin{tikzpicture}[grow=down,level distance=30pt,thick]
\tikzstyle{level 1}=[sibling distance=22pt]
\tikzstyle{level 2}=[sibling distance=9pt]
\node at (0,0){}
child {node[black,circle,draw,inner sep=2pt]{$x_1$}}{
	child[green] {node[black,circle,draw]{$x$} child[blue] child[blue] child[blue] }
	child[green] {node[black,circle,draw]{$x$} child[blue] child[blue] child[blue]}
};
\node at (1.4,0){}
child[orange] {node[black,circle,draw,inner sep=2pt]{$x_2$}}{
	child[green] {node[black,circle,draw]{$x$} child[blue] child[blue] child[blue] }
};
\node at (3.5,0){}
child [yellow]{node[black,circle,draw,inner sep=2pt]{$y_1$}}{
	child[red] {node[black,circle,draw,inner sep=3pt]{$y$} child[blue] child[blue]}
	child[red] {node[black,circle,draw,inner sep=3pt]{$y$} child[blue] child[blue]}
	child[red] {node[black,circle,draw,inner sep=3pt]{$y$} child[blue] child[blue]}
	child[red] {node[black,circle,draw,inner sep=3pt]{$y$} child[blue] child[blue]}
};
\draw [rounded corners] (-1,-3.3)--(-1,0)--(5.1,0)--(5.1,-3.3)--cycle;
\draw (0,0)--(0,0.5); \draw [orange](1.4,0)--(1.4,0.5); \draw [yellow] (3.5,0)--(3.5,0.5);
\draw [blue](2.05,-3.3)--(2.05,-3.8);
\end{tikzpicture}
\end{equation}
and then of course we compose in $\pop$, as in~\eqref{op:TMQcomposition}.



\section{Plethysms and operads}\label{section:plethysmsandoperads}


Let us present the relation between the several plethystic bialgebras, operads and the $\tcons$-construction. Some proofs are omitted, since most of them are similar. The operads involved are the  reduced symmetric operad $\comm$, the reduced associative  operad $\ass$ and their $2$-colored versions. Also, playing the same role as these operads, we have a locally finite monoid $Y$. On the other hand, the $\tcons$-constructions are taken with respect to the monads $\pfsmc$ and $\fsg$, as in Section~\ref{seccion:examples}, and everything is internal to $\ambcat=\infgrpd$. 

Let us stress again that by Proposition~\ref{prop:segalsquares}, Lemmas~\ref{lemma:TCfinite} and~\ref{lemma:TQfinite} and the discussion of Section~\ref{seccion:examples} all the bar constructions featuring in the present section are locally finite Segal groupoids, so that we can take cardinality to arrive at
their incidence bialgebra in the classical sense
of vector spaces.

It is appropriate to begin with the classical bialgebras, which are the main cases.
The following standard notation is used:\label{plethysticnotation}
\begin{multicols}{2}
\begin{itemize}
\item $\x=(x_1,x_2,\dots)$,
\item $\Lambda$: set of infinite vectors of natural numbers with $\l_i=0$ for all $i$ large enough,
\item $\Lambda\ni\l=(\lambda_1,\l_2,\dots)$,
\item $\x^{\l}=x_1^{\l_1}x_2^{\l_2}\cdots$,
\item $\autiv (\l)=1!^{\l_1}\l_1!\cdot 2!^{\l_2}\l_2!\cdots$,
\item $\l!=\l_1!\cdot\l_2!\cdots$,
\item $W$: set of finite words of positive natural numbers,
\item $W\ni \omega=\omega_1\dots\omega_n$,
\item $\x_{\omega}=x_{\omega_1}\cdots x_{\omega_n},$
\item $\omega!=\omega_1!\cdots\omega_n!$.
\end{itemize}
\end{multicols}

\subsection{The classical case}\label{subsection:classicalcase}


The classical Fa\`a di Bruno bialgebra $\FdB$ \cite{Doubilet, Joyal:1981} is obtained from the substitution of power series in one variable. Let $\Q\pow{x}$ be the ring of formal power series with coefficients in $\Q$ without constant term. Elements of $\Q\pow{x}$ are written
$$F(x)=\sum_{n\ge 1} \frac{F_n}{n!}x^n.$$
The set $\Q\pow{x}$ forms  a (noncommutative) monoid with substitution of power series. The \emph{Fa\`a di Bruno bialgebra} $\FdB$ is the free polynomial algebra $\Q[A_1,A_2,\dots]$ generated by the linear maps
\begin{equation*}
\begin{tikzcd}[row sep=0pt]
A_i:\Q\pow{x}\arrow[r]& \Q&\\
\phantom{AAAA}F\arrow[r,mapsto]&F_i&
\end{tikzcd}
\end{equation*}
together with the comultiplication induced by substitution, meaning that 
$$\Delta (A_n)(F,G)=A_n(G\circ F),$$
and counit given by $\epsilon (A_n)=A_n(x)$. The comultiplication of the generators can be explicitly described through the (exponential) Bell polynomials $B_{n,k}$, which count the number of partitions of an $n$-element set into $k$ blocks:
$$\Delta(A_n)=\sum_{k=1}^n A_k\otimes B_{n,k}(A_1,A_2,\cdots).$$

\begin{theorem}[Joyal, cf. modern reformulation in \cite{GKT:FdB}] \label{thm:FdB}The Fa\`a di Bruno bialgebra $\FdB$ is isomorphic to the homotopy cardinality of the incidence bialgebra of $\tsbar\comm$.
\end{theorem}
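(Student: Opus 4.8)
The plan is to reduce the statement to the classical computation for the fat nerve $N\sur$ of the category $\sur$ of finite nonempty sets and surjections, which is the content of Joyal's result in the reformulation of \cite{GKT:FdB}. Concretely, I would first record that $\tsbar\comm$ is a locally finite Segal groupoid: by Proposition~\ref{prop:segalsquares} it is a Segal groupoid since $\pfsmc$ preserves fibrations and $\comm_0$ is discrete, and it is locally finite by Remark~\ref{rk:finiteness}, so that homotopy cardinality is defined and produces an honest $\Q$-bialgebra. The whole content of the theorem then lies in exhibiting a CULF monoidal equivalence of Segal groupoids $\tsbar\comm \simeq N\sur$, after which functoriality of the incidence bialgebra under CULF monoidal equivalences, together with functoriality of homotopy cardinality, transports the known identification $|N\sur| \cong \FdB$ to $\tsbar\comm$.

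To build the equivalence, I would compute $\tsbar\comm$ levelwise. Since $\comm$ has a single operation in each arity $n \ge 1$ (with automorphism group $\symgroup_n$), the $n$-simplices $\tsbar_n\comm = \pfsmc\comm_n$ are forests of $n$-level $\comm$-trees, and such a forest is exactly the data of a chain of $n$ composable surjections of finite nonempty sets: level $i$ records a surjection from the set of inputs at height $i$ to the set of nodes at height $i-1$. In particular $\tsbar_0\comm \simeq \sur^{\simeq}$ is the core of finite nonempty sets and $\tsbar_1\comm \simeq N\sur_1$ is the groupoid of surjections, with matching automorphism groups (the layerwise permutations of a forest correspond to the automorphisms of a chain of surjections). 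I would then check that the simplicial operators match: the bottom face map, induced by $t$, returns the codomain; the inner face maps, induced by the operad multiplication $m$ and $\mu$, correspond to composing adjacent surjections; the top face map, induced by $s$ and $\mu$, returns the domain; and the degeneracies insert identities. This gives a levelwise equivalence compatible with the simplicial structure, hence an equivalence of Segal groupoids $\tsbar\comm \simeq N\sur$.

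Both sides carry a CULF monoidal structure induced by disjoint union of sets (finite sets being monoidal extensive under $+$, cf.\ the pullback condition \eqref{monoidalpullback}), and the equivalence above respects it, since layerwise concatenation of forests corresponds to disjoint union of chains of surjections. Taking homotopy cardinality, the algebra $\Q_{\pi_0\tsbar_1\comm}$ becomes the polynomial algebra on the connected objects $\delta_{[n]\twoheadrightarrow[1]}$, which I would name $A_n$; thus as an algebra it is $\Q[A_1,A_2,\dots]$, matching $\FdB$. For the comultiplication I would apply the closed formula of Lemma~\ref{SegalCom} to the surjection $[n]\twoheadrightarrow[1]$: its factorizations $[n]\twoheadrightarrow[k]\twoheadrightarrow[1]$ are indexed by set partitions of $[n]$ into $k$ blocks, and the groupoid-cardinality weights $1/(|\aut(a)||\aut(b)|)$ counted against $|\iso|$ reproduce exactly the Bell-polynomial coefficients, giving $\Delta(A_n) = \sum_{k=1}^n A_k \otimes B_{n,k}(A_1,A_2,\dots)$, i.e.\ the comultiplication dual to substitution.

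The main obstacle I anticipate is not the final cardinality count --- which can equally be cited from \cite{GKT:FdB} once the equivalence $\tsbar\comm \simeq N\sur$ is in hand --- but rather verifying that the levelwise equivalences assemble into a genuine map of simplicial groupoids, compatibly with all face maps. The delicate point is that the inner face maps of $\tsbar\comm$ are defined through the induced monad $\mC$, its multiplication, and the monad multiplication $\mu$ of $\pfsmc$ (see the description following Proposition~\ref{prop:categoryobject}), so one must unwind these to see that they implement ordinary composition of surjections, and one must check the top face map, which mixes $s$ with $\mu$, against the domain map of $N\sur$. Keeping track of the homotopy quotients by symmetric groups throughout, and confirming that the comparison is an equivalence rather than merely a $\pi_0$-bijection, is where the care is required.
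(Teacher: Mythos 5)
Your proposal is correct and takes essentially the same route as the paper, which states Theorem \ref{thm:FdB} without proof as Joyal's result (cf.\ \cite{GKT:FdB}), relying on the well-known equivalence $N\sur\simeq\tsbar\comm$ recorded in the introduction --- exactly the bridge you construct and transport along. Your levelwise identification of $\pfsmc\comm_n$ with chains of $n$ composable surjections of nonempty finite sets, the CULF monoidal compatibility under disjoint union, and the Bell-polynomial count via Lemma \ref{SegalCom} are precisely the ingredients the paper leaves implicit in that citation.
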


Note that $\comm$ is of course the same as $\Tpc{}{\idm}{\comm}$ and, as explained in Section \ref{seccion:examples}, it is also $\Tpc{}{\pfsmc}{}$ of the trivial monoid. This connects the Fa\`a di Bruno bialgebra to the $\tcons$-construction in an analogous way as the plethystic bialgebras.



Let us recall how the classical plethystic substitution works \cite{polya1937, Joyal:1981,Nava-Rota}. 
Let $\Q\pow{x_1,x_2,\dots}$ be the ring of power series in infinitely many variables without constant term and coefficients in $\Q$. 
\noindent Elements of $\Q\pow{\x}$ are written
$$F(\x)=\sum_{\l\in \Lambda}\frac{F_{\l}}{\autiv (\l)}\x^{\l}.$$
Given two power series $F,G\in\Q\pow{\x}$, their \emph{plethystic substitution} is defined as 
 \begin{align}\label{eq:plethsubs}
\nonumber (G\oast F)(x_1,x_2,\dots):=&\,G(F_1,F_2,\dots),\;\;\;\;\;\text{where}\\
F_k(x_1,x_2,\dots):=&\,F(x_k,x_{2k},\dots).&
\end{align}
%
%

The set $\Q\pow{\x}$ forms  a (noncommutative) monoid with plethystic substitution. 
 The \emph{plethystic bialgebra} $\plethB$ \cite{Cebrian, Nava} is the free polynomial algebra $\mathcal{P}=\Q\big[\{A_{\l}\}_{\l}\big]$ generated by the set maps
\begin{equation*}
\begin{tikzcd}[row sep=0pt]
A_{\l}:\Q\pow{\x}\arrow[r]& \Q&\\
\phantom{AAAA}F\arrow[r,mapsto]&F_{\l}&
\end{tikzcd}
\end{equation*}
together with the comultiplication induced by substitution, meaning that 
$$\Delta (A_{\l})(F,G)=A_{\l}(G\oast F),$$
and counit given by $\epsilon (A_{\l})=A_{\l}(x_1)$. The comultiplication of the generators can be explicitly described through the polynomials $P_{\sigma,\l}$, a plethystic version of the Bell polynomials which, in the terminology of Nava--Rota \cite{Nava-Rota}, count transversals of partitions.
$$\Delta(A_{\sigma})=\sum_{\l} A_{\l}\otimes P_{\sigma,\l}\big(\{A_{\mu}\}_{\mu}\big).$$
\begin{example} \label{example:classiccomultiplication}
\begin{align}\label{eq:Psigmalabmda}
\nonumber P_{(0,0,0,1,0,2),(1,2)}\big(\{A_{\mu}\}_{\mu}\big)=&\\[5pt]
=\frac{6!^22!4!\cdot 2!}{2!^22!\cdot 4!3!^22!}&A_{(0,0,0,1)}A^2_{(0,0,1)}+\frac{6!^22!4!\cdot 2!}{2!^22!\cdot 6!3!2!}A_{(0,0,0,0,0,1)}A_{(0,0,1)}A_{(0,1)}.
\end{align}
We can give a heuristic explanation of this. Define the \label{Verschiebung}\emph{$k$-th Verschiebung operator} $V^k\colon \Lambda\rightarrow\Lambda$ as
\begin{eqnarray}\label{eq:classicalVerschiebung}
    (V^k\l)_i=\left \{ \begin{array}{ll} \l_{i/k} & \mbox{if} \; k\mid i \\
        0 & \mbox{otherwise,} 
    \end{array} \right.
\end{eqnarray}
Then Equation~\ref{eq:Psigmalabmda} indicates that the vector $(0,0,0,1,0,2)$ can be obtained in the following two ways,
\begin{align*}
(0,0,0,1,0,2)&=V^1(0,0,0,1)+V^2(0,0,1)+V^2(0,0,1),\\
(0,0,0,1,0,2)&=V^1(0,0,0,0,0,1)+V^2(0,0,1)+V^2(0,1),
\end{align*}
where the Verschiebung operators used are determined by $\l$: one $V^1$ and two $V^2$. The coefficients are given by the automorphisms of the vectors involved. See~\cite{Cebrian} for a detailed explanation.
\end{example}

\begin{theorem}\label{thm:plethB}
 The plethystic bialgebra $\plethB$ is isomorphic to the homotopy cardinality of the incidence bialgebra of $\tsbar \Tpc{}{\pfsmc}{\comm}$.
 \begin{proof} The comparison between these two incidence bialgebras was made in \cite{Cebrian}, where the simplicial interpretation of plethysm was established.  
 In Section \ref{seccion:oldTrelation} we will see that indeed $\tsbar \Tpc{}{\pfsmc}{\comm}$ is equivalent  to $T\sur$, the simplicial groupoid of \cite{Cebrian}.
 \end{proof}
\end{theorem}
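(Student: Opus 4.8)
The plan is to make the operad $\pop:=\Tpc{}{\pfsmc}{\comm}$ completely explicit, read off its two-sided bar construction, and then match the resulting incidence bialgebra term by term against $\plethB$. By Example~\ref{ex:Tsym}, $\pop$ has a single color, its $n$-ary operations are sequences $(x_1,\dots,x_n)$ of positive integers taken up to the $\symgroup_n$-action permuting the entries and carrying internal automorphisms $\symgroup_{x_i}$ at each entry, and composition is the ``repeat-and-multiply'' rule inherited from $(\N^+,\times)$. The first step is thus to identify $\pi_0$ of the groupoid of single operations with $\Lambda$: a sequence, up to permutation, is a multiset of positive integers, and recording the multiplicity of each $k$ as $\l_k$ gives a bijection with $\Lambda$. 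Crucially, the automorphism group of the operation indexed by $\l$ has order $\prod_k (k!)^{\l_k}\l_k!=\autiv(\l)$, so the homotopy-cardinality normalization matches the classical symmetry factor. Since $\pop_0$ is discrete and $\pfsmc$ preserves fibrations, Proposition~\ref{prop:segalsquares} guarantees $\tsbar\pop$ is a Segal groupoid, and Lemmas~\ref{lemma:TCfinite} and~\ref{lemma:TQfinite} (see Remark~\ref{rk:finiteness}) guarantee it is locally finite, so that cardinality may be taken.

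Next I would establish the algebra isomorphism. The underlying vector space of the incidence bialgebra is $\Q_{\pi_0\tsbar_1\pop}=\Q_{\pi_0\pfsmc\pop_1}$, and an isomorphism class of $\pfsmc\pop_1$ is a finite multiset of single operations, i.e.\ a monomial in the symbols $\{A_\l\}_{\l\in\Lambda}$. The monoidal structure on $\tsbar\pop$ given by concatenation of forests is CULF monoidal, and under $|\grpdSprod|$ it sends $\delta_a\cdot\delta_b$ to the multiset union, which is exactly multiplication of monomials; hence $\Q_{\pi_0\tsbar_1\pop}\cong\Q[\{A_\l\}]=\plethB$ as algebras, with $A_\l$ corresponding to the class of the single operation indexed by $\l$.

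The substantive step is the comultiplication, for which I would invoke the closed formula of Lemma~\ref{SegalCom}. A $2$-simplex lying over a single operation $f$ indexed by $\sigma$ is a two-step factorization $f=b\circ(a^{(1)},\dots,a^{(m)})$ with $b$ a single outer operation and the $a^{(i)}$ an inner forest. The point is that the composition rule of $\pop$ realizes the Verschiebung operators of~\eqref{eq:classicalVerschiebung}: if the $i$-th entry of the outer operation $b$ equals $k$, then $b$ acts on the inner operation $a^{(i)}$ by multiplying all of its entries by $k$, which on the associated vector is precisely $V^k$. Thus the factorizations of $f$ are exactly the decompositions $\sigma=\sum_i V^{k_i}\nu^{(i)}$ of Example~\ref{example:classiccomultiplication}, the multiplicities of the outer entries $k_i$ being recorded by the vector $\l$ of $b$, so that summing over $2$-simplices reproduces $\Delta(A_\sigma)=\sum_\l A_\l\otimes P_{\sigma,\l}(\{A_\mu\}_\mu)$. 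The groupoid-cardinality weights $|\iso(d_0a,d_1b)_f|/(|\aut(a)||\aut(b)|)$ then supply exactly the rational coefficients of the polynomials $P_{\sigma,\l}$, using again that all automorphism orders are the factors $\autiv(-)$.

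The main obstacle is this last bookkeeping: verifying that the groupoid cardinalities coming out of Lemma~\ref{SegalCom} agree, coefficient by coefficient, with the transversal-counting polynomials $P_{\sigma,\l}$, and---a genuine subtlety---that the tensor factors appear in the correct order. As the face maps place the inner forest on the left and the outer corolla on the right, whereas $\plethB$ wants $A_\l$ (outer) on the left, one must pass to the opposite convention of Section~\ref{subsection:oppositeconvention}. Rather than re-derive the entire cardinality computation, the cleaner route I would actually take is to prove directly an equivalence of simplicial groupoids $\tsbar\pop\simeq T\sur$ with the model of~\cite{Cebrian}, carried out in Section~\ref{seccion:oldTrelation}, and then cite the main theorem of~\cite{Cebrian}, which already identifies the incidence bialgebra of $T\sur$ with $\plethB$; this reduces the whole statement to a structural comparison of two Segal groupoids.
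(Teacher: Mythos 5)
Your final route is exactly the paper's proof: Theorem~\ref{thm:plethB} is established there by the equivalence $\tsbar\,\Tpc{}{\pfsmc}{\comm}\simeq T\sur$ of Proposition~\ref{prop:TSTSymequivalence} together with the main result of \cite{Cebrian}, so your proposal is correct and takes essentially the same approach. Your preliminary direct-computation sketch (identifying $\pi_0$ with $\Lambda$, the automorphism orders $\autiv(\l)$, and the factorizations with Verschiebung decompositions via Lemma~\ref{SegalCom}) is also sound and in fact mirrors the paper's own proof of Theorem~\ref{thm:MplethB}, which specializes to this case.
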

\begin{example} \label{example:TSymcomultiplication}
Let us see the interpretation of $P_{(0,0,0,1,0,2),(1,2)}\big(\{A_{\mu}\}_{\mu}\big)$ (see Examples~\ref{example:classiccomultiplication}) from the point of view of $\tsbar^{\fsmc} \Tpc{}{\pfsmc}{\comm}$. The vectors $\sigma=(0,0,0,1,0,2)$ and $\l=(1,2)$ are represented by
 \begin{center}
 \begin{tikzpicture}[grow=down,level distance=20pt]
\tikzstyle{level 1}=[sibling distance=10pt]
\node at (0,0){}
child {
child 
child 
child 
child
};
\node at (2,0){}
child {
child 
child 
child
child
child
child
};
\node at (4.4,0){}
child {
child 
child 
child 
child
child
child
};
\draw [rounded corners] (-0.7,-1.6)--(-0.7,0)--(5.5,0)--(5.5,-1.6)--cycle;
\draw (0,0)--(-0,0.5); \draw (2,0)--(2,0.5); \draw (4.4,0)--(4.4,0.5);
\draw (2.4,-1.6)--(2.4,-2.1);
\end{tikzpicture}
\begin{tikzpicture}
\node at (1,0){}; \node at (0,-1){}; \node at (-1,0){};
\node at (0,0){and};
\end{tikzpicture}
\begin{tikzpicture}[grow=down,level distance=20pt]
\tikzstyle{level 1}=[sibling distance=10pt]
\node at (0,0){}
child {
child 
};
\node at (0.7,0){}
child {
child 
child 
};
\node at (1.5,0){}
child {
child 
child
};
\draw [rounded corners] (-0.5,-1.6)--(-0.5,0)--(2,0)--(2,-1.6)--cycle;
\draw (0,0)--(0,0.5); \draw (0.7,0)--(0.7,0.5); \draw (1.5,0)--(1.5,0.5);
\draw (0.7,-1.6)--(0.7,-2.1);
\end{tikzpicture}
\end{center}
respectively. We have used the opposite convention (Subection~\ref{subsection:oppositeconvention}), which in this case does not affect the result. What we want to count is, roughly speaking, the number of ways we can obtain $\sigma$ as a composition of $\l$ with three operations. It is straightforward to see that there are essentially two choices:
 \begin{center}
 \begin{tikzpicture}[grow=down,level distance=20pt]
\tikzstyle{level 1}=[sibling distance=10pt]
\node at (0,0){}
child {
child 
child 
child 
child
};
\draw [rounded corners] (-0.7,-1.6)--(-0.7,0)--(0.7,0)--(0.7,-1.6)--cycle;
\draw (0,0)--(-0,0.5);
\draw (0,-1.6)--(0,-2.1);
\end{tikzpicture}
 \begin{tikzpicture}[grow=down,level distance=20pt]
\tikzstyle{level 1}=[sibling distance=10pt]
\node at (0,0){}
child {
child 
child 
child 
};
\draw [rounded corners] (-0.6,-1.6)--(-0.6,0)--(0.6,0)--(0.6,-1.6)--cycle;
\draw (0,0)--(-0,0.5);
\draw (0,-1.6)--(0,-2.1);
\end{tikzpicture}
 \begin{tikzpicture}[grow=down,level distance=20pt]
\tikzstyle{level 1}=[sibling distance=10pt]
\node at (0,0){}
child {
child 
child 
child 
};
\draw [rounded corners] (-0.6,-1.6)--(-0.6,0)--(0.6,0)--(0.6,-1.6)--cycle;
\draw (0,0)--(-0,0.5);
\draw (0,-1.6)--(0,-2.1);
\end{tikzpicture}
\begin{tikzpicture}
\node at (1,0){}; \node at (0,-1){}; \node at (-1,0){};
\node at (0,0){and};
\end{tikzpicture}
 \begin{tikzpicture}[grow=down,level distance=20pt]
\tikzstyle{level 1}=[sibling distance=10pt]
\node at (0,0){}
child {
child 
child 
child 
child
child
child
};
\draw [rounded corners] (-1,-1.6)--(-1,0)--(1,0)--(1,-1.6)--cycle;
\draw (0,0)--(-0,0.5);
\draw (0,-1.6)--(0,-2.1);
\end{tikzpicture}
 \begin{tikzpicture}[grow=down,level distance=20pt]
\tikzstyle{level 1}=[sibling distance=10pt]
\node at (0,0){}
child {
child 
child 
child 
};
\draw [rounded corners] (-0.6,-1.6)--(-0.6,0)--(0.6,0)--(0.6,-1.6)--cycle;
\draw (0,0)--(-0,0.5);
\draw (0,-1.6)--(0,-2.1);
\end{tikzpicture}
 \begin{tikzpicture}[grow=down,level distance=20pt]
\tikzstyle{level 1}=[sibling distance=10pt]
\node at (0,0){}
child {
child 
child 
};
\draw [rounded corners] (-0.4,-1.6)--(-0.4,0)--(0.4,0)--(0.4,-1.6)--cycle;
\draw (0,0)--(-0,0.5);
\draw (0,-1.6)--(0,-2.1);
\end{tikzpicture}
\end{center}
which clearly coincide with the ones of Example~\ref{example:classiccomultiplication}. This example could be misleading, in the sense that each operation above contains only one operation of $\comm$. This happens of course because $|\sigma|=|\l|$. For instance, 
if we take instead $\l=(1,1)$ we obtain three possible choices:
 \begin{center}
 \begin{tikzpicture}[grow=down,level distance=20pt]
\tikzstyle{level 1}=[sibling distance=10pt]
\node at (0,0){}
child {
child 
child 
child 
child
};
\draw [rounded corners] (-0.7,-1.6)--(-0.7,0)--(0.7,0)--(0.7,-1.6)--cycle;
\draw (0,0)--(-0,0.5);
\draw (0,-1.6)--(0,-2.1);
\end{tikzpicture}
 \begin{tikzpicture}[grow=down,level distance=20pt]
\tikzstyle{level 1}=[sibling distance=10pt]
\node at (0,0){}
child {
child 
child 
child 
};
\node at (1,0){}
child {
child 
child 
child 
};
\draw [rounded corners] (-0.6,-1.6)--(-0.6,0)--(1.6,0)--(1.6,-1.6)--cycle;
\draw (0,0)--(-0,0.5); \draw (1,0)--(1,0.5); 
\draw (0.5,-1.6)--(0.5,-2.1);
\end{tikzpicture}
\begin{tikzpicture}
\end{tikzpicture}
\begin{tikzpicture}
\node at (0.2,0){}; \node at (0,-1){};
\node at (0,-0.2){\!\!,};
\end{tikzpicture}
\begin{tikzpicture}[grow=down,level distance=20pt]
\tikzstyle{level 1}=[sibling distance=10pt]
\node at (0,0){}
child {
child 
child 
child 
child
child
child
};
\draw [rounded corners] (-1,-1.6)--(-1,0)--(1,0)--(1,-1.6)--cycle;
\draw (0,0)--(-0,0.5);
\draw (0,-1.6)--(0,-2.1);
\end{tikzpicture}
 \begin{tikzpicture}[grow=down,level distance=20pt]
\tikzstyle{level 1}=[sibling distance=10pt]
\node at (0,0){}
child {
child 
child 
child
};
\node at (0.8,0){}
child {
child 
child 
};
\draw [rounded corners] (-0.6,-1.6)--(-0.6,0)--(1.3,0)--(1.3,-1.6)--cycle;
\draw (0,0)--(-0,0.5); \draw (0.8,0)--(0.8,0.5); 
\draw (0.4,-1.6)--(0.4,-2.1);
\end{tikzpicture}
\begin{tikzpicture}
\node at (0.3,0){}; \node at (0,-1){}; \node at (-0.3,0){};
\node at (0,0){and};
\end{tikzpicture}
 \begin{tikzpicture}[grow=down,level distance=20pt]
\tikzstyle{level 1}=[sibling distance=10pt]
\node at (0,0){}
child {
child 
child 
child
child
child
child
};
\node at (2.2,0){}
child {
child 
child 
child 
child
child
child
};
\draw [rounded corners] (-1,-1.6)--(-1,0)--(3.2,0)--(3.2,-1.6)--cycle;
\draw (0,0)--(-0,0.5); \draw (2.2,0)--(2.2,0.5);
\draw (1.1,-1.6)--(1.1,-2.1);
\end{tikzpicture}
 \begin{tikzpicture}[grow=down,level distance=20pt]
\tikzstyle{level 1}=[sibling distance=10pt]
\node at (0,0){}
child {
child 
child 
};
\draw [rounded corners] (-0.4,-1.6)--(-0.4,0)--(0.4,0)--(0.4,-1.6)--cycle;
\draw (0,0)--(-0,0.5);
\draw (0,-1.6)--(0,-2.1);
\end{tikzpicture}
\end{center}

\end{example}

\subsection{Overview of variations}

We proceed to introduce the variations of the plethystic bialgebra we explore. For the set of variables $(x_1,x_2,\dots)$, there are three sources of variations. At the level of power series they are the following:
\begin{enumerate}[(i)]
\item Commuting or noncommuting variables: of course in the classical case the variables commute. When the variables do not commute we will index them by $\omega\in W$, rather than $\l\in \Lambda$.
\item Commuting or noncommuting coefficients.
\item Two types of automorphisms: $\autiv(\l)$ or $\l!$ for commuting variables, and $\omega!$ or $1$ for noncommuting variables.
\end{enumerate}
These variations are not independent: if the variables commute then the coefficients commute. Analogous variations can be obtained of the Fa\`a di Bruno bialgebra, except in this case there is only one variable. 

At the objective level, these three variations correspond (respectively) to the following choices:
\begin{enumerate}[(i)]
\item $\tcons$-construction over $\pfsmc$ or over $\fsg$.
\item Bar construction over $\pfsmc$ or over $\fsg$.
\item Taking $\comm$ or $\ass$ as input operads.
\end{enumerate}
The reason why they are not independent is clear here: there is a cartesian natural transformation $\fsg\Rightarrow\pfsmc$ that allows taking $\tsbar^{\pfsmc}$ of a $\fsg$-operad (see Section \ref{seccion:MonadsMulticategories}), but no natural transformation in the opposite direction.

Let us give a brief justification of these correspondences. Consider the following sequence of operations:
 \begin{center}
 \begin{tikzpicture}[grow=down,level distance=20pt]
\tikzstyle{level 1}=[sibling distance=10pt]
\node at (-1,-1){$a=$};
\node at (0,0){}
child {
child 
child 
child 
};
\node at (1,0){}
child {
child 
child 
};
\node at (2.3,0){}
child {
child 
child 
child 
child
};
\node at (3.6,0){}
child {
child 
child 
};
\node at (4.6,0){}
child {
child 
child 
};
\node at (5.9,0){}
child {
child 
child 
child 
child
};
\draw [rounded corners] (-0.5,-1.6)--(-0.5,0)--(6.6,0)--(6.6,-1.6)--cycle;
\foreach \i in {0,1,2.3,3.6,4.6,5.9}
{
	\draw (\i,0)--(\i,0.5);
}
\draw (3.05,-1.6)--(3.05,-2.1);
\end{tikzpicture}
\end{center}
This could be either an operation in one of the following operads:
\begin{enumerate}[(i)]
\item $\Tpc{}{\pfsmc}{\comm}$: in this case each operation has automorphisms, coming from the action of the symmetric group on $\comm$, and since the $\tcons$-construction is over $\pfsmc$ we can permute the operations. This means that the isomorphism class of $a$ is given by $\l=(0,3,1,2)$, since the order of the operations does not matter, and it has $\autiv(\l)=2!^33!\cdot 3!^11!\cdot 4!^22!$ automorphisms.  The corresponding bialgebra is thus $\plethB$ and this particular operation corresponds to $A_{(0,3,1,2)}$, the linear map returning the coefficient of $x_2^3x_3x_4^2/\autiv(\l)$. 

\item $\Tpc{}{\fsg}{\comm}$: in this case the operations have automorphisms again, but since the $\tcons$-construction is over $\fsg$ we cannot permute them. This means that the isomorphism class of $a$ is given by $\omega=(3,2,4,2,2,4)$, so that it corresponds to noncommuting variables. Clearly it has $3!2!4!2!2!4!$ automorphisms. Now, depending on the bar construction it corresponds to commuting or noncommuting coefficients. This particular operation corresponds to $A_{(3,2,4,2,2,4)}$, the linear map returning the coefficient of $x_3x_2x_4x_2x_2x_4/\omega!$.

\item $\Tpc{}{\fsg}{\ass}$: in this case the operations do not have automorphisms, and since the $\tcons$-construction is over $\fsg$ we cannot permute them. This means that the isomorphism class of $a$ is given by $\omega=(3,2,4,2,2,4)$, so that it corresponds to noncommuting variables,  and it has no automorphisms. Now, depending on the bar construction it corresponds to commuting or noncommuting coefficients, as in the previous case. This particular operation corresponds to $a_{(3,2,4,2,2,4)}$, the linear map returning the coefficient of $x_3x_2x_4x_2x_2x_4$.

\item $\Tpc{}{\pfsmc}{\ass}$: in this case the operations do not have automorphisms, and since the $\tcons$-construction is over $\pfsmc$ we can permute them. This means that the isomorphism class of $a$ is given by $\l=(0,3,1,2)$, and it has $\l!=3!\cdot 1!\cdot 2!$ automorphisms. Therefore it corresponds to commuting variables and coefficients. This particular operation corresponds to $a_{(0,3,1,2)}$, the linear map returning the coefficient of $x_2^3x_3x_4^2/\l!$.
\end{enumerate}

The cases of $\comm$ and $\ass$ are developed in Subsections $\ref{subsection:comm}$ and $\ref{subsection:ass}$ respectively. In Subsection \ref{subsection:monoid} we generalize $\EplethB$ to power series in the set of variables $(x_m\,|\,m\in Y)$ indexed over a locally finite monoid.

In Subsections \ref{subsection:comm} and \ref{subsection:ass} we also study the Fa\`a di Bruno bialgebra in two variables and the plethystic bialgebra in the two sets of variables $(x_1,x_2,\dots),(y_1,y_2,\dots)$. For the plethystic case we only consider commuting variables and coefficients. 
Let us give a similar digression as above for the plethystic cases. Consider the following $2$-colored operation:
 \begin{center}
 \begin{tikzpicture}[grow=down,level distance=20pt,thick]
\tikzstyle{level 1}=[sibling distance=10pt]
\node at (-1,-1){$a=$};
\node at (0,0){}
child[red] {
child[blue]
child[blue]
child[blue] 
};
\node at (1,0){}
child[blue] {
child[blue] 
child[blue] 
};
\node at (2.3,0){}
child[blue] {
child[blue]
child[blue] 
child[blue] 
child[blue]
};
\node at (3.6,0){}
child[red] {
child[blue] 
child[blue] 
};
\node at (4.6,0){}
child [blue]{
child[blue] 
child [blue]
};
\node at (5.9,0){}
child[red] {
child[blue] 
child [blue]
child [blue]
child[blue]
};
\draw [rounded corners] (-0.5,-1.6)--(-0.5,0)--(6.6,0)--(6.6,-1.6)--cycle;
\foreach \i in {0,3.6,5.9}
{
	\draw [red](\i,0)--(\i,0.5);
}
\foreach \i in {1,2.3,4.6}
{
	\draw [blue](\i,0)--(\i,0.5);
}
\draw [blue](3.05,-1.6)--(3.05,-2.1);
\end{tikzpicture}

\end{center}

The isomorphism class of this operation is given by $(\l^1,\l^2)=\big({\color{blue}(0,2,0,2)},{\color{red}(0,1,1,1)}\big)$ (since everything commutes now), and it can either be an operation in $\Tpc{}{\pfsmc}{\ass}_2$ or $\Tpc{}{\pfsmc}{\ass}_2$. It thus corresponds to $A_{\big({\color{blue}(0,2,0,2)},{\color{red}(0,1,1,1)}\big)}\in \IIplethB$ or to $a_{\big({\color{blue}(0,2,0,2)},{\color{red}(0,1,1,1)}\big)}\in \EIIplethB$, the linear maps returning the coefficients of $x_2^2x_4^2y_2y_3y_4/\autiv(\l^x)\autiv(\l^y)$.


\subsection{Bialgebras from $\comm$ and $\comm_2$}\label{subsection:comm}

We have already seen two bialgebras arising from $\comm$ in Subsection \ref{subsection:classicalcase}, the Fa\`a di Bruno bialgebra $\FdB$ and the plethystic bialgebra $\plethB$. Let us see the aforementioned variations.

Replace $\Q\pow{\x}$ by $\Q\ncpow{\x}$, that is, noncommuting variables. Elements of $\Q\ncpow{\x}$ are written
$$F(\x)=\sum_{\omega\in W}\frac{F_{\omega}}{\omega !}\x^{\omega},$$
Substitution of power series in $\Q\ncpow{\x}$ is defined in the same way as before (\ref{eq:plethsubs}). The \emph{plethistic bialgebra with noncommuting variables} $\plethBX$ is defined as the free polynomial algebra $\Q\big[\{A_{\omega}\}_{\omega}\big]$ on the set maps $A_{\omega}$ and comultiplication and counit as usual.

\begin{theorem}\label{thm:plethBX}
 The  plethystic bialgebra with noncommuting variables $\plethBX$ is isomorphic to the homotopy cardinality of the incidence bialgebra of $\tsbar^{\pfsmc} \Tpc{}{\fsg}{\comm}$.
 \end{theorem}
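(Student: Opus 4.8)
The plan is to follow exactly the template of the classical plethystic case (Theorem~\ref{thm:plethB}), namely to compute the incidence bialgebra of $\tsbar^{\pfsmc}\Tpc{}{\fsg}{\comm}$ by homotopy cardinality and then match its generators, algebra structure and comultiplication with the algebraic definition of $\plethBX$. First I would check the hypotheses that make this meaningful: $\comm$ is locally finite with discrete groupoid of colours (Example~\ref{ex:asscommlocfinite}), the monads $\fsg$ and $\pfsmc$ preserve locally finite groupoids, finite maps and fibrations and have finite strength, and the monad map $\fsg\Rightarrow\pfsmc$ is finite (Remark~\ref{rk:finiteness}). Hence Lemmas~\ref{lemma:TCfinite} and~\ref{lemma:TQfinite} show $\Tpc{}{\fsg}{\comm}$ is locally finite, and Proposition~\ref{prop:segalsquares} shows $\tsbar^{\pfsmc}\Tpc{}{\fsg}{\comm}$ is a locally finite Segal groupoid, so homotopy cardinality yields a genuine incidence bialgebra on $\Q_{\pi_0 X_1}$.

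Next I would identify the generators. Writing $\pop:=\Tpc{}{\fsg}{\comm}$, the groupoid of $1$-simplices is $X_1=\pfsmc\pop_1$. By Example~\ref{ex:Tsym} the first step $\Tpc{\pfsmc}{}{\comm}$ is the monoid $(\N^+,\times)$ internal to groupoids with $\aut(n)\cong\symgroup_n$, and the Giraudo step of Example~\ref{ex:giraudo} then makes the operations of $\pop$ into words $\omega=(n_1,\dots,n_k)\in W$ of positive integers; a single such word has automorphism group $\symgroup_{n_1}\times\cdots\times\symgroup_{n_k}$ of order $\omega!$, and the $\fsg$-construction forbids permuting its entries, which is precisely the noncommutativity of the variables. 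Applying $\pfsmc$ turns $\pi_0 X_1$ into the free commutative monoid on $W$, so under the CULF-monoidal structure given by the symmetric (disjoint-union) sum coming from $\pfsmc$ one gets $\Q_{\pi_0 X_1}=\Q[\{A_\omega\}_{\omega\in W}]$ as algebras, the connected $1$-simplices being the single words. The factor $1/\omega!$ produced by the automorphisms of $[\omega]$ under homotopy cardinality matches the normalization $F(\x)=\sum_\omega F_\omega\,\x^\omega/\omega!$ in $\Q\ncpow{\x}$, so $\delta_{[\omega]}$ corresponds to $A_\omega$.

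It then remains to compute the comultiplication. By Lemma~\ref{SegalCom} the coproduct $\Delta(\delta_{[\omega]})$ sums over all $2$-step factorizations of the operation $\omega$, weighted by the fibre cardinalities $|\iso(d_0a,d_1b)_f|$ divided by $|\aut(a)|\,|\aut(b)|$ and by $\omega!$. The inner face map of $\tsbar^{\pfsmc}$ is operad composition in $\pop$, which by Example~\ref{ex:Tsym} amounts to multiplication of the corolla labels in $(\N^+,\times)$ together with the $\fsg$-concatenation of the words. Reading off the index of the outer factor recovers the Verschiebung operators $V^k$ (now acting on words by $(\omega_1,\dots,\omega_n)\mapsto(k\omega_1,\dots,k\omega_n)$) together with the same multiplicities as in the classical computation of $\plethB$ in Example~\ref{example:classiccomultiplication}, but indexed over $W$ rather than $\Lambda$; the choice of $\tsbar^{\pfsmc}$ (rather than $\tsbar^{\fsg}$) makes the scalar coefficients commute, matching $\Q\ncpow{\x}$ with coefficients in $\Q$. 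This yields $\Delta(A_\omega)(F,G)=A_\omega(G\oast F)$, as required.

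The main obstacle is this last comparison: one must verify that the homotopy-cardinality weights from the Segal formula reproduce the plethystic symmetry factors for noncommuting variables, exactly as was done in \cite{Cebrian} for the partitional case. The computation is entirely parallel to that classical argument, the only genuine differences being the bookkeeping of words $\omega\in W$ in place of vectors $\l\in\Lambda$ (because the $\fsg$-construction keeps the corollas ordered) and the check that the outer $\pfsmc$ of the bar construction contributes only commuting scalars. Since these are precisely the modifications anticipated in the digression preceding this subsection, the proof reduces to the already-established classical computation and I would present it as such.
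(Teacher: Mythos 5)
Your proposal is correct and follows essentially the same route as the paper: the paper omits an explicit proof of Theorem~\ref{thm:plethBX} as being ``similar'' to its model computations, namely the proofs of Theorems~\ref{thm:MplethB} and~\ref{thm:LXplethBX}, which proceed exactly as you do --- identify the connected $1$-simplices of the bar construction up to isomorphism (words $\omega\in W$ with $\omega!$ automorphisms, as also recorded in Example~\ref{ex:BSTMSym}), obtain the free polynomial algebra from the symmetric $\pfsmc$-monoidal structure, and match the comultiplication weights via Lemma~\ref{SegalCom} and the Verschiebung action $k\omega=(k\omega_1)\dots(k\omega_n)$ on words. Your preliminary finiteness and Segal checks via Lemmas~\ref{lemma:TCfinite}, \ref{lemma:TQfinite} and Proposition~\ref{prop:segalsquares} are likewise precisely the ones the paper invokes at the start of Section~\ref{section:plethysmsandoperads}.
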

 

 If we take $R\ncpow{\x}$ with $R$ a noncommutative unital ring, then we get the \emph{noncommutative plethystic bialgebra with noncommuting variables} $\XplethBX$, which is the free associative unital algebra $\Q\langle \{A_{\omega}\}_{\omega}\rangle$ together with the usual comultiplication and counit. In this case, substitution of power series is defined in the same way but it is not associative. However the comultiplication is still associative. A proof of this can be found in \cite{BFK:0406117} for the one variable case, which is obtained below.
 
\begin{theorem}\label{thm:XplethBX}
 The noncommutative plethystic bialgebra with noncommuting variables $\XplethBX$ is isomorphic to the homotopy cardinality of the incidence bialgebra of $\tsbar \Tpc{}{\fsg}{\comm}$.
\end{theorem}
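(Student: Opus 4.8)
The plan is to follow the template already used for Theorems~\ref{thm:plethB} and~\ref{thm:plethBX}, the point being that passing from the symmetric bar construction $\tsbar^{\pfsmc}$ to the planar one $\tsbar$ is exactly what converts the free commutative polynomial algebra into a free associative one, while the $\tcons$-construction $\Tpc{}{\fsg}{\comm}$ (the ``noncommuting variables'' ingredient) is left untouched. First I would record that $\tsbar\Tpc{}{\fsg}{\comm}$ is a locally finite Segal groupoid: this is Proposition~\ref{prop:segalsquares} combined with Lemmas~\ref{lemma:TCfinite} and~\ref{lemma:TQfinite}, as already observed at the start of the section, so homotopy cardinality produces a genuine $\Q$-bialgebra structure on $\Q_{\pi_0\tsbar_1}$. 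It then remains only to identify this bialgebra with $\XplethBX$ by matching generators, product, counit and comultiplication.

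Next I would make the $1$-simplices explicit. By the $\fsg$-variant of Example~\ref{ex:Tsym}, a single operation of $\Tpc{}{\fsg}{\comm}$ is an \emph{ordered} forest of $\comm$-corollas, with the leaves of each tree permutable (the $\comm$ contribution) but the trees themselves not permutable (the $\fsg$ contribution); such a connected operation is thus recorded by a word $\omega=\omega_1\cdots\omega_n\in W$ of arities, with automorphism group $\symgroup_{\omega_1}\cx\cdots\cx\symgroup_{\omega_n}$ of order $\omega!$. Since $\tsbar_1=\fsg(\Tpc{}{\fsg}{\comm})_1$, a general $1$-simplex is a sequence of such operations, and the connected ones are precisely the single operations. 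I would set $A_\omega:=\delta_{f_\omega}$, where $f_\omega$ is the connected $1$-simplex of profile $\omega$; its homotopy cardinality carries the factor $1/\omega!$, matching the normalisation $F_\omega/\omega!$ of a series in $\Q\ncpow{\x}$, so that the $A_\omega$ are the maps returning $F_\omega$, with counit the evaluation $A_\omega(x_1)$.

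I would then pin down the algebra structure. The monoidal structure on $\tsbar\Tpc{}{\fsg}{\comm}$ is concatenation of sequences in $\fsg$, and the bar construction of this extensive monoidal category is CULF monoidal, so homotopy cardinality yields the product $\delta_a\cdot\delta_b=\delta_{a\cdot b}$. Crucially, because we work over $\fsg$ and not $\pfsmc$, there is no symmetric-group identification between the concatenated factors, so this product is genuinely noncommutative and every $1$-simplex decomposes uniquely as an ordered concatenation of connected ones; hence $\Q_{\pi_0\tsbar_1}\cong\Q\langle\{A_\omega\}_\omega\rangle$, the free associative unital algebra. This is the one place where the argument genuinely diverges from Theorem~\ref{thm:plethBX}, where the $\pfsmc$-bar forces commutativity.

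Finally, the comultiplication: by Lemma~\ref{SegalCom}, $\Delta(A_\sigma)$ is a sum over isomorphism classes of $2$-simplices of $\tsbar\Tpc{}{\fsg}{\comm}$ fibred over $f_\sigma$, weighted by reciprocal automorphism counts. A $2$-simplex is a two-step factorisation in the operad, and the dictionary of Example~\ref{example:classiccomultiplication} identifies factorisations of the profile $\sigma$ with the ways of writing $\sigma$ as a plethystic substitution $G\oast F$ via Verschiebung reindexing; reading off the two outer faces then gives $\Delta(A_\sigma)(F,G)=A_\sigma(G\oast F)$. I expect the main obstacle to be exactly this bookkeeping in the fully ordered setting: one must check that the factorisation count reproduces the plethystic coefficients when \emph{no} symmetric-group averaging is available on either the variable side ($\tcons$ over $\fsg$) or the coefficient side (bar over $\fsg$), keeping both orderings consistent. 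A structural point I would emphasise is that coassociativity of $\Delta$ is automatic, since $\tsbar\Tpc{}{\fsg}{\comm}$ is a Segal, hence decomposition, space; this explains why the comultiplication is coassociative even though substitution of noncommutative power series fails to be associative (cf.~\cite{BFK:0406117}).
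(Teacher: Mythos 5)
Your proposal is correct and follows essentially the approach the paper intends: the proof of Theorem~\ref{thm:XplethBX} is one of those the paper omits as ``similar'', and its template proofs (Theorems~\ref{thm:MplethB} and~\ref{thm:LXplethBX}) proceed exactly as you do --- identify the connected $1$-simplices of $\tsbar\Tpc{}{\fsg}{\comm}$ with words $\omega\in W$ carrying $\omega!$ automorphisms (cf.\ the overview of variations, case (ii)), note that the nonsymmetric $\fsg$-concatenation makes $\Q_{\pi_0\tsbar_1}$ the free associative algebra on the $A_\omega$, and compute $\Delta$ via Lemma~\ref{SegalCom} by matching weighted factorisation counts against Verschiebung decompositions. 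Your closing structural point, that coassociativity is automatic from the Segal/decomposition-space machinery even though noncommutative substitution fails to be associative, likewise coincides with the paper's own remark citing \cite{BFK:0406117}.
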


Let us move forward to power series in two variables. All the results are also valid for any number of variables, but for simplicity and notation we have chosen to show the two variables case. Also, for the bivariate plethystic bialgebras, we do not enter into noncommutativity of the variables or of the coefficients.


Let $\Q\pow{x,y}$ be the ring of formal power series in the variables $x$ and $y$ with coefficients in $\Q$ without constant term. Elements of $\Q\pow{x,y}$ are written
$$F(x,y)=\sum_{n+m\ge 1} \frac{F_{n,m}}{n!m!}x^ny^m.$$
The set $\Q\pow{x,y}\cx \Q\pow{x,y}$ forms  a (noncommutative) monoid with substitution of power series:
\begin{equation*}
\begin{tikzcd}[row sep=0pt,column sep=30pt]
\big(\Q\pow{x,y}\cx \Q\pow{x,y}\big)\cx \big(\Q\pow{x,y}\cx \Q\pow{x,y}\big)\arrow[r,"\circ"]&\Q\pow{x,y}\cx \Q\pow{x,y}\phantom{AAAAA}&\\
\phantom{AAAAAAAAAA}\big((F^1,F^2),(G^1,G^2)\big)\arrow[r]&\big(G^1(F^1,F^2),G^2(F^1,F^2)\big).&
\end{tikzcd}
\end{equation*}
 We define the \emph{Fa\`a di Bruno bialgebra in two variables} $\IIFdB$ as the free polynomial algebra $\Q\big[\{A^i_{n,m}\}_{n+m\ge 1}^{i=1,2}\big]$ generated by the set maps
\begin{equation*}
\begin{tikzcd}[row sep=0pt]
A^i_{n,m}:\Q\pow{x,y}\cx\Q\pow{x,y}\arrow[r]& \Q&\\
\phantom{AAAA}(F^1,F^2)\arrow[r]&F_{n,m}^i&
\end{tikzcd}
\end{equation*}
together with the comultiplication induced by substitution, meaning that 
$$\Delta (A_{n,m}^i)\big((F^1,F^2),(G^1,G^2)\big)=A_{n,m}^i\big((G^1,G^2)\circ (F^1,F^2)\big),$$
and counit given by $\epsilon (A_{n,m}^i)=A^i_{n,m}(x,y)$. 

\begin{theorem}\label{thm:2FdB}
The Fa\`a di Bruno bialgebra in two variables $\IIFdB$ is isomorphic to the homotopy cardinality of the incidence bialgebra of $\tsbar \comm_2$. The same holds for $n$ variables and $\comm_n$.
\end{theorem}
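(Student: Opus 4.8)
The plan is to run, in the two-coloured setting, the same machine that gives the one-variable statement of Theorem~\ref{thm:FdB} (which is precisely the case $\comm_1=\comm$), carefully tracking the colours. First I would record the standing structural facts. Since $\comm_2$ is a reduced symmetric operad with discrete two-element groupoid of colours, it is locally finite (Remark~\ref{rk:finiteness}), so Proposition~\ref{prop:segalsquares} makes $\tsbar\comm_2$ a Segal groupoid, and local finiteness lets us take homotopy cardinality as in Section~\ref{seccion:HoCard}. Moreover $\tsbar\comm_2$ is CULF monoidal, with monoidal product given in each simplicial degree by disjoint union of forests of operations; the squares~\eqref{monoidalpullback} are pullbacks because a forest decomposes uniquely into its connected components. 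By \cite{GKT:DSIAMI-1} the incidence coalgebra is then a bialgebra, and after cardinality $\Q_{\pi_0X_1}$ is the free polynomial algebra on the connected objects of $X_1=\tsbar_1\comm_2$, the monoid of forests being the free commutative monoid on corollas precisely because we work over $\pfsmc$.

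Next I would identify these generators. A connected object of $X_1$ is a single $\comm_2$-operation, i.e.\ a corolla with output colour $i\in\{1,2\}$ and input profile recording $n$ inputs of colour $1$ and $m$ of colour $2$, with $n+m\ge1$. As $\comm_2$ is the terminal reduced $\pfsmc$-operad on two colours there is exactly one such operation for each triple $(i,n,m)$, and its automorphism group is $\symgroup_n\times\symgroup_m$, of order $n!\,m!$. Sending the corolla $(i,n,m)$ to the generator $A^i_{n,m}$ therefore yields an algebra isomorphism $\Q\big[\{A^i_{n,m}\}\big]\xrightarrow{\ \sim\ }\Q_{\pi_0X_1}$, the factor $1/(n!\,m!)$ supplied by homotopy cardinality being exactly the symmetry factor dividing $x^ny^m$ in the definition of $F_{n,m}$.

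It then remains to match the comultiplications. Applying Lemma~\ref{SegalCom} to $X=\tsbar\comm_2$ writes $\Delta(\delta_f)$ as a sum over two-step factorisations of the corolla $f$: a root operation $b$ (a corolla with output colour $i$) together with a forest $a$ of operations grafted onto the inputs of $b$ whose leaves reassemble the profile of $f$, weighted by $1/(|\aut a|\,|\aut b|)$; here $a$ is the $d_2$-component and $b$ the $d_0$-component, matching $a\leftrightarrow(F^1,F^2)$ (inner, leaves) and $b\leftrightarrow(G^1,G^2)$ (outer, root). This is exactly the colour-indexed Bell-polynomial expansion dual to substitution: unwinding $(G^1,G^2)\circ(F^1,F^2)$ and reading off the coefficient of $x^ny^m$ in $G^i(F^1,F^2)$ reproduces the same automorphism-weighted sum over grafted forests, whence $\Delta(A^i_{n,m})\big((F^1,F^2),(G^1,G^2)\big)=A^i_{n,m}\big((G^1,G^2)\circ(F^1,F^2)\big)$. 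The counits agree because $\epsilon$ is induced by the degeneracy $s_0$, which selects the identity operations $A^1_{1,0}$ and $A^2_{0,1}$, in agreement with $\epsilon(A^i_{n,m})=A^i_{n,m}(x,y)$. The statement for $n$ variables is verbatim the same argument applied to $\comm_n$, the automorphism group of a corolla becoming $\symgroup_{k_1}\times\cdots\times\symgroup_{k_n}$.

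The hard part will be the third paragraph: checking that the automorphism-weighted sum over grafted forests produced by Lemma~\ref{SegalCom} agrees term by term with the coefficient extraction from the composite $G^i(F^1,F^2)$. All the structural inputs (Segal, CULF monoidality, the count of generators and their automorphisms) are immediate, so the entire substance of the proof is this colored symmetry-factor bookkeeping, which runs exactly parallel to Joyal's one-variable computation underlying Theorem~\ref{thm:FdB}, now distributed over the colours; for this reason it is natural to present the verification as a mild elaboration of the $\comm$ case rather than repeating it in full.
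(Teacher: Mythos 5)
Your proposal is correct and follows exactly the route the paper intends: the paper omits the proof of Theorem~\ref{thm:2FdB} as ``similar'', and its template proofs (Theorem~\ref{thm:MplethB}, and the one-variable case via \cite{GKT:FdB}) proceed precisely as you do --- Segal and local finiteness from Proposition~\ref{prop:segalsquares} and Remark~\ref{rk:finiteness}, the free polynomial algebra on connected $1$-simplices (corollas $(i,n,m)$ with $\aut\cong\symgroup_n\times\symgroup_m$), and Lemma~\ref{SegalCom} matched against the coloured Bell-polynomial expansion of $(G^1,G^2)\circ(F^1,F^2)$. Your identification of the $d_2$-component with the inner series and the $d_0$-component with the root, and of the counit via degenerate simplices, is also the correct bookkeeping, so the level of detail you leave to the ``coloured Joyal computation'' is no less than what the paper itself supplies.
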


Notice that $\comm_2$ is the same as $\Tpc{}{\idm}{\comm}$ and, as explained in Example \ref{ex:TassTsym}, it is also $\Tpc{}{\pfsmc}{C}$, where $C\!=\!\{\!\!\begin{tikzcd}0\!\arrow[r,bend left=20]&\!1\arrow[l,bend left=20]\end{tikzcd}\!\!\}$. This connects the Fa\`a di Bruno bialgebra in two variables to the $\tcons$-construction in an analogous way as the plethystic bialgebras.


We can do the same with the power series ring in two sets of infinitely many variables $\Q\pow{\x,\y}$ with coefficients in $\Q$. We shall write 
$$\X=(\x,\y), \;\;\l=(\l^1,\l^2)\in \Lambda^2,\;\; \autiv(\l)=\autiv(\l^1)\autiv(\l^2) \;\;\text{and} \;\; \X^{\l}=\x^{\l^1}\y^{\l^2},$$
so that elements of $\Q\pow{\X}$ are written
$$F(\X)=\sum_{\l} \frac{F_{\l}}{\autiv (\l)}\X^{\l}.$$
The set $\Q\pow{\X}\cx \Q\pow{\X}$ forms  a (noncommutative) monoid with plethystic substitution of power series:
\begin{equation*}
\begin{tikzcd}[row sep=0pt,column sep=30pt]
\big(\Q\pow{\X}\cx \Q\pow{\X}\big)\cx \big(\Q\pow{\X}\cx \Q\pow{\X}\big)\arrow[r,"\oast"]&\Q\pow{\X}\cx \Q\pow{\X}\phantom{AAAAA}&\\
\phantom{AAAAAAAAAA}\big((F^1,F^2),(G^1,G^2)\big)\arrow[r,mapsto]&\big(G^1(F^1,F^2),G^2(F^1,F^2)\big)&
\end{tikzcd}
\end{equation*}
 The \emph{plethystic bialgebra in two variables} $\IIplethB$ is defined as the free polynomial algebra $\Q\big[\{A^i_{\l}\}^{i=1,2}\big]$ generated by the set maps
\begin{equation*}
\begin{tikzcd}[row sep=0pt]
A^i_{\l}:\Q\pow{\X}\cx\Q\pow{\X}\arrow[r]& \Q&\\
\phantom{AAAA}(F^1,F^2)\arrow[r,mapsto]&F_{\l}^i&
\end{tikzcd}
\end{equation*}
together with the comultiplication induced by substitution, meaning that 
$$\Delta (A_{\l}^i)\big((F^1,F^2),(G^1,G^2)\big)=A_{\l}^i\big((G^1,G^2)\circ (F^1,F^2)\big),$$
and counit given by $\epsilon (A_{\l}^i)=A^i_{\l}(x,y)$. 

\begin{theorem}\label{thm:2plethB}
The  plethystic bialgebra in two variables $\IIplethB$ is isomorphic to the homotopy cardinality of the incidence bialgebra of $\tsbar \Tpc{}{\pfsmc}{\comm_2}$.
\end{theorem}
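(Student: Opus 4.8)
The plan is to mirror the proof of Theorem~\ref{thm:plethB}, carrying along the two colours of $\comm_2$, and to reduce the genuinely analytic content to the transversal combinatorics of Nava--Rota already exploited in \cite{Cebrian}. By the discussion opening this section, Proposition~\ref{prop:segalsquares} together with Lemmas~\ref{lemma:TCfinite}, \ref{lemma:TQfinite} and Remark~\ref{rk:finiteness} guarantee that $\tsbar \Tpc{}{\pfsmc}{\comm_2}$ is a locally finite Segal groupoid whose homotopy cardinality carries the incidence bialgebra structure of Section~\ref{seccion:HoCard}; it remains only to identify this bialgebra with $\IIplethB$.

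First I would make the algebra isomorphism explicit by unravelling $\Tpc{}{\pfsmc}{\comm_2}$ as in Section~\ref{seccion:examples}. A connected $1$-simplex is a single operation of $\Tpc{}{\pfsmc}{\comm_2}$, namely an output colour $i\in\{1,2\}$ together with a finite multiset of $\comm_2$-corollas grafted onto it, each corolla having all of its leaves of a single colour. Recording, for each leaf-colour $c\in\{1,2\}$ and each arity $n\ge1$, the number of corollas present yields a pair $\l=(\l^1,\l^2)\in\Lambda^2$, and the isomorphism class of the operation is exactly the datum $(i,\l)$. Hence $\pi_0$ of the connected $1$-simplices is in bijection with $\{1,2\}\times\Lambda^2$, the index set of the generators $A^i_\l$. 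Since the monoidal product is disjoint union of operations, with $\delta_a\cdot\delta_b=\delta_{a+b}$, both $\IIplethB$ and the cardinality bialgebra are free polynomial algebras on this set, and I would send $A^i_\l$ to the class of the corresponding connected operation.

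The crux is the comultiplication. On the plethystic side $\Delta(A^i_\l)(F,G)=A^i_\l(G\oast F)$, where $(G\oast F)^i=G^i(F^1,F^2)$ is obtained by substituting $F^1$ for the colour-$1$ variables of $G^i$ and $F^2$ for its colour-$2$ variables, with the appropriate Verschiebung rescaling. On the operadic side, Lemma~\ref{SegalCom} writes $\Delta(\delta_{\mathrm{op}})$ as a sum over $2$-step factorizations in $\tsbar \Tpc{}{\pfsmc}{\comm_2}$, weighted by automorphisms. As in Example~\ref{example:TSymcomultiplication}, a factorization of an operation of output colour $i$ is a top layer---an operation $G$ of output colour $i$---whose colour-$c$ leaves are refined by a bottom layer of colour-$c$ operations $F$; the arity of each slot of $G$ dictates, through the Verschiebung operator $V^k$ of~\eqref{eq:classicalVerschiebung}, which coefficients of $F$ are summoned, and the leaf-colour $c$ selects $F^c$. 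This is precisely the colour-refined version of the transversal count behind $P_{\sigma,\l}$. The main obstacle is to verify that the colours are threaded consistently, so that the mixing of $F^1$ and $F^2$ inside $G^i(F^1,F^2)$ is matched exactly by the choice of leaf-colours in each factorization; since in $\comm_2$ the colours merely label edges and are preserved both by operadic composition and by the permutation automorphisms, the underlying trees, Verschiebung data and automorphism weights are identical to the uncoloured situation. The computation therefore collapses to the uncoloured identity of Theorem~\ref{thm:plethB} with the colours carried as labels, and I would conclude by invoking that result. Finally, the equivalence of $\tsbar \Tpc{}{\pfsmc}{\comm_2}$ with a two-coloured analogue of the simplicial groupoid $T\sur$ of \cite{Cebrian}, established along the lines of Section~\ref{seccion:oldTrelation}, provides a more geometric route to the same identification.
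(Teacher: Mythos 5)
Your proposal is correct and is essentially the paper's own argument: the paper omits the proof of this theorem as ``similar'' to the ones it writes out, and the intended proof is exactly your colour-decorated rerun of Theorems~\ref{thm:plethB}/\ref{thm:MplethB} --- identify the connected $1$-simplices of $\tsbar\,\Tpc{}{\pfsmc}{\comm_2}$ with the generators $A^i_{\l}$, $(i,\l)\in\{1,2\}\times\Lambda^2$, and match comultiplications via Lemma~\ref{SegalCom}, with the colours threaded through the transversal/Verschiebung count so that coloured factorizations are precisely colourings of the uncoloured ones, as in the paper's own two-variable example. Your closing alternative route via a two-coloured analogue of $T\sur$ likewise matches the paper's Example~\ref{ex:TS2BTSym2}.
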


\begin{example} \label{example:TSymcomultiplication}
We could define polynomials $P^2_{\sigma,\l}\big(\{A_{\mu}\}^i_{\mu}\big)$ to express the comultiplication of $A^i_{\sigma}$, in analogy to the univariate case. Let us see again the interpretation of $P^2_{\sigma,\l}\big(\{A_{\mu}\}^i_{\mu}\big)$, for $\sigma=\big({\color{blue}(0,0,0,0,0,1)},{\color{red}(0,0,0,1,0,1)}\big)$ and $\l=\big({\color{blue}(1,1)},{\color{red}(0,1)}\big)$, from the point of view of $\tsbar^{\fsmc} \Tpc{}{\pfsmc}{\comm_2}$. These two vectors are represented by
 \begin{center}
 \begin{tikzpicture}[grow=down,level distance=20pt,thick]
\tikzstyle{level 1}=[sibling distance=10pt]
\node at (0,0){}
child[red] {
child[blue] 
child [blue]
child [blue]
child [blue]
};
\node at (2,0){}
child [blue]{
child [blue]
child [blue]
child [blue]
child [blue]
child [blue]
child [blue]
};
\node at (4.4,0){}
child [red]{
child [blue]
child [blue]
child [blue]
child [blue]
child [blue]
child [blue]
};
\draw [rounded corners] (-0.7,-1.6)--(-0.7,0)--(5.5,0)--(5.5,-1.6)--cycle;
\draw [red](0,0)--(-0,0.5); \draw [blue](2,0)--(2,0.5); \draw [red](4.4,0)--(4.4,0.5);
\draw [blue](2.4,-1.6)--(2.4,-2.1);
\end{tikzpicture}
\begin{tikzpicture}
\node at (1,0){}; \node at (0,-1){}; \node at (-1,0){};
\node at (0,0){and};
\end{tikzpicture}
\begin{tikzpicture}[grow=down,level distance=20pt,thick]
\tikzstyle{level 1}=[sibling distance=10pt]
\node at (0,0){}
child [blue]{
child [blue]
};
\node at (0.7,0){}
child [blue]{
child [blue]
child [blue]
};
\node at (1.5,0){}
child [red]{
child [blue]
child [blue]
};
\draw [rounded corners] (-0.5,-1.6)--(-0.5,0)--(2,0)--(2,-1.6)--cycle;
\draw [blue](0,0)--(0,0.5); \draw [blue](0.7,0)--(0.7,0.5); \draw [red](1.5,0)--(1.5,0.5);
\draw [blue](0.7,-1.6)--(0.7,-2.1);
\end{tikzpicture}
\end{center}
respectively. The output color depends on whether we are computing the comultiplication of $A^1_{\sigma}$ or $A^2_{\sigma}$. We assume the former, without loss of generality. It is easy to see that there are essentially three options, which are the only possible colorings of the solutions for the analogous case of Example~\ref{example:TSymcomultiplication}:
 \begin{center}
 \begin{tikzpicture}[grow=down,level distance=20pt,thick]
\tikzstyle{level 1}=[sibling distance=10pt]
\node at (0,0){}
child [red]{
child [blue]
child [blue]
child [blue]
child [blue]
};
\draw [rounded corners] (-0.7,-1.6)--(-0.7,0)--(0.7,0)--(0.7,-1.6)--cycle;
\draw [red](0,0)--(-0,0.5);
\draw [blue](0,-1.6)--(0,-2.1);
\end{tikzpicture}
 \begin{tikzpicture}[grow=down,level distance=20pt,thick]
\tikzstyle{level 1}=[sibling distance=10pt]
\node at (0,0){}
child [blue]{
child [blue]
child [blue]
child [blue]
};
\draw [rounded corners] (-0.6,-1.6)--(-0.6,0)--(0.6,0)--(0.6,-1.6)--cycle;
\draw [blue](0,0)--(-0,0.5);
\draw [blue](0,-1.6)--(0,-2.1);
\end{tikzpicture}
 \begin{tikzpicture}[grow=down,level distance=20pt,thick]
\tikzstyle{level 1}=[sibling distance=10pt]
\node at (0,0){}
child [red]{
child [red]
child [red]
child [red]
};
\draw [rounded corners] (-0.6,-1.6)--(-0.6,0)--(0.6,0)--(0.6,-1.6)--cycle;
\draw [red](0,0)--(-0,0.5);
\draw [red](0,-1.6)--(0,-2.1);
\end{tikzpicture}
\begin{tikzpicture}
\node at (0.2,0){}; \node at (0,-1){};
\node at (0,-0.2){\!\!,};
\end{tikzpicture}
 \begin{tikzpicture}[grow=down,level distance=20pt,thick]
\tikzstyle{level 1}=[sibling distance=10pt]
\node at (0,0){}
child [blue]{
child [blue]
child [blue]
child [blue]
child[blue]
child[blue]
child[blue]
};
\draw [rounded corners] (-1,-1.6)--(-1,0)--(1,0)--(1,-1.6)--cycle;
\draw [blue](0,0)--(-0,0.5);
\draw [blue](0,-1.6)--(0,-2.1);
\end{tikzpicture}
 \begin{tikzpicture}[grow=down,level distance=20pt,thick]
\tikzstyle{level 1}=[sibling distance=10pt]
\node at (0,0){}
child [red]{
child [blue]
child [blue]
child [blue]
};
\draw [rounded corners] (-0.6,-1.6)--(-0.6,0)--(0.6,0)--(0.6,-1.6)--cycle;
\draw [red](0,0)--(-0,0.5);
\draw [blue](0,-1.6)--(0,-2.1);
\end{tikzpicture}
 \begin{tikzpicture}[grow=down,level distance=20pt,thick]
\tikzstyle{level 1}=[sibling distance=10pt]
\node at (0,0){}
child [red]{
child [red]
child [red]
};
\draw [rounded corners] (-0.4,-1.6)--(-0.4,0)--(0.4,0)--(0.4,-1.6)--cycle;
\draw [red](0,0)--(-0,0.5);
\draw [red](0,-1.6)--(0,-2.1);
\end{tikzpicture}
\begin{tikzpicture}
\node at (0.3,0){}; \node at (0,-1){}; \node at (-0.3,0){};
\node at (0,0){and};
\end{tikzpicture}
 \begin{tikzpicture}[grow=down,level distance=20pt,thick]
\tikzstyle{level 1}=[sibling distance=10pt]
\node at (0,0){}
child [red]{
child [blue]
child [blue]
child [blue]
child[blue]
child[blue]
child[blue]
};
\draw [rounded corners] (-1,-1.6)--(-1,0)--(1,0)--(1,-1.6)--cycle;
\draw [red](0,0)--(-0,0.5);
\draw [blue](0,-1.6)--(0,-2.1);
\end{tikzpicture}
 \begin{tikzpicture}[grow=down,level distance=20pt,thick]
\tikzstyle{level 1}=[sibling distance=10pt]
\node at (0,0){}
child [blue]{
child [blue]
child [blue]
child [blue]
};
\draw [rounded corners] (-0.6,-1.6)--(-0.6,0)--(0.6,0)--(0.6,-1.6)--cycle;
\draw [blue](0,0)--(-0,0.5);
\draw [blue](0,-1.6)--(0,-2.1);
\end{tikzpicture}
 \begin{tikzpicture}[grow=down,level distance=20pt,thick]
\tikzstyle{level 1}=[sibling distance=10pt]
\node at (0,0){}
child [red]{
child [red]
child [red]
};
\draw [rounded corners] (-0.4,-1.6)--(-0.4,0)--(0.4,0)--(0.4,-1.6)--cycle;
\draw [red](0,0)--(-0,0.5);
\draw [red](0,-1.6)--(0,-2.1);
\end{tikzpicture}
\end{center}
\end{example}


\subsection{Bialgebras from $\ass$ and $\ass_2$}\label{subsection:ass}



Take again $\Q\pow{x}$, but write now elements of $\Q\pow{x}$ as
$$F(x)=\sum_{n\ge 1} f_nx^n.$$
The \emph{ordinary Fa\`a di Bruno bialgebra} $\OFdB$ is the free polynomial algebra $\Q[a_1,a_2,\dots]$ generated by the linear maps $a_i(F)=f_i$ 
together with the comultiplication induced by substitution and counit given by $\epsilon (a_n)=a_n(x)$, as before. 

\begin{theorem}\label{thm:OFdB}
The ordinary Fa\`a di Bruno bialgebra $\OFdB$ is isomorphic to the homotopy cardinality of the incidence bialgebra of $\tsbar^{\pfsmc} \ass $.
\end{theorem}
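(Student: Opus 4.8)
The plan is to follow the template of Theorem~\ref{thm:FdB} (the exponential Fa\`a di Bruno bialgebra), replacing the reduced commutative operad $\comm$ by the reduced associative operad $\ass$ and replacing $\tsbar$ by the bar construction $\tsbar^{\pfsmc}$ taken relative to the finite cartesian monad map $\fsg\Rightarrow\pfsmc$. First I would record that $\tsbar^{\pfsmc}\ass$ is a locally finite Segal groupoid which is CULF monoidal under disjoint union of forests, so that homotopy cardinality produces a genuine bialgebra on $\Q_{\pi_0(\tsbar^{\pfsmc}_1\ass)}$. Indeed it is Segal by the $\tsbar^{\mathsf{R}}$ version of Proposition~\ref{prop:segalsquares}, since $\ass$ has discrete groupoid of colours and $\pfsmc$ preserves fibrations; and it is locally finite by Remark~\ref{rk:finiteness}, because $\ass$ is locally finite, $\pfsmc$ preserves locally finite groupoids and finite maps, and $\fsg\Rightarrow\pfsmc$ is finite.

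Next I would identify the combinatorial objects. Since $\ass$ is discrete with $\pop_1=\fsg 1\cong\N^+$, the groupoid of $1$-simplices is $\tsbar^{\pfsmc}_1\ass=\pfsmc\pop_1=\bigsqcup_{m}(\N^+)^m/\!/\symgroup_m$: an object is a finite forest of corollas, each corolla an $\ass$-operation of some arity and carrying no internal symmetry, and its isomorphism class is the vector $\l\in\Lambda$ recording the number $\l_j$ of arity-$j$ corollas, with automorphism group $\prod_j\symgroup_{\l_j}$ of order $\l!$. The monoidal (disjoint union) product satisfies $\delta_\l\cdot\delta_\nu=\delta_{\l+\nu}$, so $\Q_{\pi_0(\tsbar^{\pfsmc}_1\ass)}$ is the free polynomial algebra on the classes of single corollas; I set $a_n:=\delta_{e_n}$, where $e_n$ is the arity-$n$ corolla, so that $\prod_j a_j^{\l_j}=\delta_\l$ and the algebra matches $\OFdB=\Q[a_1,a_2,\dots]$.

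The heart of the proof is the comultiplication, computed as $\Delta(\delta_{e_n})=\big|(d_2,d_0)_!\,d_1^{\ast}\delta_{e_n}\big|$ (equivalently through Lemma~\ref{SegalCom}). Here $\pop_2$ consists of the $2$-level planar trees $\big((a_1,\dots,a_k),b\big)$ with $b$ the arity-$k$ corolla; the inner face $d_1$ composes such a tree to the single corolla of arity $a_1+\dots+a_k$, the top face $d_2$ returns the symmetrized forest $\{a_1,\dots,a_k\}$, and the bottom face $d_0$ returns $e_k$. Because $e_n$ has trivial automorphisms, the homotopy fibre of $d_1$ over $e_n$ is the discrete set of ordered compositions $(a_1,\dots,a_k)$ of $n$; pushing forward along $(d_2,d_0)$ and taking cardinality, the fibre over a target $(\l,e_k)$ acquires the torsor factor $|\aut(\l,e_k)|=\l!$, so that after dividing by $|\aut(\l,e_k)|$ the coefficient of $\delta_\l\otimes a_k$ is the number $k!/\l!$ of orderings of the multiset $\l$. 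This is exactly the coefficient obtained by dualizing $a_n(G\circ F)=[x^n]G(F(x))$, namely $\Delta(a_n)=\sum_k\big(\sum_{m_1+\dots+m_k=n}a_{m_1}\cdots a_{m_k}\big)\otimes a_k$. Finally the counit matches: the degenerate $1$-simplices are the forests of arity-$1$ (unit) corollas, so $\epsilon(\delta_{e_n})=\delta_{n,1}=a_n(x)$, and the resulting algebra, coalgebra and counit maps assemble into the desired isomorphism $\OFdB\cong\big|\text{incidence bialgebra of }\tsbar^{\pfsmc}\ass\big|$.

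The main obstacle I anticipate is the automorphism bookkeeping in this homotopy-cardinality computation, and in particular getting the $\l!$-normalization right: this is precisely the point at which the \emph{ordinary} (rather than exponential) generating function appears, since $\ass$-corollas carry no internal $\symgroup_j$-symmetry, unlike the $\comm$-corollas that produce the factor $\autiv(\l)$ in Theorem~\ref{thm:FdB}. One must not conflate the strict and homotopy fibres of $(d_2,d_0)$ over $(\l,e_k)$: the homotopy fibre has $k!$ points (the $k!/\l!$ compositions with underlying multiset $\l$, each counted $\l!$ times by the isomorphism torsor of $\l$), and only after division by $|\aut(\l,e_k)|=\l!$ does one recover $k!/\l!$. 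All remaining verifications are parallel to the exponential case and to Subsection~\ref{subsection:classicalcase}.
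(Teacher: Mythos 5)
Your proposal is correct and takes essentially the same route as the paper: the paper omits this proof as ``similar'' to its template arguments (Theorems \ref{thm:MplethB} and \ref{thm:LXplethBX}), which proceed exactly as you do --- identify $\pi_0\tsbar^{\pfsmc}_1\ass$ with $\Lambda$ via forests of planar corollas with automorphism groups of order $\l!$, note that juxtaposition makes $\delta_{\l}=\prod_j a_j^{\l_j}$ a polynomial basis, and extract the coefficient from Lemma \ref{SegalCom} by counting $|\iso(d_0 a,d_1 b)_{e_n}|=k!$ and dividing by $|\aut(a)||\aut(b)|=\l!$. Your automorphism bookkeeping (trivial $\aut(e_n)$, the $k!$-point homotopy fibre versus the $k!/\l!$ strict orderings, and the finiteness/Segal justifications via Proposition \ref{prop:segalsquares}, Remark \ref{rk:finiteness} and the finite cartesian monad map $\fsg\Rightarrow\pfsmc$) is exactly right.
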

It is clear that $\FdB$ and $\OFdB$ are isomorphic bialgebras, since we have only changed the basis. However their combinatorial meaning is slightly different, and indeed $\tsbar^{\pfsmc} \comm $ and $\tsbar^{\pfsmc} \ass $ are not equivalent.
Note that $\ass$ is of course the same as $\Tpc{}{\idm}{\ass}$ and, as explained in Section \ref{seccion:examples}, it is also $\Tpc{}{\fsg}{}$ of the trivial monoid. This connects the ordinary Fa\`a di Bruno bialgebra to the $\tcons$-construction. 


If we replace above $\Q$ by $R$ (a noncommutative unital ring),  we obtain the \emph{noncommutative Fa\`a di Bruno bialgebra} $\XFdB$ \cite{BFK:0406117,EbrahimiFard-Lundervold-Manchon:1402.4761,Lundervold-MuntheKaas:0905.0087}, the free associative unital algebra $\Q\langle a_1,a_2,\dots\rangle$ generated by the set maps $a_i(F)=f_i$,
together with the comultiplication induced by substitution and counit  $\epsilon (a_n)=a_n(x)$, as before. In this case, substitution of power series is not associative, but the comultiplication is still coassociative \cite{BFK:0406117}. 
It is clear that $\FdB$ and $\OFdB$ are the abelianization of $\XFdB$ \cite{BFK:0406117}. 

\begin{theorem}\label{thm:XFdB}
The noncommutative Fa\`a di Bruno bialgebra $\XFdB$ is isomorphic to the homotopy cardinality of the incidence bialgebra of $\tsbar^{\fsg} \ass $.
\end{theorem}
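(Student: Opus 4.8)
The plan is to follow the same strategy as for Theorems~\ref{thm:OFdB} and~\ref{thm:FdB}, producing an explicit isomorphism between $\XFdB$ and the incidence bialgebra of $\tsbar^{\fsg}\ass$ obtained by homotopy cardinality. First I would record that $\tsbar^{\fsg}\ass$ is simply the ordinary two-sided bar construction of $\ass$ regarded as an $\fsg$-operad (the case $\mathsf{R}=\fsg$, $\psi=\id$), and that by Proposition~\ref{prop:segalsquares}, Lemma~\ref{lemma:TQfinite} and Remark~\ref{rk:finiteness} it is a locally finite Segal groupoid, CULF monoidal with monoidal product given by levelwise concatenation of forests inherited from $\fsg$. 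Hence homotopy cardinality yields a bialgebra structure on $\Q_{\pi_0\tsbar_1}$.

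Next I would identify the underlying algebra. Since $\ass_0$ and $\ass_1$ are discrete and $\fsg$ adds no automorphisms, the groupoid $\tsbar_1=\fsg\ass_1$ is discrete; its connected objects are the single $\ass$-operations, i.e.\ the $n$-ary corollas for $n\ge 1$, and a general $1$-simplex is an \emph{ordered} forest of such corollas. Because the monoidal product is concatenation of ordered forests and $\delta_a\cdot\delta_b=\delta_{a+b}$, the algebra $\Q_{\pi_0\tsbar_1}$ is the free associative unital algebra on the classes $a_n:=\delta_{(n\text{-corolla})}$, $n\ge 1$. Identifying $a_n$ with the functional $F\mapsto f_n$ then matches $\XFdB=\Q\langle a_1,a_2,\dots\rangle$ as an algebra, and one checks the counit agrees: the unary corolla is the only connected degenerate $1$-simplex, so $\epsilon(a_1)=1$ and $\epsilon(a_n)=0$ for $n\ge 2$, which is exactly $\epsilon(a_n)=a_n(x)$.

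I would then compute the comultiplication via Lemma~\ref{SegalCom} and compare it with the one dual to substitution. A connected $2$-simplex over the $n$-corolla $f$ is a planar two-level tree consisting of a bottom $k$-corolla with an ordered forest of arities $(j_1,\dots,j_k)$ grafted on, with $j_1+\cdots+j_k=n$; the two outer face maps return the top forest and the bottom corolla, and the inner face composes them to $f$. As $\ass$ has no automorphisms and $\fsg$ keeps the forest ordered, every composition $(j_1,\dots,j_k)$ of $n$ into $k\ge 1$ positive parts contributes exactly one such tree with trivial automorphism group, so Lemma~\ref{SegalCom} gives
$$\Delta(\delta_f)=\sum_{k\ge 1}\ \sum_{\substack{j_1+\cdots+j_k=n\\ j_i\ge 1}} (a_{j_1}\cdots a_{j_k})\otimes a_k.$$
On the algebraic side, expanding $a_n(G\circ F)$ as the coefficient of $x^n$ in $\sum_k g_k\,F(x)^k$ produces precisely the same ordered sum over compositions, with the left tensor factor recording $F$ and the right recording $G$. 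Thus the two bialgebra structures coincide.

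The main obstacle I anticipate is the bookkeeping in this last comparison: one must verify that taking the bar construction over $\fsg$ (rather than $\pfsmc$) genuinely produces \emph{ordered} factorizations, so that the noncommutative product order in $a_{j_1}\cdots a_{j_k}$ agrees with the order of the iterated factors of $F(x)^k$, and that the tensor-factor convention ($F$ left, $G$ right) is consistent with the face-map conventions fixed in Section~\ref{seccion:MonadsMulticategories}. Once the $\fsg$-ordering is tracked carefully the argument is formally identical to that of Theorem~\ref{thm:OFdB}; the sole difference is that the symmetric-group actions present over $\pfsmc$, which there symmetrize the forests and yield the free commutative algebra $\OFdB$, are absent over $\fsg$. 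Indeed the cartesian monad map $\fsg\Rightarrow\pfsmc$ induces a CULF functor $\tsbar^{\fsg}\ass\to\tsbar^{\pfsmc}\ass$ whose cardinality is exactly the abelianization $\XFdB\twoheadrightarrow\OFdB$, which serves as a useful consistency check.
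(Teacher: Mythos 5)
Your proposal is correct and follows essentially the same route as the paper, which omits the proof of Theorem \ref{thm:XFdB} as analogous to its proof of Theorem \ref{thm:LXplethBX}: observe that $\tsbar_1^{\fsg}\ass$ is discrete with connected objects the single corollas, identify the resulting algebra as the free associative algebra on the classes $a_n$, and apply Lemma \ref{SegalCom} to get the ordered sum over compositions $j_1+\cdots+j_k=n$ dual to substitution with noncommuting coefficients. One cosmetic point: Lemma \ref{lemma:TQfinite} is not actually needed here, since no $\tcons$-construction is applied to $\ass$; local finiteness of the bar construction follows directly from Example \ref{ex:asscommlocfinite} and the finiteness discussion at the end of Section \ref{seccion:Tcons}.
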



We now move to the plethystic bialgebras.  The \emph{exponential plethystic bialgebra} $\EplethB$ is the same bialgebra as $\plethB$, but in this case $\autiv(\l)=\l!=\l_1!\l_2!\cdots$ \cite{Nava}. The generators of this bialgebra are denoted by $a_{\l}$.
 
 \begin{theorem}\label{thm:EplethB}
  The exponential plethystic bialgebra $\EplethB$ is isomorphic to the homotopy cardinality of the incidence bialgebra of $\tsbar \Tpc{}{\pfsmc}{\ass}$.
  \end{theorem}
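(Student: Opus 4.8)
The plan is to follow the template of the proof of Theorem~\ref{thm:plethB}, exploiting that $\Tpc{}{\pfsmc}{\ass}$ and $\Tpc{}{\pfsmc}{\comm}$ share the same underlying combinatorics and differ only in the automorphisms carried by their operations. By Example~\ref{ex:Tass} the operations of $\Tpc{}{\pfsmc}{\ass}$ are finite sequences of positive natural numbers, and since the outer $\pfsmc$-structure permutes the entries, their isomorphism classes are exactly the vectors $\l\in\Lambda$, with $\l_i$ recording the multiplicity of $i$. Because the operations of $\ass$ are rigid (planar, with no internal symmetry), the only automorphisms of the operation of class $\l$ are those permuting equal entries, so its automorphism group has order $\l!=\l_1!\l_2!\cdots$. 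This is precisely the exponential symmetry factor $\autiv(\l)=\l!$ defining $\EplethB$, in contrast to the $\comm$ case, where each arity-$i$ operation contributes an extra $i!$ and one recovers $\autiv(\l)=\prod_i (i!)^{\l_i}\l_i!$.

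First I would set $X:=\tsbar\Tpc{}{\pfsmc}{\ass}$ and establish the algebra isomorphism $\EplethB\xrightarrow{\sim}\Q_{\pi_0 X_1}$ sending the generator $a_\l$ to $\delta_{[\l]}$, the basis element of the isomorphism class of the single operation of class $\l$. Both sides are free polynomial algebras: $\EplethB$ by definition, and the incidence side because, by Proposition~\ref{prop:segalsquares} together with Lemmas~\ref{lemma:TCfinite} and~\ref{lemma:TQfinite}, $X$ is a locally finite, CULF monoidal Segal groupoid whose monoidal product is disjoint union of forests, so that $\delta_a\cdot\delta_b=\delta_{a+b}$ and the connected $1$-simplices, namely the single operations indexed by $\l$, are polynomial generators.

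Next I would verify that the two comultiplications agree. On the incidence side I would apply Lemma~\ref{SegalCom}: the coefficient of $\delta_\l\otimes\delta_\mu$ in $\Delta(\delta_\sigma)$ is $|\iso(d_0\l,d_1\mu)_\sigma|/(|\aut(\l)||\aut(\mu)|)$, which counts the two-step factorizations of $\sigma$ in the bar construction, that is, the ways of writing $\sigma$ as the composite of the operation $\l$ with a forest of operations. As illustrated in Example~\ref{example:classiccomultiplication}, such factorizations are governed by the Verschiebung operators, and the count reproduces the plethystic comultiplication $\Delta(a_\sigma)=\sum_\l a_\l\otimes P_{\sigma,\l}(\{a_\mu\})$; the only change from the $\comm$ computation of~\cite{Cebrian} is that the rigid $\ass$-operations carry no internal $i!$ factors, so every symmetry factor reduces from $\autiv$ to $\l!$.

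The hard part will be the bookkeeping in this last numerical comparison: one must check that the homotopy-cardinality weights $1/(|\aut(\l)||\aut(\mu)|)$ combined with the fiber cardinalities $|\iso(d_0\l,d_1\mu)_\sigma|$ assemble into exactly the exponential plethystic polynomials $P_{\sigma,\l}$ with $\l!$-normalization. Since this is the same transversal-counting argument as for $\comm$, already carried out in~\cite{Cebrian}, with only the internal symmetries suppressed, I would organize the proof as a direct transcription of that argument, flagging at each step where an internal $i!$ automorphism factor disappears.
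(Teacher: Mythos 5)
Your proposal is correct, and its computational skeleton --- identify $\tsbar_1 \Tpc{}{\pfsmc}{\ass}$ as sequences of tuples of positive integers, take the connected pieces as polynomial generators indexed by $\Lambda$ with automorphism groups of order $\l!$, then match coproducts via Lemma~\ref{SegalCom} and a Verschiebung-operator count of two-step factorizations --- is exactly the skeleton the paper uses. The organization differs, however: the paper does not prove Theorem~\ref{thm:EplethB} by transcribing the $\comm$ argument of~\cite{Cebrian} with internal symmetries suppressed. Instead it proves the more general $\monoid$-plethystic Theorem~\ref{thm:MplethB} for an arbitrary locally finite monoid, with a self-contained fiber computation $|\iso(d_0\tau,d_1\l)_{\sigma}|=\autiv(\sigma)\cdot|T^{\pmb{\mu}}_{\sigma,\l}|$ in terms of the $(\l,\pmb{\mu})$-decompositions $T^{\pmb{\mu}}_{\sigma,\l}$, and then deduces Theorem~\ref{thm:EplethB} in one line by taking $\monoid=(\N^+,\times)$, using $\Tpc{\fsg}{}{\ass}\cong(\N^+,\times)$ (Example~\ref{ex:Tass}), so that $\Tpc{}{\pfsmc}{\ass}$ coincides with $\Tpc{}{\pfsmc}{\monoid}$ for this monoid. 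The paper's route buys all monoid-indexed plethysms simultaneously (e.g.\ also $(\N,+)$) and replaces the delicate ``bookkeeping'' you defer --- rederiving the transversal count of~\cite{Cebrian} while tracking which $i!$ factors vanish --- by a single clean count that is insensitive to whether the input had internal automorphisms; your route is more direct for this one theorem but leaves precisely that comparison step to a transcription, whereas the discrete-monoid formulation makes it automatic: since $\Tpc{\fsg}{}{\ass}=(\N^+,\times)$ is discrete (no internal $\mathfrak{S}_n$-automorphisms, unlike $\Tpc{\pfsmc}{}{\comm}$ in Example~\ref{ex:Tsym}), all symmetry factors are exactly $\l!$, as you assert.
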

  
  
 The \emph{linear plethystic bialgebra with noncommuting variables} $\LplethBX$ is the same bialgebra as $\plethBX$ but without automorphisms of $\omega$. The generators for this bialgebra are denoted $a_{\omega}$.

\begin{theorem}\label{thm:LplethBX}
 The linear plethystic bialgebra with noncommuting variables $\LplethBX$ is isomorphic to the homotopy cardinality of the incidence bialgebra of $\tsbar^{\pfsmc} \Tpc{}{\fsg}{\ass}$.
\end{theorem}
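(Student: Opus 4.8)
The plan is to compute the incidence bialgebra of $\tsbar^{\pfsmc}\Tpc{}{\fsg}{\ass}$ by hand and identify it with $\LplethBX$, following the template used for the classical case (Theorem~\ref{thm:plethB}) and running in close parallel with the $\comm$-variant (Theorem~\ref{thm:plethBX}). By Proposition~\ref{prop:segalsquares}, Lemmas~\ref{lemma:TCfinite} and~\ref{lemma:TQfinite}, and Remark~\ref{rk:finiteness}, the simplicial groupoid $\tsbar^{\pfsmc}\Tpc{}{\fsg}{\ass}$ is a locally finite Segal groupoid, so that homotopy cardinality produces an honest bialgebra on $\Q_{\pi_0 X_1}$; the whole argument then splits into identifying the generators, the algebra structure, and the comultiplication.

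First I would pin down the $1$-simplices. By Example~\ref{ex:Tass} the operations of $\Tpc{}{\fsg}{\ass}$ are finite words $\omega\in W$ of positive integers: the associated monoid $\Tpc{\fsg}{}{\ass}$ is $(\N^+,\times)$, and the outer $\Tpc{}{\fsg}{-}$ is the Giraudo construction of this monoid, whose $n$-ary operations are length-$n$ sequences of elements of $\N^+$. Since the bar construction is taken over $\pfsmc$, one has $X_1=\pfsmc\pop_1$, whose connected objects are the single words $\omega$; because $\ass$ carries no nontrivial automorphisms, each connected $1$-simplex has trivial automorphism group, which is exactly the \emph{linear} normalization of $\LplethBX$ in which the weights $\omega!$ of $\plethBX$ are absent. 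Using CULF monoidality of the $\pfsmc$-bar construction, the product on $\Q_{\pi_0 X_1}$ is symmetric, so as an algebra it is the free commutative polynomial algebra $\Q[\{a_\omega\}_{\omega\in W}]$, and I would identify the generator indexed by $\omega$ with the linear functional $a_\omega(F)=f_\omega$ of $\LplethBX$.

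The heart of the matter is the comultiplication. Applying Lemma~\ref{SegalCom} to a connected $1$-simplex $\omega$, its coproduct is the sum over $2$-simplices whose middle face $d_1$ equals $\omega$, that is, over operadic factorizations $\omega=b\circ(a^1,\dots,a^k)$ in $\Tpc{}{\fsg}{\ass}$, the outer face recording $b$ and the inner face the (unordered) collection $a^1,\dots,a^k$. By the Giraudo composition law the $i$-th letter of each inner operation $a^j$ is multiplied by the $j$-th letter $b_j$ of $b$; since the letters of $\omega$ are the variable indices, multiplication by $b_j$ is exactly the Verschiebung $V^{b_j}$ of~\eqref{eq:classicalVerschiebung} appearing in $F_{b_j}(\x)=F(x_{b_j},x_{2b_j},\dots)$. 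A short computation then shows that these factorizations are in bijection with the monomials occurring in $a_\omega(G\oast F)$ for the noncommuting plethystic substitution~\eqref{eq:plethsubs}, each contributing $\bigl(\prod_j a_{\nu^j}\bigr)\otimes a_\mu$ with $b=\mu$ and $a^j=\nu^j$; this is precisely the comultiplication $\Delta(a_\omega)(F,G)=a_\omega(G\oast F)$ of $\LplethBX$.

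The main obstacle I anticipate is the weight bookkeeping in this last step: one must verify that the factors $|\iso(d_0a,d_1b)_\omega|/(|\aut(a)||\aut(b)|)$ of Lemma~\ref{SegalCom} collapse to the correct multiplicities, the only surviving automorphisms being the $\pfsmc$-permutations of coinciding inner words, so that no spurious $\omega!$ enters. A convenient way to organize this, and an alternative route to the theorem, is to run the computation alongside the $\comm$-case of Theorem~\ref{thm:plethBX}: the unique map $\comm\to\ass$ of $\fsg$-operads is a bijection on isomorphism classes of operations that merely collapses the symmetric-group automorphisms, and by functoriality of the $\tcons$-construction and of the bar construction it induces a morphism $\tsbar^{\pfsmc}\Tpc{}{\fsg}{\comm}\to\tsbar^{\pfsmc}\Tpc{}{\fsg}{\ass}$. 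On homotopy cardinality this morphism deletes exactly the automorphism weights $\omega!$, and the remaining task is to confirm that the resulting structure constants are those dual to~\eqref{eq:plethsubs} in the linear normalization, as already recorded combinatorially in~\cite{Cebrian,Nava-Rota}.
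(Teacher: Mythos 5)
Your proposal is correct and follows essentially the same route as the paper, which omits the proof of this theorem precisely because it is ``similar'' to the ones it does write out: your direct computation is exactly the template of Theorems~\ref{thm:LXplethBX} and~\ref{thm:MplethB}, namely identifying $\pi_0$ of the connected objects of $\tsbar^{\pfsmc}_1\Tpc{}{\fsg}{\ass}=\pfsmc\,\Tpc{}{\fsg}{\ass}_1$ with $W$ (trivial automorphisms, since $\Tpc{}{\fsg}{\ass}_1$ is discrete), assigning $|\rep(\pmb{\kappa})|$ automorphisms to multisets of words, and letting the count of gluing permutations in $\iso(d_0a,d_1b)_{\omega}$ from Lemma~\ref{SegalCom} cancel against $|\aut(a)|$, so that each ordered tuple $(\kappa_1,\dots,\kappa_k)$ with $\mu_1\kappa_1\cdots\mu_k\kappa_k=\omega$ (Giraudo composition read as concatenated Verschiebungs) contributes with coefficient $1$, which is dual to the substitution \eqref{eq:plethsubs} in the linear normalization. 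One caveat on your auxiliary route: ``the unique map $\comm\to\ass$ of $\fsg$-operads'' does not type-check, since $\comm$ is a $\pfsmc$-operad; the relevant comparison is the functor $\Tpc{\pfsmc}{}{\comm}\to\Tpc{\fsg}{}{\ass}$ collapsing the symmetric-group automorphisms, and the induced simplicial map need not be CULF, so it does not automatically carry the bialgebra identification of Theorem~\ref{thm:plethBX} over to the $\ass$-case (compare the paper's remark after Theorem~\ref{thm:OFdB} that $\tsbar^{\pfsmc}\comm$ and $\tsbar^{\pfsmc}\ass$ are inequivalent even though their bialgebras agree up to rescaling) --- but this route is dispensable, as your direct computation already closes the argument.
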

 
 
The \emph{noncommutative linear plethystic bialgebra with non-commutating variables} $\LXplethBX$ is the same as $\XplethBX$ but without automorphisms on $\omega$. We write $a_{\omega}$ for its generators. Contrary to what it may seem, the noncommutativity simplifies the explicit  formula for the comultiplication of the generators. Denote by $|w|$ the length of a word. Let also $W^W_n$ be the set of length $n$ words of words of $W$. Finally, for $k\in\N$ and $\omega=\omega_1\dots\omega_n\in W$, define the $k$th Verschiebung operator as   $$k\omega=(k\omega_1)\dots(k\omega_n).$$
\begin{proposition} 
The comultiplication of $\XplethBX$ is given by 
$$\Delta(a_{\nu})=\sum_{\omega\in W}\sum_{\kappa\in W^W_{|\omega|}}T_{\nu,\omega}^{\kappa}\left(\prod_{i=1}^{|\omega|}a_{\kappa_i}\right)\otimes a_{\omega},$$
where 
\[
    T_{\nu,\omega}^{\pmb{\kappa}} = \left\{\begin{array}{lr}
        1 & \text{if } \nu=\sum_{i=1}^n\omega_i\kappa_i\\
        0 & \text{otherwise.}
        \end{array}\right.
  \]
\end{proposition}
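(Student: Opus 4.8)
The plan is to prove the formula by a direct computation, dualising plethystic substitution in the ring of power series in the noncommuting variables $\x$ with coefficients in the (noncommutative) ground ring. The generators $a_\omega$ are the coefficient functionals $a_\omega(F)=F_\omega$, read off in the factorial-free (linear) convention $F(\x)=\sum_{\omega\in W}F_\omega\,\x^\omega$, with $\x^\omega=x_{\omega_1}\cdots x_{\omega_{|\omega|}}$. Since by definition $\Delta(a_\nu)(F,G)=a_\nu(G\oast F)$, it suffices to expand $G\oast F$ and extract the coefficient of the monomial $\x^\nu$. First I would record the effect of the Verschiebung on coefficients: as $F_k(\x)=F(x_k,x_{2k},\dots)$ is obtained from $F$ by the substitution $x_j\mapsto x_{kj}$, in terms of words one has $F_k=\sum_{\kappa\in W}F_\kappa\,\x^{k\kappa}$, where $k\kappa=(k\kappa_1)\dots(k\kappa_{|\kappa|})$ is the $k$th Verschiebung, which preserves the length of words.

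Next, writing $G=\sum_{\omega\in W}G_\omega\,\x^\omega$ and substituting $x_k\mapsto F_k$ gives $G\oast F=\sum_{\omega\in W}G_\omega\,F_{\omega_1}\cdots F_{\omega_{|\omega|}}$, where each $F_{\omega_i}$ is the $\omega_i$th Verschiebung of $F$. Expanding every factor as $F_{\omega_i}=\sum_{\kappa_i}F_{\kappa_i}\,\x^{\omega_i\kappa_i}$ and concatenating the resulting monomials, the monomial produced by a choice $\omega\in W$ and $\kappa=(\kappa_1,\dots,\kappa_{|\omega|})\in W^W_{|\omega|}$ is exactly $\x^\nu$ with $\nu=\sum_{i=1}^{|\omega|}\omega_i\kappa_i$, the concatenation of the Verschiebung-scaled words $\omega_i\kappa_i$. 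Collecting the coefficient of $\x^\nu$ therefore yields a sum over all pairs $(\omega,\kappa)$ satisfying $\nu=\sum_i\omega_i\kappa_i$, each contributing with multiplicity one, and dualising this identity gives
\[
\Delta(a_\nu)=\sum_{\omega\in W}\sum_{\kappa\in W^W_{|\omega|}}T_{\nu,\omega}^{\kappa}\Big(\prod_{i=1}^{|\omega|}a_{\kappa_i}\Big)\otimes a_\omega,
\]
with $T_{\nu,\omega}^{\kappa}$ the indicator of the condition $\nu=\sum_i\omega_i\kappa_i$. The multiplicity one (hence $T_{\nu,\omega}^\kappa\in\{0,1\}$) is precisely the manifestation of the linear convention: there are no automorphisms of $\omega$ to weight the terms.

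The step I expect to require the most care is the bookkeeping of orders in the noncommutative setting. Because both the variables and the coefficients fail to commute, one must keep the factors $F_{\kappa_i}$ and the blocks $\x^{\omega_i\kappa_i}$ in their prescribed left-to-right order throughout the expansion, using that the variables are central with respect to the coefficients; and the pairing convention for $(a'\otimes a'')(F,G)$ must be fixed so that the product of coefficient-values reproduces the order $G_\omega F_{\kappa_1}\cdots F_{\kappa_{|\omega|}}$ dictated by $G\oast F=\sum_\omega G_\omega F_{\omega_1}\cdots F_{\omega_{|\omega|}}$. This ordering is exactly what forces the tensor factors and the internal product $\prod_i a_{\kappa_i}$ to appear as written. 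Finiteness of the double sum is then immediate: since the series have no constant term each $\kappa_i$ is a nonempty word, and as the Verschiebung preserves length one has $|\nu|=\sum_i|\kappa_i|\ge|\omega|$, so only finitely many $(\omega,\kappa)$ contribute for fixed $\nu$. The same comultiplication can alternatively be obtained from Lemma~\ref{SegalCom} applied to the corresponding Segal groupoid $\tsbar\Tpc{}{\fsg}{\ass}$, but the direct substitution computation above is the most transparent.
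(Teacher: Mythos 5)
Your proof is correct and is essentially the paper's own argument: the paper gives no written proof of this proposition, deferring instead to the analogous direct computation in \cite[Proposition 3.3]{Cebrian}, which is exactly your dualisation of $G\oast F$ via the Verschiebung $k\kappa=(k\kappa_1)\dots(k\kappa_{|\kappa|})$ and coefficient extraction of $\x^{\nu}$. Your added care about the noncommutative ordering conventions and the finiteness of the sum (each $\kappa_i$ nonempty, Verschiebung length-preserving) correctly fills in the details the paper leaves implicit, and your closing remark matches how the paper later verifies the same formula via Lemma~\ref{SegalCom} in the proof of Theorem~\ref{thm:LXplethBX}.
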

\noindent This proposition is analogous to \cite[Proposition 3.3]{Cebrian}.

\begin{theorem} \label{thm:LXplethBX}
 The noncommutative linear plethystic bialgebra with noncommuting variables $\LXplethBX$ is isomorphic to the homotopy cardinality of the incidence bialgebra of $\tsbar \Tpc{}{\fsg}{\ass}$.
 \begin{proof}
Notice that $\tsbar_1 \Tpc{}{\fsg}{\ass}$ is discrete. Its elements are given by sequences of tuples 
$$(m^1_1,\dots,m^1_{n_1}),\dots,(m^k_1,\dots,m^k_{n_k})$$
 of elements of positive natural numbers (see Example \ref{ex:Tass}), but there is only the identity morphisms between them. Thus juxtaposition of sequences gives $\tsbar \Tpc{}{\fsg}{\ass}$ a (nonsymmetric) monoidal structure. Sequences containing one tuple are called connected, and form an algebra basis of the incidence bialgebra. The subgroupoid of connected sequences is denoted \label{tsbarconnectednsimplices}$\tsbar^{^\circ}_1\Tpc{}{\fsg}{\ass}$. It is clear that $\pi_0\tsbar^{^\circ}_1\Tpc{}{\fsg}{\ass}=\tsbar^{^\circ}_1\Tpc{}{\fsg}{\ass}$ is isomorphic to $W$, and that $\pi_0\tsbar_1\Tpc{}{\fsg}{\ass}=\tsbar_1 \Tpc{}{\fsg}{\ass}$ is isomorphic to $W^W$. Although $\pi_0\tsbar_1\Tpc{}{\fsg}{\ass}=\tsbar_1\Tpc{}{\fsg}{\ass}$ we keep using the notation $\delta_{\omega}$ for the isomorphism class of $\omega\in\tsbar_1\Tpc{}{\fsg}{\ass}$. It only remains to compute the comultiplication:
 $$\Delta(\delta_{\nu})=\sum_{\omega}\sum_{\kappa}|\iso(d_0\kappa,d_1\omega)_{\nu}|\delta_{\kappa}\otimes \delta_{\omega}.$$
 By the discussion above we only have to check that
 $$|\iso(d_0\kappa,d_1\omega)_{\nu}|=T_{\nu,\omega}^{\kappa},$$
 but this is clear because there is only one morphism between $d_0\kappa$ and $d_1\omega$ and fibering over $\nu$ means taking the subset of those morphisms that give $\nu$ after composing, hence $|\iso(d_0\kappa,d_1\omega)_{\nu}|=1$ if $d_1(\kappa,\omega)=\nu$ and $0$ otherwise, exactly as  $T_{\nu,\omega}^{\kappa}$.
\end{proof}
 
\end{theorem}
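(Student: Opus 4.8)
The plan is to compute the incidence bialgebra of $\tsbar\Tpc{}{\fsg}{\ass}$ explicitly and to match it, generator by generator and coproduct term by term, against the presentation of $\LXplethBX$ recorded in the Proposition above. Since $\LXplethBX$ is by definition the free associative unital algebra on the symbols $a_\omega$, $\omega\in W$, with comultiplication governed by the coefficients $T^\kappa_{\nu,\omega}$, it suffices to produce an algebra isomorphism carrying $a_\omega$ to $\delta_\omega$ and then to check that the two comultiplications coincide. Local finiteness and the Segal condition are already guaranteed by Proposition~\ref{prop:segalsquares} and the discussion opening this section, so homotopy cardinality is available throughout.

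First I would analyse the groupoid $\tsbar_1\Tpc{}{\fsg}{\ass}$. Because $\ass$ is a planar noncolored operad with no nontrivial automorphisms, and because the free semigroup monad $\fsg$---in contrast with $\pfsmc$---adjoins no permutations, the $\fsg$-operad $\Tpc{}{\fsg}{\ass}$ has discrete groupoid of operations; since $\fsg$ preserves discreteness, $\tsbar_1\Tpc{}{\fsg}{\ass}$ is discrete. By Example~\ref{ex:Tass} its objects are finite sequences of tuples of positive natural numbers, so that $\pi_0\tsbar_1\Tpc{}{\fsg}{\ass}\cong W^W$, while the connected ones (single tuples) give $\pi_0\tsbar^{^{\circ}}_1\Tpc{}{\fsg}{\ass}\cong W$. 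The CULF monoidal structure is juxtaposition of sequences, which here is noncommutative; on cardinality this turns $\Q_{W^W}$ into the free associative algebra on $\{\delta_\omega\}_{\omega\in W}$. Sending $a_\omega\mapsto\delta_\omega$ therefore yields the required algebra isomorphism, under which a monomial $\prod_i a_{\kappa_i}$ corresponds to $\delta_{(\kappa_1,\dots,\kappa_n)}$.

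It then remains to compute the comultiplication by means of the closed Segal formula of Lemma~\ref{SegalCom}. Discreteness forces $|\aut(a)|=|\aut(b)|=1$ for all simplices, so the formula collapses to
$$\Delta(\delta_\nu)=\sum_{\omega}\sum_{\kappa}|\iso(d_0\kappa,d_1\omega)_\nu|\,\delta_\kappa\otimes\delta_\omega,$$
and the entire content of the theorem is reduced to the identity $|\iso(d_0\kappa,d_1\omega)_\nu|=T^\kappa_{\nu,\omega}$. The main obstacle is to make precise how the inner face map $d_1$ of the bar construction, induced by the operadic composition of $\Tpc{}{\fsg}{\ass}$, acts: by Example~\ref{ex:Tass} composition is controlled by multiplication in $(\N^+,\times)$, so grafting the outer word $\omega=\omega_1\cdots\omega_n$ onto the inner sequence $\kappa=(\kappa_1,\dots,\kappa_n)$ produces exactly the concatenation $\sum_{i=1}^n\omega_i\kappa_i$ of the Verschiebung-scaled words. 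Once this is verified, the argument closes: between $d_0\kappa$ and $d_1\omega$ there is a unique morphism, and its fibre over $\nu$ along $d_1$ is a singleton precisely when $\nu=\sum_{i=1}^n\omega_i\kappa_i$ and empty otherwise, which is exactly the defining condition of $T^\kappa_{\nu,\omega}$. Comparing with the Proposition then finishes the proof.
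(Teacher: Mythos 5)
Your proposal is correct and follows essentially the same route as the paper's own proof: discreteness of $\tsbar_1\Tpc{}{\fsg}{\ass}$, identification of the connected elements with $W$ and of all $1$-simplices with $W^W$, juxtaposition as the (nonsymmetric) CULF monoidal structure giving the free associative algebra on $\{\delta_\omega\}$, and the collapse of Lemma~\ref{SegalCom} to the check $|\iso(d_0\kappa,d_1\omega)_{\nu}|=T^{\kappa}_{\nu,\omega}$. The only difference is that you spell out two points the paper leaves implicit, namely why discreteness holds and why $d_1$ realizes the Verschiebung-scaled concatenation $\sum_{i}\omega_i\kappa_i$ via composition in $(\N^+,\times)$ from Example~\ref{ex:Tass}, which is a harmless (indeed helpful) elaboration rather than a different argument.
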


Let us move forward to power series in two variables. Again, all the results are also valid for any number of variables, but for simplicity and notation we have chosen to show the two variables case. 

Let $\Q\ncpow{x,y}$ be the ring of formal power series in the noncommutative variables $x$ and $y$ with coefficients in $\Q$ without constant term. Elements of $\Q\ncpow{x,y}$ are written
$$F(x,y)=\sum_{\omega} f_{\omega}\omega,$$
where $\omega$ is a nonempty word in $x$ and $y$.
The set $\Q\ncpow{x,y}$ forms  a noncommutative monoid with substitution of power series.

 We define the \emph{Fa\`a di Bruno bialgebra in two noncommuting variables} $\IIXFdB$ as the free polynomial algebra $\Q\big[\{a^i_{\omega}\}\big]$ generated by the set maps
\begin{equation*}
\begin{tikzcd}[row sep=0pt]
a^i_{\omega}:\Q\ncpow{x,y}\cx\Q\ncpow{x,y}\arrow[r]& \Q&\\
\phantom{AAAA}(F^1,F^2)\arrow[r]&f_{\omega}^i&
\end{tikzcd}
\end{equation*}
together with   the counit given by $\epsilon (a_{\omega}^i)=a^i_{\omega}(x,y)$ and the comultiplication induced by substitution. 

\begin{theorem}\label{thm:2XFdB}
The Fa\`a di Bruno bialgebra in two noncommuting variables $\IIXFdB$ is isomorphic to the homotopy cardinality of the incidence bialgebra of $\tsbar^{\pfsmc} \ass_2$.
\end{theorem}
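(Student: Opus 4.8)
The plan is to follow the template used for the univariate ordinary Fa\`a di Bruno bialgebra (Theorem~\ref{thm:OFdB}) and its noncommutative refinement (Theorem~\ref{thm:XFdB}), now with the two colours of $\ass_2$ encoding the two noncommuting variables $x,y$. Write $X:=\tsbar^{\pfsmc}\ass_2$. By Remark~\ref{rk:finiteness}, Proposition~\ref{prop:segalsquares} and the general discussion of the relative bar construction $\tsbar^{\mathsf{R}}$ (applied to the cartesian finite monad map $\fsg\Rightarrow\pfsmc$), $X$ is a locally finite Segal groupoid; it is CULF monoidal under disjoint union of forests, so by Section~\ref{seccion:HoCard} its homotopy cardinality $\Q_{\pi_0 X_1}$ carries a bialgebra structure. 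The goal is to produce an isomorphism of bialgebras $\IIXFdB\cong\Q_{\pi_0 X_1}$.

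First I would describe $X_1=\pfsmc(\ass_2)_1$ explicitly. Since $\ass_2$ is the terminal $2$-coloured $\fsg$-operad (Example~\ref{ex:TassTsym}), $(\ass_2)_1$ is the discrete groupoid of operations, exactly one for each pair $(i,\omega)$ consisting of an output colour $i$ and an input word $\omega$ over $\{x,y\}$; hence $X_1$ is the groupoid of finite multisets of such operations, its connected objects being the single operations $(i,\omega)$, which are rigid (trivial automorphism group). Consequently $\pi_0 X_1$ is the free commutative monoid on the set $\{(i,\omega)\}$, the monoidal product $\delta_a\cdot\delta_b=\delta_{a+b}$ matches multiplication of monomials, and $\Q_{\pi_0 X_1}$ is the free polynomial algebra on the symbols $(i,\omega)$. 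I would set up the algebra isomorphism by $\delta_{(i,\omega)}\mapsto a^i_\omega$.

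Next I would compute the comultiplication using Lemma~\ref{SegalCom}. A connected $2$-simplex of $X$ is a two-level composite: an outer operation $b=(i,\mu)$ with ordered input word $\mu=\mu_1\cdots\mu_k$ over $\{x,y\}$, together with inner operations $a_j=(\mu_j,\nu^j)$ plugged into its slots; its bottom face $d_0$ is the outer operation $b$, its top face $d_2$ is the $\pfsmc$-symmetrised multiset $\{a_1,\dots,a_k\}$, i.e.\ the monomial $\prod_j a^{\mu_j}_{\nu^j}$, and its inner face $d_1$ is the composite $(i,\nu^1\cdots\nu^k)$. Feeding $f=(i,\nu)$ into Lemma~\ref{SegalCom} therefore expresses $\Delta(\delta_{(i,\nu)})$ as a sum over outer words $\mu$ and ordered factorisations $\nu=\nu^1\cdots\nu^k$, of terms $\big(\prod_j a^{\mu_j}_{\nu^j}\big)\otimes a^i_\mu$ with coefficient $|\iso(d_0 a,d_1 b)_\nu|/(|\aut a|\,|\aut b|)$. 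On the algebraic side, unpacking $a^i_\nu(G\oast F)$ as the coefficient of $\nu$ in $G^i(F^1,F^2)$ and expanding $G^i=\sum_\mu g^i_\mu\,\mu(F^1,F^2)$ forces precisely the same index set: reading off the word $\nu$ yields an ordered factorisation $\nu=\nu^1\cdots\nu^k$ with each $\nu^j$ a monomial of the series selected by the $j$-th letter of $\mu$. So the two comultiplications are supported on the same monomials, and it remains only to match structure constants.

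The main obstacle will be this last bookkeeping step: identifying the homotopy fiber $\iso(d_0 a,d_1 b)_\nu$ and verifying that, after dividing by the automorphisms $|\aut a|=\prod_\ell m_\ell!$ coming from $\pfsmc$ permuting equal inner operations (with $m_\ell$ the multiplicities of the monomial $a$) and by $|\aut b|=1$, one recovers exactly the multinomial coefficient $k!/\prod_\ell m_\ell!$ dictated by the ordinary (non-exponential) substitution; here the fiber should have cardinality $k!$, counting the orderings of the inner forest into the $k$ slots of the planar outer operation $b$. This is the coloured, word-ordered analogue of the computation behind Theorems~\ref{thm:OFdB} and~\ref{thm:XFdB} (and is carried out in full in the linear case, Theorem~\ref{thm:LXplethBX}, where $\ass$ carries no symmetry at all). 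Once it is in place, the map $\delta_{(i,\omega)}\mapsto a^i_\omega$ intertwines both comultiplications and counits, establishing the claimed isomorphism of bialgebras.
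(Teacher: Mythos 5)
Your overall architecture is exactly the template the paper itself invokes for this result (the paper omits the proof of Theorem~\ref{thm:2XFdB}, declaring it similar to the proofs it does give, namely those of Theorems~\ref{thm:MplethB} and~\ref{thm:LXplethBX}): describe $\tsbar^{\pfsmc}_1\ass_2$, identify the connected rigid elements $(i,\omega)$ with the generators $a^i_\omega$, and compute $\Delta$ via Lemma~\ref{SegalCom}. Your identification of the $2$-simplices and the matching of index sets (outer word $\mu$ together with ordered factorisations $\nu=\nu^1\cdots\nu^k$) are correct.

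However, your final counting claim is wrong, and it is the one step this theorem is actually about. The fiber $\iso(d_0a,d_1b)_{(i,\nu)}$ does not have cardinality $k!$: first, $\iso(d_0a,d_1b)$ contains only the colour-matching permutations of the $k$ slots; second, membership in the fiber over $(i,\nu)$ requires that the concatenation of the inner input words, in the slot order, equal the specific word $\nu$ --- this order constraint is precisely what ``noncommuting variables'' means, and it cuts the fiber down. Arguing as in the proof of Theorem~\ref{thm:MplethB} (the factor $\aut\big((i,\nu)\big)$ is trivial here), the correct count is $|\iso(d_0a,d_1b)_{(i,\nu)}| = |\aut(a)|\cdot T$, where $T$ is the number of colour-compatible ordered factorisations $\nu=\nu^1\cdots\nu^k$ whose multiset of pairs $(\mu_j,\nu^j)$ equals $a$; dividing by $|\aut(a)||\aut(b)|=\prod_\ell m_\ell!$ yields the structure constant $T$, which is exactly what the expansion of $a^i_\nu\big((G^1,G^2)\circ(F^1,F^2)\big)$ produces --- one unit per admissible ordered factorisation, not the multinomial $k!/\prod_\ell m_\ell!$. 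Concretely, take $\nu=xy$, $b=(i,xx)$ and $a=\{(x,x),(x,y)\}$: the set $\iso(d_0a,d_1b)$ has two elements, but only the assignment giving $x\cdot y=xy$ (rather than $yx$) lies in the fiber, so the coefficient of $a^x_x\,a^x_y\otimes a^i_{xx}$ is $1$, whereas your multinomial predicts $2$. Had the structure constants been the multinomials, you would have obtained the commuting-variables bialgebra instead of $\IIXFdB$. With the count corrected, the rest of your argument goes through unchanged.
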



 We obtain the \emph{noncommutative Fa\`a di Bruno bialgebra in two noncommuting variables} $\XIIXFdB$ by taking above power series with coefficients in $R$.

\begin{theorem}\label{thm:X2XFdB}
The  noncommutative Fa\`a di Bruno bialgebra in two noncommuting variables $\XIIXFdB$ is isomorphic to the homotopy cardinality of the incidence bialgebra of $\tsbar \ass_2$.
\end{theorem}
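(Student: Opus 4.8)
The plan is to follow the template of the proof of Theorem~\ref{thm:LXplethBX}, which is the closest fully worked case, adapting it from the plethystic setting to the Fa\`a di Bruno setting and from one color to two; the univariate analog is Theorem~\ref{thm:XFdB}. First I would invoke the general machinery. Since $\ass_2$ is a locally finite $\fsg$-operad with discrete groupoid of colors (Example~\ref{ex:asscommlocfinite} and Remark~\ref{rk:finiteness}), Proposition~\ref{prop:segalsquares} together with the finiteness discussion of Section~\ref{seccion:examples} guarantees that $\tsbar\ass_2$ is a locally finite Segal groupoid, so that homotopy cardinality yields an honest incidence bialgebra over $\Q$ in the classical sense of vector spaces.

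Next I would describe the groupoid $\tsbar_1\ass_2=\fsg(\ass_2)_1$ explicitly. Because $\ass_2$ is planar (associative) its operations carry no automorphisms, and since the ambient monad of the bar construction is $\fsg$ rather than $\pfsmc$ no permutations are added; hence $\tsbar_1\ass_2$ is \emph{discrete}, its objects being the ordered forests of $2$-colored corollas. A single corolla is precisely the datum of an output color $i\in\{1,2\}$ together with an ordered sequence of input colors, i.e.\ a nonempty word $\omega$ in the two letters $x,y$. Thus the connected $1$-simplices are in bijection with the pairs $(i,\omega)$ indexing the generators $a^i_{\omega}$, and since the monoidal structure on $\tsbar\ass_2$ is given by juxtaposition of forests---which over $\fsg$ is \emph{not} symmetric---the incidence algebra is the free associative unital algebra $\Q\langle\{a^i_{\omega}\}\rangle$, matching $\XIIXFdB$ on the nose. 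This is exactly the point distinguishing the present statement from Theorem~\ref{thm:2XFdB}: there the bar construction over $\pfsmc$ symmetrizes the juxtaposition and yields instead the free polynomial (commutative) algebra $\IIXFdB$.

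It then remains to identify the comultiplication. I would apply Lemma~\ref{SegalCom} to a connected $1$-simplex $f=a^i_{\nu}$. As everything is discrete, every $|\aut(-)|=1$ and each fiber $|\iso(d_0a,d_1b)_{\nu}|$ is either $0$ or $1$: the bottom face $d_0$ reads off the output color and the top face $d_1$ reads off the flattened input word, so a term survives exactly when the inner forest $a$ is a sequence of corollas whose output colors spell the input word $\omega$ of the outer corolla $b$ (of output color $i$) and whose concatenated inputs spell $\nu$. This is precisely a $2$-step factorization of $f$ in $\ass_2$, that is, a way of writing the operation as an outer operation of color $i$ with inner operations plugged in color-compatibly. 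Writing $G^i(F^1,F^2)=\sum_{\omega}g^i_{\omega}F^{\omega}$ with $F^{\omega}=F^{c(\omega_1)}\cdots F^{c(\omega_m)}$ (the product taken in order, $c$ sending a letter to its color index), the coefficient of the word $\nu$ is obtained by summing, over all outer words $\omega$ and all ordered factorizations $\nu=\nu^1\cdots\nu^m$ with $\nu^j$ contributing to the $c(\omega_j)$-th series, the products $g^i_{\omega}\prod_j f^{c(\omega_j)}_{\nu^j}$; this is term-for-term the expansion produced by the surviving factorizations, establishing $\Delta(a^i_{\nu})(F,G)=a^i_{\nu}\big((G^1,G^2)\circ(F^1,F^2)\big)$. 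The counit is handled as usual via $\epsilon(a^i_{\omega})=a^i_{\omega}(x,y)$.

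The main obstacle is purely bookkeeping: one must check that the color-matching constraint (inner output colors must agree with outer input colors) is exactly what cuts the naive ordered factorizations of $\nu$ down to those appearing in $G^i(F^1,F^2)$, and that the ordering of the inner forest is recorded faithfully by the $\fsg$-structure, so that noncommutativity of both the variables and the coefficients is respected simultaneously. Once this dictionary between $2$-step factorizations in $\ass_2$ and terms of substitution is in place, the isomorphism of bialgebras is immediate, exactly as in Theorems~\ref{thm:XFdB} and~\ref{thm:LXplethBX}.
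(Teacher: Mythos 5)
Your proposal is correct and follows essentially the same route the paper intends: Theorem~\ref{thm:X2XFdB} is one of the proofs the paper omits as ``similar,'' and its model is exactly the proof of Theorem~\ref{thm:LXplethBX}, which you reproduce faithfully --- discreteness of $\tsbar_1\ass_2=\fsg(\ass_2)_1$, nonsymmetric juxtaposition over $\fsg$ yielding the free associative algebra $\Q\langle\{a^i_{\omega}\}\rangle$, and Lemma~\ref{SegalCom} reducing to $0/1$ coefficients that count color-compatible $2$-step factorizations, matching the expansion of $(G^1,G^2)\circ(F^1,F^2)$ term by term. No gaps; the finiteness and Segal hypotheses are invoked exactly as in Proposition~\ref{prop:segalsquares} and Remark~\ref{rk:finiteness}.
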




Finally, the \emph{exponential plethystic bialgebra in two variables} $\EIIplethB$ is the same as $\IIplethB$ but with exponential automorphisms $\autiv (\l)=\l_1!\l_2!\cdots$. The generators of this bialgebra are denoted $a_{\l}^i$.

\begin{theorem}\label{thm:E2plethB}
The  exponential plethystic bialgebra in two variables $\EIIplethB$ is isomorphic to the homotopy cardinality of the incidence bialgebra of $\tsbar \Tpc{}{\pfsmc}{\ass_2}$.
\end{theorem}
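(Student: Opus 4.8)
The plan is to compute the incidence bialgebra of $\tsbar\Tpc{}{\pfsmc}{\ass_2}$ directly and identify it with $\EIIplethB$, proceeding exactly as in the single-variable exponential case (Theorem~\ref{thm:EplethB}) while carrying along the two colors, or equivalently as in the two-variable commutative case (Theorem~\ref{thm:2plethB}) with $\ass_2$ in place of $\comm_2$. The only structural difference from the latter lies in the automorphism count, which I expect to turn the full symmetry factor into the exponential factor $\autiv(\l)=\l^1!\,\l^2!$.

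First I would unwind the operad $\Tpc{}{\pfsmc}{\ass_2}$. By Example~\ref{ex:Tass}, the category $\Tpc{\fsg}{}{\ass_2}$ has the two colors as objects and $\operatorname{Hom}(i,j)=\N^+$ for every pair $(i,j)$, with composition given by multiplication. Applying the $\tcons$-construction over $\pfsmc$ then yields a symmetric operad whose operations with output color $i$ are multisets of arrows of $\Tpc{\fsg}{}{\ass_2}$ with target $i$; each such arrow is a pair consisting of a source color and a positive natural label $k$. I would encode such a multiset as a vector $\l=(\l^1,\l^2)\in\Lambda^2$, where $\l^1_k$ (resp.\ $\l^2_k$) counts the arrows of source color $1$ (resp.\ $2$) and label $k$, so that the arrow $(\text{color }1,k)$ plays the role of the variable $x_k$ and $(\text{color }2,k)$ that of $y_k$. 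The connected $1$-simplices $\tsbar^{^\circ}_1\Tpc{}{\pfsmc}{\ass_2}$ are thus indexed by pairs $(\l,i)$ with $\l\in\Lambda^2$ and $i\in\{1,2\}$, and these furnish the algebra generators $a^i_\l$.

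Next I would compute automorphisms. Since the operations of $\ass_2$ are rigid, the only automorphisms of an operation of $\Tpc{}{\pfsmc}{\ass_2}$ come from permuting identical arrows, whence $\aut(a^i_\l)\cong\prod_k\symgroup_{\l^1_k}\times\prod_k\symgroup_{\l^2_k}$, of order $\l^1!\,\l^2!=\autiv(\l)$ in the exponential convention. Taking homotopy cardinality therefore attaches the weight $1/\autiv(\l)$ to each generator, matching the normalisation of $a^i_\l$ in $\EIIplethB$; this is exactly where replacing $\comm_2$ by $\ass_2$ converts the full symmetry factor of Theorem~\ref{thm:2plethB} into the exponential one.

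Finally I would compute the comultiplication via Lemma~\ref{SegalCom}, for which $\Delta(\delta_f)$ sums over $2$-step factorizations, that is over pairs $(a,b)$ together with the fiber of $\iso(d_0a,d_1b)$ over $f$ along $d_1$. The relevant inner face map is composition in $\Tpc{}{\pfsmc}{\ass_2}$, which on labels is multiplication of naturals and on colors records the output color; this realizes precisely the Verschiebung operators $V^k$ acting on the two variable sets together with the colored substitution $(G^1,G^2)\oast(F^1,F^2)$. I would then verify that summing these fibers, weighted by the automorphism factors above, reproduces $\Delta(a^i_\l)(F,G)=a^i_\l(G\oast F)$. The main obstacle is precisely this comultiplication bookkeeping: one must check that the factorization count, with the colored Verschiebung acting correctly on each variable set, agrees term by term with the two-variable exponential analogue of the polynomials $P_{\sigma,\l}$. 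Since the color and label data separate cleanly and each colored component behaves as in the single-variable exponential situation of Theorem~\ref{thm:EplethB}, this reduces to the one-variable verification of~\cite{Cebrian} applied componentwise.
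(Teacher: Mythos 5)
Your proposal is correct and takes essentially the same route as the paper, which omits the proof of Theorem~\ref{thm:E2plethB} as ``similar'' to the ones it does give: the template is the proof of Theorem~\ref{thm:MplethB} (specializing to Theorem~\ref{thm:EplethB}), and your argument is exactly that template carried through with two colors --- identify $\pi_0\tsbar^{^\circ}_1\Tpc{}{\pfsmc}{\ass_2}$ with pairs $(\l,i)\in\Lambda^2\times\{1,2\}$, count automorphisms $\l^1!\,\l^2!$ (rigid $\ass_2$-operations, permutations from $\pfsmc$), and match the comultiplication via Lemma~\ref{SegalCom} with the composition/Verschiebung bookkeeping. The only mild imprecision is the closing claim that the verification is the one-variable one ``applied componentwise'': since $(G\oast F)^i=G^i(F^1,F^2)$ mixes both variable sets, the count is a colored refinement of the univariate one (the operation substituted into a variable of color $j$ and label $k$ must have output color $j$ and is twisted by $V^k$), not a product of two independent univariate problems, but this constraint is exactly what your color-tracking already encodes.
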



\subsection{$\monoid$-plethysm and bialgebras from $\monoid$} \label{subsection:monoid}

In Subsection \ref{subsection:ass} we could have taken the locally finite monoid $(\N,\times)$ instead of $\ass$, since $\Tpc{\fsg}{}{\ass}=(\N,\times)$ (Example \ref{ex:Tass}). In fact, we have indirectly done so in the proof of Theorem \ref{thm:LXplethBX}. It is the case that the three plethystic bialgebras of Subsection \ref{subsection:ass} can be generalized to any locally finite monoid. In this section we explain the generalization of $\EplethB$, which arises from $\monoid$-plethysm, introduced by M\'endez and Nava \cite{Mendez-Nava} in the context of colored species.  

Let $\monoid$ be a locally finite monoid; this means that any $m\in \monoid$ there has a finite number of two-step factorizations $m=nk$. This is the same as the finite decomposition property of Cartier--Foata \cite{Cartier-Foata}. Consider the ring of formal power series $\Q\pow{x_m|m\in \monoid}$ without constant term. Following the same conventions as above, the set of variables $\{x_m\}_{m\in \monoid}$ is denoted $\x$. Elements of $\Q\pow{\x}$ are written
$$F(\x)=\sum_{\l\in\Lambda}\frac{f_{\l}}{\l!}\x^{\l},$$
where now the sum is indexed by the subset $\Lambda\subseteq \text{Hom}_{\set}(\monoid,\N)$ of maps with finite support,  and $\x^{\l}$ is the obvious monomial, for $\l\in\Lambda$. In this case $\l!=\prod \l_m!$.

The monoid structure of $\monoid$ defines an operation $x_n\oast x_m=x_{mn}$, which extends to a binary operation on $\Q\pow{\x}$ as
 \begin{align*}
 (G\oast F)(x_m|m\in \monoid):=&\,G(F_m|m\in \monoid),\;\;\;\;\;\text{where}\\
F_m(x_n|n\in \monoid):=&\,F(x_{mn}|n\in \monoid).&
\end{align*}
This substitution operation was introduced in \cite{Mendez-Nava} in the context of species colored over a monoid, although their conditions on the monoid are more restrictive. The main example comes from the monoid $(\N^+,\times)$, which gives ordinary plethysm. Another relevant example is $(\N,+)$, which gives $F_k(\x)=F(x_k,x_{k+1},\dots)$, which appears in \cite{Mendez}. The power series $F_m$  can be described by using the Verschiebung operators: for each $m\in \monoid$ we define the $m$th Verschiebung operator $V^m$ on $\text{Hom}_{\set}(\monoid,\N)$  as follows: for each $\l\in \text{Hom}_{\set}(\monoid,\N)$ and $n\in\monoid$,
$$V^m\l(n)=\sum_{mk=n}\l_k.$$
Clearly if $\monoid=(\N^+,\times)$ this gives the usual Verschiebung operators \cite{Nava,Nava-Rota,Cebrian}. The power series $F_m$ can be expressed as 
$$F_m(\x)=\sum_{\l}\frac{f_{\l}}{\autiv(\l)}\x^{V^m\l}.$$
As usual, we define the $\monoid$-plethystic bialgebra $\MplethB$ as the polynomial algebra $\Q\big[\{a_{\l}\}_{\l}\big]$ on the set maps $a_{\l}:\Q\pow{\x}\rightarrow \Q$ defined by $a_{\l}(F)=f_{\l}$, with comultiplication dual to plethystic substitution, that is 
$$\Delta(a_{\l})(F, G)=a_{\l}(G\oast F),$$
and counit given by $\epsilon(a_{\l})=a_{\l}(x_1)$.

What follows is devoted to express the comultiplication of $\MplethB$.
Consider a list $\pmb{\mu}\in \Lambda^n$ of $n$ infinite vectors, regarded as a representative element of a multiset $\overline{\pmb{\mu}}\in \Lambda^n/\mathfrak{S}_n$. We denote by $\rep(\pmb{\mu})\subseteq \mathfrak{S}_n$ the set of automorphisms that maps the list $\pmb{\mu}$ to itself. For example if $\pmb{\mu}=\{\alpha,\alpha,\beta,\gamma,\gamma,\gamma\}$ then $\rep(\pmb{\mu})$ has $2!\cdot 1!\cdot 3!$ elements. Notice that if $\pmb{\mu},\pmb{\mu'}\in\Lambda^n$ are representatives of the same multiset then there is an induced bijection $\rep(\pmb{\mu})\cong \rep(\pmb{\mu'})$. We may thus refer to $\rep(\pmb{\mu})$ for a multiset $\overline{\pmb{\mu}}\in \Lambda^n/\mathfrak{S}_n$ by taking a representative, since we are only interested in its cardinality. 

Fix two infinite vectors, $\sigma,\l\in \Lambda$, and a list of infinite vectors $\pmb{\mu}\in \Lambda^n$, with $n=|\l|$. We define the set of $(\l,\pmb{\mu})-$\emph{decompositions} of $\sigma$ as
$$T_{\sigma,\l}^{\pmb{\mu}}:=\left\{p\colon \pmb{\mu}\xrightarrow{\;\;\sim\;\;} \sum_{m\in\monoid} \{1,\dots,\l_m\}\; |\;\sigma=\sum_{\mu\in\pmb{\mu}}V^{q(\mu)}\mu\right\},$$
where $p$ is a bijection of $n$-element sets and $q$ returns the index of $p(\mu)$ in the sum. A useful way to visualize an element of this set is as a placement of the elements of $\pmb{\mu}$ over a grid with $\l_m$ cells in the $m$th column such that if we apply $V^m$ to the $m$th column and sum the cells the result is $\sigma$. For example, if $\l=(\l_{m_1},\l_{m_2},\l_{m_3})=(2,1,3)$ and $\pmb{\mu}=\{\alpha,\alpha,\beta,\gamma,\gamma,\gamma\}$ the placement

\begin{center}
\begin{tikzpicture}[scale=0.7]
\draw (1,1)--(1,0) -- (0,0) -- (0,1)--(1,1)--(1,2)--(0,2)--(0,1);
\draw (1.5,1)--(2.5,1)--(2.5,0)--(1.5,0)--(1.5,1);
\draw (3,1)--(3,0);
\draw (3,0)--(4,0)--(4,1)--(3,1)--(3,2)--(4,2)--(4,1)  (3,2)--(3,3)--(4,3)--(4,2);
\node at (0.5,0.5) {$\gamma$}; \node at (0.5,1.5) {$\alpha$}; \node at (0.5,-0.5) {$V^{m_1}$};
\node at (2,0.5) {$\gamma$};  \node at (2,-0.5) {$V^{m_2}$};
\node at (3.5,0.5) {$\alpha$}; \node at (3.5,1.5) {$\beta$}; \node at (3.5,2.5) {$\gamma$}; \node at (3.5,-0.5){$V^{m_3}$};
\end{tikzpicture}
\end{center}

\noindent belongs to $T_{\sigma,\l}^{\pmb{\mu}}$ if $\sigma=V^{m_1}(\gamma+\alpha)+V^{m_2}(\gamma)+V^{m_3}(\alpha+\beta+\gamma)$, where the sum is a pointwise vector sum in $\Lambda$. Note that each such placement appears $|\rep (\pmb{\mu})|$ times in $T_{\sigma,\l}^{\pmb{\mu}}$. Observe also that if $\pmb{\mu},\pmb{\mu'}\in\Lambda^n$ are representatives of the same multiset then there is an induced bijection $T_{\sigma,\l}^{\pmb{\mu}}\cong T_{\sigma,\l}^{\pmb{\mu'}}$. We may thus refer to $T_{\sigma,\l}^{\pmb{\mu}}$ for a class $\overline{\pmb{\mu}}\in \Lambda^{|\l|}/\mathfrak{S}_{|\l|}$ by taking a representative, since we are only interested in its cardinality.

\begin{proposition}The comultiplication of $\MplethB$ is given by 
\begin{equation}\label{eq:pletform} 
\Delta(\sigma)=\sum_{\l}\sum_{\overline{\pmb{\mu}}} \frac{\autiv(\sigma)\cdot|T_{\sigma,\l}^{\pmb{\mu}}|}{\autiv(\l)\cdot \displaystyle\autiv(\pmb{\mu}) }\prod_{\mu\in \pmb{\mu}} a_{\mu}.
\end{equation}
\end{proposition}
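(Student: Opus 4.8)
The plan is to compute the comultiplication directly from its defining property $\Delta(a_\sigma)(F,G)=a_\sigma(G\oast F)$, by expanding the right-hand side into an explicit polynomial in the coordinate functions $g_\lambda=a_\lambda(G)$ and $f_\mu=a_\mu(F)$ and then reading off the coefficient of each monomial $a_\lambda\otimes\prod_{\mu\in\pmb\mu}a_\mu$. Such a monomial pairs with $(F,G)$ to give $g_\lambda\prod_\mu f_\mu$, the left tensor factor recording the outer series $G$ and the right factor the inner series $F$, exactly as in the general shape $\Delta(A_\sigma)=\sum_\lambda A_\lambda\otimes P_{\sigma,\lambda}$. The only analytic input needed is the normalization $a_\sigma(H)=\autiv(\sigma)\,[\x^\sigma]H$, which is immediate from $H=\sum_\lambda \frac{h_\lambda}{\autiv(\lambda)}\x^\lambda$.

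First I would write $G\oast F=\sum_\lambda\frac{g_\lambda}{\autiv(\lambda)}\prod_{m\in\monoid}F_m^{\lambda_m}$ and substitute $F_m=\sum_\mu\frac{f_\mu}{\autiv(\mu)}\x^{V^m\mu}$. Expanding the product indexes its terms by \emph{placements}: for each $m$ there are $\lambda_m$ tensor factors $F_m$, giving a set of $|\lambda|=n$ cells arranged in columns (the $m$th column having $\lambda_m$ cells, as in the grid picture preceding the statement), and a placement assigns a vector $\mu$ to each cell. Such a placement contributes the monomial $\x^{\sum V^{m}\mu}$ with coefficient $\prod \frac{f_\mu}{\autiv(\mu)}$; extracting $[\x^\sigma]$ retains exactly the placements satisfying $\sum_{\mu}V^{q(\mu)}\mu=\sigma$, which is the constraint defining $T_{\sigma,\lambda}^{\pmb\mu}$.

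Next I would organize this sum by the underlying multiset $\overline{\pmb\mu}$ of vectors used. Every placement producing $\overline{\pmb\mu}$ carries the same coefficient $\prod_{\mu\in\pmb\mu}f_\mu/\autiv(\pmb\mu)$, so it remains to count the distinct grid-compatible placements with a given multiset. Here the set $T_{\sigma,\lambda}^{\pmb\mu}$ of bijections from the $n$-element list $\pmb\mu$ to the cells enters: as recorded just before the statement, each distinct placement is hit $|\rep(\pmb\mu)|$ times by such bijections, so the number of placements for a fixed multiset equals $|T_{\sigma,\lambda}^{\pmb\mu}|/|\rep(\pmb\mu)|$. Assembling the three pieces — the prefactor $\autiv(\sigma)$, the weight $1/\autiv(\lambda)$ from $G$, and the placement count times $\prod f_\mu/\autiv(\pmb\mu)$ — and reading off the coefficient of $a_\lambda\otimes\prod_\mu a_\mu$ then yields the asserted formula.

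I expect the main obstacle to be the symmetry bookkeeping rather than the expansion itself: one must simultaneously track the individual automorphism factors $\autiv(\mu)$, the exponential prefactor $\autiv(\sigma)$, and the multiset redundancy $|\rep(\pmb\mu)|$ arising from treating $\pmb\mu$ as a list versus a class, and check that they combine into precisely the stated weight — in particular confirming how $|\rep(\pmb\mu)|$ is absorbed into $\autiv(\pmb\mu)$. A secondary, easier point is finiteness: since $\monoid$ is locally finite, each $n\in\monoid$ has finitely many factorizations, so the constraint $\sum_\mu V^{q(\mu)}\mu=\sigma$ forces $\mu_k\le\sigma_{q(\mu)k}$ and admits only finitely many $(\lambda,\overline{\pmb\mu})$; this guarantees that $T_{\sigma,\lambda}^{\pmb\mu}$ is finite and the double sum well defined.
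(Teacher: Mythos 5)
Your argument is correct, and it supplies exactly what the paper leaves implicit: the paper gives no in-text proof of this proposition, remarking only that it is analogous to \cite[Proposition~3.3]{Cebrian}, and that delegated proof is the same dual-expansion computation you outline --- expand $a_\sigma(G\oast F)$ via $G\oast F=\sum_\l \frac{g_\l}{\autiv(\l)}\prod_m F_m^{\l_m}$ and $F_m=\sum_\mu\frac{f_\mu}{\autiv(\mu)}\x^{V^m\mu}$, index the resulting terms by placements on the grid with $\l_m$ cells in column $m$, and group by the multiset $\overline{\pmb{\mu}}$. Within this paper the formula is instead corroborated from the other side, in the proof of Theorem~\ref{thm:MplethB}: there the incidence coproduct of $\tsbar\,\Tpc{}{\pfsmc}{\monoid}$ is computed via Lemma~\ref{SegalCom} and the fiber identification $|\iso(d_0\tau,d_1\l)_{\sigma}|=\autiv(\sigma)\cdot|T_{\sigma,\l}^{\pmb{\mu}}|$, so the weights $1/\autiv(\l)$ and $1/\autiv(\pmb{\mu})$ arise automatically as automorphism-group cardinalities rather than through multiset bookkeeping. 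Note the logical division of labor: that homotopy computation is not an independent proof of the proposition but the groupoid half of the comparison, and your algebraic expansion is precisely the missing complement needed for the isomorphism statement.

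One slip to tighten. A single placement contributes $\prod_{\mu\in\pmb{\mu}}\bigl(f_\mu/\autiv(\mu)\bigr)$, with each factor normalized individually, not $\prod_{\mu\in\pmb{\mu}}f_\mu$ divided by $\autiv(\pmb{\mu})$; the factor $|\rep(\pmb{\mu})|$ enters exactly once, through the placement count $|T_{\sigma,\l}^{\pmb{\mu}}|/|\rep(\pmb{\mu})|$. The assembly then yields the stated weight precisely because $\autiv(\pmb{\mu})=|\rep(\pmb{\mu})|\cdot\prod_{\mu\in\pmb{\mu}}\autiv(\mu)$, with $\autiv(\mu)=\mu!$ in this exponential setting --- the same identification $|\aut(\tau)|=\autiv(\pmb{\mu})$ made in the proof of Theorem~\ref{thm:MplethB}. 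As literally written, your per-placement weight would double-count $|\rep(\pmb{\mu})|$, though you correctly flagged this absorption as the point to verify. Your inclusion of the tensor factor $a_\l$ is right (the displayed formula \eqref{eq:pletform} suppresses it); just beware that the simplicial computation in the theorem places the $\prod_\mu a_\mu$ part on the left, as $\delta_\tau\otimes\delta_\l$, so fix one evaluation convention for $(F,G)$ and keep it throughout. Your finiteness remark is fine: since each $\mu\neq 0$, the constraint $\sigma=\sum_{\mu}V^{q(\mu)}\mu$ bounds $|\l|$ by $\sum_n\sigma_n$, and local finiteness of $\monoid$ leaves only finitely many admissible supports for $\l$ and the $\mu$'s.
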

\noindent This proposition is analogous to \cite[Proposition 3.3]{Cebrian}.
\begin{theorem}\label{thm:MplethB} The $\monoid$-plethystic bialgebra $\MplethB$ is isomorphic to the homotopy cardinality of the incidence bialgebra of $\tsbar \Tpc{}{\pfsmc}{\monoid}$.

\begin{proof}[Proof of \ref{thm:MplethB}]
Let us compute the homotopy cardinality of the incidence bialgebra of $\tsbar \Tpc{}{\pfsmc}{\monoid}$. First of all, notice that the elements of $\tsbar_1 \Tpc{}{\pfsmc}{\monoid}=\pfsmc \Tpc{}{\pfsmc}{\monoid}$ are sequences of tuples 
$$(m^1_1,\dots,m^1_{n_1}),\dots,(m^k_1,\dots,m^k_{n_k})$$
 of elements of $\monoid$. Juxtaposition of sequences gives $\tsbar \Tpc{}{\pfsmc}{\monoid}$ a symmetric monoidal structure. Sequences containing only one tuple are called connected, and form an algebra basis of the incidence bialgebra. Since the morphisms between tuples are given by permutations, it is clear that the set of isomorphism classes of connected elements $\pi_0 \tsbar^{^\circ}_1 \Tpc{}{\pfsmc}{\monoid}$ is isomorphic to $\Lambda$, the subset of $\text{Hom}_{\set}(\monoid,\N)$ consisting of maps with finite support. 
 The isomorphism class $\delta_{\l}$ of  a connected element $\l$
is given by the map $\monoid\xrightarrow{\l}\N$ such that $\l_m$ is the number of times $m$ appears in $\l$. Be aware that the same notation is used for either the connected elements of   $ \tsbar_1 \Tpc{}{\pfsmc}{\monoid}$ and the maps representing their isomorphism class. Moreover, 
$$\pi_0 \tsbar_1 \Tpc{}{\pfsmc}{\monoid}\cong \sum_n \Lambda^n//\mathfrak{S}_n,$$
so that an element $\tau \in \pi_0 \tsbar_1 \Tpc{}{\pfsmc}{\monoid}$ may be identified with a multiset $\overline{\pmb{\mu}}$ of maps. With these identifications we clearly have
$$|\aut (\l)|=\l!   \;\;\;\text{ and }\;\;\; |\aut (\tau)|=\autiv (\pmb{\mu}),$$
for $\l$ connected and $\tau$ not necessarily connected. The left hand sides refer to the automorphisms groups in $ \tsbar_1 \Tpc{}{\pfsmc}{\monoid}$, while the right hand sides were introduced above.

\renewcommand{\arraystretch}{1}
The assignment 
\begin{center}
\begin{tabular}{rll}
$\Q_{\pi_0 \tsbar_1 \Tpc{}{\pfsmc}{\monoid}}$& $\longrightarrow$ &$\EplethB$\\
$\delta_{\l}$& $\longmapsto$ & $a_{\l}$\\
$\delta_{\l+\mu}=\delta_{\l}\delta_{\mu}$&$\longmapsto$ & $a_{\l}a_{\mu}$,
\end{tabular}
\end{center} 
for $\l$ and $\mu$ connected, defines  an isomorphism of algebras. Notice that $\l+\mu$ is the monoidal sum in $\tsbar_1\Tpc{}{\pfsmc}{\monoid}$, which does not correspond to the pointwise sum of their corresponding infinite vectors, since it has two connected components.

 We have to compute the coproduct in $\Q_{\pi_0\tsbar_1 \Tpc{}{\pfsmc}{\monoid}}$. It is enough to compute it for connected elements. From Lemma \ref{SegalCom} we have, for $\sigma$ connected,
 \begin{equation}\label{eq:MplethBcom}
\Delta(\delta_{\sigma})=\sum_{\l} \sum_{\tau}\frac{|\iso(d_0\tau,d_1\l)_{\sigma}|}{|\aut(\l)||\aut(\tau)|}\delta_{\tau}\otimes\delta_{\l}.
\end{equation}
In view of the discussion above, it only remains to show that 
$$|\iso(d_0\tau,d_1\l)_{\sigma}|=\autiv (\sigma)\cdot|T_{\sigma,\l}^{\pmb{\mu}}|.$$
Consider representatives for $\tau$ and $\l$, 
\begin{align*}
\tau &= \big((m^1_1,\dots,m^1_{n_1}),\dots,(m^k_1,\dots,m^k_{n_k})\big)\\
\l&=(m_1,\dots,m_k),
\end{align*}
then $d_0\tau=d_1\l=(1,\dots,1)$, $k$ times. This means that  
$$\iso(d_0\tau,d_1\l)=\aut (1,\dots,1)\cong \mathfrak{S}_k.$$
Any element $\phi \in \iso(d_0\tau,d_1\l)$ induces a map between sequences
$$\big((m^1_1,\dots,m^1_{n_1}),\dots,(m^k_1,\dots,m^k_{n_k})\big)\xrightarrow{\;\;\phi\;\;}(m_1,\dots,m_k).$$
We express it as a permutation on $\tau$ and write
$$\phi(\tau)=\big((m^{\phi(1)}_1,\dots,m^{\phi(1)}_{n_{\phi(1)}}),\dots,(m^{\phi(k)}_1,\dots,m^{\phi(k)}_{n_{\phi(k)}})\big).$$
Now, consider the subset 
$$\Big\{\phi \in \iso(d_0\tau,d_1\l)\; |  \; d_1\big((\phi(\tau),\l)\big)\simeq \sigma\Big\}.$$
It is straightforward to see that this subset is isomorphic to 
$$T_{\sigma,\l}^{\pmb{\mu}}:=\left\{p\colon \pmb{\mu}\xrightarrow{\;\;\sim\;\;} \sum_{m\in M} \{1,\dots,\l_m\}\; |\;\sigma=\sum_{\mu\in\pmb{\mu}}V^{q(\mu)}\mu\right\},$$
under the identifications $\tau \rightarrow \pmb{\mu}$ and  $\phi\rightarrow p$. The summation of the Verschiebung operators is precisely composition of $\phi(\tau)$ and $\l$. Finally, since $\iso(d_0\tau,d_1\l)_{\sigma}$ is a homotopy fiber we have that
$$\iso(d_0\tau,d_1\l)_{\sigma}\cong \aut(\sigma)\times \Big\{\phi \in \iso(d_0\tau,d_1\l)\; |  \; d_1\big((\phi(\tau),\l)\big)\simeq \sigma\Big\}\cong \aut(\sigma)\times T_{\sigma,\l}^{\pmb{\mu}}$$
and therefore 
$$|\iso(d_0\tau,d_1\l)_{\sigma}|=\autiv (\sigma)\cdot |T_{\sigma,\l}^{\pmb{\mu}}|,$$
as we wanted to see.
 \end{proof}
This proves also Theorem \ref{thm:EplethB} by taking the monoid $(\N^+,\times)$.
\end{theorem}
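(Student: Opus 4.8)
The plan is to compute the homotopy cardinality of the incidence bialgebra of $\tsbar \Tpc{}{\pfsmc}{\monoid}$ directly and match it with $\MplethB$ generator by generator. By Proposition~\ref{prop:segalsquares} together with Lemmas~\ref{lemma:TCfinite} and~\ref{lemma:TQfinite}, this bar construction is a locally finite Segal groupoid, so the incidence coalgebra and the closed comultiplication formula of Lemma~\ref{SegalCom} are available. First I would describe the groupoid of $1$-simplices $\tsbar_1\Tpc{}{\pfsmc}{\monoid}=\pfsmc\Tpc{}{\pfsmc}{\monoid}$: using the explicit form of the $\tcons$-construction from Section~\ref{seccion:examples}, its objects are finite sequences of tuples of elements of $\monoid$, and since $\monoid$ is discrete the only morphisms come from the outer $\pfsmc$-permutations of the sequence. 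Juxtaposition of sequences equips $\tsbar\Tpc{}{\pfsmc}{\monoid}$ with a symmetric CULF-monoidal structure whose indecomposables (the \emph{connected} $1$-simplices) are the single-tuple sequences; these form an algebra basis. I would then identify $\pi_0$ of the connected part with $\Lambda$ and the full $\pi_0\tsbar_1$ with $\sum_n\Lambda^n/\!/\symgroup_n$, and read off the automorphism counts $|\aut(\l)|=\l!$ for a connected $\l$ and $|\aut(\tau)|=\autiv(\pmb\mu)$ for a multiset $\overline{\pmb\mu}$.

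With this dictionary the map $\delta_\l\mapsto a_\l$ on connected classes extends to an algebra isomorphism $\Q_{\pi_0\tsbar_1\Tpc{}{\pfsmc}{\monoid}}\xrightarrow{\;\sim\;}\MplethB$, because both sides are free polynomial algebras on the respective indexing sets and the monoidal sum $\delta_{\l}\delta_{\mu}$ corresponds to the product $a_\l a_\mu$. It then remains to verify compatibility with the comultiplication. Applying Lemma~\ref{SegalCom} to a connected $\sigma$ gives
$$\Delta(\delta_\sigma)=\sum_\l\sum_\tau\frac{|\iso(d_0\tau,d_1\l)_\sigma|}{|\aut(\l)||\aut(\tau)|}\,\delta_\tau\otimes\delta_\l,$$
so after cancelling the automorphism factors against $\l!$ and $\autiv(\pmb\mu)$ the entire content of the theorem is concentrated in the single combinatorial identity
$$|\iso(d_0\tau,d_1\l)_\sigma|=\autiv(\sigma)\cdot|T_{\sigma,\l}^{\pmb\mu}|.$$

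The crux of the argument, and the step I expect to be the main obstacle, is this identity. I would unwind it as follows: writing $\l=(m_1,\dots,m_k)$ and $\tau$ as a sequence of $k$ tuples, both $d_0\tau$ and $d_1\l$ evaluate to the unit sequence $(1,\dots,1)$ of length $k$, so $\iso(d_0\tau,d_1\l)\cong\aut(1,\dots,1)\cong\symgroup_k$; an element $\phi$ is a permutation reassigning the $k$ tuples of $\tau$ to the $k$ slots of $\l$. Taking the homotopy fiber along $d_1$ over $\sigma$ means restricting to those $\phi$ whose composite $d_1(\phi(\tau),\l)$ is isomorphic to $\sigma$, while the homotopy fiber acquires a free $\aut(\sigma)$ factor, producing the prefactor $\autiv(\sigma)$. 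The heart of the matter is then to show that the set of admissible $\phi$ is exactly $T_{\sigma,\l}^{\pmb\mu}$, i.e. that placing the multiset $\pmb\mu$ into a grid with $\l_m$ cells in the $m$-th column subject to $\sigma=\sum_{\mu}V^{q(\mu)}\mu$ is the same data as composing $\phi(\tau)$ with $\l$ inside the operad. This requires checking that operadic composition in $\Tpc{}{\pfsmc}{\monoid}$ acts on the index vectors precisely through the Verschiebung operators $V^m$ attached to the monoid multiplication $x_n\oast x_m=x_{mn}$. Once this translation between operadic composition and the Verschiebung grid is in place, the identity follows, and substituting it back into the Segal formula reproduces Equation~\eqref{eq:pletform}, completing the bialgebra isomorphism. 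Finally, specializing $\monoid=(\N^+,\times)$ recovers Theorem~\ref{thm:EplethB}.
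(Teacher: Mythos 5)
Your proposal is correct and follows essentially the same route as the paper's own proof: the same identification of $\pi_0\tsbar_1\Tpc{}{\pfsmc}{\monoid}$ with $\sum_n\Lambda^n/\!/\symgroup_n$, the same automorphism counts, the same application of Lemma~\ref{SegalCom}, and the same reduction to the identity $|\iso(d_0\tau,d_1\l)_\sigma|=\autiv(\sigma)\cdot|T_{\sigma,\l}^{\pmb\mu}|$ via $\iso(d_0\tau,d_1\l)\cong\symgroup_k$ and the Verschiebung description of operadic composition. You even correct a slip in the paper by naming the target of the algebra isomorphism $\MplethB$ rather than $\EplethB$.
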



\section{Relation with $T\sur$}\label{seccion:oldTrelation}
We end this work by exploring the relations between the $\tcons$-construction and the simplicial groupoid $T\sur$ \label{TS} of \cite{Cebrian}. We first recall what this simplicial groupoid looks like. Then we prove that $T\sur$ and $\tsbar \Tpc{}{\pfsmc}{\comm}$ are equivalent simplicial groupoids. This proves in particular Theorem \ref{thm:plethB}. Finally we  show that the operads of Section \ref{section:plethysmsandoperads} arising from $\ass$ or $\comm$ are also equivalent to similar simplicial groupoids.

\subsection{The simplicial groupoid $T\sur$}
It can be defined through a general construction \cite{Cebrian}, but we content ourselves with a brief description: objects in $T_1\sur$ and $T_2\sur$ (1 and 2-simplices of $T\sur$) are, respectively, diagrams of finite sets and surjections
 \begin{equation*}
 \begin{tikzcd}[sep={3em,between origins}]
 &{t_{01}}\arrow[ld,twoheadrightarrow] \arrow[rd,twoheadrightarrow]&\\
 	     {t_{00}} \arrow[rr,twoheadrightarrow]& &{t_{11}},
\end{tikzcd}
 \hspace{30pt}
\begin{tikzcd}[sep={3em,between origins}]    
& & {t_{02}}\dpbk \arrow[ld,twoheadrightarrow] \arrow[rd,twoheadrightarrow]& &\\                             
&{t_{01}}\arrow[ld,twoheadrightarrow] \arrow[rd,twoheadrightarrow]& &{t_{12}}\arrow[ld,twoheadrightarrow] \arrow[rd,twoheadrightarrow] & \\
	     {t_{00}} \arrow[rr,twoheadrightarrow]& &{t_{11}} \arrow[rr,twoheadrightarrow]& &{t_{22}}.      
\end{tikzcd}
\end{equation*}
Morphisms of such shapes are levelwise bijections $t_{ij}\xrightarrow{\sim} t_{ij}'$ compatible with the diagram. In general $T_n\sur$ is an analogous pyramid, with $t_{0n}$ in the peak, all of whose squares are pullbacks of sets. The  face maps $d_i$ remove all the sets containing an $i$ index, and the  degeneracy maps $s_i$ repeat the $i$th diagonals. Diagrams whose last set is
  singleton are called \emph{connected}. It is not difficult to see that $T\sur$ is a Segal groupoid \cite{Cebrian}.

We now prove that $T\sur\simeq \tsbar \Tpc{}{\pfsmc}{\comm}$. We prove the equivalence by constructing an intermediate simplicial groupoid. More precisely, we find a subsimplicial groupoid of $T\sur$ which is equivalent to $T\sur$ and isomorphic to $\tsbar \Tpc{}{\pfsmc}{\comm}$. First of all we need some notation and elementary results.
 
 \begin{definition} \label{def:monotonesquare}Consider the category of finite ordinals $[n]=\{1,\dots,n\}$ and set maps. We say that a square
 \begin{equation}\label{Tpullback}
\begin{tikzcd}
{[m]} \arrow[d,"p"'] \arrow[r,"q"] \rdpbk&{[n]}  \arrow[d,"f"] \\
              {[l]} \arrow[r,"g"']&{[k]}
\end{tikzcd}   
\end{equation}
is monotone if it is a pullback of sets, $p$ is monotone and $q$ is monotone at each fiber over $p$, that is, $q_{|p^{-1}(i)|}$ is monotone for all $i\in [l]$.

\end{definition}
 \begin{lemma}\label{lemma:monotonesquare} Consider the category of finite ordinals and set maps.
 \begin{enumerate}[(i)]
 \item The class of monotone pullback squares is closed under composition of squares.
 \item Given a diagram $[l]\xrightarrow{g}[k] \xleftarrow{f} [n]$, there is a unique monotone square as \ref{Tpullback}.
 \end{enumerate}
 \begin{proof} (i) is clear, and (ii) follows fom the fact that we can totally order the pullback,
 $$P=\sum_{i\in[k]} [l]_i \times [n]_i,$$
 by using the orders of $[l]$ and $[n]$. That is, given $a,b\in P$, then $a<b$ if $p(a)<p(b)$ or $p(a)=p(b)$ and $q(a)<q(b)$.
 \end{proof}
 \end{lemma}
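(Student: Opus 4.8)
The plan is to prove (ii) first and then to deduce (i) from the standard pasting lemma for pullbacks together with a direct verification of the two order conditions.

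For (ii), I would build the square by hand. Form the set-theoretic pullback
\[
P=\{(x,y)\in[l]\times[n]\;:\;g(x)=f(y)\}
\]
and give it the lexicographic total order, declaring $(x,y)<(x',y')$ when either $x<x'$, or $x=x'$ and $y<y'$. Being finite and totally ordered, $P$ is isomorphic to $[m]$ with $m=\abs P$, and I take $p,q$ to be the two projections. Then $p$ is monotone (it is the first projection), and the fiber $p^{-1}(x)=\{(x,y):f(y)=g(x)\}$ is ordered purely by the second coordinate, so $q$ restricts to a monotone map on it; hence the square is monotone. For uniqueness, any monotone square with the same cospan is in particular a pullback, hence canonically isomorphic to $P$ through $a\mapsto(p(a),q(a))$; the two order conditions force this bijection to be strictly order-preserving, since $a<b$ gives either $p(a)<p(b)$ (the lexicographic inequality) or $p(a)=p(b)$, in which case $q(a)<q(b)$ as $q$ is monotone and injective on that fiber. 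So the bijection coincides with the lexicographic order, and the monotone square is unique up to unique isomorphism.

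For (i), the pasting lemma shows that a horizontal composite of two pullback squares is again a pullback, so only the order conditions remain. Consider
\[
\begin{tikzcd}
{[m_1]} \arrow[d,"p_1"'] \arrow[r,"q_1"] & {[m_2]} \arrow[d,"p_2"'] \arrow[r,"q_2"] & {[n]} \arrow[d,"f"] \\
{[l_1]} \arrow[r,"g_1"'] & {[l_2]} \arrow[r,"g_2"'] & {[k]}
\end{tikzcd}
\]
with both squares monotone; the composite has left leg $p_1$, top $q_2q_1$, bottom $g_2g_1$ and right $f$. The left leg $p_1$ is monotone by hypothesis. For the fiber condition, fix $i\in[l_1]$ and $a<b$ in $p_1^{-1}(i)$. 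Commutativity of the left square gives $p_2q_1=g_1p_1$, so $q_1(a)$ and $q_1(b)$ both lie in $p_2^{-1}(g_1(i))$; monotonicity of $q_1$ on $p_1^{-1}(i)$ yields $q_1(a)\le q_1(b)$, and monotonicity of $q_2$ on $p_2^{-1}(g_1(i))$ then yields $q_2q_1(a)\le q_2q_1(b)$. Hence the composite is monotone.

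The genuinely substantive point, and the one I would be most careful about, is this fiber verification in (i): preservation of the fiberwise-monotone condition hinges on the identity $p_2q_1=g_1p_1$ routing each fiber of $p_1$ into a single fiber of $p_2$, which is exactly the compatibility provided by the base chain of surjections in the pyramids of $T\sur$. Everything else — the pasting lemma, the monotonicity of the composed left legs, and the lexicographic bookkeeping in (ii) — is routine.
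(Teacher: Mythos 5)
Your treatment of (ii) is exactly the paper's argument: the paper also orders the set-theoretic pullback lexicographically ($a<b$ iff $p(a)<p(b)$, or $p(a)=p(b)$ and $q(a)<q(b)$), and your explicit uniqueness check --- the comparison bijection given by the pullback property is strictly order-preserving, using injectivity of $(p,q)$ on fibers --- is a correct elaboration of what the paper leaves implicit. Your verification of the horizontal pasting in (i), where the top and bottom maps compose and the left legs stay fixed, is also correct.

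The gap is in the scope of (i). ``Composition of squares'', as the lemma is actually used (closure of $\mathcal{V}$ under the inner face maps of $T\sur$, which is what makes the inclusion $\mathcal{V}\hookrightarrow T\sur$ simplicial before Lemma \ref{lemma:MeqTS}), must also cover the transverse pasting, in which the \emph{left} legs compose: for instance $d_2$ of a $3$-simplex produces the square with peak $t_{03}$ by pasting the unit squares with peaks $t_{03}$ and $t_{02}$ along their common edge $t_{02}\to t_{12}$. This case is not a mirror image of the one you prove, because Definition \ref{def:monotonesquare} is asymmetric in $p$ and $q$, and in fact the bare statement fails for it. Counterexample: let $\tau$ be the transposition of $[2]$. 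The unique monotone square on the cospan $[2]\xrightarrow{\mathrm{id}}[2]\xleftarrow{\tau}[2]$ has $p=\mathrm{id}$ and $q=\tau$ (the fibers of $p$ are singletons, so the fiber condition is vacuous). Paste below it the monotone pullback square with top edge $\mathrm{id}\colon[2]\to[2]$, left leg $[2]\to[1]$, right leg $[2]\to[1]$ and bottom $[1]\to[1]$. The composite is a pullback square whose left leg is $[2]\to[1]$ and whose top map is $\tau$, which is not monotone on the single fiber. What rescues the intended application is that in the pyramids of $\mathcal{V}$ the right edge $f$ of each relevant square is itself monotone (it is a left-down arrow of an adjacent square), and with $f$ monotone the transverse case goes through: if $a<b$ lie in one fiber of the composite left leg $p'p$, then either $p(a)=p(b)$, which is your fiber condition for the upper square, or $p(a)<p(b)$ with $p(a),p(b)$ in one fiber of $p'$, whence the lower square's pullback property and fiberwise monotonicity give $g(p(a))<g(p(b))$, i.e.\ $f(q(a))<f(q(b))$, and monotonicity of $f$ forces $q(a)<q(b)$. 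You should add this second half with the hypothesis on $f$ made explicit; your closing paragraph presents the horizontal fiber check as the only substantive point, but the transverse pasting is where the real subtlety of part (i) lies --- a subtlety the paper's ``(i) is clear'' glosses over as well.
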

Consider the full subsimplicial groupoid $\mathcal{V}\subseteq T\sur$ containing only the simplices whose entries are the finite ordinals $[k]$, whose left-down-arrows and right-arrows are monotone surjections and whose left-down arrows are fiber-monotone in the sense of Definition \ref{def:monotonesquare}, and whose pullback squares are monotone.
Note that Lemma \ref{lemma:monotonesquare} ensures that $\mathcal{V}$ is well defined, meaning that the inclusion $\mathcal{V}\hookrightarrow T\sur$ is a morphism of simplicial groupoids.
\begin{lemma} \label{lemma:MeqTS}$\mathcal{V}\hookrightarrow T\sur$ is an equivalence of simplicial groupoids.
\begin{proof} Given an element of $T_n\sur$,
\begin{equation*}
\begin{tikzcd}[sep={3em,between origins}]
& &  \arrow[ld,dashed] && \arrow[rd,dashed]&&\\
&t_{01}\arrow[ld] \arrow[rd]& && & t_{n-1,n} \arrow[ld] \arrow[rd]&\\
             t_{00} \arrow[rr]&&t_{11}\arrow[rr,dashed]&&t_{n-1,n-1} \arrow[rr]&&t_{nn},
\end{tikzcd}
\end{equation*}
it is clear we can choose an ordering of the $t_{ii}$ and the $t_{i,i+1}$ such that all the arrows between them are monotone. Then by Lemma \ref{lemma:monotonesquare} there exists a unique ordering on the rest of the $t_{ij}$'s making the pullback squares monotone. Hence the inclusion is essentially surjective. Since we have taken the full inclusion, the automorphism group of any elment of $\mathcal{V}_n$ is equal to to its automorphism group as an element of $T_n\sur$. Hence the inclusion is an equivalence.
\end{proof}
\end{lemma}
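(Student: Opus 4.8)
The plan is to prove the statement levelwise: an equivalence of simplicial groupoids is a degreewise equivalence of groupoids, so it suffices to show that each inclusion $\mathcal{V}_n\hookrightarrow T_n\sur$ is an equivalence of groupoids. Since $\mathcal{V}$ is defined as a \emph{full} subsimplicial groupoid, each of these inclusions is automatically fully faithful --- in particular the automorphism group of an object of $\mathcal{V}_n$ coincides with its automorphism group in $T_n\sur$. Thus the only thing left to establish is essential surjectivity of $\mathcal{V}_n\hookrightarrow T_n\sur$ for every $n$; moreover the witnessing isomorphisms need not be natural in $n$, so I can treat each simplicial level separately.

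For essential surjectivity, I would start from an arbitrary $n$-simplex of $T_n\sur$, i.e.\ a pyramid of finite sets $t_{ij}$ and surjections with all squares pullbacks, and produce a levelwise bijection to a pyramid in $\mathcal{V}_n$ --- that is, a relabelling of every $t_{ij}$ by a finite ordinal $[k]$ making all the arrows monotone, the left-down arrows fiber-monotone, and the squares monotone in the sense of Definition \ref{def:monotonesquare}. The relabelling is built in two stages. First I would totally order the sets along the base of the pyramid, namely the diagonal entries $t_{ii}$ and the first-diagonal entries $t_{i,i+1}$, so that the surjections relating them (the bottom maps $t_{ii}\twoheadrightarrow t_{i+1,i+1}$ and the two legs $t_{i,i+1}\twoheadrightarrow t_{ii}$, $t_{i,i+1}\twoheadrightarrow t_{i+1,i+1}$) become monotone; this is elementary, since these arrows only link consecutive entries and so impose no conflicting constraints. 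Then I would extend the order to the remaining interior entries $t_{ij}$ with $j-i\ge 2$ by induction outward toward the peak $t_{0n}$: each such entry completes a cospan of entries that have already been ordered, so Lemma \ref{lemma:monotonesquare}(ii) supplies a \emph{unique} total order on it making the corresponding basic square monotone.

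The step I expect to be the main obstacle is coherence: a single interior entry sits in several pullback squares at once, so I must check that the order assigned to it is independent of which square produces it, and that all arrows of the completed pyramid --- including the long composite left-edge surjections and the large composite squares, not merely the basic ones --- are monotone, so that the relabelled pyramid genuinely lies in $\mathcal{V}_n$. This is exactly what Lemma \ref{lemma:monotonesquare}(i) provides: since monotone pullback squares are closed under pasting, any composite square assembled from the basic ones is again monotone, so the inductive filling is well defined and compatible across overlapping squares. Granting this, the relabelled pyramid is isomorphic in $T_n\sur$ to the original one and belongs to $\mathcal{V}_n$, which gives essential surjectivity. Combined with fullness, each $\mathcal{V}_n\hookrightarrow T_n\sur$ is then an equivalence of groupoids, and hence $\mathcal{V}\hookrightarrow T\sur$ is an equivalence of simplicial groupoids.
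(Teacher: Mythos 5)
Your proposal is correct and follows essentially the same route as the paper: order the diagonal entries $t_{ii}$ and $t_{i,i+1}$ so the arrows between them are monotone, invoke Lemma \ref{lemma:monotonesquare} to extend uniquely to the interior entries, and use fullness of the inclusion to conclude that automorphism groups agree, giving a levelwise equivalence. Your explicit coherence check via Lemma \ref{lemma:monotonesquare}(i) (closure of monotone squares under pasting) is a welcome elaboration of a step the paper leaves implicit, but it is the same argument, not a different one.
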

Note that in $\mathcal{V}$ the uniqueness of the monotone squares implies that the Segal maps are in fact isomorphisms, 
$$\mathcal{V}_n\cong \mathcal{V}_1\cx_{\mathcal{V}_0} \cdots \cx_{\mathcal{V}_0}  \mathcal{V}_1.$$
In other words, there is a well-defined composition $d_1\colon \mathcal{V}_1\cx_{\mathcal{V}_0} \mathcal{V}_1\rightarrow \mathcal{V}_1$. In view of this we may drop the elements $t_{ij}$ with $j\ge i+2$ from the diagrams.

\begin{lemma} \label{lemma:MeqbarT}Let $\mathsf{V}$ be the operad whose $n$-ary operations are diagrams
\begin{equation*}
\begin{tikzcd}[sep={3em,between origins}]
&{[m]}\arrow[ld] \arrow[rd]& \\
             {[n]} \arrow[rr]&&1 
\end{tikzcd}
\end{equation*}
where $[m]\twoheadrightarrow [n]$ is monotone, whose morphisms are entrywise bijections, and whose composition is given by monotone pullback squares. Then $\mathcal{V}\cong \tsbar\mathsf{V}$.
\begin{proof} The isomorphism is given by
\begin{equation*}
\begin{tikzcd}[sep={3em,between origins}]
&{[m_1]}\arrow[ld] \arrow[rd]& & &{[m_k]}\arrow[ld] \arrow[rd]&\\
             {[n_1]} \arrow[rr]&&1,\arrow[r,dash,dotted]&,{[n_2]} \arrow[rr]&&1,
\end{tikzcd}
\mapsto
\begin{tikzcd}[sep={3em,between origins}]
&{[m_1+\dots+m_k]}\arrow[ld] \arrow[rd]&\\
			{[n_1+\dots+n_k]} \arrow[rr]&&{[k]}
\end{tikzcd}
\end{equation*}
at the level of $1$-simplices and similarly in general. 
\end{proof}
\end{lemma}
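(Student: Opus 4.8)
The plan is to exhibit, for each $n$, an isomorphism of groupoids $\Phi_n\colon\tsbar_n\mathsf{V}\xrightarrow{\sim}\mathcal{V}_n$ and to verify that the family $(\Phi_n)_n$ is simplicial. First I would unwind both sides. Since $\mathsf{V}$ has a single colour, the general formula for the bar construction gives $\tsbar_n\mathsf{V}=\pfsmc\mathsf{V}_n$, where $\mathsf{V}_n$ is the groupoid of $n$-level $\mathsf{V}$-trees, i.e.\ towers of composable monotone surjections, and the outer $\pfsmc$ assembles these into forests: finite sequences of trees, with level-wise bijections and permutations of the trees as morphisms. On the $\mathcal{V}$ side I would observe that a simplex of $\mathcal{V}_n$ whose bottom-right vertex $t_{nn}$ is a singleton, which the text calls connected, is precisely a tower of monotone surjections, hence an object of $\mathsf{V}_n$; conversely an arbitrary simplex of $\mathcal{V}_n$ decomposes uniquely, by restricting over the fibres of the total target map $t_{0n}\twoheadrightarrow t_{nn}=[k]$, into $k$ connected sub-pyramids. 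This decomposition, together with the fact that the morphisms of $\mathcal{V}$ are entrywise bijections (so that a bijection of $t_{nn}$ permutes the components), is exactly the content of an identification $\mathcal{V}_n\cong\pfsmc\mathsf{V}_n$, which I would take as the definition of $\Phi_n$; its inverse is the sum/concatenation map displayed in the statement.

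Next I would check that $\Phi=(\Phi_n)_n$ commutes with the simplicial operators. The outer face maps and the degeneracies are routine: the bottom and top face maps of $\tsbar\mathsf{V}$ read off the target and source configurations of the operations, which under $\Phi$ correspond to deleting from the pyramid all entries carrying a $0$-index, respectively all entries carrying a top index, i.e.\ to the outer maps $d_0$ and $d_n$ of $\mathcal{V}$; the degeneracies insert identity operations, matching the repetition of diagonals $s_i$. The substantive point is the inner face maps, which in $\tsbar\mathsf{V}$ are given by operadic composition in $\mathsf{V}$. By definition the composition of $\mathsf{V}$ is performed by monotone pullback squares, while the inner face map of $\mathcal{V}$ deletes an intermediate index and hence composes two adjacent monotone pullback squares of the pyramid into their outer rectangle. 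That these agree is exactly Lemma~\ref{lemma:monotonesquare}: part~(i) guarantees the composite rectangle is again monotone, and part~(ii) guarantees it is the unique such square, so operadic composition and pyramid composition coincide on connected pieces and hence, after applying $\pfsmc$, on all of $\tsbar_n\mathsf{V}$.

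Finally I would confirm that each $\Phi_n$ is an equivalence, in fact an isomorphism of groupoids. Essential surjectivity is the unique decomposition above; fullness and faithfulness follow because morphisms on both sides are level-wise bijections of the constituent ordinals compatible with all structure maps, and restriction along the fibres of $t_{0n}\twoheadrightarrow t_{nn}$ sets up a bijection between such morphisms and tuples of tree-morphisms together with a permutation of the components, which is precisely a morphism in $\pfsmc\mathsf{V}_n$. I expect the only delicate step to be the verification that $\Phi$ respects the inner face maps, i.e.\ that operadic composition in $\mathsf{V}$ is transported to composition of monotone pullback squares in $\mathcal{V}$; once the unique-monotone-square property of Lemma~\ref{lemma:monotonesquare} is in hand, everything else is bookkeeping.
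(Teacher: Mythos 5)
Your proposal is correct and takes essentially the same route as the paper: the isomorphism is the very concatenation/fibre-decomposition identifying $\mathcal{V}_n$ with $\pfsmc\mathsf{V}_n$ that the paper displays for $1$-simplices, and your verification of the inner face maps via Lemma~\ref{lemma:monotonesquare} is exactly what the paper's ``similarly in general'' leaves implicit. The only blemish is your gloss describing objects of $\mathsf{V}_n$ as ``towers of composable monotone surjections''---they are $n$-level trees, i.e.\ at each level a forest of operations grafted onto the inputs of the level below, and the left-edge tower alone does not determine the pyramid---but your actual argument never uses this literal description, so nothing breaks.
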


\begin{lemma} $\mathsf{V}$ is isomorphic to $\Tpc{}{\pfsmc}{\comm}$.
\begin{proof}
An operation of $\Tpc{}{\pfsmc}{\comm}$ is a family of operations of $\comm$, which is equivalent to a monotone surjection $[m]\twoheadrightarrow [n]$. It is also clear that morphsims between operations of $\Tpc{}{\pfsmc}{\comm}$ are the same as morphisms in $\mathsf{V}$. Thus we only need to see that composition coincides. Let us denote by $x$ the unique $x$-ary operation of $\comm$. Thus a general element of $\Tpc{}{\pfsmc}{\comm}$ is a tuple $(x_1,\dots,x_n)$. By definition of the $\tcons$-construction
$$(x_1,\dots,x_n)\oast ((y_1^1,\dots,y_{k_1}^1),\dots,(y_1^n,\dots,y_{k_n}^n))=(y_1^1\cdot x_1,\dots,y_{k_1}^1\cdot x_1,\dots,y_1^n\cdot x_n,\dots,y_{k_n}^n\cdot x_n),$$
which is nothing but the pullback 
\begin{equation*}
\begin{tikzcd}[sep={4em,between origins}]
& & \left[\sum_{i,j}y_j^ix_i\right] \arrow[ld,twoheadrightarrow] \arrow[rd,twoheadrightarrow] \dpbk& &\\
&\left[\sum_{i,j}y_j^i\right]\arrow[ld,twoheadrightarrow] \arrow[rd,twoheadrightarrow]& &\left[\sum_ix_i\right]\arrow[ld,twoheadrightarrow] \arrow[rd,twoheadrightarrow]&\\
             \left[\sum_i k_i^i\right] \arrow[rr,twoheadrightarrow]&&\phantom{a}[n]\phantom{a} \arrow[rr,twoheadrightarrow]&&1,
\end{tikzcd}
\end{equation*}
the composition of their corresponding operations in $\mathsf{V}$.
\end{proof}
\end{lemma}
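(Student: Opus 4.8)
The plan is to produce the isomorphism $\mathsf{V}\cong\Tpc{}{\pfsmc}{\comm}$ explicitly, checking separately that it is a bijection on operations, that it respects morphisms, and that it is compatible with composition; the identification of $\comm$ with ``surjections onto a point'' is what makes all three transparent.

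First I would describe the correspondence on operations. Since $\comm$ is the terminal $\pfsmc$-operad, it has exactly one $x$-ary operation for each $x\ge 1$, with automorphism group $\mathfrak{S}_x$; write $x$ for this operation. An operation of $\Tpc{}{\pfsmc}{\comm}$ is by construction a tuple $(x_1,\dots,x_n)$ of such operations. On the other hand, a monotone surjection $[m]\twoheadrightarrow[n]$ is determined by, and determines, the sequence of its fibre cardinalities $(x_1,\dots,x_n)$ with $\sum_i x_i=m$, the fibre over $i$ being the block of $x_i$ consecutive elements. Thus $(x_1,\dots,x_n)\mapsto \big([m]\twoheadrightarrow[n]\big)$ is a bijection between the operations of $\Tpc{}{\pfsmc}{\comm}$ and those of $\mathsf{V}$.

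Next I would match morphisms. A morphism in $\mathsf{V}$ is an entrywise bijection of the diagram $[m]\twoheadrightarrow[n]\twoheadrightarrow 1$, i.e. a pair of bijections $(\beta\colon[n]\to[n'],\ \alpha\colon[m]\to[m'])$ commuting with the surjections. Here $\beta$ permutes the $n$ inputs (the outer $\pfsmc$-symmetry, permuting the blocks, which forces matching fibre sizes) while $\alpha$, being compatible with $\beta$, restricts on each block to a bijection $[x_i]\to[x'_{\beta(i)}]$; these block bijections are precisely the $\mathfrak{S}_{x_i}$-symmetries inherited from $\comm$. This is exactly the groupoid structure on the operations of $\Tpc{}{\pfsmc}{\comm}$, so the bijection above is an equivalence of the groupoids of operations.

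The substance of the proof is compatibility with composition, which I would verify by computing the monotone pullback. Take the $n$-ary operation $(x_1,\dots,x_n)\leftrightarrow\big([\textstyle\sum_i x_i]\twoheadrightarrow[n]\big)$ and, at input $i$, the $k_i$-ary operation $(y^i_1,\dots,y^i_{k_i})\leftrightarrow\big([\sum_j y^i_j]\twoheadrightarrow[k_i]\big)$. Assembling the inputs gives $[\sum_{i,j}y^i_j]\twoheadrightarrow[\sum_i k_i]\twoheadrightarrow[n]$, and the operadic composite is the $d_1$-face of the monotone pullback of $[\sum_{i,j}y^i_j]\to[n]\leftarrow[\sum_i x_i]$ drawn in the statement. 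Over $i\in[n]$ the pullback fibre is the product of a set of size $\sum_j y^i_j$ with one of size $x_i$; projecting to $[\sum_i k_i]$ and grouping shows that the composite surjection has fibre $y^i_j\cdot x_i$ over the $(i,j)$-th block. Reading off fibre sizes in the monotone order yields exactly
$$(y^1_1 x_1,\dots,y^1_{k_1} x_1,\dots,y^n_1 x_n,\dots,y^n_{k_n} x_n),$$
which is the $\tcons$-composition formula recalled in the statement (using that composition in $\comm$, i.e. in the monoid $\Tpc{\pfsmc}{}{\comm}\cong(\N^+,\times)$, is multiplication). Hence composition is preserved and the correspondence is an isomorphism of $\pfsmc$-operads. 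The step I expect to be the main obstacle is keeping the bookkeeping of the monotone structure honest: one must invoke part (ii) of Lemma~\ref{lemma:monotonesquare} to know that the pullback in $\mathsf{V}$ is taken in its unique monotone form, so that the induced order on $[\sum_{i,j}y^i_j x_i]$ agrees block-by-block with the order underlying the $\tcons$-tuple. Once this ordering is pinned down the fibre computation is routine, the only conceptual point being that the two sources of symmetry -- permutations of the tuple versus the $\mathfrak{S}_{x_i}$ acting inside each $\comm$-operation -- are exactly the two ways an entrywise bijection in $\mathsf{V}$ can move the diagram.
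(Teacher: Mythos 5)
Your proof is correct and takes essentially the same route as the paper's: you identify a tuple $(x_1,\dots,x_n)$ of $\comm$-operations with the monotone surjection $\left[\sum_i x_i\right]\twoheadrightarrow[n]$ via fibre cardinalities, match the two kinds of symmetry (block permutations versus the $\mathfrak{S}_{x_i}$ inside each block) with entrywise bijections in $\mathsf{V}$, and verify that the $\tcons$-composition formula $(y^1_1\cdot x_1,\dots,y^n_{k_n}\cdot x_n)$ is exactly what the monotone pullback computes. The only difference is one of detail: you spell out the fibre count over each $i\in[n]$ and the appeal to the uniqueness of monotone squares (Lemma~\ref{lemma:monotonesquare}) that the paper's terser argument leaves implicit.
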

\begin{proposition} \label{prop:TSTSymequivalence}
The simplicial groupoids $T\sur$ and $\tsbar\Tpc{}{\pfsmc}{\comm}$ are equivalent.
\begin{proof} It is direct from Lemmas \ref{lemma:monotonesquare}, \ref{lemma:MeqTS} and \ref{lemma:MeqbarT}.
\end{proof}
\end{proposition}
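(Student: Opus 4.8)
The final statement is Proposition 6.1.8 (labeled \texttt{prop:TSTSymequivalence}): the simplicial groupoids $T\sur$ and $\tsbar\Tpc{}{\pfsmc}{\comm}$ are equivalent. Let me plan a proof.

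The proposition is stated to follow directly from the three preceding lemmas, so the main work has already been distributed into them. My plan is to assemble the chain of equivalences and isomorphisms that these lemmas provide. The strategy is to interpolate between the two objects via an intermediate simplicial groupoid $\mathcal{V}$, the full subsimplicial groupoid of $T\sur$ whose entries are standard finite ordinals $[k]$ and whose arrows and pullback squares are monotone in the precise sense of Definition \ref{def:monotonesquare}.

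First I would invoke Lemma \ref{lemma:MeqTS}, which gives an equivalence of simplicial groupoids $\mathcal{V}\hookrightarrow T\sur$: essential surjectivity comes from the existence part of Lemma \ref{lemma:monotonesquare}(ii) (any pyramid in $T_n\sur$ can be reordered so that the diagonal and bottom arrows are monotone, and then the uniqueness part forces a unique monotone ordering on the remaining entries), while fullness of the inclusion ensures the automorphism groups agree. Next, using the uniqueness of monotone squares (Lemma \ref{lemma:monotonesquare}) the Segal maps of $\mathcal{V}$ are isomorphisms, so $\mathcal{V}$ is the nerve-type object generated by its composition $d_1\colon\mathcal{V}_1\cx_{\mathcal{V}_0}\mathcal{V}_1\to\mathcal{V}_1$; Lemma \ref{lemma:MeqbarT} then identifies $\mathcal{V}$ with the two-sided bar construction $\tsbar\mathsf{V}$ of the operad $\mathsf{V}$, the isomorphism being given on $1$-simplices by collapsing a sequence of connected monotone diagrams into a single diagram over $[k]$ (and analogously in higher degree). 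Finally, the last lemma of the excerpt identifies $\mathsf{V}\cong\Tpc{}{\pfsmc}{\comm}$ as operads, matching their operations (monotone surjections $[m]\twoheadrightarrow[n]$ versus tuples of operations of $\comm$), their morphisms (entrywise bijections versus permutations of tuples), and crucially their composition (monotone pullback squares versus the $\tcons$-construction composition formula). Since $\tsbar$ is functorial in the operad, this operad isomorphism yields an isomorphism of simplicial groupoids $\tsbar\mathsf{V}\cong\tsbar\Tpc{}{\pfsmc}{\comm}$.

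Chaining these together gives
$$T\sur\ \simeq\ \mathcal{V}\ \cong\ \tsbar\mathsf{V}\ \cong\ \tsbar\Tpc{}{\pfsmc}{\comm},$$
the first arrow an equivalence and the latter two isomorphisms, which is the claimed equivalence. The only genuinely load-bearing input is Lemma \ref{lemma:monotonesquare}, whose existence-and-uniqueness of monotone pullback squares is what makes the intermediate object $\mathcal{V}$ both well defined and strictly rigid enough to be a bar construction; since all the substantive content is already packaged into the lemmas, the proposition itself is a short assembly and I expect no serious obstacle beyond correctly citing the chain.
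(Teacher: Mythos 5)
Your proposal is correct and follows the paper's own route exactly: the paper assembles the very same chain $T\sur \simeq \mathcal{V} \cong \tsbar\mathsf{V} \cong \tsbar\Tpc{}{\pfsmc}{\comm}$, with Lemma \ref{lemma:MeqTS} giving the first equivalence, Lemma \ref{lemma:MeqbarT} the second, the (unlabelled) lemma identifying $\mathsf{V}$ with $\Tpc{}{\pfsmc}{\comm}$ the third, and Lemma \ref{lemma:monotonesquare} underpinning well-definedness and the rigidity of $\mathcal{V}$ throughout. Your explicit appeal to functoriality of $\tsbar$ to upgrade the operad isomorphism to an isomorphism of simplicial groupoids is precisely the step the paper leaves implicit.
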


\begin{example} \label{ex:TSTSym2simplex}
Consider the following $2$-simplex of $\tsbar^{\fsmc}\Tpc{}{\pfsmc}{\comm}$:
\begin{equation}\label{ex:2simplexBTSym}
\begin{tikzpicture}[grow=down,level distance=23pt,thick]
\tikzstyle{level 1}=[sibling distance=10pt]
\node at (0,0){}
child [blue]{
child [orange]
child [orange]
child [orange]
};
\node at (1,0){}
child [blue]{
child [orange]
child [orange]
};
\draw [rounded corners] (-0.5,-1.8)--(-0.5,0)--(1.4,0)--(1.4,-1.8)--cycle;
\draw (0,0)--(-0.5,0.5); \draw (1,0)--(1.3,0.5); \draw (0.45,-1.8)--(0.45,-2.3);
\draw [rounded corners] (0.4,0.5)--(0.4,2.2)--(-1.4,2.2)--(-1.4,0.5)--cycle;
\draw [rounded corners] (0.6,0.5)--(0.6,2.2)--(2,2.2)--(2,0.5)--cycle;
\node at (-0.9,2.2){}
child [green]{
child [yellow]
child [yellow]
};
\node at (-0.1,2.2){}
child [green]{
child [yellow]
};
\node at (1.3,2.2){}
child [green]{
child [yellow]
child [yellow]
child [yellow]
child [yellow]
};
\draw (-0.9,2.2)--(-0.9,2.7); \draw (-0.1,2.2)--(-0.1,2.7); \draw (1.3,2.2)--(1.3,2.7);
\end{tikzpicture}
\begin{tikzpicture}
\node at (-0.2,-1.9){};
\node at (1.8,0){};
\node at (0,-0.1){,};
\node at (0.75,0){or};
\end{tikzpicture}
\begin{tikzpicture}[grow=down,level distance=30pt,thick]
\tikzstyle{level 1}=[sibling distance=27pt]
\tikzstyle{level 2}=[sibling distance=10pt]
\node at (0,0){}
child[green] {}{
	child[yellow] {  child[red] child[red] child[red] }
	child [yellow]{ child[red] child[red] child[red]}
};
\node at (1.4,0){}
child[green]{
	child[yellow] { child[red] child[red] child[red] }
};
\tikzstyle{level 1}=[sibling distance=22pt]
\node at (3.5,0){}
child[green] {}{
	child[yellow] { child[red] child[red]}
	child[yellow] { child[red] child[red]}
	child[yellow] { child[red] child[red]}
	child[yellow] {child[red] child[red]}
};
\draw [rounded corners] (-1,-3.3)--(-1,0)--(5.1,0)--(5.1,-3.3)--cycle;
\draw (0,0)--(0,0.5); \draw (1.4,0)--(1.4,0.5); \draw  (3.5,0)--(3.5,0.5);
\draw (2.05,-3.3)--(2.05,-3.8);
\end{tikzpicture}
\end{equation}
We use colors here only to make the comparison more pleasant, but of course this is not a colored operad. This $2$-simplex corresponds, in $T\sur$, to
\begin{equation}\label{ex:2simplexBS}
\begin{tikzcd}[sep={2.5em,between origins}]
& & {\color{red}17} \arrow[ld,twoheadrightarrow] \arrow[rd,twoheadrightarrow] \dpbk& &\\
&{\color{yellow}7}\arrow[ld,twoheadrightarrow] \arrow[rd,twoheadrightarrow]& &{\color{orange}5}\arrow[ld,twoheadrightarrow] \arrow[rd,twoheadrightarrow]&\\
            {\color{green}3} \arrow[rr,twoheadrightarrow]&&{\color{blue}2}\arrow[rr,twoheadrightarrow]&&1.
\end{tikzcd}
\end{equation}
It is opportune in this example to show that indeed the opposite convention comes out more naturally in order to interpret $T\sur$ as an operad. First of all, observe that at the level of finite sets and surjections the Verschiebung operators~\ref{eq:classicalVerschiebung} can be regarded as a scalar multiplication, 
$$V^S(X\rightarrow B)=S\times X\rightarrow X\rightarrow B,$$
in the sense that if $\l$ represents the class of $X\rightarrow B$, then $V^{|S|}\l$ represents the class of $V^S(X\rightarrow B)$. Under this perspective we can write the information on~\eqref{ex:2simplexBS} as
\begin{equation}\label{eq:exV}
{\color{red}17}\twoheadrightarrow {\color{green}3}=({\color{red}9}\twoheadrightarrow{\color{green}2})+({\color{red}8}\twoheadrightarrow{\color{green}1})={\color{blue}V}^{\color{orange}3}({\color{yellow}3}\twoheadrightarrow{\color{green}2})+{\color{blue}V}^{\color{orange}2}({\color{yellow}4}\twoheadrightarrow{\color{green}1})=({\color{orange}3}\times{\color{yellow}3}\twoheadrightarrow{\color{green}2})+({\color{orange}2}\times{\color{yellow}4}\twoheadrightarrow{\color{green}1}),
\end{equation}
We can clearly see this in~\eqref{ex:2simplexBTSym}. On the contrary, it is not difficult to check that without the opposite convention Equation~\eqref{eq:exV} would rather appear as 
$${\color{red}17}\twoheadrightarrow {\color{green}3}=({\color{red}9}\twoheadrightarrow{\color{green}2})+({\color{red}8}\twoheadrightarrow{\color{green}1})=\big(({\color{yellow}2}\times{\color{orange}3}\twoheadrightarrow{\color{green}1})+({\color{yellow}1}\times{\color{orange}3}\twoheadrightarrow{\color{green}1})\big)+({\color{yellow}4}\times{\color{orange}2}\twoheadrightarrow{\color{green}1}).$$
\end{example}

\subsection{Other $T\sur$-like simplicial groupoids}
We now present other equivalences between variations of  $T\sur$ and some of the bar constructions treated before. First of all we introduce some notation: monotone surjections between ordered sets are denoted $\begin{tikzcd}[column sep=1.5em]a\arrow[r, twoheadrightarrow, pos=0.5, "\bullet" on top]&b.\end{tikzcd}$ We call \emph{linear surjection}  $\begin{tikzcd}[column sep=1.5em]a\arrow[r, twoheadrightarrow,pos=0.5, "\circ" on top]&b\end{tikzcd}$ a surjection between finite sets $f: a\twoheadrightarrow b$ with an order on $f^{-1}(r)$ for each $r\in b$. 

Notice that the composite of two monotone surjections is again a monotone surjection, and the composite of two linear surjections is again a linear surjection, with the obvious order. Moreover, given pullback squares
 \begin{equation*}
\begin{tikzcd}[sep={3.5em,between origins}]
\phantom{\cdot} \arrow[d,twoheadrightarrow,pos=0.5, "\bullet" on top,"p"'] \arrow[r,twoheadrightarrow] \rdpbk&\phantom{\cdot}  \arrow[d,twoheadrightarrow,pos=0.5, "\bullet" on top,"f"] \\
            \phantom{\cdot} \arrow[r,twoheadrightarrow]&\phantom{\cdot}
\end{tikzcd}  
\phantom{aaa}
\text{and}
\phantom{aaa}
\begin{tikzcd}[sep={3.5em,between origins}]
\phantom{\cdot} \arrow[d,twoheadrightarrow,pos=0.5, "\circ" on top,"q"'] \arrow[r,twoheadrightarrow] \rdpbk&\phantom{\cdot}  \arrow[d,twoheadrightarrow,pos=0.5, "\circ" on top,"g"] \\
            \phantom{\cdot} \arrow[r,twoheadrightarrow]&\phantom{\cdot}
\end{tikzcd},
\end{equation*}
we say that $p$ and $f$ are \emph{compatible} if the order of $p$ is induced by the order of $f$, in the sense of Lemma~\ref{lemma:monotonesquare}. Similarly, we say that $q$ and $g$ are compatible if the order of $q$ is induced by the order of $g$.

The proofs of all the following results are similar to the one of Proposition~\ref{prop:TSTSymequivalence}. To avoid repetitiveness we give only intuitive explanations.

\begin{example}\label{ex:TAsslineartransversal}The simplicial groupoid $\tsbar^{\fsmc}\Tpc{}{\pfsmc}{\ass}$ is equivalent to the simplicial groupoid constructed as $T\sur$ but with the additional structure that all  the left-down surjections  are linear and compatible. Morphisms are order-preserving levelwise bijections. Hence the $1$-simplices are diagrams
\begin{equation*}
\begin{tikzcd}[sep={3em,between origins}]
& t_{01} \arrow[ld, twoheadrightarrow,pos=0.5, "\circ" on top]  \arrow[rd,twoheadrightarrow]& \\
             t_{00} \arrow[rr,twoheadrightarrow]& &t_{11}.
\end{tikzcd}
\end{equation*}
The isomorphism classes of connected diagrams are again infinite vectors $\l=(\l_1,\l_2,\dots)$ as in $T\sur$, and the number of automorphisms of a connected element of class $\l$ is precisely $\l_1!\cdot \l_2! \cdots $, since $t_{01}$ is fixed. 

%
%

\end{example}

\begin{example}\label{ex:BSTMAss}
The simplicial groupoid $\tsbar^{\fsmc}\Tpc{}{\fsg}{\ass}$ is equivalent to the simplicial grou-poid constructed as $T\sur$ but  with the additional structure that the left-down surjections and the right surjections are linear and compatible. Morphisms are order-preserving levelwise bijections. Hence the $1$-simplices are diagrams
\begin{equation*}
\begin{tikzcd}[sep={3em,between origins}]
& t_{01} \arrow[ld, twoheadrightarrow, pos=0.5, "\circ" on top]  \arrow[rd,twoheadrightarrow]& \\
             t_{00} \arrow[rr, twoheadrightarrow, pos=0.5, "\circ" on top]& &t_{11}.
\end{tikzcd}
\end{equation*}
Observe that for a connected element, $t_{00}$ is totally ordered. Thus the isomorphism classes of connected elements are given by words $\omega=\omega_1\omega_2\dots\omega_n$ where $\omega_i$ is the size of the $i$th fiber. It does not have any automorphisms, since $t_{01}$ and $t_{00}$ are fixed.
\end{example}

\begin{example}\label{ex:BSTMSym}
The simplicial groupoid $\tsbar^{\fsmc}\Tpc{}{\fsg}{\comm}$ is equivalent to the simplicial groupoid constructed as $T\sur$ but  with the additional structure that the right surjections  are linear and compatible. Morphisms are order-preserving levelwise bijections. Hence the $1$-simplices are diagrams
\begin{equation*}
\begin{tikzcd}[sep={3em,between origins}]
& t_{01} \arrow[ld,twoheadrightarrow]  \arrow[rd,twoheadrightarrow]& \\
             t_{00} \arrow[rr, twoheadrightarrow,pos=0.5, "\circ" on top]& &t_{11}.
\end{tikzcd}
\end{equation*}
Observe that for a connected element, $t_{00}$ is totally ordered. Thus the isomorphism classes of connected elements are given by finite words $\omega=\omega_1\omega_2\dots\omega_n$ where $\omega_i>0$ is the size of the $i$th fiber. It has  $\omega!:=\omega_1!\omega_2!\cdots \omega_n!$ automorphisms, since $t_{00}$ is fixed. 
\end{example}
\begin{example}
The simplicial groupoid $\tsbar^{\fm}\Tpc{}{\fsg}{\comm}$ is equivalent to the simplicial groupoid constructed as $T\sur$ but with the additional structure that the right surjections are monotone. Morphisms are order-preserving levelwise bijections. Hence the $1$-simplices are diagrams
\begin{equation*}
\begin{tikzcd}[sep={3em,between origins}]
& t_{01} \arrow[ld,twoheadrightarrow]  \arrow[rd,twoheadrightarrow]& \\
             t_{00} \arrow[rr, twoheadrightarrow,pos=0.5, "\bullet" on top]& &t_{11}.
\end{tikzcd}
\end{equation*}
Observe that for a connected element, $t_{00}$ is totally ordered. Thus the isomorphism classes of connected elements are given by finite words $\omega=\omega_1\omega_2\dots\omega_n$ where $\omega_i>0$ is the size of the $i$th fiber. It has  $\omega!:=\omega_1!\omega_2!\cdots \omega_n!$ automorphisms, since $t_{00}$ is fixed.  The difference between this simplicial groupoid and the one of Example~\ref{ex:BSTMSym} is that in this case $t_{11}$ is also ordered. As a consequence the monoidal structure is not symmetric, so that the resulting incidence bialgebra is not commutative.
\end{example}

\begin{example}
The simplicial groupoid $\tsbar^{\fm}\Tpc{}{\fsg}{\ass}$ is equivalent to the simplicial groupoid constructed as $T\sur$ but with the additional structure that the left-down surjections and the right surjections are monotone and compatible. Morphisms are order-preserving levelwise bijections. Hence the $1$-simplices are diagrams
\begin{equation*}
\begin{tikzcd}[sep={3em,between origins}]
& t_{01} \arrow[ld, twoheadrightarrow,pos=0.5, "\bullet" on top]  \arrow[rd,twoheadrightarrow]& \\
             t_{00} \arrow[rr, twoheadrightarrow,pos=0.5, "\bullet" on top]& &t_{11}.
\end{tikzcd}
\end{equation*}
Observe that for a connected element, $t_{00}$ is totally ordered. Thus the isomorphism classes of connected elements are given by words $\omega=\omega_1\omega_2\dots\omega_n$ where $\omega_i$ is the size of the $i$th fiber. It does not have any automorphisms, since $t_{01}$ and $t_{00}$ are fixed. Again, the difference between this simplicial groupoid and the one of Example~\ref{ex:BSTMAss} is that in this case $t_{11}$ is ordered.
\end{example}
\begin{example}\label{ex:TS2BTSym2}
Finally, the simplicial groupoid $\tsbar^{\fsmc} \Tpc{}{\pfsmc}{\comm_2}$  is equivalent to the simplicial groupoid constructed as $T\sur$ but with the additional structure that the objects are $2$-colored and the right-down surjections are color preserving. Morphisms are color-preserving levelwise bijections. 

For instance, the following $2$-simplex of $\tsbar^{\fsmc} \Tpc{}{\pfsmc}{\comm_2}$,
\begin{equation}\label{eq:2simplexBTSym2}
\begin{tikzpicture}[grow=down,level distance=23pt,thick]
\tikzstyle{level 1}=[sibling distance=10pt]
\node at (0,0){}
child[blue] {}{
child[blue]
child[blue]
child[blue]
};
\node at (1,0){}
child[red] {}{
child[blue]
child[blue]
};
\draw [rounded corners] (-0.5,-1.8)--(-0.5,0)--(1.4,0)--(1.4,-1.8)--cycle;
\draw [blue](0,0)--(-0.5,0.5); \draw [red](1,0)--(1.3,0.5); \draw [blue](0.45,-1.8)--(0.45,-2.3);
\draw [rounded corners] (0.4,0.5)--(0.4,2.2)--(-1.4,2.2)--(-1.4,0.5)--cycle;
\draw [rounded corners] (0.6,0.5)--(0.6,2.2)--(2,2.2)--(2,0.5)--cycle;
\node at (-0.9,2.2){}
child[red] {}{
child[blue]
child[blue]
};
\node at (-0.1,2.2){}
child [blue]{}{
child[blue]
};
\node at (1.3,2.2){}
child [red]{}{
child[red]
child[red]
child[red]
child[red]
};
\draw [red](-0.9,2.2)--(-0.9,2.7); \draw [blue](-0.1,2.2)--(-0.1,2.7); \draw [red](1.3,2.2)--(1.3,2.7);
\end{tikzpicture}
\begin{tikzpicture}
\node at (-0.2,-1.9){};
\node at (1.8,0){};
\node at (0,-0.1){,};
\node at (0.75,0){or};
\end{tikzpicture}
\begin{tikzpicture}[grow=down,level distance=30pt,thick]
\tikzstyle{level 1}=[sibling distance=22pt]
\tikzstyle{level 2}=[sibling distance=9pt]
\node at (0,0){}
child [red]{}{
	child[blue] { child[blue] child[blue] child[blue] }
	child[blue] { child[blue] child[blue] child[blue]}
};
\node at (1.4,0){}
child[blue] {}{
	child[blue] { child[blue] child[blue] child[blue] }
};
\node at (3.5,0){}
child [red]{}{
	child[red] { child[blue] child[blue]}
	child[red] {child[blue] child[blue]}
	child[red] { child[blue] child[blue]}
	child[red] { child[blue] child[blue]}
};
\draw [rounded corners] (-1,-3.3)--(-1,0)--(5.1,0)--(5.1,-3.3)--cycle;
\draw [red](0,0)--(0,0.5); \draw [blue](1.4,0)--(1.4,0.5); \draw [red] (3.5,0)--(3.5,0.5);
\draw [blue](2.05,-3.3)--(2.05,-3.8);
\end{tikzpicture}
\end{equation}
where now the colors do refer to the input and output colors, corresponds to the following $2$-simplex:
\begin{equation}\label{eq:2simplexTS2}
\begin{tikzcd}[sep={2.5em,between origins}]
& & {\color{blue}17} \arrow[ld,twoheadrightarrow] \arrow[rd,twoheadrightarrow] \dpbk& &\\
&{\color{blue}3}{+}{\color{red}4}\arrow[ld,twoheadrightarrow] \arrow[rd,twoheadrightarrow]& &{\color{blue}5}\arrow[ld,twoheadrightarrow] \arrow[rd,twoheadrightarrow]&\\
            {\color{blue}1}{+}{\color{red}2} \arrow[rr,twoheadrightarrow]&&{\color{blue}1}{+}{\color{red}1}\arrow[rr,twoheadrightarrow]&&{\color{blue}1}.
\end{tikzcd}
\end{equation}
Observe that indeed the right-down surjections are color-preserving. Notice also that if we had not used here the opposite convention the colors of~\eqref{eq:2simplexBTSym2} would not match the colors of~\eqref{eq:2simplexTS2} in such a direct way.
\end{example}


\appendices

\section{Appendices}

 \subsection{Axioms for internal category}\label{Cat_Axioms}
 Let $\ambcat$ be a cartesian category.
  A category $C$ internal to $\ambcat$ can be described by objects and arrows of $\ambcat$
\begin{center} 
 \begin{tikzcd}[column sep=small]
       & C_1\arrow[dl,"s"']\arrow[dr,"t"] &       \\
 C_0 &                                            & C_0
 \end{tikzcd}
 \begin{tikzcd}
 C_1\cx_{C_0} C_1 \arrow[r,"\comp"]& C_1\\
 \phantom{aaaaa}C_0 \arrow[r,"e"] &C_1
 \end{tikzcd}
 \end{center}
 where the pullback is taken along $C_1\xrightarrow{\;\;s\;\;}C_0\xleftarrow{\;\;t\;\;}C_1$, satisfying the following commutative diagrams:

 \begin{figure}[ht]
\centering
 \begin{subequations}
\begin{minipage}[b]{0.48\linewidth}
 \begin{equation}\label{Cat_d1d1}
 \begin{tikzcd}
 C_1\cx_{C_0}C_1\arrow[d,"p_1"']\arrow[r,"\comp"]&C_1\arrow[d,"s"]\\
 C_1\arrow[r,"s"']&C_0\\
 \end{tikzcd}
 \end{equation}
\end{minipage}
\quad
\begin{minipage}[b]{0.48\linewidth}
\begin{equation}\label{Cat_d0d1}
  \begin{tikzcd}
 C_1\cx_{C_0}C_1\arrow[d,"p_2"']\arrow[r,"\comp"]&C_1\arrow[d,"t"]\\
 C_1\arrow[r,"t"']&C_0\\
 \end{tikzcd}
 \end{equation}
\end{minipage}
 \end{subequations}
\end{figure}

 \vspace{50pt}
 
  \begin{figure}[ht]
\centering
 \begin{subequations}\label{Cat_topbottom}
\begin{minipage}[b]{0.48\linewidth}
 \begin{equation}\label{Cat_d1e}
  \phantom{aaa} \begin{tikzcd}[column sep=4em]
 C_0\arrow[dr,"\id"']\arrow[r,"e"]&C_1\arrow[d,"s"]\\
   &C_0
 \end{tikzcd}
 \end{equation}
\end{minipage}
\quad
\begin{minipage}[b]{0.48\linewidth}
  \begin{equation}\label{Cat_d0e}
 \phantom{aaa} \begin{tikzcd}[column sep=4em]
 C_0\arrow[dr,"\id"']\arrow[r,"e"]&C_1\arrow[d,"t"]\\
   &C_0
 \end{tikzcd}
 \end{equation}
\end{minipage}
 \end{subequations}
\end{figure}

 \begin{equation}\label{dgm:Cat_Associativity}
  \begin{tikzcd}[column sep=60pt, row sep=40pt]
 (C_1\cx_{C_0}C_1)\cx_{C_0}C_1\arrow[r,"\comp\cx_{C_0}C_1"]\arrow[d]&C_1\cx_{C_0}C_1\arrow[dd,"\comp"]\\
 C_1\cx_{C_0}(C_1\cx_{C_0}C_1)\arrow[d,"C_1\cx_{C_0}\comp"']&\\
 C_1\cx_{C_0}C_1\arrow[r,"\comp"']& C_1
 \end{tikzcd}
 \end{equation}
 
 \vspace{20pt}

   \begin{figure}[ht]
\centering
 \begin{subequations}\label{dgm:Cat_Unit}
\begin{minipage}[b]{0.475\linewidth}
\begin{equation}\label{dgm:Cat_Unita}
   \begin{tikzcd}[column sep=20pt, row sep=20pt]
 C_0\cx_{C_0}C_1\arrow[dr,"p_2"']\arrow[rr,"e\cx_{C_0}C_1"]&&C_1\cx_{C_0}C_1\arrow[ld,"\comp"]\\
   &C_1&
 \end{tikzcd}
 \end{equation}
\end{minipage}
\quad
\begin{minipage}[b]{0.475\linewidth}
\begin{equation}\label{dgm:Cat_Unitb}
   \begin{tikzcd}[column sep=20pt, row sep=20pt]
 C_1\cx_{C_0}C_0\arrow[dr,"p_1"']\arrow[rr,"C_1\cx_{C_0}e"]&&C_1\cx_{C_0}C_1\arrow[ld,"\comp"]\\
   &C_1&
 \end{tikzcd}
 \end{equation}
 \end{minipage}
 \end{subequations}
\end{figure}

 \subsection{Axioms for $\gm$-operad}\label{PCat_Axioms}
 Let $\mathcal{E}$ be a cartesian category and $(\gm ,\mu,\eta)$ a cartesian monad.
  A $\gm $-multicategory $\pop$ can be described by objects and arrows of $\ambcat$
\begin{center} \begin{tikzcd}[column sep=small]
       & \pop_1\arrow[dl,"s"']\arrow[dr,"t"] &       \\
 \gm \pop_0 &                                            & \pop_0
 \end{tikzcd}
 \begin{tikzcd}
\gm \pop_1\cx_{\gm \pop_0} \pop_1 \arrow[r,"\comp"]& \pop_1\\
 \phantom{aaaaa}\pop_0 \arrow[r,"e"] &\pop_1
 \end{tikzcd}
  \end{center}
 where the pullback is taken along $\gm \pop_1\xrightarrow{t}\gm \pop_0\xleftarrow{s}\pop_1$, satisfying the following commutative diagrams:

 \begin{figure}[H]
\centering
 \begin{subequations}
\begin{minipage}[b]{0.48\linewidth}
  \begin{equation}\label{PCat_d1d1}
 \begin{tikzcd}[column sep=3em,row sep=2em]
 \gm \pop_1\cx_{\gm \pop_0}\pop_1\arrow[r,"p_1"]\arrow[dd,"\comp"']&\gm \pop_1\arrow[d,"\gm s"]\\
 &\gm ^2\pop_0 \arrow[d,"\mu_{\pop_0}"]\\
 \pop_1\arrow[r,"s"']&\gm \pop_0
 \end{tikzcd}
 \end{equation}
\end{minipage}
\quad
\begin{minipage}[b]{0.48\linewidth}
 \begin{equation}\label{PCat_d0d1}
  \begin{tikzcd}
 \gm \pop_1\cx_{\pop_0}\pop_1\arrow[d,"p_2"']\arrow[r,"\comp"]&\pop_1\arrow[d,"t"]\\
 \pop_1\arrow[r,"t"']&\pop_0
 \end{tikzcd}
 \end{equation}
\end{minipage}
 \end{subequations}
\end{figure}
 
\vspace{20pt}
 
  \begin{figure}[H]
\centering
 \begin{subequations}\label{PCat_topbottom}
\begin{minipage}[b]{0.48\linewidth}
 \begin{equation}\label{PCat_d1e}
 \begin{tikzcd}
 \pop_0\arrow[dr,"\eta_{\pop_0}"']\arrow[r,"e"]&\pop_1\arrow[d,"s"]\\
   &\gm \pop_0
 \end{tikzcd}
\end{equation}
\end{minipage}
\quad
\begin{minipage}[b]{0.48\linewidth}
 \begin{equation}\label{PCat_d0e}
  \begin{tikzcd}
 \pop_0\arrow[dr,"\id"']\arrow[r,"e"]&\pop_1\arrow[d,"t"]\\
   &\pop_0
 \end{tikzcd}
 \end{equation}
\end{minipage}
 \end{subequations}
\end{figure}

 \begin{equation}\label{dgm:PCat_Associativity}
  \begin{tikzcd}[column sep=60pt, row sep=40pt]
 (\gm ^2\pop_1\cx_{\gm ^2\pop_0}\gm \pop_1)\cx_{\gm \pop_0}\pop_1\arrow[r,"\gm \comp\cx_{\pop_0}\pop_1"]\arrow[d]&\gm \pop_1\cx_{\gm \pop_0}\pop_1\arrow[dd,"\comp"]\\
 \gm ^2\pop_1\cx_{\gm ^2\pop_0}(\gm \pop_1\cx_{\gm \pop_0}\pop_1)\arrow[d,"\mu_{\pop_1}\cx_{\mu{\pop_0}}\comp"']&\\
 \gm \pop_1\cx_{\gm \pop_0}\pop_1\arrow[r,"\comp"']& \pop_1
 \end{tikzcd}
 \end{equation}
 
 \vspace{20pt}

   \begin{figure}[ht]
\centering
 \begin{subequations}\label{dgm:PCat_Unit}
\begin{minipage}[b]{0.475\linewidth}
 \begin{equation}\label{dgm:PCat_Unita}
   \begin{tikzcd}[column sep=20pt, row sep=20pt]
 \gm \pop_0\cx_{\gm \pop_0}\pop_1\arrow[dr,"p_2"']\arrow[rr,"\gm e\cx_{\pop_0}\pop_1"]&&\gm \pop_1\cx_{\gm \pop_0}\pop_1\arrow[dl,"\comp"]\\
   &\pop_1&
 \end{tikzcd}
 \end{equation}
\end{minipage}
\quad
\begin{minipage}[b]{0.475\linewidth}
 \begin{equation}\label{dgm:PCat_Unitb}
   \begin{tikzcd}[column sep=20pt, row sep=20pt]
\pop_1\cx_{\pop_0}\pop_0\arrow[dr,"p_1"']\arrow[rr,"\pop_1\cx_{\pop_0}e"]& &\gm \pop_1\cx_{\gm \pop_0}\pop_1\arrow[dl,"\comp"]\\
   &\pop_1&
 \end{tikzcd}
 \end{equation}\end{minipage}
 \end{subequations}
\end{figure}


\noindent \textsc{Departament de Matem\`atiques,\\
Universitat Aut\`onoma de Barcelona,\\
08193 Bellaterra (Barcelona),
Spain}\\
\textit{E-mail adress:} \href{mailto:acebrian@mat.uab.cat}{acebrian@mat.uab.cat},\\


 \end{document}